\numberwithin{equation}{section}
\numberwithin{figure}{section}
\theoremstyle{plain}
\newtheorem{thm}{\protect\theoremname}[section]
  \theoremstyle{definition}
  \newtheorem{defn}[thm]{\protect\definitionname}
  \theoremstyle{remark}
  \newtheorem{rem}[thm]{\protect\remarkname}
  \theoremstyle{plain}
  \newtheorem{lem}[thm]{\protect\lemmaname}
  \theoremstyle{plain}
  \newtheorem{prop}[thm]{\protect\propositionname}
  \theoremstyle{remark}
  \newtheorem{notation}[thm]{\protect\notationname}
  \theoremstyle{plain}
  \newtheorem{question}[thm]{\protect\questionname}
  \theoremstyle{remark}
  \newtheorem*{claim*}{\protect\claimname}
  \theoremstyle{plain}
  \newtheorem{cor}[thm]{\protect\corollaryname}
  \theoremstyle{definition}
  \newtheorem{example}[thm]{\protect\examplename}
\newcommand{\dotminus}{\mathbin{\text{\@dotminus}}}
\newcommand{\@dotminus}{%
  \ooalign{\hidewidth\raise1ex\hbox{.}\hidewidth\cr$\m@th-$\cr}%
}
\subjclass[2010]{Primary 03D32; Secondary 68Q30}
  \providecommand{\claimname}{Claim}
  \providecommand{\corollaryname}{Corollary}
  \providecommand{\definitionname}{Definition}
  \providecommand{\examplename}{Example}
  \providecommand{\lemmaname}{Lemma}
  \providecommand{\notationname}{Notation}
  \providecommand{\propositionname}{Proposition}
  \providecommand{\questionname}{Question}
  \providecommand{\remarkname}{Remark}
\providecommand{\theoremname}{Theorem}
\begin{document}
\global\long\def\SR#1#2{\mathsf{SR}_{#1}^{#2}}
\global\long\def\MLR#1#2{\mathsf{MLR}_{#1}^{#2}}
\global\long\def\KR#1#2{\mathsf{KR}_{#1}^{#2}}
\global\long\def\id{\operatorname{id}}
\global\long\def\eval{\operatorname{eval}}
\global\long\def\dom{\operatorname{dom}}
\global\long\def\supp{\operatorname{supp}}
\global\long\def\probmeas{\mathcal{M}_{1}}
\global\long\def\finitemeas{\mathcal{M}}
\global\long\def\radonmeas{\mathcal{M}_{\textnormal{loc}}}
\global\long\def\meas{\mathcal{M}}
\global\long\def\locflag{[\textnormal{loc}]}
\global\long\def\cont{\mathcal{C}}

\title{Schnorr randomness for noncomputable measures}

\author{Jason Rute}

\address{Department of Mathematics\\
Pennsylvania State University\\
University Park, PA 16802}

\email{jmr71@math.psu.edu}

\urladdr{http://www.personal.psu.edu/jmr71/}

\keywords{Algorithmic randomness, Schnorr randomness, uniform integral tests,
noncomputable measures}

\thanks{We would like to thank Mathieu Hoyrup for the results concerning
uniform Schnorr sequential tests in Subsection~\ref{subsec:uniform-sequential-tests}
and Christopher Porter for pointing out the Schnorr-Fuchs definition
mentioned in Subsection~\ref{subsec:Schnorr-Fuchs-def}. We would
also like to thank Jeremy Avigad, Peter Gács, Mathieu Hoyrup, Bjørn
Kjos-Hanssen, and two anonymous referees for corrections and comments.\\
\indent Started on January 16, 2015. Last updated on \today.}
\begin{abstract}
This paper explores a novel definition of Schnorr randomness for noncomputable
measures. We say $x$ is uniformly Schnorr $\mu$-random if $t(\mu,x)<\infty$
for all lower semicomputable functions $t(\mu,x)$ such that $\mu\mapsto\int t(\mu,x)\,d\mu(x)$
is computable. We prove a number of theorems demonstrating that this
is the correct definition which enjoys many of the same properties
as Martin-Löf randomness for noncomputable measures. Nonetheless,
a number of our proofs significantly differ from the Martin-Löf case,
requiring new ideas from computable analysis.
\end{abstract}

\maketitle

\section{Introduction}

Algorithmic randomness is a branch of mathematics which gives a rigorous
formulation of randomness using computability theory. The first algorithmic
randomness notion, Martin-Löf randomness, was formulated by Martin-Löf
\cite{Martin-Lof:1966mz} and has remained the dominant notion in
the literature. Schnorr \cite{Schnorr:1969ly}, however, felt that
Martin-Löf randomness was too strong, and introduced a weaker, more
constructive, randomness notion now known as Schnorr randomness. Both
Martin-Löf and Schnorr randomness play an important role in computable
analysis and computable probability theory. For example, the Martin-Löf
randoms are exactly the points of differentiability for all computable
functions $f\colon[0,1]\rightarrow\mathbb{R}$ of bounded variation
\cite{Brattka.Miller.Nies:}. Similarly, the Schnorr randoms are exactly
the Lebesgue points for all functions $f\colon[0,1]\rightarrow\mathbb{R}$
computable in the $L^{1}$-norm \cite{Rute:2013pd,Pathak.Rojas.Simpson:2014}. 

Algorithmic randomness is formulated through the idea of ``computable
tests.'' Specifically, if $\mu$ is a computable measure on a computable
metric space $\mathbb{X}$, then (in this paper) a test for Martin-Löf
$\mu$-randomness is a lower semicomputable function $t\colon\mathbb{X}\rightarrow[0,\infty)$
such that $\int_{{}}t\,d\mu<\infty$. A point $x$ passes the test
$t$ if $t(x)<\infty$, else it fails the test. A point $x$ is Martin-Löf
$\mu$-random if $x$ passes all such tests $t$. Schnorr randomness
is the same, except that we also require that $\int_{{}}t\,d\mu$
is computable for each test $t$. (We present the full details in
the paper.)

While, historically, algorithmic randomness was mostly studied for
computable probability measures, there were a few early papers investigating
Martin-Löf randomness for arbitrary noncomputable probability measures.
One was by Levin \cite{Levin:1976uq} using the concept of a ``uniform
test''\textemdash that is, a test $t$ which takes as input a pair
$(\mu,x)$ and for which $t(\mu,x)=\infty$ if and only if $x$ is
$\mu$-random. Gács, later, \cite{Gacs:2005} modified Levin's uniform
test approach.\footnote{Levin \cite{Levin:1976uq} required that uniform tests have two additional
properties, called monotonicity and concavity, while Gács \cite{Gacs:2005}
removed these conditions. The two approaches lead to different definitions
of Martin-Löf $\mu$-randomness for noncomputable measures $\mu$.
Gács's approach is now standard.}  Separately, Reimann \cite{Reimann:2008vn} (also Reimann and Slaman
\cite{Reimann.Slaman:2015}) gave an alternate definition using the
concept of a ``relativized test''\textemdash that is a test $t$
which is computable from (a name for) $\mu$. Day and Miller \cite{Day.Miller:2013}
showed the Levin-Gács and Reimann definitions are equivalent. Recently,
there have been a number of papers investigating Martin-Löf randomness
for noncomputable measures, e.g.~\cite{Kjos-Hanssen:2010aa,Bienvenu.Gacs.Hoyrup.ea:2011,Bienvenu.Monin:2012,Day.Miller:2013,Day.Reimann:2014,Reimann.Slaman:2015}.
These results have applications to effective dimension \cite{Reimann:2008vn},
the ergodic decomposition for computable measure-preserving transformations
\cite{Hoyrup:2011}, and the members of random closed sets \cite{Kjos-Hanssen:2009,Diamondstone.Kjos-Hanssen:2012}\textemdash just
to name a few.

In stark contrast, Schnorr randomness for noncomputable measures has
remained virtually untouched. The first goal of this paper is to give
a proper definition of Schnorr randomness for noncomputable measures.
Our definition is based on the Levin-Gács uniform tests. 

The second goal of this paper is to convince the reader that our definition
is the correct one. We will do this by showing that the major theorems
concerning Martin-Löf randomness for noncomputable measures also hold
for (our definition of) Schnorr randomness for noncomputable measures.
While many of the theorems in this paper are known to hold for Martin-Löf
randomness, the Schnorr randomness versions require different arguments,
using new ideas and tools from computable analysis. However, our proofs
naturally extend to Martin-Löf randomness as well. In some cases,
we even prove new results about Martin-Löf randomness.

\subsection{Uniform verse nonuniform reasoning}

There are a number of reasons that Schnorr randomness has remained
less dominant up to this point. The first is historical: Martin-Löf
randomness came first. (Also, much of Schnorr's work, particularly
his book \cite{Schnorr:1971rw}, was written in German and never translated
into English.)

However, there is also another reason: Many consider Schnorr randomness
to be less well behaved than Martin-Löf randomness \cite[\S7.1.2]{Downey.Hirschfeldt:2010}.
Generally two results are given in support of this claim:
\begin{enumerate}
\item Schnorr randomness does not have a universal test.
\item Van Lambalgen's Theorem fails for Schnorr randomness.
\end{enumerate}
As for the first point, Martin-Löf showed that there is one universal
test $t$ for Martin-Löf randomness such that $x$ is Martin-Löf random
iff $x$ passes $t$. In contrast, for every Schnorr test $t$ there
is a computable point (hence not Schnorr random) which fails $t$.
This latter result, while an inconvenience in proofs, actually shows
that Schnorr randomness is more constructive. If an a.e.\ theorem
holds for Schnorr randomness (for example, the strong law of large
numbers), then we can generally construct a computable pseudo-random
object satisfying this a.e.\ theorem. 

As for the second point, Van Lambalgen's Theorem says that a pair
$(x,y)$ is Martin-Löf random if and only if $x$ is Martin-Löf random
and $y$ is Martin-Löf random relative to $x$. Whether Van Lambalgen's
Theorem holds for Schnorr randomness depends on how one interprets
``$y$ is random relative to $x$.'' If we use a uniform test approach
(similar to the Levin-Gács uniform tests) then it holds \cite{Miyabe.Rute:2013}.
If we use a non-uniform relativized test approach (similar to Reimann's
relativized tests) then it does not hold \cite{Yu:2007}. Uniform
approaches were more common in the earlier work of Martin-Löf, Levin,
Schnorr, and (to a lesser degree) Van Lambalgen.\footnote{Indeed, to the extent that Martin-Löf \cite[\S IV]{Martin-Lof:1966mz},
Levin \cite{Levin:1976uq}, and Schnorr \cite[\S 24]{Schnorr:1971rw}
explored randomness for noncomputable measures\textemdash usually
Bernoulli measures\textemdash their approaches were uniform. One of
the two (equivalent) approaches to relative randomness in Van Lambalgen
\cite[Def.~5.6]{Lambalgen:1990} is also uniform.}  However, now-a-days it is more common to see nonuniform relativized
test approaches. (To be fair, the distinction between uniform and
nonuniform reasoning in randomness\textemdash and computability theory
in general\textemdash is quite blurred. This is further exacerbated
by the fact that for Martin-Löf randomness, the uniform and nonuniform
approaches are equivalent. Nonetheless, one area in computability
theory where the distinction between uniform and nonuniform approaches
are different is the truth-table degrees and the Turing degrees. Indeed
the uniform approach to Schnorr randomness was originally called \emph{truth-table Schnorr randomness}
\cite{Franklin.Stephan:2010}.)

This paper is built on the uniform approach, and we believe this goes
far in explaining why Schnorr randomness behaves the way it does.
Nonetheless, we also briefly look at the nonuniform approach in Subsection~\ref{subsec:seq-test-rel-to-name}.

\subsection{Finite measures on computable metric spaces}

In this paper, we take a general approach. Instead of working with
only Cantor space, we explore randomness for all computable metric
spaces. We do this because many of the most interesting applications
of randomness occur in other spaces. For example, Brownian motion
is best described as a probability measure on the space $\cont([0,\infty))$
which is not even a locally compact space. Moreover, the finite-dimensional
vector space $\mathbb{R}^{d}$ is a natural space to do analysis,
and any reasonable approach to randomness should be applicable there.

Not only do we consider other spaces, but we also consider finite
Borel measures which may not necessarily be probability measures.
While, probability theory is mostly concerned with probability measures,
other applications of measure theory rely on more general Borel measures.
In particular, potential theory (which has had some recent connections
with randomness \cite{Reimann:2008vn,Diamondstone.Kjos-Hanssen:2012,Allen.Bienvenu.Slaman:2014,Miller.Rute:})
uses finite Borel measures on $\mathbb{R}^{d}$.

\subsection{Outline}

The paper is organized as follows. Section~\ref{sec:background}
contains background on computable analysis and computable measure
theory. Most of the material can be found elsewhere.

Section~\ref{sec:SR-comp} contains a review of Schnorr and Martin-Löf
randomness for \emph{computable probability} measures, while Section~\ref{sec:SR-noncomp}
introduces our definitions of Schnorr and Martin-Löf randomness for
\emph{noncomputable} measures.

While our uniform test definition given in Section~\ref{sec:SR-noncomp}
is elegant in its simplicity, it is difficult to work with. In Section~\ref{sec:tests-restricted}
we show that we may restrict our uniform test to any effectively closed
set of measures, and then in Section~\ref{sec:basic-results} we
use this fact to prove a number of basic facts about Schnorr randomness
for noncomputable measures.

Randomness for noncomputable measures allows us to state and prove
a number of variations of Van Lambalgen's Theorem. These variations,
while useful facts in their own right, help to justify that our definition
of Schnorr randomness for noncomputable measures is the correct one.
In Section~\ref{sec:SR-product-meas} we state and prove Van Lambalgen's
Theorem for noncomputable product measures. In particular, we characterize
which pairs $(x,y)$ are random for a noncomputable product measure
$\mu\otimes\nu$. In Section~\ref{sec:SR-kernels} we state and prove
Van Lambalgen's Theorem for a more general class of measures on a
product space, namely measures given by a probability measure and
a kernel $\kappa$. In this paper, we only consider continuous kernels,
but with more work we could prove a similar theorem for measurable
kernels.

In Section~\ref{sec:SR-maps}, we turn to measure preserving maps
$T\colon(\mathbb{X},\mu)\rightarrow(\mathbb{Y},\nu)$, proving a number
of results relating the randomness on the space $(\mathbb{X},\mu)$
to the space $(\mathbb{Y},\nu)$. This relates to the randomness preservation
theorems and no-randomness-from-nothing theorems which can be found
in other papers. In particular, we prove the following quite general
result: Given a continuous measure-preserving map $T\colon(\mathbb{X},\mu)\rightarrow(\mathbb{Y},\nu)$,
the following are equivalent:
\begin{enumerate}
\item $x$ is $\mu$-random relative to $T$, and $y=T(x)$.
\item $y$ is $\nu$-random relative to the pair $(T,\mu)$, and $x$ is
$\mu(\cdot\mid T=y)$ random relative to the triple $(T,\mu,y)$.
\end{enumerate}
This theorem\textemdash which is new for both Schnorr and Martin-Löf
randomness, as well as for both computable and noncomputable measures\textemdash lets
us easily pass between randomness for $(\mathbb{X},\mu)$ and randomness
for $(\mathbb{Y},\nu)$.

Despite the theorems proved so far, we still need to show that our
definition is practical when relativizing Schnorr randomness results
to noncomputable measures. There are two obstacles we must overcome.
Many results for Schnorr randomness require reasoning which is not
uniform in the measure $\mu$, but is uniform in a name for the measure.
(A common example is that every probability space can be decomposed
into regions with null boundary, which makes the space look and act
like Cantor space. However, this decomposition is only uniformly computable
in a name for the measure, not uniform computable in the measure itself.)
The second problem is that our definition of Schnorr randomness relies
on integral tests, while a more typical type of test used in Schnorr
randomness is a sequential test (commonly just called a Schnorr test).
We handle these issues in Section~\ref{sec:useful-characterization}.
Firstly, we show in Theorem~\ref{thm:SR-name} that it does not matter
if we use tests which are uniform in the measure, or uniform in the
Cauchy name for the measure. This result is similar to that of Day
and Miller \cite{Day.Miller:2013} for Martin-Löf randomness except
that our proof is different and more general. Their proof neither
works for Schnorr randomness nor noncompact spaces, while our proof
works for both. Secondly, we show how one can use Theorem~\ref{thm:SR-name}
to relativize the usual sequential test proof that Schnorr randomness
is stronger than Kurtz randomness. Thirdly, we give a sequential test
characterization of Schnorr randomness for noncomputable measures.

In Section~\ref{sec:Other-bad-defs}, we address some alternate possible
definitions of Schnorr randomness for noncomputable measures. The
first is a nonuniform approach. We show this has less desirable properties.
The second is a martingale definition of Schnorr and Fuchs. We show
this is a ``blind definition'' which does not use any computability
theoretic properties of the measure. The third is a uniform sequential
test definition of Hoyrup. We give Hoyrup's unpublished result that
all uniform sequential tests (of a certain type) are trivial\textemdash justifying
the integral test approach we take in this paper.

Last, we end with Section~\ref{sec:conclusion} which contains discussion,
open questions, and further directions. This relatively small paper
does not come close to addressing all the topics which have been investigated
regarding Martin-Löf randomness for noncomputable measures. There
is still a lot of work to do and open questions to answer.

\section{\label{sec:background}Computable analysis and computable measure
theory}

As for notation, we denote the space of infinite binary sequences
(Cantor space) as $\{0,1\}^{\mathbb{N}}$ and the finite binary strings
as $\{0,1\}^{*}$. For a string $\sigma\in\{0,1\}^{*}$ the cylinder
set of sequences extending that string is denoted $[\sigma]$. The
same conventions will be used for the space of infinite sequences
of numbers $\mathbb{N}^{\mathbb{N}}$ (Baire space).

\subsection{Computable analysis}

We assume the reader is familiar with computability theory in the
countable spaces $\mathbb{N}$, $\mathbb{Q}$, $\mathbb{N}\times\mathbb{N}$,
and $\{0,1\}^{*}$, as well as the uncountable spaces $\{0,1\}^{\mathbb{N}}$
and $\mathbb{N}^{\mathbb{N}}$\textemdash as can be found in a standard
computability theory text, e.g.~\cite{Soare:1987dp}. Also, we assume
some familiarity with computability theory on $\mathbb{R}$ (e.g.~\cite{Brattka.Hertling.Weihrauch:2008}).
In particular, a function $f\colon\mathbb{N}\rightarrow\mathbb{R}$
is computable if there is a computable map $g\colon\mathbb{N}\times\mathbb{N}\rightarrow\mathbb{Q}$
such that 
\[
\forall n\ \forall j\geq i\ |g(n,j)-g(n,i)|\leq2^{-i}
\]
and such that $f(n)=\lim_{i}g(n,i)$ for all $n$.
\begin{defn}
A \emph{computable metric space} $\mathbb{X}=(X,d,(x_{i})_{i\in\mathbb{N}})$
is a complete separable metric space $(X,d)$ along with a dense sequence
of points $x_{i}\in X$, such that $(i,j)\mapsto d(x_{i},x_{j})$
is computable. We refer to the points $x_{i}$ as \emph{basic points},
the rational open balls $B$ with basic point centers as \emph{basic open balls},
and the rational closed balls $\overline{B}$ with basic point centers
as \emph{basic closed balls}. As for notation, we will write, say,
$x\in\mathbb{X}$ and $A\subseteq\mathbb{X}$ instead of the more
pedantic $x\in X$ and $A\subseteq X$.

A \emph{Cauchy name} for $y\in\mathbb{X}$ is a function $h\in\mathbb{N}^{\mathbb{N}}$
satisfying
\[
\forall j\geq i\ d(x_{h(j)},x_{h(i)})\leq2^{-i}
\]
such that $y=\lim_{i}x_{h(i)}$. A point $y\in\mathbb{X}$ is \emph{computable}
if it has a computable Cauchy name.
\end{defn}
The spaces $\mathbb{N}$, $\{0,1\}^{\mathbb{N}}$, $\mathbb{N}^{\mathbb{N}}$,
$[0,\infty)$, and $\mathbb{R}$ are all computable metric spaces.
If $\mathbb{X}$ and $\mathbb{Y}$ are computable metric spaces then
so are $\mathbb{X}\times\mathbb{Y}$, $\mathbb{X}^{\mathbb{N}}$,
etc.
\begin{rem}
Every computable metric space is a Polish space (a complete separable
metric space). As in Polish space theory, we are concerned with the
metric on the space $\mathbb{X}$, only in so far as it generates
a certain computable topological structure on $\mathbb{X}$. Two computable
metric spaces $\mathbb{X}$ and $\mathbb{Y}$ are \emph{equivalent}
in this sense if there is a computable homeomorphism between the spaces\textemdash that
is computable maps (see below) $f\colon\mathbb{X}\rightarrow\mathbb{Y}$
and $g\colon\mathbb{Y}\rightarrow\mathbb{X}$ which are inverses of
each other. Equivalent spaces have the same effectively open sets,
the same computable points, and the same computable maps. Moreover,
our definition of Schnorr randomness will be equivalent for equivalent
spaces (as will be evident from Proposition~\ref{prop:iso-preserve-cont}).
For all intents and purposes, we consider two equivalent computable
metric spaces to be the same. Therefore, when we say, e.g., that $\mathbb{X}\times\mathbb{Y}$
is a computable metric space, we mean under any of the standard metrics
that generates the product topology (e.g.~$d_{\mathbb{X}}+d_{\mathbb{Y}}$,
$\sqrt{d_{\mathbb{X}}^{2}+d_{\mathbb{Y}}^{2}}$, or $\max\{d_{\mathbb{X}},d_{\mathbb{Y}}\}$)
and any standard choice of basic points.
\end{rem}
\begin{defn}
Let $\mathbb{X}$ and $\mathbb{Y}$ be computable metric spaces. 

\begin{itemize}
\item A set $U\subseteq\mathbb{X}$ is \emph{effectively open} ($\Sigma_{1}^{0}$)
if $U=\bigcup_{B\in A}B$ where $A$ is a computable set of basic
open balls (i.e.~fix a standard enumeration of all basic open balls,
and consider a computable set $A\subseteq\mathbb{N}$ of the indices).
\item A sequence $(U^{x})_{x\in\mathbb{X}}$ of subsets of $\mathbb{Y}$
is $\Sigma_{1}^{0}[x]$ if 
\[
U^{x}=\{y\in\mathbb{Y}:(x,y)\in U\}
\]
 for some $\Sigma_{1}^{0}$ set $U\subseteq\mathbb{X}\times\mathbb{Y}$.
A set $V\subseteq Y$ is $\Sigma_{1}^{0}[x]$ if $V=U^{x}$ for some
$x$.
\item A set $C\subseteq\mathbb{X}$ is \emph{effectively closed} ($\Pi_{1}^{0}$)
if $\mathbb{X}\smallsetminus C$ is $\Sigma_{1}^{0}$. Similarly define
$\Pi_{1}^{0}[x]$.
\item A function $f\colon A\rightarrow\mathbb{Y}$ (for $A\subseteq\mathbb{X}$)
is \emph{computable} if there is a partial computable map $F\colon{\subseteq{}}\mathbb{N}^{\mathbb{N}}\rightarrow\mathbb{N}^{\mathbb{N}}$
such that if $h$ is a Cauchy name for $x\in A$, then $h\in\operatorname{dom}F$
and $F(h)$ is a Cauchy name for $f(x)$.
\item A function $f\colon\mathbb{X}\rightarrow(-\infty,\infty]$ is \emph{lower semicomputable}
if $f(x)=\sup_{n\in\mathbb{N}}g(n,x)$ for a computable function $g\colon\mathbb{N}\times\mathbb{X}\rightarrow\mathbb{R}$.
(We will often write $g(n,x)$ as $g_{n}(x)$ and call $(g_{n})_{n\in\mathbb{N}}$
a \emph{computable sequence of computable functions}.) Similarly,
define \emph{upper semicomputable} functions $f\colon\mathbb{X}\rightarrow[-\infty,\infty)$.
\end{itemize}
\end{defn}
The basic continuous operations of analysis\textemdash $\min,\max,+,\cdot$,
etc.\textemdash are all computable and the composition of computable
functions is computable. Also a function $f\colon\mathbb{X}\rightarrow\mathbb{R}$
is computable if and only if it is both upper and lower semicomputable.

\subsection{Computable measure theory}

We assume the reader has some basic familiarity with analysis, including
measure theory and probability, e.g.~\cite{Tao:2011kl,Billingsley:1995}.
Much of the material in this section is known and can be found in
Bienvenu, Gács, Hoyrup, Rojas, Shen \cite[\S 2,5,7]{Bienvenu.Gacs.Hoyrup.ea:2011}.

Measure theory is an abstract theory involving set theory and $\sigma$-algebras,
much of which is not obviously compatible with computability theory.
Instead of working with an abstract measure space $(\mathbb{X},\mathcal{F},\mu)$,
it is sufficient for much of work-a-day analysis and probability theory
to restrict our attention to Borel measures on a Polish space. We
always will assume $\mathbb{X}$ is a Polish space and $\mathcal{F}$
is $\mathcal{B}(\mathbb{X})$, the Borel $\sigma$-algebra of $\mathbb{X}$.
Therefore, we write $(\mathbb{X},\mu)$ instead of $(\mathbb{X},\mathcal{B}(\mathbb{X}),\mu)$.
Moreover, we will also assume $\mu$ is \emph{finite}. That is, $\mu(\mathbb{X})<\infty$.
We denote the space of finite Borel measures on $\mathbb{X}$ by $\finitemeas(\mathbb{X})$
and the space of probability measures as $\probmeas(\mathbb{X})$.

To understand the computability of finite Borel measures, it is helpful
to think about it from a functional analysis point of view. Let $\cont_{b}(\mathbb{X})$
denote the space of bounded continuous functions $f\colon\mathbb{X}\rightarrow\mathbb{R}$.
This is a normed vector space under the sup norm $\|f\|=\sup_{x\in\mathbb{X}}|f(x)|$.
Now, every finite Borel measure $\mu$ gives rise to a positive bounded
linear operator $T\colon\cont_{b}(\mathbb{X})\to\mathbb{R}$ given
by $f\mapsto\int_{{}}fd\mu$. (A linear operator $T$ is \emph{bounded}
if $\|Tf\|\leq\|f\|$ and \emph{positive} if $Tf\geq0$ whenever
$f\geq0$.) Conversely, if $T\colon\cont_{b}(\mathbb{X})\to\mathbb{R}$
is a positive bounded linear operator, then it is given by $f\mapsto\int_{{}}fd\mu$
for some measure $\mu$.

However, it is sufficient to only consider the operator on a countable
family of functions. Consider the following enumerable sets of computable
functions. 
\begin{defn}
\label{def:basic-bump}For a computable metric space $\mathbb{X}$,
define $\mathcal{F}_{0}(\mathbb{X})$ to be the set of \emph{basic bump functions}
$f_{x,r,s}\colon\mathbb{X}\rightarrow\mathbb{R}$ (where $x$ is an
basic point, $r,s\in\mathbb{Q}^{+}$, and $r<s$) given by
\[
f_{x,r,s}(y)=\begin{cases}
1 & d_{\mathbb{X}}(x,y)\leq r\\
\frac{s-d_{\mathbb{X}}(x,y)}{s-r} & r<d_{\mathbb{X}}(x,y)\leq s\\
0 & s<d_{\mathbb{X}}(x,y)
\end{cases}.
\]
Define $\mathcal{E}_{b}(\mathbb{X})$ to be $\mathcal{F}_{0}(\mathbb{X})\cup\{\mathbf{1}\}$
closed under $\max$, $\min$, and rational linear combinations. Call
these \emph{basic functions}. Fix a natural enumeration of $\mathcal{E}_{b}(\mathbb{X})$.
Every function in $\mathcal{E}_{b}(\mathbb{X})$ is bounded and its
bounds are computable from the index for the function.
\end{defn}
\begin{defn}
\label{def:Measure metric}If $\mathbb{X}$ is a computable metric
space, then for the spaces $\probmeas(\mathbb{X})$ and $\finitemeas(\mathbb{X})$
we will adopt the following computable metric space structure (see
Kallenberg \cite[15.7.7]{Kallenberg:1983}):

\begin{itemize}
\item The metric is 
\[
d(\mu,\nu)=\sum_{i}2^{-i}\left(1-\exp\left|\int_{{}}f_{i}\,d\mu-\int_{{}}f_{i}\,d\nu\right|\right)
\]
where $(f_{i})_{i\in\mathbb{N}}$ is the enumeration of the basic
functions $\mathcal{E}_{b}(\mathbb{X})$.
\item The basic points are measures of the form $q_{1}\delta_{x_{1}}+\ldots+q_{n}\delta_{x_{n}}$
where $q_{j}\in\mathbb{Q}^{+}$, $x_{j}$ is an basic point of $\mathbb{X}$
(resp.\ $\mathbb{Y}$), and $\delta_{x_{j}}$ is the \emph{Dirac measure}
(the probability measure with unit mass concentrated on $x_{j}$).
For $\probmeas(\mathbb{X})$, we also require $q_{1}+\ldots+q_{n}=1$.
\end{itemize}
\end{defn}
The topologies on these metric spaces are the ones associated with
weak convergence of measures. For $\mu_{n},\nu\in\finitemeas(\mathbb{X})$
or $\probmeas(\mathbb{X})$, $\mu_{n}\rightarrow\nu$ \emph{weakly}
iff $\int_{{}}f\,d\mu_{n}\rightarrow\int_{{}}f\,d\nu$ for all $f\in\cont_{b}(\mathbb{X})$.
(See Kallenberg \cite[\S 15.7]{Kallenberg:1983}.)

Despite the above foundational matters, the only properties we need
of $\probmeas(\mathbb{X})$ and $\finitemeas(\mathbb{X})$ are that
they are computable metric spaces and that they satisfy the following
lemmas.
\begin{lem}
\label{lem:char-comp-meas}Let $\mathbb{A}$ and $\mathbb{X}$ be
computable metric spaces. Any map $a\mapsto\mu_{a}$ of type $\mathbb{A}\rightarrow\finitemeas(\mathbb{X})$
(resp.~$\mathbb{A}\rightarrow\probmeas(\mathbb{X})$) is computable
if and only if $a,i\mapsto\int_{{}}f_{i}(x)\,d\mu_{a}(x)$ is computable
(where $(f_{i})_{i\in\mathbb{N}}$ are the basic functions used in
Definition~\ref{def:Measure metric}).
\end{lem}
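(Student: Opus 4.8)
The plan is to reduce the statement to the explicit description of the metric on $\finitemeas(\mathbb{X})$ (resp.\ $\probmeas(\mathbb{X})$) from Definition~\ref{def:Measure metric}, using the definition of a computable map into a computable metric space: it suffices to produce, uniformly in a Cauchy name for $a$ and in $n$, (the index of) a basic point within distance $2^{-(n+1)}$ of $\mu_a$, since such a sequence is automatically a Cauchy name. The two features of the metric $d$ that I would isolate are: first, the pointwise lower bound
\[
d(\mu,\nu)\ \geq\ 2^{-i}\Bigl(1-\exp\bigl(-\bigl|\textstyle\int f_i\,d\mu-\int f_i\,d\nu\bigr|\bigr)\Bigr)\qquad\text{for each }i,
\]
which turns closeness in $d$ into closeness of the individual integrals $\int f_i$; and second, the tail bound $\sum_{i>N}2^{-i}(\cdots)\leq 2^{-N}$, which says that $d(\mu,\nu)$ is determined by its first $N$ terms up to an error $2^{-N}$. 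I would also record once and for all that for a basic point $\nu=q_1\delta_{x_1}+\dots+q_k\delta_{x_k}$ one has $\int f_i\,d\nu=\sum_j q_j\,f_i(x_j)$, and since each $f_i\in\mathcal{E}_b(\mathbb{X})$ is a computable function (built from the basic bump functions of Definition~\ref{def:basic-bump} by $\max$, $\min$, and rational combinations) and each $x_j$ is a basic point, this is a computable real, uniformly in $i$ and the index of $\nu$.

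For the forward implication, assume $a\mapsto\mu_a$ is computable, so a name for $a$ yields a Cauchy name $(\nu_k)_k$ of basic points with $d(\mu_a,\nu_k)\leq 2^{-k}$. Plugging into the displayed bound, for every $k>i$ we have $2^i d(\mu_a,\nu_k)\leq 2^{\,i-k}<1$ and hence $\bigl|\int f_i\,d\mu_a-\int f_i\,d\nu_k\bigr|\leq-\ln\bigl(1-2^{\,i-k}\bigr)$, whose right-hand side tends to $0$ effectively as $k\to\infty$. Thus $\int f_i\,d\mu_a=\lim_k\int f_i\,d\nu_k$ with an explicit modulus of convergence, and since the approximants $\int f_i\,d\nu_k$ are computable reals uniformly in $i$, $k$, and the name for $a$, so is $\int f_i\,d\mu_a$.

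For the converse, assume $(a,i)\mapsto\int f_i\,d\mu_a$ is computable. Given $n$, I would pick an integer $N$ and a rational $\delta>0$ (depending only on $n$) small enough that any basic point $\nu$ with $\bigl|\int f_i\,d\nu-\int f_i\,d\mu_a\bigr|<\delta$ for all $i\leq N$ automatically satisfies $d(\mu_a,\nu)<2^{-(n+1)}$; this is arranged using the tail bound to discard the terms $i>N$ and the elementary estimate on $1-e^{-\delta}$ for the remaining ones. Then I would search through all basic points (through their indices) for one passing this finite test. Each condition $\bigl|\int f_i\,d\nu-\int f_i\,d\mu_a\bigr|<\delta$ is a strict inequality between computable reals and hence is semidecidable, as is a finite conjunction of them; moreover a witness exists, because the basic points are dense in the metric space and, by the displayed bound, a basic point $d$-close to $\mu_a$ has all of $\int f_i\,d\nu$ ($i\leq N$) close to $\int f_i\,d\mu_a$. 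Declaring $\nu_n$ to be the first witness found, we get $d(\mu_a,\nu_n)<2^{-(n+1)}$, so for $m\geq n$ one has $d(\nu_n,\nu_m)\leq d(\nu_n,\mu_a)+d(\mu_a,\nu_m)<2^{-(n+1)}+2^{-(n+1)}=2^{-n}$; hence $(\nu_n)_n$ is a genuine Cauchy name for $\mu_a$, produced uniformly in a name for $a$, so $a\mapsto\mu_a$ is computable. The probability-measure case is identical, noting that the basic points of $\probmeas(\mathbb{X})$ are dense there and that the total mass $\mu_a(\mathbb{X})=\int\mathbf{1}\,d\mu_a$ is covered since $\mathbf{1}$ is among the $f_i$.

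The step I expect to need the most care is the search in the converse direction: one must check both that the semidecidable test on basic points actually has a witness --- which is exactly where density of the basic points is combined with the quantitative metric bound --- and that the outputs are assembled with the sharper rate $2^{-(n+1)}$ so that they literally satisfy the definition of a Cauchy name. Everything else is routine manipulation of the explicit metric.
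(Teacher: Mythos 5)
Your proposal is correct and is essentially the paper's own argument: the paper's proof is a one-liner that appeals to Definition~\ref{def:Measure metric} together with the uniform computability of $\int f_i\,d\nu$ for basic measures $\nu$, and your write-up simply supplies the details of that reduction (the term-by-term lower bound, the tail estimate, and the semidecidable search for a close basic point). The only thing worth noting is that you silently corrected the paper's formula for $d$ to $1-\exp(-|\cdot|)$, which is clearly the intended (Kallenberg) metric, since the version as printed would be nonpositive.
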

\begin{proof}
This follows from Definition~\ref{def:Measure metric} and the fact
that $\int_{{}}f_{i}(x)\,d\nu(x)$ is uniformly computable in $\nu$
and $i$ for basic measures $\nu=q_{1}\delta_{x_{1}}+\ldots+q_{n}\delta_{x_{n}}$.
\end{proof}
This next lemma can be viewed as a computable version of the Portmanteau
Theorem in probability theory.
\begin{lem}[Computable Portmanteau Theorem]
\label{lem:Portmanteau}Let $\mathbb{X}$ be a computable metric
space.

\begin{enumerate}
\item \label{enu:lsc-integral}The map $\mu\mapsto\int_{{}}g(\mu,x)\,d\mu(x)$
is lower (resp.\ upper) semicomputable for any lower semicomputable
function $g\colon\finitemeas(\mathbb{X})\times\mathbb{X}\rightarrow[0,\infty]$
(resp.~upper semicomputable function $g\colon\finitemeas(\mathbb{X})\times\mathbb{X}\rightarrow[-\infty,0]$.)
\item \label{enu:open-integral}The map $\mu\mapsto\mu(U^{\mu})$ is lower
semicomputable for effectively open sets $U\subseteq\finitemeas(\mathbb{X})\times\mathbb{X}$.
\item \label{enu:closed-integral}The map $\mu\mapsto\mu(C^{\mu})$ is upper
semicomputable for effectively closed sets $C\subseteq\finitemeas(\mathbb{X})\times\mathbb{X}$.
\item \label{enu:bounded-cont-integral}The map $\mu\mapsto\int_{{}}f(\mu,x)\,d\mu$
is computable for a bounded computable function $f\colon\finitemeas(\mathbb{X})\times\mathbb{X}\rightarrow\mathbb{R}$
with computable bounds $\mu\mapsto a(\mu),b(\mu)$ such that for all
$x\in\mathbb{X}$, $a(\mu)\leq f(\mu,x)\leq b(\mu)$.
\end{enumerate}
\noindent These results also hold for $\probmeas(\mathbb{X})$ in
place of $\finitemeas(\mathbb{X})$. 
\end{lem}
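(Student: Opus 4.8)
The plan is to prove the four parts in a chain, starting from part~\eqref{enu:lsc-integral}, since the other three will follow quickly from it. For part~\eqref{enu:lsc-integral}, write the lower semicomputable $g\colon\finitemeas(\mathbb{X})\times\mathbb{X}\to[0,\infty]$ as $g=\sup_n g_n$ for a computable sequence of computable functions $g_n\colon\finitemeas(\mathbb{X})\times\mathbb{X}\to\mathbb{R}$; without loss of generality the $g_n$ are nonnegative and nondecreasing in $n$ (replace $g_n$ by $\max\{0,g_0,\dots,g_n\}$). By the Monotone Convergence Theorem, $\int g(\mu,x)\,d\mu(x)=\sup_n\int g_n(\mu,x)\,d\mu(x)$, so it suffices to show $\mu\mapsto\int g_n(\mu,x)\,d\mu(x)$ is computable uniformly in $n$. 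This is the crux: a fixed bounded computable $h\colon\finitemeas(\mathbb{X})\times\mathbb{X}\to\mathbb{R}$ need not be uniformly approximable by basic functions in a way that interacts well with $\mu$, because the "bump functions" $f_i$ are defined on $\mathbb{X}$ only, not on the product. I would handle this by first reducing to the case where $h$ depends only on $x$: cover $\finitemeas(\mathbb{X})$ by basic open balls, use a computable partition-of-unity/locality argument on the $\finitemeas(\mathbb{X})$ factor, and on each piece approximate $h(\mu,\cdot)$ by elements of $\mathcal{E}_b(\mathbb{X})$ with coefficients computable in $\mu$; then invoke Lemma~\ref{lem:char-comp-meas} to integrate. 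The upper semicomputable case (with $g\le 0$) follows by applying the lower case to $-g$.

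Given part~\eqref{enu:lsc-integral}, part~\eqref{enu:open-integral} is immediate: for an effectively open $U\subseteq\finitemeas(\mathbb{X})\times\mathbb{X}$, the indicator $\mathbf{1}_U$ is lower semicomputable as a function of $(\mu,x)$ (it is the supremum of basic bump functions supported inside $U$), and $\mu(U^\mu)=\int\mathbf{1}_U(\mu,x)\,d\mu(x)$, which is then lower semicomputable by part~\eqref{enu:lsc-integral}. Part~\eqref{enu:closed-integral} follows from part~\eqref{enu:open-integral} by complementation: if $C$ is effectively closed then $(\finitemeas(\mathbb{X})\times\mathbb{X})\setminus C$ is effectively open, so $\mu\mapsto\mu(\mathbb{X})-\mu(C^\mu)=\mu\bigl(((\finitemeas(\mathbb{X})\times\mathbb{X})\setminus C)^\mu\bigr)$ is lower semicomputable; since $\mu\mapsto\mu(\mathbb{X})=\int\mathbf{1}\,d\mu$ is computable (Lemma~\ref{lem:char-comp-meas} with the basic function $\mathbf{1}$), $\mu\mapsto\mu(C^\mu)$ is upper semicomputable.

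For part~\eqref{enu:bounded-cont-integral}, a bounded computable $f$ is both lower and upper semicomputable, so part~\eqref{enu:lsc-integral} gives that $\mu\mapsto\int f(\mu,x)\,d\mu(x)$ is both lower and upper semicomputable once we account for the sign: write $f=f-a(\mu)+a(\mu)$, so $\int f\,d\mu=\int(f-a(\mu))\,d\mu+a(\mu)\mu(\mathbb{X})$, where $f-a(\mu)\ge0$ is lower semicomputable giving lower semicomputability of the integral, and symmetrically $b(\mu)-f\ge 0$ gives upper semicomputability; combined with computability of $\mu\mapsto a(\mu),b(\mu),\mu(\mathbb{X})$, the map $\mu\mapsto\int f\,d\mu$ is computable. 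Finally, all four statements transfer verbatim to $\probmeas(\mathbb{X})$ since it carries the same computable metric structure and Lemma~\ref{lem:char-comp-meas} is stated for it as well. The main obstacle, as noted, is the reduction in part~\eqref{enu:lsc-integral} showing a bounded computable function on the product $\finitemeas(\mathbb{X})\times\mathbb{X}$ can be integrated against $\mu$ with the result computable in $\mu$ — this is where the $\finitemeas(\mathbb{X})$-locality argument and a careful uniform approximation by basic functions are needed, and it is the only place real work happens.
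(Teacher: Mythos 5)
Your proposal is correct and follows essentially the same route as the paper, whose entire proof of this lemma is the single line ``approximate the integrand or set with basic functions from below ((1) and (2)), from above (3), or from both below and above (4), then apply Lemma~\ref{lem:char-comp-meas}''; you in fact supply more detail than the paper does, and you correctly isolate the only place where real work happens, namely showing that a bounded computable function of the \emph{pair} $(\mu,x)$ can be integrated against $\mu$ with the result computable in $\mu$, which does require the locality/partition-of-unity argument you sketch (together with exhausting $\mu$ up to measure $\|\mu\|-\delta$ by finitely many basic balls, using that $\mu\mapsto\|\mu\|$ is computable). One small patch: after replacing $g_n$ by $\max\{0,g_0,\dots,g_n\}$ you should also truncate above, e.g.\ take $g_n:=\min\bigl\{n,\max\{0,g_0,\dots,g_n\}\bigr\}$, since on a noncompact $\mathbb{X}$ the original $g_n$ need not be bounded, and boundedness is exactly what your crux step needs in order to make each $\int g_n(\mu,x)\,d\mu(x)$ computable (uniformly in $n$).
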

\begin{proof}
Approximate the integrand or set with basic functions from below ((\ref{enu:lsc-integral})
and (\ref{enu:open-integral})), from above (\ref{enu:closed-integral}),
or from both below and above (\ref{enu:bounded-cont-integral}). Then
apply Lemma~\ref{lem:char-comp-meas}.
\end{proof}
Using the previous two lemmas, one can show that the metric we put
on $\probmeas(\mathbb{X})$ is equivalent to more well-known computable
metrics such as the Wasserstein metric or the Levy-Prokhorov metric.
(See Hoyrup and Rojas \cite[\S 4]{Hoyrup.Rojas:2009}.)

Define the \emph{norm} of a measure $\mu$ as $\|\mu\|:=\int_{{}}1\,d\mu$.
The Computable Portmanteau Theorem tells us that the map $\mu\mapsto\|\mu\|$
is computable.

\section{\label{sec:SR-comp}Schnorr randomness for computable measures}

Martin-Löf randomness \cite{Martin-Lof:1966mz} and Schnorr randomness
\cite{Schnorr:1969ly} were both originally characterized via ``sequential
tests.'' The sequential tests characterizing Martin-Löf randomness
(originally referred to as just ``sequential tests'') are now usually
referred to as Martin-Löf tests. The ones for Schnorr randomness (originally
referred to as ``total sequential tests'') are now usually referred
to as Schnorr tests. We will adopt the more explicit terminology ``Martin-Löf/Schnorr
sequential test.''\footnote{The term ``Martin-Löf test'' is ambiguous, since it is also used
to refer to a sequential test\textemdash even if it is used to characterize
another notion of randomness, e.g.~``bounded Martin-Löf test''
characterizing computable randomness \cite[Def.~7.1.21(ii)]{Downey.Hirschfeldt:2010}.
Alternately ``Martin-Löf test,'' or just ``$\mu$-test,'' is sometimes
used to refer to an author's preferred type of test for Martin-Löf
randomness, even if it is not a sequential test. By using the terms
``Martin-Löf sequential test'' and ``Schnorr sequential test,''
we avoid these ambiguities.}
\begin{defn}
\label{def:Schnorr-comp-meas}Assume $\mu$ is a computable probability
measure on $\mathbb{X}$. A \emph{Schnorr sequential $\mu$-test}
is a sequence $(U_{n})_{n\in\mathbb{N}}$ of open sets where $U_{n}$
is $\Sigma_{1}^{0}[n]$ such that

\begin{enumerate}
\item $\mu(U_{n})\leq2^{-n}$ for all $n$ and
\item $\mu(U_{n})$ is uniformly computable in $n$.
\end{enumerate}
\noindent A point $x\in\mathbb{X}$ is \emph{Schnorr $\mu$-random}
if $x\notin\bigcap_{n}U_{n}$ for every Schnorr sequential $\mu$-test
$(U_{n})_{n\in\mathbb{N}}$.
\end{defn}
Define \emph{Martin-Löf sequential $\mu$-test} and \emph{Martin-Löf $\mu$-randomness}
analogously, except omit condition (2).

Levin \cite[\S3]{Levin:1976uq} introduced a different style of test,
now called an integral test. (Other names include average-bounded
test and expectation-bounded test.) Miyabe \cite{Miyabe:2013uq} gave
the following integral test characterization for Schnorr randomness.
\begin{defn}[{Miyabe \cite[Def.~3.4]{Miyabe:2013uq}}]
\label{def:Schnorr-int-test}Assume $\mu$ is a computable probability
measure on $\mathbb{X}$. A \emph{Schnorr integral $\mu$-test} is
a lower semicomputable function $t\colon\mathbb{X}\rightarrow[0,\infty]$
such that 

\begin{enumerate}
\item $\int_{{}}t\,d\mu<\infty$ and
\item $\int_{{}}t\,d\mu$ is computable.
\end{enumerate}
\end{defn}
\begin{prop}[{Miyabe \cite[Def.~3.5]{Miyabe:2013uq}}]
\label{prop:Miyabe-int-test-Sch}A point $x\in\mathbb{X}$ is Schnorr
$\mu$-random if and only if $t(x)<\infty$ for every Schnorr integral
$\mu$-test $t$.
\end{prop}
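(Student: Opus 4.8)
The plan is to prove the two directions separately, passing through the sequential test characterization in Definition~\ref{def:Schnorr-comp-meas}.

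\textbf{From sequential tests to integral tests.} Suppose $x$ is not Schnorr $\mu$-random, so there is a Schnorr sequential $\mu$-test $(U_n)_{n\in\mathbb{N}}$ with $x\in\bigcap_n U_n$. First I would replace the indicator functions of the $U_n$ by lower semicomputable functions: since each $U_n$ is $\Sigma^0_1[n]$, it is a union $\bigcup_m B^n_m$ of basic open balls, and I can write $\mathbf{1}_{U_n}$ as the supremum of an increasing computable sequence of basic functions $\varphi^n_m \le \mathbf{1}_{U_n}$. The natural candidate test is $t = \sum_{n} \mathbf{1}_{U_n}$ (or, to be safe with condition (1) on the $\mu(U_n)\le 2^{-n}$, something like $t=\sum_n 2^{n/2}\mathbf{1}_{U_n}$ so that $t(x)=\infty$ on $\bigcap_n U_n$ while $\int t\,d\mu \le \sum_n 2^{n/2}\cdot 2^{-n} < \infty$). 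This $t$ is lower semicomputable as a sum of lower semicomputable functions. The point that needs care is condition (2): $\int t\,d\mu$ must be \emph{computable}, not merely finite. For this I would use that $\mu(U_n)$ is uniformly computable in $n$ (part of being a Schnorr sequential test) together with Lemma~\ref{lem:char-comp-meas} / Lemma~\ref{lem:Portmanteau}\eqref{enu:lsc-integral}: the partial sums $\sum_{n\le N} 2^{n/2}\mu(U_n)$ are uniformly computable, and the tail is bounded by $\sum_{n>N}2^{n/2}2^{-n} = \sum_{n>N}2^{-n/2}$, which goes to $0$ effectively, so the full sum $\int t\,d\mu$ is computable. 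Finally $t(x) \ge \sum_{n\le N} 2^{n/2}\cdot 1$ for every $N$ since $x\in U_n$ for all $n$, so $t(x)=\infty$, and $t$ is the desired Schnorr integral $\mu$-test witnessing that $x$ fails.

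\textbf{From integral tests to sequential tests.} Conversely, suppose $t$ is a Schnorr integral $\mu$-test with $t(x)=\infty$. Set $c=\int t\,d\mu$, a computable real. Define $U_n = \{y : t(y) > 2^n\cdot c\}$ (taking $c>0$ without loss of generality; if $c=0$ then $t=0$ $\mu$-a.e.\ and one argues separately, or simply rescales). Each $U_n$ is $\Sigma^0_1[n]$ because $t$ is lower semicomputable. By Markov's inequality $\mu(U_n) \le c/(2^n c) = 2^{-n}$, giving condition (1). The subtle point — and I expect this to be the main obstacle — is condition (2): we need $\mu(U_n)$ to be \emph{computable}, uniformly in $n$. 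This is where the hypothesis that $\int t\,d\mu$ is computable (not just finite) is essential. The standard trick is that, for all but countably many thresholds $a$, $\mu\{t > a\} = \mu\{t \ge a\}$, and one can compute $\mu\{t > a\}$ from above and from below: from below using lower semicomputability of $t$ via Lemma~\ref{lem:Portmanteau}\eqref{enu:open-integral}, and from above by exploiting $\int t\,d\mu = \int_0^\infty \mu\{t>a\}\,da$ being computable — if $\mu\{t>a\}$ were larger than claimed on a set of positive measure of thresholds, the integral would exceed its computed value. More carefully, I would choose the thresholds $2^n c$ to be replaced, if necessary, by nearby computable reals $a_n$ at which the layer-cake integrand is continuous and at which $\mu\{t>a_n\}$ is computable; a standard lemma (as in Miyabe's work) guarantees a dense computable set of such "good" thresholds, and one re-indexes so that $a_n$ still forces $\mu(U_n)\le 2^{-n}$ while $x\in\bigcap_n U_n$ (which holds since $t(x)=\infty$ exceeds every threshold). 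Then $(U_n)$ is a Schnorr sequential $\mu$-test that $x$ fails.

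\textbf{Remarks on structure.} The genuinely new content over Martin-L\"of randomness is entirely in the computability of $\mu(U_n)$ and of $\int t\,d\mu$; the inclusion and Markov-inequality arguments are verbatim the classical ones. I would state the "good threshold" fact as a small lemma if it is not already available, noting it follows from the fact that $a\mapsto\mu\{t>a\}$ is nondecreasing hence has at most countably many discontinuities, combined with the computability of $\int t\,d\mu$ to effectively locate continuity points. Since the paper attributes this proposition to Miyabe, I would cite \cite{Miyabe:2013uq} for the details and present here only the sketch above.
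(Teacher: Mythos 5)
Your proposal is correct, and it is essentially the standard argument: the paper itself does not prove this proposition but defers to Miyabe \cite{Miyabe:2013uq}, and your two directions coincide with what the paper later carries out in the uniform setting --- the sum $\sum_n \mathbf{1}_{U_n}$ in Proposition~\ref{prop:seq-test-easy} for one direction, and the threshold-adjustment claim in the proof of Theorem~\ref{thm:SR-seq-test} (computing a level $c(n)\approx 2^{n+1}$ at which $\mu\{t>c(n)\}$ is computable, using an $L^1$ lower approximation of $t$ and the computability of $\int t\,d\mu$) for the other. Your identification of the computability of $\mu(U_n)$ at good thresholds as the only genuinely new content over the Martin-L\"of case is exactly right.
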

Define \emph{Martin-Löf integral $\mu$-test} analogously, except
omit condition (2). Levin \cite{Levin:1976uq} showed that \emph{Martin-Löf integral $\mu$-tests}
characterize Martin-Löf $\mu$-randomness.
\begin{defn}
Assume $\mu$ is a computable probability measure on $\mathbb{X}$.
A \emph{Kurtz $\mu$-test} is an effectively closed set $P\subseteq\mathbb{X}$
such that $\mu(P)=0$. A point $x\in\mathbb{X}$ is \emph{Kurtz $\mu$-random}
if $x\notin P$ for all Kurtz tests $P$.
\end{defn}
It is well known that Martin-Löf $\mu$-random implies Schnorr $\mu$-random
implies Kurtz $\mu$-random and that for many (but not all) measures
$\mu$ these implications are strict. For more background on randomness,
the standard books are \cite{Downey.Hirschfeldt:2010,Nies:2009,Li.Vitanyi:2008}.
However, these books take place in the setting of computable measures
on Cantor space. For a more general setting, closer to our own, we
recommend the comprehensive paper by Bienvenu, Gács, Hoyrup, Rojas,
and Shen \cite{Bienvenu.Gacs.Hoyrup.ea:2011} and the lecture notes
by Gács \cite{Gacs:aa}.

\section{\label{sec:SR-noncomp}Schnorr randomness for noncomputable measures}

Levin \cite{Levin:1976uq} extended Martin-Löf randomness (on Cantor
space) to noncomputable probability measures by using a uniform test\textemdash that
is a single test which combines tests for every measure. However,
the uniform tests we use are a modification due to Gács \cite{Gacs:2005}.
Also, Gács \cite{Gacs:2005} and Hoyrup and Rojas \cite{Hoyrup.Rojas:2009}
generalized uniform tests to all computable metric spaces. (See also
Bienvenu, Gács, Hoyrup, Rojas, and Shen \cite{Bienvenu.Gacs.Hoyrup.ea:2011}.)
Moreover, uniform tests can also be used to define Martin-Löf randomness
relative to noncomputable oracles.

Our definition of Schnorr randomness for noncomputable measures and
noncomputable oracles follows this approach. (See Theorems~\ref{thm:SR-over-K},
\ref{thm:SR-name}, and \ref{thm:SR-seq-test} for other useful characterizations.)
\begin{defn}
\label{def:SR-noncomp-meas}Given computable metric spaces $\mathbb{A}$
and $\mathbb{X}$, a \emph{uniform Schnorr integral test} is a lower
semicomputable function $t\colon\mathbb{A}\times\finitemeas(\mathbb{X})\times\mathbb{X}\rightarrow[0,\infty]$
such that

\begin{enumerate}
\item $\int_{{}}t(a,\mu,x)\,d\mu(x)<\infty$ for all $\mu\in\finitemeas(\mathbb{X})$
and $a\in\mathbb{A}$ and 
\item the map $a,\mu\mapsto\int_{{}}t(a,\mu,x)\,d\mu(x)$ is a computable
map of type $\mathbb{A}\times\finitemeas(\mathbb{X})\to\mathbb{R}$.
\end{enumerate}
\noindent Given $\mu\in\finitemeas(\mathbb{X})$ and $a\in\mathbb{A}$,
a point $x\in\mathbb{X}$ is \emph{uniformly Schnorr $\mu$-random relative to the oracle $a$}
($x\in\SR{\mu}a$) if $t(a,\mu,x)<\infty$ for every Schnorr integral
test $t$. (If the oracle $a$ is computable, we just write $\SR{\mu}{}$.)
\end{defn}
The concepts of \emph{uniform Martin-Löf integral test} and \emph{Martin-Löf $\mu$-randomness relative to the oracle $a$}
($\MLR{\mu}a$) are defined analogously, except omit condition (2).

This definition of a uniform Kurtz $\mu$-randomness is also new.
\begin{defn}
Say that a \emph{uniform Kurtz test} is an effectively closed set
$P\subseteq\mathbb{A}\times\finitemeas(\mathbb{X})\times\mathbb{X}$
such that for all $a\in\mathbb{A}$ and $\mu\in\finitemeas(\mathbb{X})$,
the set $P_{\mu}^{a}:=\{x\in\mathbb{X}:(a,\mu,x)\in P\}$ has $\mu$-measure
0. Say that $x_{0}\in\mathbb{X}$ is \emph{uniformly Kurtz $\mu_{0}$-random relative to the oracle $a_{0}$}
($x_{0}\in\KR{\mu_{0}}{a_{0}}$) if $x_{0}\notin P_{\mu_{0}}^{a_{0}}$
for all uniform Kurtz tests $P$.
\end{defn}
\begin{rem}
We say $x$ is ``\emph{uniformly} Schnorr $\mu$-random relative
to the oracle $a$'' to emphasize that we are using a test which
is uniform in both the measure $\mu$ and the oracle $a$. This terminology
is adapted from Miyabe and Rute \cite{Miyabe.Rute:2013} where they
discuss Schnorr randomness uniformly relative to an oracle (previously
studied by Franklin and Stephan \cite{Franklin.Stephan:2010} under
the name truth-table Schnorr randomness). (See Definition~\ref{def:sch-unif-rel}.)
There is also a non-uniform version of Schnorr randomness for noncomputable
measures which we briefly discuss in Subsection~\ref{subsec:naive-def},
but we believe the uniform definition is the correct one. For Martin-Löf
randomness, there is no distinction between the uniform and nonuniform
definitions.
\end{rem}
\begin{notation}
Rather then writing $t(a,\mu,x)$, we will write the more compact
$t_{\mu}^{a}(x)$. That is, the superscript is the oracle, the subscript
is the measure, and the argument is the point being tested. The same
goes for the notation $x\in\SR{\mu}a$. To match our new notation
$t_{\mu}^{a}(x)$, we will write (and think about) a uniform Schnorr
integral test as a computable family of tests $\{t_{\mu}^{a}\}_{a\in\mathbb{A},\mu\in\finitemeas(\mathbb{X})}$.

Another convention is that when we write multiple oracles separated
by commas, e.g. $x\in\mathsf{SR}_{\mu}^{a,b}$, we mean that $x$
is Schnorr $\mu$-random uniformly relative to the pair $(a,b)\in\mathbb{A}\times\mathbb{B}$.
(This will be justified by Proposition~\ref{prop:reducible-oracles}.)
Both these conventions will help when we use measures as oracles later.
\end{notation}
We give a few basic results here, however, most must wait until we
have developed some tools.
\begin{prop}
\label{prop:MLR-to-SR}If $x\in\MLR{\mu}a$, then $x\in\SR{\mu}a$.
\end{prop}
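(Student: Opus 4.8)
The plan is to show that every uniform Schnorr integral test is (after a trivial modification) a uniform Martin-L\"of integral test, so that failing some Martin-L\"of test is a weaker condition than failing some Schnorr test. Concretely, suppose $t$ is a uniform Schnorr integral test on $\mathbb{A}\times\finitemeas(\mathbb{X})\times\mathbb{X}$. Then $t$ is lower semicomputable and $\int t(a,\mu,x)\,d\mu(x)<\infty$ for every $a$ and $\mu$; that is precisely the definition of a uniform Martin-L\"of integral test (condition (2), computability of the integral, is simply dropped). Hence if $x\notin\SR{\mu}a$, witnessed by some Schnorr integral test $t$ with $t(a,\mu,x)=\infty$, the same $t$ witnesses $x\notin\MLR{\mu}a$. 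Taking contrapositives gives $x\in\MLR{\mu}a\implies x\in\SR{\mu}a$.

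The only thing to check is that this containment of test classes makes sense with the quantifiers as stated: $x\in\MLR{\mu}a$ means $t'(a,\mu,x)<\infty$ for \emph{every} uniform Martin-L\"of integral test $t'$, in particular for every uniform Schnorr integral test, and $x\in\SR{\mu}a$ means $t(a,\mu,x)<\infty$ for every uniform Schnorr integral test. Since the class of uniform Schnorr integral tests is a subclass of the uniform Martin-L\"of integral tests, the universal statement over the larger class implies the one over the smaller class. First I would state this subclass relationship explicitly (it is immediate from Definition~\ref{def:SR-noncomp-meas} and the remark following it), then conclude.

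There is essentially no obstacle here: this is the noncomputable-measure analogue of the standard fact that Martin-L\"of randomness implies Schnorr randomness, and the proof is the same one-line observation that dropping a constraint on tests enlarges the test class. The only mild subtlety worth a sentence is that the oracle $a$ and the measure $\mu$ are carried along untouched, so no uniformity is lost in passing from the Schnorr test to the Martin-L\"of test; both conditions (1) of the two definitions are literally the same.
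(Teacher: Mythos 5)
Your proposal is correct and is exactly the paper's argument: the paper's entire proof is the one-line observation that a uniform Schnorr integral test is a uniform Martin-L\"of integral test, which is the subclass relationship you spell out. No further commentary is needed.
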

\begin{proof}
A uniform Schnorr integral test is a uniform Martin-Löf integral test.
\end{proof}
We delay showing that $\SR{\mu}a\subseteq\KR{\mu}a$ (Proposition~\ref{prop:SR-implies-KR})
until we have more tools.

The one special measure in $\finitemeas(\mathbb{X})$ is the zero
measure $\mu=0$ such that $\mu(A)=0$ for all measurable sets $A$.
\begin{prop}
\label{prop:.zero-meas}$\SR 0a=\varnothing$.
\end{prop}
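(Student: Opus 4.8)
The plan is to exhibit a single uniform Schnorr integral test which equals $+\infty$ everywhere on the zero measure, forcing $\SR{0}{a}=\varnothing$ regardless of the oracle $a$. The natural candidate is the constant function $t^{a}_{\mu}(x)=\infty$, or — to keep things lower semicomputable — $t^{a}_{\mu}(x)=\sup_{n} n$, i.e.\ the limit of the computable sequence of constant functions $g_{n}(a,\mu,x)=n$. This $t$ is lower semicomputable by construction.

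First I would check condition (1) of Definition~\ref{def:SR-noncomp-meas}: we need $\int t^{a}_{\mu}(x)\,d\mu(x)<\infty$ for \emph{every} $\mu\in\finitemeas(\mathbb{X})$ and every $a\in\mathbb{A}$. This is exactly where finiteness of the measures is essential: if $\mu\neq 0$ then $\int\infty\,d\mu=\infty$, so $t$ would fail to be a test. But here we are only concerned with whether $\SR{0}{a}=\varnothing$, so the honest approach is to restrict the test to the effectively closed singleton $\{0\}\subseteq\finitemeas(\mathbb{X})$ (the zero measure is a computable point, hence $\{0\}$ is $\Pi^{0}_{1}$) and invoke the restriction machinery — but since that machinery (Section~\ref{sec:tests-restricted}) has not yet been established at this point in the paper, the cleaner route is to multiply by the norm: set $t^{a}_{\mu}(x)=\frac{1}{\|\mu\|}\cdot(\text{something})$ — no, that is not lower semicomputable where $\|\mu\|=0$.

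So the real plan is: take $t^{a}_{\mu}(x)=\sup_{n} g_{n}(\mu)$ where $g_{n}(\mu)$ is a computable function of $\mu$ alone, chosen so that $g_{n}(0)\to\infty$ but $g_{n}(\mu)$ stays small relative to $\|\mu\|$ for $\mu\neq 0$. Concretely, I would use that $\mu\mapsto\|\mu\|=\int 1\,d\mu$ is computable (Computable Portmanteau Theorem, Lemma~\ref{lem:Portmanteau}), hence the map $\mu\mapsto\max\{0, n(1-n\|\mu\|)\}$ is computable in $n$ and $\mu$; taking the supremum over $n$ gives a lower semicomputable $t^{a}_{\mu}(x)$ independent of $a$ and $x$. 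For $\mu=0$ this supremum is $\sup_{n} n=\infty$, so $\int t^{a}_{0}\,d0 = \infty\cdot 0$ — which is $0$, fine for condition (1) — and more importantly $t^{a}_{0}(x)=\infty$ for every $x$, so no $x$ passes the test. For $\mu\neq 0$, once $n>1/\|\mu\|$ the term vanishes, so $t^{a}_{\mu}(x)$ is a finite constant, giving $\int t^{a}_{\mu}\,d\mu=\|\mu\|\cdot t^{a}_{\mu}<\infty$, verifying condition (1) everywhere, and $(a,\mu)\mapsto\int t^{a}_{\mu}\,d\mu$ is computable — it equals $\|\mu\|\cdot\sup_{n}\max\{0,n(1-n\|\mu\|)\}$, a computable function of the computable quantity $\|\mu\|$ — verifying condition (2).

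The main obstacle is purely bookkeeping: one must make sure the witnessing function is genuinely lower semicomputable as a function on $\mathbb{A}\times\finitemeas(\mathbb{X})\times\mathbb{X}$ (it is, being a supremum of computable functions that happen not to depend on $a$ or $x$) and that the induced integral map is computable and finite-valued \emph{uniformly} over all of $\finitemeas(\mathbb{X})$, not just near $0$; the $\max\{0,n(1-n\|\mu\|)\}$ truncation is precisely the device that handles the transition. Once the test is in hand, the conclusion $\SR{0}{a}=\varnothing$ is immediate since $t^{a}_{0}(x)=\infty$ for all $x\in\mathbb{X}$.
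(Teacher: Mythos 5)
Your overall strategy---build a test depending only on $\|\mu\|$ that equals $+\infty$ at the zero measure while keeping the integral map computable---is exactly the paper's strategy, and your preliminary discussion (why the constant-$\infty$ test fails condition (1), why restricting to $\{0\}$ is not yet available at this point) is sound. But the specific witness you settle on does not satisfy condition (2) of Definition~\ref{def:SR-noncomp-meas}. Write $s=\|\mu\|$ and $c(s)=\sup_{n}\max\{0,n(1-ns)\}$, so that your integral map is $F(s)=s\cdot c(s)$ for $s>0$ and $F(0)=\int\infty\,d0=0$. For small $s>0$ the supremum over $n$ is attained near $n\approx 1/(2s)$, which gives $c(s)\geq \frac{1}{4s}-\frac{s}{4}$ and $c(s)\le\frac{1}{4s}$, hence $F(s)\to \frac{1}{4}$ as $s\to 0^{+}$ while $F(0)=0$. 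The map $\mu\mapsto\int t_{\mu}^{a}\,d\mu$ is therefore discontinuous at $\mu=0$, so it cannot be computable, and your $t$ is not a uniform Schnorr integral test. The root of the problem is that your truncation makes $t_{\mu}^{a}$ blow up at the critical rate $\sim 1/\|\mu\|$, which is exactly the rate at which multiplication by $\|\mu\|$ fails to kill the blow-up.

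The paper's fix is to blow up strictly slower: take $t_{\mu}^{a}(x)=\|\mu\|^{-1/2}$. This is lower semicomputable even at $\mu=0$ (it is the reciprocal of the nonnegative computable function $\|\mu\|^{1/2}$), it is identically $+\infty$ on the zero measure, and $\int t_{\mu}^{a}\,d\mu=\|\mu\|^{1/2}$, which is computable and continuous everywhere, including at $0$. Replacing your $c(s)$ by any lower semicomputable function of $s$ alone that tends to $\infty$ as $s\to 0$ but is $o(1/s)$ would likewise repair your argument; as written, however, the computability claim in your condition (2) check is false.
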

\begin{proof}
Let $t_{\mu}^{a}(x)=\|\mu\|^{-1/2}$. (Since $\|\mu\|^{1/2}$ is computable
by Lemma~\ref{lem:Portmanteau} and nonnegative, its inverse is lower
semicomputable, even at $\mu=0$.) Then $\int_{{}}t_{\mu}^{a}(x)\,d\mu=\|\mu\|^{1/2}$
and for all $x\in\mathbb{X}$, $t_{0}^{a}(x)=\infty$. So $x\notin\SR 0a$.
\end{proof}

\section{Tests restricted to closed sets of measures\label{sec:tests-restricted}}

In order to prove even the most basic properties about our new definition
of Schnorr randomness for noncomputable measures, we will need a more
flexible notion of uniform integral test.
\begin{defn}
\label{def:SR-unif-int-test-K}Let $\mathbb{A}$ and $\mathbb{X}$
be computable metric spaces and let $K\subseteq\mathbb{A}\times\finitemeas(\mathbb{X})$
be effectively closed (i.e.\ $\Pi_{1}^{0}$). A \emph{uniform Schnorr integral test restricted to $K$}
is a computably indexed family of lower semicomputable functions $\{t_{\mu}^{a}\}_{(a,\mu)\in K}$
(i.e.\ $t\colon\mathbb{X}\times K\rightarrow\mathbb{R}$ is lower
semicomputable) such that 

\begin{enumerate}
\item $\int_{{}}t_{\mu}^{a}(x)\,d\mu(x)<\infty$ for all $(a,\mu)\in K$
and 
\item the map $a,\mu\mapsto\int_{{}}t_{\mu}^{a}(x)\,d\mu(x)$ is a computable
map of type $K\rightarrow\mathbb{R}$.
\end{enumerate}
\end{defn}
\begin{rem}
\label{rem:normalized}If there are no pairs $(a,\mu)\in K$ where
$\mu$ is the zero measure $0$, we can normalize our uniform integral
test $\{t_{\mu}^{a}\}_{(a,\mu)\in K}$ by replacing it with 
\[
s_{\mu}^{a}=\frac{(t_{\mu}^{a}+1)}{\int(t_{\mu}^{a}+1)\,d\mu}.
\]
Then, $\int_{{}}s_{\mu}^{a}\,d\mu=1$ for all $(a,\mu)\in K$ and
$s$ is bounded below by a computable function $g(a,\mu,x)>0$. Call
such an integral test \emph{normalized}.
\end{rem}
The goal of this section is to prove that these more general uniform
Schnorr integral tests characterize Schnorr randomness for noncomputable
measures and oracles.
\begin{thm}
\label{thm:SR-over-K}Let $\mathbb{A}$ and $\mathbb{X}$ be computable
metric spaces and let $K\subseteq\mathbb{A}\times\finitemeas(\mathbb{X})$
be effectively closed. For all $(a,\mu)\in K$, $x\in\SR{\mu}a$ if
and only if $t_{\mu}^{a}(x)<\infty$ for all uniform Schnorr integral
tests $\{t_{\mu}^{a}\}_{(a,\mu)\in K}$ restricted to $K$ (as in
Definition~\ref{def:SR-unif-int-test-K}).
\end{thm}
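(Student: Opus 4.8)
The plan is to prove both directions. One direction is immediate: a uniform Schnorr integral test $t\colon\mathbb{A}\times\finitemeas(\mathbb{X})\times\mathbb{X}\to[0,\infty]$ in the sense of Definition~\ref{def:SR-noncomp-meas} restricts to a uniform Schnorr integral test on $K$ by simply forgetting the values outside $K$; restriction of a lower semicomputable function to an effectively closed set is lower semicomputable, and the integral map stays computable when restricted to $K$. Hence if $x$ passes every test restricted to $K$, it in particular passes (the restriction of) every unrestricted test, so $x\in\SR{\mu}a$. The substance is the converse: given a uniform Schnorr integral test $\{t_{\mu}^{a}\}_{(a,\mu)\in K}$ restricted to $K$, I must extend it to an honest uniform Schnorr integral test on all of $\mathbb{A}\times\finitemeas(\mathbb{X})$ without destroying the lower semicomputability, the finiteness of the integrals, or — and this is the delicate condition — the \emph{computability} (not just lower semicomputability) of $(a,\mu)\mapsto\int t_{\mu}^{a}\,d\mu$.

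First I would normalize: using Remark~\ref{rem:normalized}, after discarding the (at most effectively closed, measure-zero) locus where $\mu=0$ — and noting $\SR{0}{a}=\varnothing$ by Proposition~\ref{prop:.zero-meas} so nothing is lost there — I may assume $\int t_{\mu}^{a}\,d\mu = 1$ on $K$, with $t$ bounded below by a positive computable function. Next, because $K$ is $\Pi_1^0$, its complement is a computable union of basic open balls in $\mathbb{A}\times\finitemeas(\mathbb{X})$; the key geometric fact I want is that the "distance to $K$" is lower semicomputable (indeed one can write $d((a,\mu),K)$ as a sup of computable functions vanishing on $K$), so I can build a lower semicomputable function $w(a,\mu)\ge 0$ with $w>0$ exactly off $K$. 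The idea for the extension is then: off $K$, make the test blow up harmlessly — e.g.\ set the extended test to be something like $\hat t_{\mu}^{a}(x) = t_{\mu'}^{a'}(x)\cdot(\text{something}) $ is wrong since $t$ isn't defined off $K$; instead define $\hat t_{\mu}^{a}(x)$ as $\|\mu\|^{-1}$ times a lower semicomputable "bump" that is $0$ on $K$ and grows as $(a,\mu)$ moves away from $K$, added to a term that agrees with $t_{\mu}^{a}(x)$ on $K$. Concretely I would take $\hat t = t^{K} + c\cdot w$ where $t^{K}$ is a lower semicomputable function on the whole product that restricts to $t$ on $K$ (such an extension exists for any lower semicomputable function on a $\Pi_1^0$ set, by the usual sup-of-computable-functions argument, and one can arrange $0\le t^{K}$ and $t^{K}=0$ away from a neighborhood — or more carefully, arrange that outside $K$ the extension contributes a controlled, computable integral), and $w$ forces failure off $K$ while contributing a \emph{computably} controlled amount to $\int\hat t\,d\mu$.

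The main obstacle — and where I expect to spend the real effort — is arranging that $(a,\mu)\mapsto\int\hat t_{\mu}^{a}\,d\mu$ is computable, not merely lower semicomputable, on the full space $\mathbb{A}\times\finitemeas(\mathbb{X})$, given that it is only assumed computable on $K$. On $K$ the integral equals $1$ (after normalization), so there the value is trivially computable; off $K$ I need the added term to contribute an amount I can compute exactly. This forces care in choosing $w$: I want $\int w\,d\mu$ over $\mathbb{X}$ to be a \emph{computable} function of $(a,\mu)$ — e.g.\ take $w(a,\mu,x) = g(a,\mu)$ independent of $x$ with $g$ lower semicomputable, $g\ge 0$, $g=0$ iff $(a,\mu)\in K$; then $\int w\,d\mu = g(a,\mu)\cdot\|\mu\|$, and I need $g$ itself computable, which is stronger than $g$ lower semicomputable. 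Producing a \emph{computable} $g$ that vanishes exactly on a $\Pi_1^0$ set is not possible in general, so the fix is to not insist the integral be globally computable via such a crude term but instead to absorb the $K$-restricted test's integral: replace $\hat t$ by $\frac{1}{\int \hat t\,d\mu}\hat t$-style renormalization only works if that denominator is computable. The cleanest route, which I would pursue, is: extend $t$ to $t^K$ on the whole space so that $\int t^{K}_{\mu}\,d\mu$ is \emph{lower} semicomputable everywhere and \emph{equals the given computable value} on $K$; then use the gap between the lower semicomputable global integral and an \emph{upper} bound — available because on $K$ it's pinned to $1$ and the Computable Portmanteau Theorem (Lemma~\ref{lem:Portmanteau}) gives upper semicomputability of integrals against closed data — to run a "wait for the integral to stabilize or for $(a,\mu)$ to leave $K$" argument. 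I would combine: (i) the lower semicomputable approximations $s_n\nearrow \int t^K_\mu\,d\mu$ from below; (ii) an upper semicomputable approximation that, restricted to $K$, also converges to the same value (this is where Lemma~\ref{lem:Portmanteau}(\ref{enu:closed-integral}) and the effective closedness of $K$ enter); and (iii) a term supported off $K$ that diverges on every $x$. At stages where the lower and upper approximations disagree by more than expected — which, by item (ii), can only happen because we are off $K$ — I dump extra mass into the off-$K$ divergent term in a quantity exactly equal to the current discrepancy, so the \emph{total} integral tracks the upper approximation and is therefore computable. Verifying that this bookkeeping (a) keeps $\hat t$ lower semicomputable, (b) keeps $\int\hat t\,d\mu<\infty$ for every $(a,\mu)$, (c) makes $\int\hat t\,d\mu$ computable, and (d) ensures $\hat t_{\mu}^{a}(x)<\infty$ fails for every $x$ when $(a,\mu)\notin K$ while agreeing with $t$ on $K$, is the heart of the proof and the step I expect to be genuinely intricate.
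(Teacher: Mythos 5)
Your overall architecture matches the paper's: the easy direction by restriction, and for the converse an extension of the restricted test to all of $\mathbb{A}\times\finitemeas(\mathbb{X})$, with the normalization, the exclusion of the zero measure, and the identification of the global computability of $(a,\mu)\mapsto\int t_{\mu}^{a}\,d\mu$ as the crux. But the bookkeeping you propose for the extension does not go through. First, your requirements (b) and (d) contradict each other: if $\hat t_{\mu}^{a}(x)=\infty$ for \emph{every} $x$ whenever $(a,\mu)\notin K$, then $\int\hat t_{\mu}^{a}\,d\mu=\infty$ for every such pair with $\mu\neq 0$, so $\hat t$ is not a uniform Schnorr integral test. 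Divergence off $K$ is neither needed nor possible; the theorem only asks for a global test with $\bar t_{\mu_{0}}^{a_{0}}(x_{0})=\infty$ at the given $(a_{0},\mu_{0})\in K$. Second, and more fundamentally, ``dumping extra mass equal to the current discrepancy'' cannot make the global integral computable. If you cut off the enumeration so that the extension's integral never exceeds the target, the resulting deficit is $1-\int t^{K}_{\mu}\,d\mu$, an \emph{upper} semicomputable function of $(a,\mu)$ that is not computable off $K$; the integral of any nonnegative lower semicomputable correction term is \emph{lower} semicomputable (Lemma~\ref{lem:Portmanteau}(\ref{enu:lsc-integral})), so forcing the two to coincide already presupposes that the deficit is computable, which is exactly what you are trying to arrange. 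If you do not cut off, the extension can overshoot the target off $K$, and a monotone enumeration of a lower semicomputable function cannot retract mass. You correctly sensed this obstruction when you rejected the additive bump $w$, but the stabilization argument inherits the very same problem.

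The missing idea is multiplicative rather than additive. Reparametrize the approximation from below (Lemma~\ref{lem:approx-below}) by inverting the computable, strictly increasing map $r\mapsto\int f_{\mu}^{r;a}\,d\mu$, so that the $r$-th bounded continuous approximant has integral equal to a prescribed computable value. Then extend each \emph{approximant}\textemdash not the lower semicomputable limit\textemdash to the whole space by the Computable Tietze Extension Theorem (Lemma~\ref{lem:Tietze}), and rescale each extension by the factor $(\text{prescribed integral})/(\text{actual integral})$; this factor is computable everywhere because the extension is bounded and continuous, so its integral is computable by Lemma~\ref{lem:Portmanteau}(\ref{enu:bounded-cont-integral}). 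Rescaling can both shrink and grow a function, which is precisely what an additive lower semicomputable correction cannot do. The supremum over $r$ of the rescaled extensions is then lower semicomputable, agrees with $t_{\mu}^{a}$ on $K$, and by monotone convergence its integral is the supremum of the prescribed values\textemdash a globally computable function of $(a,\mu)$. (There remains a small division-by-zero issue near $\mu=0$, which the paper handles by working with $\|\mu\|t_{\mu}^{a}+1$ and prescribing the extension explicitly on a ball around the zero measure.)
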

We will give a proof of Theorem~\ref{thm:SR-over-K} below after
providing two lemmas in computable analysis. Before we give those
lemmas let us provide a proof sketch of the left-to-right direction
of Theorem~\ref{thm:SR-over-K}. Assume $t_{\mu_{0}}^{a_{0}}(x_{0})=\infty$
for some uniform Schnorr integral test $\{t_{\mu}^{a}\}_{(a,\mu)\in K}$
restricted to $K$ such that $\int_{{}}t_{\mu}^{a}(x)\,d\mu(x)=1$.
It suffices to extend $\{t_{\mu}^{a}\}_{(a,\mu)\in K}$ to a uniform
Schnorr integral test $\{\bar{t}_{\mu}^{a}\}_{a\in\mathbb{A},\mu\in\finitemeas(\mathbb{X})}$
such that $\bar{t}_{\mu}^{a}(x_{0})=\infty$. Then $x_{0}\notin\SR{\mu_{0}}{a_{0}}$. 

It is easy to extend $t_{\mu}^{a}$ to a lower semicomputable function
$\bar{t}_{\mu}^{a}$ outside of $K$, but it is more challenging to
guarantee that $a,\mu\mapsto\int_{{}}\bar{t}_{\mu}^{a}(x)\,d\mu(x)$
remains computable. To do this, we can break up $t_{\mu}^{a}$ into
a supremum of bounded continuous functions $\{f_{\mu}^{n;a}\}_{n\in\mathbb{N},(a,\mu)\in K}$
which are increasing in $n$ such that $t_{\mu}^{a}=\sup_{n}f_{\mu}^{n;a}$
and $\int_{{}}f_{\mu}^{n;a}(x)\,d\mu(x)=1-2^{-n}$. Using a computable
version of the Tietze Extension Theorem (Lemma~\ref{lem:Tietze}
below) we can naturally extend $f_{\mu}^{n;a}$ to values of $(a,\mu)$
outside $K$. This, however, may change the integral of $f_{\mu}^{n;a}$.
Since $f_{\mu}^{n;a}$ is bounded and continuous, its integral is
computable and we can rescale the extension of $f_{\mu}^{n;a}$ to
get a new function $g_{\mu}^{n;a}$ with the desired integral. Finally,
we construct the extension of $t_{\mu}^{a}$ to be $\bar{t}_{\mu}^{a}=\sup_{n}g_{\mu}^{n;a}$.
We have that $\int_{{}}\bar{t}_{\mu}^{a}(x)\,d\mu(x)=1$ as desired.

The actually proof is more complicated for two reasons. Firstly, when
rescaling we need to avoid division by zero. Secondly, it is nontrivial
to find the functions $f_{\mu}^{n;a}$ described above. To do this
we need to approximate $t_{\mu}^{a}$ from below not with a \emph{discrete}
sequence of functions $f_{\mu}^{n;a}$, but instead with a \emph{continuous}
family $f_{\mu}^{r;a}$ ($r\in[0,1)$) such that $t_{\mu}^{a}=\sup_{n}f_{\mu}^{r;a}$
and $\int_{{}}f_{\mu}^{r;a}(x)\,d\mu(x)=r$. This next lemma shows
that such a continuous approximation is computable.
\begin{lem}
\label{lem:approx-below}Given a \emph{uniform Schnorr integral test}
$\{t_{\mu}^{a}\}_{(a,\mu)\in K}$, there is a computable function
$f\colon[0,\infty)\times K\times\mathbb{X}\rightarrow[0,\infty)$
nondecreasing in the first coordinate such that $f(0,a,\mu,x)=0$,
\[
t_{\mu}^{a}(x)=\sup\{f(r,a,\mu,x):0\leq r<\infty\},
\]
and $r,a,\mu\mapsto\int_{{}}f(r,a,\mu,x)\,d\mu(x)$ is uniformly computable.
Moreover, if there is a computable function $g\colon K\times\mathbb{X}\rightarrow[0,\infty)$
such that $t_{\mu}^{a}(x)\geq g(a,\mu,x)>0$, then $f$ is strictly
increasing in the first coordinate. (We will often write $f(r,a,\mu,x)$
as $f_{\mu}^{r;a}(x)$.)
\end{lem}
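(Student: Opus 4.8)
The plan is to build the family $f_{\mu}^{r;a}$ by first writing $t_{\mu}^{a}$ as a supremum of an increasing computable sequence of bounded computable functions $(h^{n})_{n}$ with $h^{n}\le h^{n+1}$, $t_{\mu}^{a}=\sup_n h^{n}(a,\mu,x)$, and $h^0 = 0$; this is just the definition of lower semicomputability, and by passing to $\max\{h^0,\dots,h^n\}$ we may assume monotonicity in $n$, while replacing $h^n$ by $\max\{h^n - h^0, 0\}$ plus a shift lets us assume $h^0=0$. Set $I^n(a,\mu) := \int h^{n}(a,\mu,x)\,d\mu(x)$, which is uniformly computable by Lemma~\ref{lem:Portmanteau}(\ref{enu:bounded-cont-integral}) (the bounds on $h^n$ are computable from its index), nondecreasing in $n$, and converges to $\int t_{\mu}^{a}\,d\mu$, which is computable by hypothesis. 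The idea is then to ``reparametrize time'': the desired $f$ at parameter $r$ should be obtained by interpolating between consecutive $h^n$'s, using $r$ to measure how much integral-mass we have accumulated. Concretely, for $r$ in the interval $[I^n(a,\mu), I^{n+1}(a,\mu))$ we want to output a convex-type combination of $h^n$ and $h^{n+1}$ whose integral is exactly $r$; since the integral depends linearly on the interpolation parameter, solving for that parameter is a division, and the monotonicity $I^n \le r < I^{n+1}$ keeps us away from division by zero as long as $I^n < I^{n+1}$.

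The main technical obstacle is exactly that $I^n(a,\mu)$ and $I^{n+1}(a,\mu)$ may coincide (or be very close), so the naive interpolation parameter $(r - I^n)/(I^{n+1}-I^n)$ is ill-defined, and moreover the breakpoints $I^n(a,\mu)$ themselves are only computable reals, not rationals, so one cannot decide which interval $r$ falls into. The standard fix is to avoid case-splitting on $r$ altogether and instead define $f$ by an explicit ``clamping'' formula that is manifestly computable. For instance, one can take
\[
f(r,a,\mu,x) = \sup_{n\ge 0}\ \min\Bigl\{h^{n}(a,\mu,x),\ h^{n}(a,\mu,x)\cdot\lambda_n(r,a,\mu)\Bigr\}
\]
for a suitable continuous ``weight'' $\lambda_n(r,a,\mu)\in[0,1]$ that is $1$ once $r$ is comfortably past $I^{n}$ and $0$ once $r\le I^{n-1}$, interpolating linearly in between; the supremum over $n$ is then lower semicomputable, in fact computable once one checks local uniform convergence. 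The integral $\int f(r,a,\mu,x)\,d\mu(x)$ will be a continuous, piecewise-affine function of the $\lambda_n$'s and hence of $r$; by an intermediate-value argument it takes every value in $[0, \int t_{\mu}^{a}\,d\mu)$, and with a final monotone reparametrization $r \mapsto r'$ (computable since the integral is a computable increasing function of $r$) we can arrange $\int f(r,a,\mu,x)\,d\mu(x) = r$ exactly. I would carry out the details by first treating the normalized case where $\int t_{\mu}^{a}\,d\mu$ is bounded away from $0$ (using Remark~\ref{rem:normalized} one reduces to this when $0\notin K$; the general case is handled by a separate cheap argument, since $t_\mu^a$ restricted to pairs with $\mu=0$ contributes no integral constraint and $f$ can be taken to be $r\cdot t_\mu^a/(1+t_\mu^a)$-style there).

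For the ``moreover'' clause: if $t_{\mu}^{a}(x)\ge g(a,\mu,x)>0$ with $g$ computable, then after adding $g$ into the approximating sequence (replace $h^n$ by $\max\{h^n,(1-2^{-n})g\}$, say) we get $I^n(a,\mu)\to\int t_\mu^a\,d\mu$ with each $I^n>0$ and strictly increasing; then the reparametrization above produces an $f$ that is \emph{strictly} increasing in $r$, because at every $r$ there is some $n$ with $0<\lambda_n(r)<1$ and increasing $r$ strictly increases that weight against a strictly positive $h^n\ge(1-2^{-n})g>0$, hence strictly increases the pointwise value. I expect the bookkeeping around computability of the supremum (uniform convergence of the tails, so that $f$ is genuinely computable and not merely lower semicomputable) and the final reparametrization to be the only places requiring real care; everything else is assembling Lemma~\ref{lem:Portmanteau} with elementary real analysis.
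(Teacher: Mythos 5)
You are proving more than the lemma asks, and the extra strength is exactly where your argument breaks. The statement only requires that $r,a,\mu\mapsto\int f(r,a,\mu,x)\,d\mu(x)$ be \emph{computable}; it does not require $\int f(r,a,\mu,x)\,d\mu(x)=r$. That normalization is performed later (in the proof of Theorem~\ref{thm:SR-over-K} and in the claim inside Lemma~\ref{lem:Luzin-for-tests}) by inverting the computable map $r\mapsto\int f(r,a,\mu,x)\,d\mu(x)$, and the inversion is only attempted in situations where the ``moreover'' clause supplies strict monotonicity. Once you drop that requirement the proof is short: take $h(n,a,\mu,x)$ nondecreasing in $n$ with supremum $t_{\mu}^{a}(x)$, truncate so that $h(n,a,\mu,x)\le n$ (which also forces $h(0,\cdot)=0$), and set $f(r,a,\mu,x)=h(\lfloor r\rfloor,a,\mu,x)+(r-\lfloor r\rfloor)\bigl(h(\lfloor r\rfloor+1,a,\mu,x)-h(\lfloor r\rfloor,a,\mu,x)\bigr)$. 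Then $f(r,a,\mu,x)\le r+1$, so Lemma~\ref{lem:Portmanteau} gives computability of the integral directly; no reparametrization by accumulated integral mass is needed.

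Your construction, as written, has two genuine gaps. First, for $r\ge\int t_{\mu}^{a}\,d\mu$ every weight $\lambda_{n}(r,a,\mu)$ equals $1$ (such an $r$ is ``comfortably past'' every $I^{n}$), so $f(r,a,\mu,\cdot)=\sup_{n}h^{n}=t_{\mu}^{a}$, which is in general neither computable nor finite-valued; this contradicts both the codomain $[0,\infty)$ and the computability of $f$. Second, your final step needs $r\mapsto\int f(r,a,\mu,x)\,d\mu(x)$ to be strictly increasing with a computable inverse, but in general this map has flat stretches (wherever consecutive $I^{n}(a,\mu)$ coincide), and a merely nondecreasing computable function has no computable inverse; the remedy you propose (mixing in $(1-2^{-n})g$) is only available under the hypothesis of the ``moreover'' clause, not in the main statement. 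There is also a smaller issue in the ``moreover'' part itself: replacing $h^{n}$ by $\max\{h^{n},(1-2^{-n})g\}$ loses the computable bounds needed to compute the integral, since $g$ need not be bounded. The paper instead approximates $t_{\mu}^{a}-g$ from below by bounded functions, observes that $\int g\,d\mu$ is computable (it is lower semicomputable and $\int(t_{\mu}^{a}-g)\,d\mu+\int g\,d\mu$ is computable by hypothesis), and then adds $(1-2^{-r})g$ to obtain strict monotonicity.
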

\begin{proof}
Let $h\colon\mathbb{N}\times\mathbb{A}\times\finitemeas(\mathbb{X})\times\mathbb{X}\rightarrow[0,\infty)$
be a computable function such that $h(n,a,\mu,x)$ is nondecreasing
in $n$ and 
\[
t_{\mu}^{a}(x)=\sup_{n}h(n,a,\mu,x).
\]
We may assume $h(n,a,\mu,x)\leq n$ by replacing it with $\min\{n,h(n,a,\mu,x)\}$.

Now define $f$ by interpolation (where $\lfloor r\rfloor$ denotes
the greatest integer $\leq r$), 
\[
f(r,a,\mu,x)=h(\lfloor r\rfloor,a,\mu,x)+(r-\lfloor r\rfloor)\left(h(\lfloor r\rfloor+1,a,\mu,x)-h(\lfloor r\rfloor,a,\mu,x)\right).
\]
Since $f(r,a,\mu,x)$ is computable and 
\[
f(r,a,\mu,x)\leq h(\lfloor r\rfloor+1,a,\mu,x)\leq r+1
\]
we can uniformly compute $\int_{{}}f(r,a,\mu,x)\,d\mu(x)$ from $r,a,\mu$
(Lemma~\ref{lem:Portmanteau}).

If $t_{\mu}^{a}(x)\geq g(a,\mu,x)>0$, then $t_{\mu}^{a}(x)-g(a,\mu,x)$
is lower semicomputable. By Lemma~\ref{lem:Portmanteau}, $\int_{{}}t_{\mu}^{a}(x)-g(a,\mu,x)\,d\mu$
and $\int_{{}}g(a,\mu,x)\,d\mu$ are both lower semicomputable in
$(a,\mu)\in K$. Moreover, these integrals are computable since they
are lower semicomputable and their sum is computable. Therefore, there
is some $f\colon[0,\infty)\times K\times\mathbb{X}\rightarrow[0,\infty)$
(as defined earlier in this proof) which approximates $t_{\mu}^{a}(x)-g(a,\mu,x)$
from below. Add $(1-2^{-r})g(a,\mu,x)$ to $f(r,a,\mu,x)$ to get
a strictly increasing approximation of $t_{\mu}^{a}(x)$.
\end{proof}
The Tietze Extension Theorem is an important tool in analysis. The
following is a computable version.
\begin{lem}[{Computable Tietze Extension Theorem \cite[Thm.~19]{Weihrauch:2001}}]
\label{lem:Tietze}If $\mathbb{X}$ is a computable metric space,
$K\subseteq\mathbb{X}$ is effectively closed, and $f\colon K\rightarrow\mathbb{R}$
is computable, then we can effectively extend $f$ to a computable
function $\bar{f}\colon\mathbb{X}\rightarrow\mathbb{R}$ such that
$\sup_{x\in\mathbb{X}}\bar{f}(x)=\sup_{x\in K}f(x)$ and $\inf_{x\in\mathbb{X}}\bar{f}(x)=\inf_{x\in K}f(x)$.
\end{lem}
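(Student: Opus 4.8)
The plan is to effectivize the classical proof of the Tietze extension theorem for metric spaces, checking that each step goes through uniformly in a name for $f$ and an index for the effectively closed set $K$.

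\emph{First I would reduce to bounded functions.} Composing $f$ with a computable increasing homeomorphism $\rho\colon\mathbb{R}\to(-1,1)$, say $\rho(t)=t/(1+|t|)$, whose inverse is also computable, gives a bounded computable function $\rho\circ f\colon K\to(-1,1)$. It then suffices to extend a bounded computable $h\colon K\to[a,b]$ to a computable $\bar h\colon\mathbb{X}\to\mathbb{R}$ whose range lies in the closed interval $[\inf_K h,\sup_K h]$: taking $h=\rho\circ f$, the extension lands in $[\inf_K(\rho\circ f),\sup_K(\rho\circ f)]\subseteq(-1,1)$ (if $f$ is unbounded so that $\rho\circ f$ approaches $\pm1$ on $K$, one more use of the effective Urysohn lemma below keeps $\bar h$ strictly inside $(-1,1)$), so $\bar f:=\rho^{-1}\circ\bar h$ is a computable extension of $f$; since $\rho^{-1}$ is increasing and continuous, $\sup_{\mathbb X}\bar f=\rho^{-1}(\sup_{\mathbb X}\bar h)=\rho^{-1}(\sup_K(\rho\circ f))=\sup_K f$, and likewise for the infimum.

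\emph{For the bounded case} I would use the classical explicit extension formula: for $x\notin K$, set $\bar h(x)$ to be an infimum over $y\in K$ of $h(y)$ plus a correction term built from $d(x,y)$ and $d(x,K)$ that vanishes as $y$ runs over the (approximate) nearest points of $K$ to $x$, and put $\bar h=h$ on $K$. This formula is transparently range-preserving: every term is at least $\inf_K h$, and along a minimizing sequence $y_k$ the infimum is at most $\limsup_k h(y_k)\le\sup_K h$, so $\bar h$ takes values in $[\inf_K h,\sup_K h]$ with no need to compute those (in general noncomputable) endpoints. Continuity of $\bar h$ across $\partial K$ is the classical content. An equivalent route, which can be more convenient for tracking uniformity, is the telescoping construction $\bar h=\sum_n g_n$ in which each $g_n$ is a rescaled Urysohn function separating two sublevel/superlevel sets of the running remainder of $h$.

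\emph{The main obstacle} is that both routes need effective forms of classically trivial facts that fail naively once $\mathbb{X}$ is not (locally) compact. For a merely effectively closed set $A$, the distance $x\mapsto d(x,A)$ need not be lower semicomputable --- certifying $d(x,A)>q$ means certifying $B(x,q)\subseteq\mathbb{X}\setminus A$, which in general cannot be done from a finite portion of a name for $x$ --- so neither the explicit formula nor the classical Urysohn function $d(x,A)/(d(x,A)+d(x,B))$ is usable off the shelf; worse, $K$ may contain no computable points, so one cannot form the relevant infima by searching through $K$. The way around this is to work with the effectively open set $U=\mathbb{X}\setminus A$ (respectively $\mathbb{X}\setminus(A\cup B)$) directly: cover $U$ by a computable family of basic open balls, refine it to a locally finite cover whose balls have diameter controlled by their distance to the complement of $U$, attach a partition of unity assembled from basic bump functions (Definition~\ref{def:basic-bump}), and patch together the local data (the boundary constants, or the nearby values of $h$) along this partition; the real work lies in verifying that the resulting function is total, computable, continuous across $A$ (and $B$), and has the prescribed boundary behavior. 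Carrying this out is exactly \cite[Thm.~19]{Weihrauch:2001}, which I would then invoke to conclude.
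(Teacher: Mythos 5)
The paper offers no proof of this lemma: it is imported verbatim as \cite[Thm.~19]{Weihrauch:2001}, and your proposal ultimately does the same, after a sketch that correctly identifies the genuine obstacles (non-semicomputability of $x\mapsto d(x,K)$ from negative information, possible absence of computable points in $K$, hence the need to work with the open complement and a partition of unity built from basic bump functions). So your treatment is consistent with the paper's; the added sketch is accurate in outline and the final step is the same citation.
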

Now we prove Theorem~\ref{thm:SR-over-K}.
\begin{proof}[Proof of Theorem~\ref{thm:SR-over-K}]
Fix $x_{0}\in\mathbb{X}$ and $(a_{0},\mu_{0})\in K$.

($\Leftarrow$) Assume $x_{0}\notin\SR{\mu_{0}}{a_{0}}$. Then there
is some uniform Schnorr integral test $\{t_{\mu}^{a}\}_{a\in\mathbb{A},\mu\in\finitemeas(\mathbb{X})}$
(as in Definition~\ref{def:SR-noncomp-meas}) such that $t_{\mu_{0}}^{a_{0}}(x)=\infty$.
We have that $\{t_{\mu}^{a}\}_{(a,\mu)\in K}$ is a uniform Schnorr
integral test restricted to $K$.

($\Rightarrow$) Assume $t_{\mu_{0}}^{a_{0}}(x_{0})=\infty$ for some
uniform Schnorr integral test $\{t_{\mu}^{a}\}_{(a,\mu)\in K}$ restricted
to $K$. By Proposition~\ref{prop:.zero-meas}, $\SR 0a=\varnothing$.
Therefore, we assume $K$ does not contain the $0$ measure. (Just
replace $K$ with $K\smallsetminus(\mathbb{A}\times B)$ where $B\subseteq\finitemeas(\mathbb{X})$
is some sufficiently small basic open ball containing $0$.) Then
by Remark~\ref{rem:normalized}, we may assume $t_{\mu}^{a}$ is
normalized and bounded below by a nonnegative computable function.

Our basic goal is to extend $\{t_{\mu}^{a}\}_{(a,\mu)\in K}$ to a
uniform Schnorr integral test $\{\bar{t}_{\mu}^{a}\}_{a\in\mathbb{A},\mu\in\finitemeas(\mathbb{X})}$
such that $\bar{t}_{\mu}^{a}(x_{0})=\infty$. (For technical reasons,
the uniform Schnorr integral test $\{\bar{s}_{\mu}^{a}\}_{a\in\mathbb{A},\mu\in\finitemeas(\mathbb{X})}$
we construct at the end of this proof will be an extension of $\|\mu\|t_{\mu}^{a}(x)+1$.)
By Lemma~\ref{lem:approx-below}, there is a computable family $\{f_{\mu}^{r;a}\}_{r\in[0,\infty),(a,\mu)\in K}$
of strictly increasing uniformly computable functions such that for
$(a,\mu)\in K$ we have $t_{\mu}^{a}=\sup_{r}f_{\mu}^{r;a}$ and $r,a,\mu\mapsto\int_{{}}f_{\mu}^{r;a}\,d\mu$
is computable. 

Notice the integral function $I(r,a,\mu)=\int_{{}}f_{\mu}^{r;a}\,d\mu$
is a computable bijection in the first coordinate from $[0,\infty)$
to $[0,1)$. (Indeed, $\int_{{}}f_{\mu}^{r;a}\,d\mu$ is strictly
increasing in $r$, $f_{\mu}^{0;a}=0$, and by the Monotone Convergence
Theorem, 
\[
\sup_{r}\int_{{}}f_{\mu}^{r;a}\,d\mu=\int_{{}}\sup_{r}f_{\mu}^{r;a}\,d\mu=\int_{{}}t_{\mu}^{a}\,d\mu=1.
\]
So it is a bijection.) Now, the inverse map $I^{-1}(r,a,\mu)$ such
that $I(I^{-1}(r,a,\mu),a,\mu)=r$ for $r\in[0,1)$ is also computable.
So we consider the function $g\colon[0,1)\times K\times\mathbb{X}\rightarrow[0,\infty)$
given by
\[
g(r,a,\mu,x)=f(I^{-1}(r,a,\mu),a,\mu,x).
\]
Then $t_{\mu}^{a}=\sup\{g(r,a,\mu,x):r\in[0,1)\}$ and $\int_{{}}g(r,a,\mu,x)\,d\mu(x)=r$.

To avoid division by zero, instead of $t_{\mu}^{a}$ and $g(r,a,\mu,x)$
we will consider the consider the lower semicomputable function $s_{\mu}^{a}=\|\mu\|t_{\mu}^{a}+1$
and the computable function 
\[
h(r,a,\mu,x)=\|\mu\|g(r,a,\mu,x)+r.
\]
So $s_{\mu}^{a}=\sup\{h(r,a,\mu,x):r\in[0,1)\}$ and $\int_{{}}h(r,a,\mu,x)\,d\mu=2\|\mu\|r$.
We still have that $s_{\mu_{0}}^{a_{0}}(x_{0})=\infty$. Then use
the Computable Tietze Extension Theorem (Lemma~\ref{lem:Tietze})
to extend $h\colon[0,1)\times K\times\mathbb{X}\rightarrow[0,\infty)$
to $\bar{h}\colon[0,1)\times\mathbb{A}\times\finitemeas(\mathbb{X})\times\mathbb{X}\rightarrow[0,\infty)$
such that $\bar{h}(r,a,\mu,x)\geq r$. (Technically, one applies the
Computable Tietze Extension Theorem to $h(r,a,\mu,x)-r$ to get a
nonnegative extension. Then add $r$.) Last, normalize $\bar{h}(r,a,\mu,x)$
to get 
\begin{equation}
\hat{h}(r,a,\mu,x)=\frac{2\|\mu\|r}{\int_{{}}\bar{h}(r,a,\mu,x)\,d\mu}\bar{h}(r,a,\mu,x)\label{eq:normal-approx}
\end{equation}
which is equal to $h(r,a,\mu,x)$ for $(a,\mu)\in K$.\footnote{As a technical detail we need that $\hat{h}$ is still continuous
and computable at $\mu=0$. To avoid the issues of dividing by $0$
in (\ref{eq:normal-approx}), do the following. By our assumption
above, there is a basic closed ball $\overline{B}\subseteq\finitemeas(\mathbb{X})$
containing $0$ such that $K\cap(\mathbb{A}\times\overline{B})=\varnothing$.
Before applying the Computable Tietze Extension Theorem, define $h$
for $\mu\in\overline{B}$ to be $h(r,a,\mu,x)=2r$. Then apply the
Computable Tietze Extension Theorem (Lemma~\ref{lem:Tietze}) to
$h\colon[0,1)\times(K\cup\overline{B})\times\mathbb{X}\rightarrow[0,\infty)$
to get $\bar{h}$. In the neighborhood around $\mu=0$, $\bar{h}(r,a,\mu,x)=2r$
and $\hat{h}(r,a,\mu,x)=2r$ by (\ref{eq:normal-approx}). We explicitly
set $\hat{h}(r,a,0,x)=2r$. This ensures that $\hat{h}$ is computable,
even at $\mu=0$.} Set $\{\bar{s}_{\mu}^{a}\}_{a\in\mathbb{A},\mu\in\finitemeas(\mathbb{X})}$
to be 
\[
\bar{s}_{\mu}^{a}=\sup\{\bar{h}(r,a,\mu,x):r\in[0,1)\}.
\]
This extends $\{s_{\mu}^{a}\}_{(a,\mu)\in K}$. So $\bar{s}_{\mu_{0}}^{a_{0}}(x_{0})=s_{\mu_{0}}^{a_{0}}(x_{0})=\|\mu_{0}\|t_{\mu_{0}}^{a_{0}}(x_{0})+1=\infty$.
\end{proof}

\section{Basic properties of Schnorr randomness for noncomputable measures\label{sec:basic-results}}

Armed with Theorem~\ref{thm:SR-over-K}, it is easy to show some
basic results about Schnorr randomness for noncomputable measures.
In this section, $\mathbb{A}$ and $\mathbb{X}$ will denote computable
metric spaces, $a$ and $x$ will denote points in $\mathbb{A}$ and
$\mathbb{X}$ respectively (where $a$ is used as an oracle), and
$\mu$ will denote a measure in $\finitemeas(\mathbb{X})$.

Firstly, and most importantly, our new definition of Schnorr randomness
agrees with Schnorr randomness for computable probability measures.
\begin{prop}
\label{prop:our-def-extends-usual-def}If $\mu\in\probmeas(\mathbb{X})$
and $a\in\mathbb{A}$ are computable, then $x\in\SR{\mu}a$ if and
only if $x$ is Schnorr $\mu$-random (in the original sense of Definition~\ref{def:Schnorr-comp-meas}).
\end{prop}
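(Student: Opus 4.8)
The plan is to prove the equivalence by contraposition in both directions, bridging Miyabe's integral-test characterization of Schnorr randomness for computable probability measures (Proposition~\ref{prop:Miyabe-int-test-Sch}) and the restricted-test characterization of Theorem~\ref{thm:SR-over-K}.

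First I would handle the inclusion from $\SR{\mu}{a}$ into the set of Schnorr $\mu$-randoms. Starting from a point $x$ that is not Schnorr $\mu$-random, Proposition~\ref{prop:Miyabe-int-test-Sch} supplies a Schnorr integral $\mu$-test $t\colon\mathbb{X}\to[0,\infty]$ (Definition~\ref{def:Schnorr-int-test}) with $t(x)=\infty$. The idea is to regard $t$ as a uniform Schnorr integral test restricted to the singleton $K:=\{(a,\mu)\}$ and then invoke Theorem~\ref{thm:SR-over-K}. For this I must first check that $K$ is effectively closed: since $a$ and $\mu$ are computable points, $d(p,(a,\mu))$ is uniformly computable in the basic point $p$, so one can enumerate exactly the basic open balls $B(p,q)$ with $d(p,(a,\mu))>q$, and a routine approximation argument shows these cover $(\mathbb{A}\times\finitemeas(\mathbb{X}))\smallsetminus K$; hence $K\in\Pi_{1}^{0}$. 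The remaining checks are immediate: $(b,\nu,y)\mapsto t(y)$ (constant in the oracle and the measure) is lower semicomputable on $\mathbb{X}\times K$, $\int t\,d\mu<\infty$, and the map from the one-point space $K$ to $\mathbb{R}$ sending its unique element to the computable real $\int t\,d\mu$ is computable. So $t$ is a uniform Schnorr integral test restricted to $K$ with $t_{\mu}^{a}(x)=\infty$, and Theorem~\ref{thm:SR-over-K} yields $x\notin\SR{\mu}{a}$.

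For the reverse inclusion I would take $x\notin\SR{\mu}{a}$, so by Definition~\ref{def:SR-noncomp-meas} there is a uniform Schnorr integral test $t$ with $t_{\mu}^{a}(x)=\infty$, and I simply freeze the computable oracle and measure: set $s(y):=t_{\mu}^{a}(y)=t(a,\mu,y)$. Then $s$ is lower semicomputable (it is the composition of a lower semicomputable function with the computable map $y\mapsto(a,\mu,y)$), $\int s\,d\mu<\infty$ by condition~(1) of Definition~\ref{def:SR-noncomp-meas}, and $\int s\,d\mu$ is a computable real by condition~(2), being the value at the computable pair $(a,\mu)$ of the computable map $b,\nu\mapsto\int t(b,\nu,y)\,d\nu(y)$. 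Thus $s$ is a Schnorr integral $\mu$-test with $s(x)=\infty$, and Proposition~\ref{prop:Miyabe-int-test-Sch} shows $x$ is not Schnorr $\mu$-random.

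The main (and essentially only) nontrivial point is the observation that the singleton $\{(a,\mu)\}$ is $\Pi_{1}^{0}$ when $a$ and $\mu$ are computable; everything else is unwinding the definitions. One could instead bypass Theorem~\ref{thm:SR-over-K} in the forward direction by extending $t$ explicitly to a uniform test over all of $\mathbb{A}\times\finitemeas(\mathbb{X})$, but that is exactly the work already carried out in the proof of Theorem~\ref{thm:SR-over-K}, so it is cleaner to quote it.
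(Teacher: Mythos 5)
Your proposal is correct and takes essentially the same route as the paper: restrict to the effectively closed singleton $K=\{(a,\mu)\}$, identify uniform Schnorr integral tests restricted to $K$ with Schnorr integral $\mu$-tests, and combine Theorem~\ref{thm:SR-over-K} with Miyabe's characterization (Proposition~\ref{prop:Miyabe-int-test-Sch}). Your write-up merely spells out details the paper leaves implicit, such as why the singleton is $\Pi_{1}^{0}$.
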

\begin{proof}
Let $K$ be the effectively closed singleton set $\{(a,\mu)\}$. A
uniform Schnorr integral test over $K$ (Definition~\ref{def:SR-unif-int-test-K})
is the same as a Schnorr integral $\mu$-test (Definition~\ref{def:Schnorr-int-test}).
Now apply Theorem~\ref{thm:SR-over-K} to Miyabe's Schnorr integral
test characterization (Proposition~\ref{prop:Miyabe-int-test-Sch})
of Schnorr $\mu$-randomness.
\end{proof}
Notice, in Definition~\ref{def:SR-noncomp-meas} that we defined
the uniform Schnorr integral $\mu$-tests to be of type $t\colon\mathbb{A}\times\finitemeas(\mathbb{X})\times\mathbb{X}\to[0,\infty)$.
This next proposition says that we could we can replace $\finitemeas(\mathbb{X})$
in the definition with $\probmeas(\mathbb{X})$ to get the same definition
of randomness.
\begin{prop}
If $\mu$ is a probability measure, then $\SR{\mu}{}$ (using $\probmeas(\mathbb{X})$)
is the same as $\SR{\mu}{}$ (using $\finitemeas(\mathbb{X})$).
\end{prop}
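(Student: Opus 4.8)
The plan is to apply Theorem~\ref{thm:SR-over-K} to the effectively closed set $K:=\mathbb{A}\times\probmeas(\mathbb{X})\subseteq\mathbb{A}\times\finitemeas(\mathbb{X})$, where $a$ is a computable oracle (for the statement as literally written one may even take $\mathbb{A}$ to be a one-point space).

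First I would verify that $K$ is indeed $\Pi_{1}^{0}$. Since the norm map $\nu\mapsto\|\nu\|$ is computable by the Computable Portmanteau Theorem, the complement $\finitemeas(\mathbb{X})\smallsetminus\probmeas(\mathbb{X})=\{\nu:\|\nu\|<1\}\cup\{\nu:\|\nu\|>1\}$ is effectively open, so $\probmeas(\mathbb{X})$, and hence $K=\mathbb{A}\times\probmeas(\mathbb{X})$, is effectively closed.

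Second, I would observe that a \emph{uniform Schnorr integral test restricted to $K$} in the sense of Definition~\ref{def:SR-unif-int-test-K} is the same object as a uniform Schnorr integral test ``using $\probmeas(\mathbb{X})$,'' i.e.\ the analogue of Definition~\ref{def:SR-noncomp-meas} with $\finitemeas(\mathbb{X})$ replaced throughout by $\probmeas(\mathbb{X})$. This amounts to checking that the computable metric space structure placed on $\probmeas(\mathbb{X})$ in Definition~\ref{def:Measure metric} coincides with the structure $\probmeas(\mathbb{X})$ inherits as a $\Pi_{1}^{0}$ subspace of $\finitemeas(\mathbb{X})$: the two metrics are defined by the identical formula in terms of the same basic functions $(f_{i})$, every basic point of $\probmeas(\mathbb{X})$ is a basic point of $\finitemeas(\mathbb{X})$, and conversely a $\finitemeas(\mathbb{X})$-Cauchy name for a measure of norm $1$ can be converted to a $\probmeas(\mathbb{X})$-Cauchy name by normalizing the (eventually nonzero) basic-point approximations. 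Hence lower semicomputability and computability of the integral functional are unaffected by which of the two structures we use, and the two notions of test agree.

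With these two observations the proposition is immediate: fix a probability measure $\mu$ and computable $a$, so $(a,\mu)\in K$. By Theorem~\ref{thm:SR-over-K}, $x\in\SR{\mu}{}$ in the $\finitemeas(\mathbb{X})$-sense (that is, $x\in\SR{\mu}{a}$) if and only if $t_{\mu}^{a}(x)<\infty$ for every uniform Schnorr integral test $\{t_{\mu}^{a}\}_{(a,\mu)\in K}$ restricted to $K$, and by the second observation this is exactly the condition defining $x\in\SR{\mu}{}$ in the $\probmeas(\mathbb{X})$-sense. The only point requiring any care is the second observation — the bookkeeping that nothing is lost in passing between $\probmeas(\mathbb{X})$ as a computable metric space and as a subspace of $\finitemeas(\mathbb{X})$ — and this is routine given Definition~\ref{def:Measure metric} and the computability of the norm; everything else is a direct appeal to Theorem~\ref{thm:SR-over-K}.
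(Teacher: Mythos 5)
Your proposal is correct and follows essentially the same route as the paper: observe that $\probmeas(\mathbb{X})$ is an effectively closed subspace of $\finitemeas(\mathbb{X})$ and apply Theorem~\ref{thm:SR-over-K} with $K=\mathbb{A}\times\probmeas(\mathbb{X})$. The extra bookkeeping you supply (computability of the norm map to get effective closedness, and the match between the intrinsic and subspace computable metric structures on $\probmeas(\mathbb{X})$) is exactly what the paper leaves implicit.
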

\begin{proof}
Note that $\probmeas(\mathbb{X})$ is an effectively closed subspace
of $\finitemeas(\mathbb{X})$. Now apply Theorem~\ref{thm:SR-over-K}.
\end{proof}
While Schnorr randomness for noncomputable measures is new, Schnorr
randomness for noncomputable oracles has already been given a lot
of attention\textemdash at least when the oracles are in $\{0,1\}^{\mathbb{N}}$
and the measure is the fair coin measure $\lambda$. In particular,
our definition agrees with that of Franklin and Stephan \cite[Def.~2.3]{Franklin.Stephan:2010}
(although we use an integral test characterization due to Miyabe and
Rute \cite[Prop.~4.5]{Miyabe.Rute:2013}.)
\begin{defn}
\label{def:sch-unif-rel}Let $x_{0},a_{0}\in\{0,1\}^{\mathbb{N}}$.
Say that $x_{0}$ is \emph{Schnorr $\lambda$-random uniformly relative to $a_{0}$}
(or $x_{0}$ is \emph{$a_{0}$ truth-table Schnorr $\lambda$-random})
if and only if $t^{a_{0}}(x)<\infty$ for all uniformly computable
families $\{t^{a}\}_{a\in\{0,1\}^{\mathbb{N}}}$ of lower semicomputable
functions where $\int_{{}}t^{a}\,d\lambda=1$ for all $a$.
\end{defn}
\begin{prop}
For $x,a\in\{0,1\}^{\mathbb{N}}$, $x\in\SR{\lambda}a$ if and only
if $x$ is Schnorr random uniformly relative to $a$ as in Definition~\ref{def:sch-unif-rel}.
\end{prop}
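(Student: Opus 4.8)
The plan is to show that the two definitions of "Schnorr $\lambda$-random uniformly relative to $a$" coincide by unwinding both through the restricted-test machinery of Theorem~\ref{thm:SR-over-K}. On the one hand, $x \in \SR{\lambda}{a}$ unwinds, via Definition~\ref{def:SR-noncomp-meas}, to passing every uniform Schnorr integral test $t\colon \{0,1\}^{\mathbb{N}} \times \finitemeas(\{0,1\}^{\mathbb{N}}) \times \{0,1\}^{\mathbb{N}} \to [0,\infty]$. On the other hand, Definition~\ref{def:sch-unif-rel} asks $x$ to pass every uniformly computable family $\{t^a\}_{a \in \{0,1\}^{\mathbb{N}}}$ of lower semicomputable functions with $\int t^a \, d\lambda = 1$. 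The bridge is to apply Theorem~\ref{thm:SR-over-K} with $\mathbb{A} = \{0,1\}^{\mathbb{N}}$, $\mathbb{X} = \{0,1\}^{\mathbb{N}}$, and $K = \{0,1\}^{\mathbb{N}} \times \{\lambda\}$, which is an effectively closed subset of $\mathbb{A} \times \finitemeas(\mathbb{X})$ since $\lambda$ is a computable measure and hence a computable point. Then $\SR{\lambda}{a}$ is characterized by passing every uniform Schnorr integral test restricted to this $K$, i.e.\ every computably indexed family $\{t^a_\lambda\}_{a}$ of lower semicomputable functions such that $\int t^a_\lambda \, d\lambda < \infty$ for all $a$ and $a \mapsto \int t^a_\lambda \, d\lambda$ is computable.

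\medskip

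The remaining task is to match "uniform Schnorr integral test restricted to $K = \mathbb{A} \times \{\lambda\}$" with "uniformly computable family $\{t^a\}_a$ of lower semicomputable functions with $\int t^a \, d\lambda = 1$." The direction from Definition~\ref{def:sch-unif-rel}'s tests to tests restricted to $K$ is immediate: a family with $\int t^a \, d\lambda = 1$ trivially satisfies conditions (1) and (2) of Definition~\ref{def:SR-unif-int-test-K} (the integral is the constant computable function $1$). For the converse, given a test $\{t^a_\lambda\}_a$ restricted to $K$, I would normalize it exactly as in Remark~\ref{rem:normalized}: since $\lambda$ is not the zero measure, replace $t^a_\lambda$ by $s^a = (t^a_\lambda + 1) / \int (t^a_\lambda + 1)\, d\lambda$, obtaining a uniformly computable family of lower semicomputable functions with $\int s^a \, d\lambda = 1$; and $s^a(x) = \infty$ whenever $t^a_\lambda(x) = \infty$, so the two families fail at exactly the same points. (Here one uses Lemma~\ref{lem:Portmanteau} to see $a \mapsto \int (t^a_\lambda + 1)\, d\lambda$ is computable and bounded below by $\|\lambda\| = 1 > 0$, so the quotient is well-defined and lower semicomputable.) Thus the two classes of tests determine the same set of random points.

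\medskip

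Putting these together: $x \in \SR{\lambda}{a}$ iff $x$ passes every uniform Schnorr integral test restricted to $\mathbb{A} \times \{\lambda\}$ (Theorem~\ref{thm:SR-over-K}) iff $x$ passes every uniformly computable family $\{t^a\}_a$ of lower semicomputable functions with $\int t^a \, d\lambda = 1$ (by the normalization argument above) iff $x$ is Schnorr $\lambda$-random uniformly relative to $a$ (Definition~\ref{def:sch-unif-rel}). The only mild subtlety—and the step I would be most careful about—is confirming that one may freely pass between "$\int t^a \, d\lambda = 1$" and "$\int t^a \, d\lambda < \infty$ with the integral computable in $a$" without changing the random points; this is precisely what the normalization trick of Remark~\ref{rem:normalized} handles, and it goes through here because $\lambda$ is a fixed computable probability measure, so no division-by-zero issue arises. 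Everything else is bookkeeping about the identification of the oracle space and point space with $\{0,1\}^{\mathbb{N}}$.
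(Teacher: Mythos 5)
Your proposal is correct and follows essentially the same route as the paper, which simply applies Theorem~\ref{thm:SR-over-K} to the effectively closed set $K=\{(a,\mu):a\in\{0,1\}^{\mathbb{N}},\,\mu=\lambda\}$; the extra normalization step you spell out via Remark~\ref{rem:normalized} is the detail the paper leaves implicit, and you handle it correctly.
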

\begin{proof}
Apply Theorem~\ref{thm:SR-over-K} where $K$ is the effectively
closed set $\{(a,\mu):a\in\{0,1\}^{\mathbb{N}},\mu=\lambda\}$.
\end{proof}
One would like to say that an oracle $b$ is more powerful than an
oracle $a$ if ``$b$ computes $a$.'' However, for Schnorr randomness,
this does not work.\footnote{There are $x,y\in\{0,1\}^{\mathbb{N}}$ such that $x\equiv_{T}y$
and $x\in\SR{\lambda}y$ (see Miyabe and Rute \cite[Prop~6.4]{Miyabe.Rute:2013}).
Since $x\notin\SR{\lambda}x$, this implies that $\SR{\lambda}y\not\subseteq\SR{\lambda}x$
even though $y$ computes $x$.}  Instead, we use the following uniform reducibility.
\begin{defn}
\label{def:uniform-comp}Let $\mathbb{A}$ and $\mathbb{B}$ be computable
metric spaces. Say that $a\in\mathbb{A}$ \emph{uniformly computes}
$b\in\mathbb{B}$ iff there is an effectively closed set $K\subseteq\mathbb{A}$
and a computable map $f\colon K\rightarrow\mathbb{A}$ such that $f(a)=b$.
Say $a$ and $b$ are \emph{uniformly equicomputable} if each uniformly
computes the other.
\end{defn}
It is easy to see that when $\mathbb{A}=2^{\mathbb{N}}$ and $\mathbb{B}\in\{2^{\mathbb{N}},\mathbb{N}^{\mathbb{N}}\}$
then $a$ uniformly computes $b$ if and only if $a$ truth-table
computes $b$. Our definition of uniform computation is well-suited
to Schnorr randomness as this next proposition shows.
\begin{prop}
\label{prop:reducible-oracles}Let $\mu\in\finitemeas(\mathbb{X})$,
$a\in\mathbb{A}$, and $b\in\mathbb{B}$. Assume $(a,\mu)$ uniformly
computes $b$ as in the previous definition. Then $x\in\SR{\mu}a$
implies $x\in\SR{\mu}b$.
\end{prop}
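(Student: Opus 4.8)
The plan is to prove the contrapositive in a relativized form: given a uniform Schnorr integral test witnessing $x \notin \SR{\mu}{b}$, pull it back along the computable map $f$ to obtain a uniform Schnorr integral test (restricted to $K$) witnessing $x \notin \SR{\mu}{a}$, then invoke Theorem~\ref{thm:SR-over-K}. More precisely, suppose $x \notin \SR{\mu}{b}$. By Definition~\ref{def:SR-noncomp-meas} there is a uniform Schnorr integral test $\{s_{\nu}^{c}\}_{c \in \mathbb{B}, \nu \in \finitemeas(\mathbb{X})}$ such that $s_{\mu}^{b}(x) = \infty$. Let $K \subseteq \mathbb{A} \times \finitemeas(\mathbb{X})$ be the effectively closed set and $f\colon K \to \mathbb{B}$ the computable map with $f(a,\mu) = b$ coming from the hypothesis that $(a,\mu)$ uniformly computes $b$ (here I am treating $\mathbb{A}$ in Definition~\ref{def:uniform-comp} as the product space $\mathbb{A} \times \finitemeas(\mathbb{X})$, and using that this product is again a computable metric space).

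Next I would define, for $(a',\mu') \in K$,
\[
    t_{\mu'}^{a'}(x') \;=\; s_{f(a',\mu')}^{\,\mu'}(x'),
\]
i.e.\ compose the given test with $f$ in the oracle coordinate. Since $f$ is computable on $K$ and $s$ is lower semicomputable, the composition is lower semicomputable as a function on $K \times \mathbb{X}$, so $\{t_{\mu'}^{a'}\}_{(a',\mu')\in K}$ is a computably indexed family of lower semicomputable functions. The integrability condition is immediate: $\int t_{\mu'}^{a'}\,d\mu' = \int s_{f(a',\mu')}^{\mu'}\,d\mu' < \infty$ for every $(a',\mu') \in K$, since this holds for $s$ at every oracle. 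For the computability of the integral, the map $(a',\mu') \mapsto \int t_{\mu'}^{a'}\,d\mu' = \int s_{f(a',\mu')}^{\mu'}\,d\mu'$ is the composition of the computable map $(a',\mu') \mapsto (f(a',\mu'),\mu')$ of type $K \to \mathbb{B}\times\finitemeas(\mathbb{X})$ with the computable map $(c,\nu) \mapsto \int s_{\nu}^{c}\,d\nu$ guaranteed by condition~(2) of Definition~\ref{def:SR-noncomp-meas}; a composition of computable maps is computable. Hence $\{t_{\mu'}^{a'}\}_{(a',\mu')\in K}$ is a uniform Schnorr integral test restricted to $K$ in the sense of Definition~\ref{def:SR-unif-int-test-K}.

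Finally, since $(a,\mu) \in K$ and $t_{\mu}^{a}(x) = s_{f(a,\mu)}^{\mu}(x) = s_{\mu}^{b}(x) = \infty$, Theorem~\ref{thm:SR-over-K} applied to the test $\{t_{\mu'}^{a'}\}_{(a',\mu')\in K}$ gives $x \notin \SR{\mu}{a}$. Contrapositively, $x \in \SR{\mu}{a}$ implies $x \in \SR{\mu}{b}$. I do not expect a serious obstacle here; the only point requiring a little care is the bookkeeping around which space plays the role of ``$\mathbb{A}$'' in Definition~\ref{def:uniform-comp} — namely that the oracle being reduced is the pair $(a,\mu)$ and that $\mathbb{A}\times\finitemeas(\mathbb{X})$ is a computable metric space — together with the routine verification that pulling back a lower semicomputable function along a computable map preserves lower semicomputability and that the resulting integral map is computable by composition. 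Theorem~\ref{thm:SR-over-K} does all the real work of turning a test restricted to $K$ into a genuine uniform test.
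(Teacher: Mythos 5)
Your proof is correct and is essentially the paper's own argument: pass to the contrapositive, pull the test back along $f$ via $s_{\mu'}^{a'}=t_{\mu'}^{f(a',\mu')}$, and apply Theorem~\ref{thm:SR-over-K}; you merely spell out the routine verifications that the paper leaves implicit. (The only blemish is a harmless notational slip where you occasionally write the measure in the superscript and the oracle in the subscript, e.g.\ $s_{f(a',\mu')}^{\mu'}$ instead of $s_{\mu'}^{f(a',\mu')}$.)
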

\begin{proof}
Fix $x_{0}$, $\mu_{0}$, $a_{0}$, and $b_{0}$. Assume $x_{0}\notin\SR{\mu_{0}}{b_{0}}$.
Then there is a uniform Schnorr integral test $\{t_{\mu}^{b}\}_{b\in\mathbb{B},\mu\in\finitemeas(\mathbb{X})}$
such that $t_{\mu_{0}}^{b_{0}}(x_{0})=\infty$. Also assume $(a_{0},\mu_{0})$
uniformly computes $b_{0}$, so there is an effectively closed set
$K\subseteq\mathbb{A}\times\finitemeas(\mathbb{X})$ and a computable
map $f\colon K\rightarrow\mathbb{B}$ such that $f(a_{0},\mu_{0})=b_{0}$.
Define a uniform Schnorr integral test $\{s_{\mu}^{a}\}_{(a,\mu)\in K}$
given by $s_{\mu}^{a}=t_{\mu}^{f(a,\mu)}$ for all $(a,\mu)\in K$.
Then 
\[
s_{\mu_{0}}^{a_{0}}(x_{0})=t_{\mu_{0}}^{b_{0}}(x_{0})=\infty.
\]
By Theorem~\ref{thm:SR-over-K}, $x_{0}\notin\SR{\mu_{0}}{a_{0}}$.
\end{proof}
We conjecture that the converse to Proposition~\ref{prop:reducible-oracles}
also holds, showing that uniform computability (as in Definition~\ref{def:uniform-comp})
is precisely the reducibility arising from uniform Schnorr randomness.
Specifically, one would need to answer this next question positively.
\begin{question}
\label{que:reducibility-needed}Assume $a\in\mathbb{A}$ does not
uniformly compute $b\in\mathbb{B}$ (as in Definition~\ref{def:uniform-comp}).
Is there a finite Borel measure $\mu$ on a space $\mathbb{X}$ such
that $\SR{\mu}a\not\subseteq\SR{\mu}b$?
\end{question}
Recall the \emph{support} of $\mu$, denoted $\supp\mu$, is the
smallest closed set of full $\mu$-measure. Equivalently, $\supp\mu=\{x\in\mathbb{X}:\forall r>0\ \mu(B(x,r))>0\}.$
\begin{prop}
\label{prop:support}If $x\in\SR{\mu}{}$ (even $x\in\KR{\mu}{}$),
then $x\in\supp\mu$.
\end{prop}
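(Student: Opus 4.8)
The plan is to build a single uniform Kurtz test (in fact a single Kurtz $\mu$-test suffices, since the statement only asks about computable oracles) that, for every measure $\mu$, catches all points outside $\supp\mu$. First I would recall that the complement of the support is, by the equivalent description $\supp\mu=\{x:\forall r>0\ \mu(B(x,r))>0\}$, exactly $\bigcup\{B : B \text{ a basic open ball with } \mu(B)=0\}$. So a point $x\notin\supp\mu$ lies in some basic open ball of $\mu$-measure zero. The natural test set is therefore
\[
P=\bigl\{(\mu,x)\in\finitemeas(\mathbb{X})\times\mathbb{X} : \overline{B}(x)\subseteq \text{some basic ball of $\mu$-measure }0\bigr\},
\]
but this is not closed, so I would instead work with a countable family of basic closed balls and take $P$ to be (the closure of) the set of pairs $(\mu,x)$ with $x$ in a fixed basic closed ball $\overline{B_i}$ of small $\mu$-measure. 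Concretely: for each basic closed ball $\overline{B_i}$ and each rational $\varepsilon>0$, the set
\[
C_{i,\varepsilon}=\{(\mu,x) : x\in\overline{B_i}\ \text{and}\ \mu(\overline{B_i'})\le\varepsilon\}
\]
— where $\overline{B_i'}$ is a slightly larger basic closed ball with $\overline{B_i}\subseteq (B_i')^\circ$ — is effectively closed, because membership $x\in\overline{B_i}$ is $\Pi^0_1$ and $\mu\mapsto\mu(\overline{B_i'})$ is upper semicomputable by Lemma~\ref{lem:Portmanteau}(\ref{enu:closed-integral}), so the condition $\mu(\overline{B_i'})\le\varepsilon$ is $\Pi^0_1$ as well.

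Next I would verify the measure-zero condition, which is the only substantive point. Fix $\mu$. If $(\mu,x)\in C_{i,\varepsilon}$ then $x\in\overline{B_i}$ and $\mu(\overline{B_i})\le\mu(\overline{B_i'})\le\varepsilon$; but the fiber $(C_{i,\varepsilon})_\mu$ is contained in $\overline{B_i}$, so $\mu((C_{i,\varepsilon})_\mu)\le\varepsilon$ — which is not yet $0$. To get a genuine uniform Kurtz test I would intersect over $\varepsilon$: let
\[
C_i = \bigcap_{k\in\mathbb{N}} C_{i,2^{-k}} = \{(\mu,x): x\in\overline{B_i}\ \text{and}\ \mu(\overline{B_i'})=0\},
\]
still effectively closed (a computable intersection of $\Pi^0_1$ sets), whose fiber over $\mu$ is either $\varnothing$ (if $\mu(\overline{B_i'})>0$) or $\subseteq\overline{B_i}$ with $\mu(\overline{B_i})=0$; either way it is $\mu$-null. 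Then $P=\bigcup_i C_i$ is \emph{not} closed, so rather than taking the union I would apply the standard device of bundling the index: put $\mathbb{X}'=\mathbb{N}\times\mathbb{X}$ or, more simply, observe that it suffices to produce \emph{one} uniform Kurtz test per index $i$ and note that $\KR{\mu}{}$ is defined by quantifying over \emph{all} uniform Kurtz tests, so a point failing to be in the support fails some $C_i$ and hence is not in $\KR{\mu}{}$. That is, I would phrase the conclusion as: each $C_i$ is a uniform Kurtz test, and if $x\notin\supp\mu$ then $x\in B_i$ for some basic ball $B_i$ with $\mu(B_i)=0$; choosing the enclosing closed balls appropriately gives $(\mu,x)\in C_j$ for a suitable $j$, so $x\notin\KR{\mu}{}$, hence $x\notin\SR{\mu}{}$ by the (still-to-be-proved, but citable) inclusion $\SR{\mu}{}\subseteq\KR{\mu}{}$ — or, to avoid the forward reference, directly exhibit an integral test: $t_\mu(x)=\sum_i c_i\cdot\mathbf{1}_{\overline{B_i}}(x)\cdot[\mu(\overline{B_i'})=0]$ made lower semicomputable by summing the lower-semicomputable approximations.

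The main obstacle is purely the packaging: there is no single closed set whose fiber is exactly $\mathbb{X}\setminus\supp\mu$ (that set is open, not closed), so the argument must either (a) use the freedom in the definition of $\KR{\mu}{}$ to quantify over countably many tests $C_i$, one for each basic ball, or (b) fold the ball-index into the space. I expect approach (a) to be cleanest. The routine verifications — that $x\in\overline{B_i}$ is $\Pi^0_1$, that $\mu\mapsto\mu(\overline{B_i'})$ is upper semicomputable, that a computable intersection of effectively closed sets is effectively closed, and that each fiber is $\mu$-null — all follow immediately from Lemma~\ref{lem:Portmanteau} and the definitions, so I would not belabor them. One mild subtlety worth a sentence: to guarantee that an $x$ strictly inside a zero-measure \emph{open} ball $B_i$ actually lands in one of the \emph{closed} balls $\overline{B_j}$ with $\mu$ of a slightly larger closed ball still zero, I would shrink: since $\mu(B_i)=0$ and $x\in B_i$, pick a basic closed ball $\overline{B_j}$ with $x\in\overline{B_j}\subseteq(B_j')^\circ\subseteq B_i$, whence $\mu(\overline{B_j'})\le\mu(B_i)=0$, so $(\mu,x)\in C_j$ as needed.
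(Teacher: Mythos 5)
Your strategy for the Kurtz half---localize $x\notin\supp\mu$ to a basic ball of $\mu$-measure zero and turn the measure-zero condition into an effectively closed condition on $\mu$---is the right idea and is essentially what the paper does, but you have the open/closed balls the wrong way around at the key step. For an \emph{upper} semicomputable map $f$, the set $\{f\ge\varepsilon\}$ is $\Pi_1^0$, while $\{f\le\varepsilon\}$ is not; indeed $\{\mu:\mu(\overline{B}{}')\le\varepsilon\}$ need not even be topologically closed, since $\mu\mapsto\mu(\overline{B}{}')$ is only upper semicontinuous under weak convergence (take $\mu_n=\delta_{x_n}$ with $x_n\notin\overline{B}{}'$ converging to a boundary point of $\overline{B}{}'$). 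So your $C_{i,\varepsilon}$, and hence $C_i$, are not effectively closed as written. The fix is the paper's choice: demand $\mu(B_i')=0$ for the \emph{open} enlarged ball $B_i'$, which is a $\Pi_1^0$ condition on $\mu$ because $\mu\mapsto\mu(B_i')$ is \emph{lower} semicomputable (Lemma~\ref{lem:Portmanteau}), while keeping $x\in\overline{B_i}$ for the smaller closed ball. With that swap your per-ball uniform Kurtz tests and your final shrinking argument go through.

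The Schnorr half is where the genuine gap lies. Citing $\SR{\mu}{}\subseteq\KR{\mu}{}$ is not just a forward reference: in this paper that inclusion is Proposition~\ref{prop:SR-implies-KR}, whose proof goes through Theorem~\ref{thm:SR-name}, whose proof in turn invokes the present proposition---so the argument would be circular. Your fallback integral test $t_\mu(x)=\sum_i c_i\,\mathbf{1}_{\overline{B_i}}(x)\cdot[\mu(B_i')=0]$ does not work either: each summand is the indicator of an effectively \emph{closed} subset of $\finitemeas(\mathbb{X})\times\mathbb{X}$, hence upper semicomputable rather than lower semicomputable, and no reshuffling of open and closed balls helps, since $\{\mu:\mu(A)=0\}$ is never an open set of measures (one can always add a small point mass inside $A$). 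The missing tool is Theorem~\ref{thm:SR-over-K}: restrict to the effectively closed set of measures $K=\{\mu:\mu(B)=0\}$ for a single open ball $B$ around $x_0$ with $\mu_0(B)=0$, and use the test $t_\mu=\infty\cdot\mathbf{1}_B$, which is lower semicomputable on $K\times\mathbb{X}$ and satisfies $\int t_\mu\,d\mu=0$ for every $\mu\in K$. That is exactly the paper's proof, and it is the reason the section on tests restricted to closed sets of measures is developed before this proposition.
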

\begin{proof}
Fix $\mu_{0},x_{0}$. Assume $x_{0}\notin\supp\mu_{0}$. Then there
is some basic open ball $B\subseteq\mathbb{X}$ around $x_{0}$ such
that $\mu_{0}(B)=0$. This ball is effectively open. Let $K$ be the
effectively closed set $\{\mu:\mu(B)=0\}$ (which is effectively closed
since $\mu\mapsto\mu(B)$ is lower semicomputable by Lemma~\ref{lem:Portmanteau}).
Let $\{t_{\mu}\}_{\mu\in K}$ be the uniform Schnorr integral test
on $K$ given by 
\[
t_{\mu}(x)=\begin{cases}
\infty & x\in B\\
0 & \text{otherwise}
\end{cases}.
\]
Then $t_{\mu}$ is lower semicomputable and $\int_{{}}t_{\mu}\,d\mu=0$
for all $\mu\in K$, but $t_{\mu_{0}}(x_{0})=\infty$. Hence $x_{0}\notin\SR{\mu_{0}}{}$.
\end{proof}
\begin{prop}
Let $\mathbb{X}$ be a computable metric space and assume $\mu\in\finitemeas(\mathbb{X})$.
Then $x\in\SR{\mu}{}$ if and only if $\mu\neq0$ and $x\in\SR{\mu/\|\mu\|}{\|\mu\|}$.
\end{prop}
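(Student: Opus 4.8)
The plan is to use the ``computable change of variables'' $\mu \leftrightarrow (\|\mu\|,\mu/\|\mu\|)$, which identifies the nonzero finite measures with pairs consisting of a positive scalar and a probability measure, together with Theorem~\ref{thm:SR-over-K} so that we may work with tests defined only on an effectively closed set of measures. First I would note that in both directions we may assume $\mu\neq0$: if $x\in\SR{\mu}{}$ then $\mu\neq0$ by Proposition~\ref{prop:.zero-meas}, and the right-hand side of the claimed equivalence also explicitly requires $\mu\neq0$. So fix $\mu_{0}\neq0$, put $c_{0}:=\|\mu_{0}\|>0$ and $\nu_{0}:=\mu_{0}/\|\mu_{0}\|$, and choose $n$ with $2^{-n}\leq c_{0}$. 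Let $K_{n}$ be the effectively closed set of pairs $(c,\nu)$, with $\nu$ a probability measure, such that $c\geq2^{-n}$; and let $L_{n}:=\{\mu\in\finitemeas(\mathbb{X}):\|\mu\|\geq2^{-n}\}$, which is effectively closed since $\mu\mapsto\|\mu\|$ is computable. On $K_{n}$ the map $(c,\nu)\mapsto c\nu$ is computable, and on $L_{n}$ the map $\mu\mapsto(\|\mu\|,\mu/\|\mu\|)$ is computable, because $\int f_{i}\,d(\mu/\|\mu\|)=\|\mu\|^{-1}\int f_{i}\,d\mu$ and the condition $\|\mu\|\geq2^{-n}$ bounds the denominator away from $0$, so Lemma~\ref{lem:char-comp-meas} applies.

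For ``$x\in\SR{\mu_{0}}{}\Rightarrow x\in\SR{\nu_{0}}{c_{0}}$'' I would argue contrapositively. Given a uniform Schnorr integral test $\{t_{\nu}^{c}\}$ with $t_{\nu_{0}}^{c_{0}}(x)=\infty$, define $s_{\mu}(x):=t_{\mu/\|\mu\|}^{\|\mu\|}(x)$ for $\mu\in L_{n}$; this is lower semicomputable as a composition with the computable map above. Since $\mu=\|\mu\|\cdot(\mu/\|\mu\|)$ we get $\int s_{\mu}\,d\mu=\|\mu\|\int t_{\mu/\|\mu\|}^{\|\mu\|}\,d(\mu/\|\mu\|)$, which is finite and, on $L_{n}$, a computable function of $\mu$ (a product of the computable $\|\mu\|$ with the integral supplied by the test $\{t_{\nu}^{c}\}$, precomposed with the computable map $\mu\mapsto(\|\mu\|,\mu/\|\mu\|)$). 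Thus $\{s_{\mu}\}_{\mu\in L_{n}}$ is a uniform Schnorr integral test restricted to $L_{n}$ with $s_{\mu_{0}}(x)=\infty$, so Theorem~\ref{thm:SR-over-K} gives $x\notin\SR{\mu_{0}}{}$. The converse ``$x\in\SR{\nu_{0}}{c_{0}}\Rightarrow x\in\SR{\mu_{0}}{}$'' is symmetric: from a uniform Schnorr integral test $\{t_{\mu}\}$ with $t_{\mu_{0}}(x)=\infty$, set $s_{\nu}^{c}(x):=t_{c\nu}(x)$ for $(c,\nu)\in K_{n}$; then $\int s_{\nu}^{c}\,d\nu=c^{-1}\int t_{c\nu}\,d(c\nu)$ is finite and computable on $K_{n}$ (because $c\geq2^{-n}$ and $(c,\nu)\mapsto\int t_{c\nu}\,d(c\nu)$ is computable), so $\{s_{\nu}^{c}\}_{(c,\nu)\in K_{n}}$ is a uniform Schnorr integral test restricted to $K_{n}$ with $s_{\nu_{0}}^{c_{0}}(x)=\infty$, and Theorem~\ref{thm:SR-over-K} gives $x\notin\SR{\nu_{0}}{c_{0}}$.

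I expect the only genuine obstacle to be the division by $\|\mu\|$ (equivalently by $c$): the change of variables degenerates at the zero measure, so these tests cannot be extended to all of $\finitemeas(\mathbb{X})$; indeed Proposition~\ref{prop:.zero-meas} shows that no such extension can exist, since $\int t\,d(c\nu)$ need not vanish at rate $c$ as $c\to0$. Restricting to $\|\mu\|\geq2^{-n}$ and invoking Theorem~\ref{thm:SR-over-K} is precisely what sidesteps this; the remaining verifications (that the relevant integrals stay finite and computable) are routine applications of Lemmas~\ref{lem:char-comp-meas} and~\ref{lem:Portmanteau}.
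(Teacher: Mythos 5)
Your proof is correct and follows essentially the same route as the paper: both rest on the fact that $\mu$ and the pair $(\|\mu\|,\mu/\|\mu\|)$ are uniformly equicomputable away from the zero measure (which is dispatched by Proposition~\ref{prop:.zero-meas}), combined with the restricted-test machinery of Theorem~\ref{thm:SR-over-K}. The paper compresses this into a one-line citation of Proposition~\ref{prop:reducible-oracles}, whereas you write out the underlying test translation explicitly --- including the rescaling of the integral by the computable factor $\|\mu\|$, which is the one detail that the literal statement of Proposition~\ref{prop:reducible-oracles} (which changes only the oracle, not the measure) does not cover.
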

\begin{proof}
We have already handled the zero measure in Proposition~\ref{prop:.zero-meas}.
The rest follows from Proposition~\ref{prop:reducible-oracles} since
$\mu$ and $(\|\mu\|,\mu/\|\mu\|)$ are uniformly equicomputable as
in Definition~\ref{def:uniform-comp}.
\end{proof}
By this last proposition, there is not much loss in restricting our
attention to probability measures, which we will mostly do in the
rest of the paper.

\section{Schnorr randomness on noncomputable product measures\label{sec:SR-product-meas}}

An important result in algorithmic randomness is Van Lambalgen's Theorem,
which characterizes when a pair $(x,y)$ can be random for a product
measure $\mu\otimes\nu$. In this section, let $\mu$ and $\nu$ be
probability measures on $\mathbb{X}$ and $\mathbb{Y}$, respectively.
Since $\probmeas(\mathbb{X})$ and $\probmeas(\mathbb{Y})$ are computable
metric spaces, we can also treat $\mu$ and $\nu$ as oracles. 

Recall that $\mu\otimes\nu$ denotes the \emph{product measure} on
$\mathbb{X}\times\mathbb{Y}$ characterized by the equation 
\[
(\mu\otimes\nu)(A\times B)=\mu(A)\times\nu(B)\qquad(A\subseteq\mathbb{X},B\subseteq\mathbb{Y}).
\]
Every product measure satisfies Fubini's Theorem, which states that
for a $\mu\otimes\nu$-integrable function $f$,
\[
\iint_{\mathbb{X}\times\mathbb{Y}}f(x,y)\,d(\mu\otimes\nu)(x,y)=\int_{\mathbb{X}}\left(\int_{\mathbb{Y}}f(x,y)\,d\nu(y)\right)d\mu(x)=\int_{\mathbb{Y}}\left(\int_{\mathbb{X}}f(x,y)\,d\mu(x)\right)d\nu(y).
\]

Van Lambalgen's Theorem (generalized to noncomputable measures) is
as follows. The main proof goes back to Van Lambalgen \cite[Thm.~5.10]{Lambalgen:1990}.
This extension to noncomputable measures follows the same general
proof. Details can be found in Day and Reimann \cite[Thm.~1.7]{Day.Reimann:2014}
(for $\mu=\nu$ and $\mathbb{X}=\{0,1\}^{\mathbb{N}}$), and Simpson
and Stephan \cite[Thm.~4.11]{Simpson.Stephan:2015} (for $\mathbb{X}=\{0,1\}^{\mathbb{N}}$).
(The same proof can be easily adapted to computable metric spaces.)
\begin{thm}[Van Lambalgen]
For (noncomputable) probability measures $\mu$ and $\nu$,
\[
(x,y)\in\MLR{\mu\otimes\nu}{}\quad\text{iff}\quad x\in\MLR{\mu}{\nu}\text{ and }y\in\MLR{\nu}{\mu,x}.
\]
\end{thm}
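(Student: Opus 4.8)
The plan is to run the classical Van Lambalgen argument of \cite{Lambalgen:1990} inside the integral-test framework of this paper, using two facts special to the noncomputable-measure Martin-L\"of setting. First, Martin-L\"of randomness for noncomputable measures admits a \emph{universal} uniform integral test (G\'acs \cite{Gacs:2005}): for any configuration of oracle and measure spaces there is a lower semicomputable $\mathbf{t}_{\mu}^{a}(x)$ with $\int\mathbf{t}_{\mu}^{a}\,d\mu\leq1$ for all $(a,\mu)$, such that $x\notin\MLR{\mu}a$ iff $\mathbf{t}_{\mu}^{a}(x)=\infty$, and which dominates, up to a multiplicative constant, every uniform Martin-L\"of integral test with $\int\leq1$. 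Second, the set $K\subseteq\probmeas(\mathbb{X}\times\mathbb{Y})$ of product measures is effectively closed: the marginals $\rho\mapsto\mu_{\rho},\nu_{\rho}$ are computable by Lemma~\ref{lem:Portmanteau}, and $\rho\in K$ iff $\int f(x)g(y)\,d\rho=(\int f\,d\mu_{\rho})(\int g\,d\nu_{\rho})$ for all basic $f,g$, which is a $\Pi_{1}^{0}$ condition on computable reals. The Martin-L\"of analogue of Theorem~\ref{thm:SR-over-K} (whose proof is in fact simpler here, as there is no computable-integral condition to preserve off $K$) then lets me test $\mu\otimes\nu$-randomness using only tests restricted to $K$, on which Fubini's Theorem applies.

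For the forward direction I would show that each failure of the right-hand side produces a product test catching $(x,y)$. If $x\notin\MLR{\mu}{\nu}$, I fix a witnessing test $u_{\mu}^{\nu}(x)$ and set $s_{\rho}(x,y)=u_{\mu_{\rho}}^{\nu_{\rho}}(x)$. Since this depends only on $x$ and the marginals, $\int s_{\rho}\,d\rho=\int u_{\mu_{\rho}}^{\nu_{\rho}}\,d\mu_{\rho}<\infty$ for \emph{every} $\rho$, so $s$ is a genuine uniform Martin-L\"of integral test on the full space with $s_{\mu\otimes\nu}(x,y)=u_{\mu}^{\nu}(x)=\infty$; hence $(x,y)\notin\MLR{\mu\otimes\nu}{}$. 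If instead $y\notin\MLR{\nu}{\mu,x}$, I use the normalized universal test, setting $s_{\rho}(x,y)=\mathbf{t}_{\nu_{\rho}}^{\mu_{\rho},x}(y)$. On $K$, Fubini gives $\int s_{\mu\otimes\nu}\,d(\mu\otimes\nu)=\int_{\mathbb{X}}\int_{\mathbb{Y}}\mathbf{t}_{\nu}^{\mu,x}(y)\,d\nu(y)\,d\mu(x)\leq\int_{\mathbb{X}}1\,d\mu=1$, while $s_{\mu\otimes\nu}(x,y)=\infty$; restricting to $K$ and applying the Martin-L\"of version of Theorem~\ref{thm:SR-over-K} again yields $(x,y)\notin\MLR{\mu\otimes\nu}{}$.

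For the reverse direction I would argue by domination. Let $\mathbf{s}_{\rho}(x,y)$ be the universal test restricted to $K$, $\mathbf{u}_{\mu}^{\nu}(x)$ the universal test for $x$ over oracle $\nu$, and $\mathbf{t}_{\nu}^{\mu,x}(y)$ the universal test for $y$ over oracle $(\mu,x)$, each with unit integral bound. The function $w(x,y)=\mathbf{u}_{\mu}^{\nu}(x)+\mathbf{t}_{\nu}^{\mu,x}(y)$ is lower semicomputable, and on $K$ Fubini gives $\int w\,d(\mu\otimes\nu)=\int\mathbf{u}_{\mu}^{\nu}\,d\mu+\int_{\mathbb{X}}\int_{\mathbb{Y}}\mathbf{t}_{\nu}^{\mu,x}\,d\nu\,d\mu\leq1+1=2$, so $w$ is a uniform Martin-L\"of integral test restricted to $K$. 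By universality of $\mathbf{s}$ on $K$ there is a constant $C$ with $\mathbf{s}_{\rho}\leq C\,w$ on $K$. Thus if $(x,y)\notin\MLR{\mu\otimes\nu}{}$, i.e.\ $\mathbf{s}_{\mu\otimes\nu}(x,y)=\infty$, then $w(x,y)=\infty$, forcing $\mathbf{u}_{\mu}^{\nu}(x)=\infty$ or $\mathbf{t}_{\nu}^{\mu,x}(y)=\infty$, i.e.\ $x\notin\MLR{\mu}{\nu}$ or $y\notin\MLR{\nu}{\mu,x}$.

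The hard part will be the asymmetry in the conditional ($y$-)test. A witnessing test for $y\notin\MLR{\nu}{\mu,x}$ only guarantees a \emph{finite} inner integral $\int\mathbf{t}_{\nu}^{\mu,x}\,d\nu$ for each $x$, with no uniform bound, so the outer integral over $\mu$ in the lifted product test could diverge. This is exactly why I pass to the universal test with the uniform bound $\int\leq1$ and why I restrict to the effectively closed set $K$, where Fubini licenses splitting the double integral; both devices are needed precisely for this conjunct. The $x$-test causes no such difficulty, since a function of $x$ integrates against any $\rho$ as it does against its $x$-marginal, whether or not $\rho$ is a product, which is why that half of the forward direction needs neither the universal test nor the restriction to $K$.
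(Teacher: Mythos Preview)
The paper does not actually prove this theorem; it only cites Van Lambalgen, Day--Reimann, and Simpson--Stephan for the details. So the comparison is with the classical argument rather than with anything in the paper itself.

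Your forward direction is fine and follows the standard pattern: the use of the universal test with $\int\mathbf{t}_{\nu}^{\mu,x}\,d\nu\leq 1$ is exactly what is needed to get a uniform bound on the inner integral so that Fubini yields a legitimate product test on the effectively closed set $K$ of product measures.

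Your reverse direction, however, contains a genuine error. You write ``by universality of $\mathbf{s}$ on $K$ there is a constant $C$ with $\mathbf{s}_{\rho}\leq C\,w$.'' This inequality is backwards. Universality of $\mathbf{s}$ says that $\mathbf{s}$ \emph{dominates} every other test: for any test $t$ with $\int t\,d\rho\leq 1$ there is $C$ with $t\leq C\,\mathbf{s}$. Applied to $w$ this gives $w\leq C\,\mathbf{s}$, which only lets you conclude $\mathbf{s}=\infty$ from $w=\infty$, i.e.\ the forward direction again. It does \emph{not} give you $w=\infty$ from $\mathbf{s}=\infty$, which is what you need.

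The correct reverse argument is the one the paper carries out for Schnorr randomness in Theorem~\ref{thm:vL-sch}, but without Lemma~\ref{lem:Luzin-for-tests}. Assume $(x_{0},y_{0})\notin\MLR{\mu_{0}\otimes\nu_{0}}{}$ and $x_{0}\in\MLR{\mu_{0}}{\nu_{0}}$; take a test $t_{\rho}$ on $K$ with $\int t_{\rho}\,d\rho\leq 1$ and $t_{\mu_{0}\otimes\nu_{0}}(x_{0},y_{0})=\infty$. The inner integral $I_{\mu}^{\nu}(x)=\int t_{\mu\otimes\nu}(x,y)\,d\nu(y)$ is itself a uniform Martin-L\"of test (lower semicomputable with $\int I_{\mu}^{\nu}\,d\mu\leq 1$), so $x_{0}\in\MLR{\mu_{0}}{\nu_{0}}$ gives $I_{\mu_{0}}^{\nu_{0}}(x_{0})<\infty$. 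Now for each $N$ cut off the approximation of $y\mapsto t_{\mu\otimes\nu}(x,y)$ once its $\nu$-integral reaches $2^{N}$, and divide by $2^{N}$; this yields a uniform Martin-L\"of test $u^{N}$ in the parameters $(\mu,x,\nu)$ with $\int u^{N}\,d\nu\leq 1$. For $N$ large enough that $I_{\mu_{0}}^{\nu_{0}}(x_{0})<2^{N}$ the cutoff never triggers at $(\mu_{0},x_{0},\nu_{0})$, so $u^{N}(y_{0})=2^{-N}t_{\mu_{0}\otimes\nu_{0}}(x_{0},y_{0})=\infty$ and hence $y_{0}\notin\MLR{\nu_{0}}{\mu_{0},x_{0}}$. (Summing $2^{-N}u^{N}$ over $N$ gives a single witnessing test if you prefer.) This cutoff step is the Martin-L\"of substitute for Lemma~\ref{lem:Luzin-for-tests}; it is simpler precisely because the integral only needs to be finite, not computable.
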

We will show that Van Lambalgen's Theorem holds for Schnorr randomness
(as defined in Definition~\ref{def:SR-noncomp-meas}).\footnote{We caution the reader that other papers and books say that Van Lambalgen's
theorem \emph{does not} hold for Schnorr randomness. That is because
those papers and books use the non-uniformly relativized version which
we discuss more in subsection~\ref{subsec:naive-def}.} Our proof generalizes that in Miyabe \cite{Miyabe:2011} and in Miyabe
and Rute \cite{Miyabe.Rute:2013} (both of which concern Van Lambalgen's
Theorem for Schnorr randomness with respect to the fair coin measure
$\lambda$ on $\{0,1\}^{\mathbb{N}}$). Before proving the theorem,
we need this analytic lemma. (The reader familiar with measure theory
will note this is a computable version of a special case of Luzin's
Theorem.)
\begin{lem}
\label{lem:Luzin-for-tests}Let $\{t_{\mu}^{a}\}_{(a,\mu)\in K}$
be a uniform Schnorr integral test. Let $x_{0}\in\mathbb{X}$, $(a_{0},\mu_{0})\in K$
be such that $x_{0}\in\SR{\mu_{0}}{a_{0}}$. Then there is a computable
function $h\colon\mathbb{A}\times\probmeas(\mathbb{X})\times\mathbb{X}\rightarrow[0,\infty)$
such that $h_{\mu}^{a}(x)\leq t_{\mu}^{a}(x)$ and $h_{\mu_{0}}^{a_{0}}(x_{0})=t_{\mu_{0}}^{a_{0}}(x_{0})$.
\end{lem}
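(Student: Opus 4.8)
The plan is to approximate $t_{\mu}^{a}$ from below by a computable family and then ``freeze'' that approximation along the sequence where it is known to converge. By Lemma~\ref{lem:approx-below}, there is a computable function $f\colon[0,\infty)\times K\times\mathbb{X}\rightarrow[0,\infty)$, nondecreasing in the first coordinate, with $t_{\mu}^{a}(x)=\sup_{r}f_{\mu}^{r;a}(x)$ and with $r,a,\mu\mapsto\int f_{\mu}^{r;a}\,d\mu$ computable. The idea is to choose, in a way that is uniformly computable from $(a,\mu)$, a rate $r=r(n,a,\mu)$ along which $\int f_{\mu}^{r;a}\,d\mu$ is within $2^{-n}$ of $\int t_{\mu}^{a}\,d\mu$ (possible since the latter integral is computable over $K$), and then set $h_{\mu}^{a}(x)=\sup_{n}f_{\mu}^{r(n,a,\mu);a}(x)=\lim_{n}f_{\mu}^{r(n,a,\mu);a}(x)$. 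This $h$ is lower semicomputable, is bounded above by $t_{\mu}^{a}$, and satisfies $\int h_{\mu}^{a}\,d\mu=\int t_{\mu}^{a}\,d\mu$ by monotone convergence — but in general it is only lower semicomputable, not computable, which is not yet what we want.

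To upgrade lower semicomputability to computability we use the hypothesis $x_{0}\in\SR{\mu_{0}}{a_{0}}$, i.e.\ $t_{\mu_{0}}^{a_{0}}(x_{0})<\infty$. Consider the difference $d_{\mu}^{a}(x)=t_{\mu}^{a}(x)-h_{\mu}^{a}(x)$, which is lower semicomputable and nonnegative, and has integral $\int d_{\mu}^{a}\,d\mu=0$ over $K$. Applying Lemma~\ref{lem:approx-below} (the $g$-free version) to the uniform Schnorr integral test $\{d_{\mu}^{a}\}_{(a,\mu)\in K}$ — whose integral is the computable constant function $0$ — gives a computable approximation from below whose integral is identically $0$; since $d_{\mu}^{a}\geq 0$ and its integral vanishes, each such approximant is $\mu$-a.e.\ zero. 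The point is rather to argue as follows: because $h$ was built so that $\int h_{\mu}^{a}\,d\mu$ is already computable (it equals the computable map $\int t_{\mu}^{a}\,d\mu$), and $h$ is a lower semicomputable function with computable integral over the effectively closed set $K$, we may invoke the characterization behind Theorem~\ref{thm:SR-over-K}: the hypothesis $x_{0}\in\SR{\mu_{0}}{a_{0}}$ guarantees $h_{\mu_{0}}^{a_{0}}(x_{0})<\infty$, and then a Schnorr-style cut-and-truncate argument — truncate $h_{\mu}^{a}$ at level $N$ and correct the integral defect, which is computable — converts $h$ into a computable $h$ agreeing with $t_{\mu}^{a}$ at $(a_{0},\mu_{0},x_{0})$.

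Concretely, the final step I would carry out: since $\int h_{\mu}^{a}\,d\mu$ is computable and equals $\sup_{N}\int\min\{N,h_{\mu}^{a}\}\,d\mu$ by monotone convergence, the tail integrals $\int (h_{\mu}^{a}-\min\{N,h_{\mu}^{a}\})\,d\mu$ tend to $0$ effectively in a name for $(a,\mu)$; but to stay uniform in $(a,\mu)$ itself rather than a name, restrict to the effectively closed set $K'=\{(a,\mu,N)\in K\times\mathbb{N}: \int h_{\mu}^{a}\,d\mu - \int\min\{N,h_{\mu}^{a}\}\,d\mu \le 2^{-n}\}$ and use the computable Tietze extension (Lemma~\ref{lem:Tietze}) exactly as in the proof of Theorem~\ref{thm:SR-over-K} to patch together a globally computable function. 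Setting $h_{\mu}^{a}(x)=\sup_{n}\min\{N(n,a,\mu),h_{\mu}^{a}(x)\}$ for a suitably chosen computable $N(n,a,\mu)$ with $N(n,a_{0},\mu_{0})\to\infty$ fast enough that $\min\{N(n,a_{0},\mu_{0}),h_{\mu_{0}}^{a_{0}}(x_{0})\}=h_{\mu_{0}}^{a_{0}}(x_{0})=t_{\mu_{0}}^{a_{0}}(x_{0})$ for large $n$ (using finiteness of $t_{\mu_{0}}^{a_{0}}(x_{0})$), gives the desired $h$. The main obstacle is the bookkeeping to keep everything uniform in the measure $\mu$ and oracle $a$ — not merely in a name — which is precisely why we must invoke the computable Tietze extension theorem rather than argue pointwise; the measure-theoretic content (monotone convergence, Fatou) is routine.
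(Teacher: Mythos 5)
Your proposal has a genuine gap at its central step: truncating a lower semicomputable function at a computable height does not make it computable. If $h_{\mu}^{a}$ is lower semicomputable, then $\min\{N,h_{\mu}^{a}\}$ is still only lower semicomputable (e.g.\ $\min\{1,\infty\cdot\mathbf{1}_{U}\}=\mathbf{1}_{U}$ for an effectively open $U$ is not continuous, hence not computable), and taking $\sup_{n}\min\{N(n,a,\mu),h_{\mu}^{a}\}$ with $N(n,a,\mu)\to\infty$ just returns $h_{\mu}^{a}$ itself, which in your construction is literally $t_{\mu}^{a}$. So the object you end up with is never upper semicomputable, and the whole point of the lemma --- a Luzin-type statement producing a genuinely \emph{computable} (i.e.\ continuous, two-sidedly approximable) minorant that touches $t_{\mu}^{a}$ at the random point --- is not achieved. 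The ``cut off the enumeration so that the integral equals a prescribed computable value'' device (which the paper does use elsewhere) likewise only yields a lower semicomputable function with computable integral, not a computable function, so it cannot close this gap either.

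Relatedly, you use the hypothesis $x_{0}\in\SR{\mu_{0}}{a_{0}}$ only in the weak form $t_{\mu_{0}}^{a_{0}}(x_{0})<\infty$, and that is not enough. What is actually needed is control on the \emph{rate} at which the approximants converge to $t$ at $x_{0}$: the paper arranges $\int(t_{\mu}^{a}-f_{\mu}^{r;a})\,d\mu=2^{-r}$, builds the candidate $h_{\mu}^{m;a}(x)=\sup\{f_{\mu}^{2r;a}(x):f_{\mu}^{2r;a}(x)\le\inf_{m\le s\le r}(f_{\mu}^{2s;a}(x)+2^{-s})\}$ --- which is computable precisely because the sup can also be written as an inf of computable functions --- and then shows that if this fails to equal $t_{\mu_{0}}^{a_{0}}(x_{0})$ for every $m$, then $t_{\mu_{0}}^{a_{0}}(x_{0})-f_{\mu_{0}}^{2n;a_{0}}(x_{0})\ge2^{-(n+1)}$ infinitely often, so the auxiliary Schnorr integral test $u_{\mu}^{a}=\sum_{n}2^{n+1}(t_{\mu}^{a}-f_{\mu}^{2n;a})$ (with $\int u_{\mu}^{a}\,d\mu=4$) diverges at $x_{0}$, contradicting $x_{0}\in\SR{\mu_{0}}{a_{0}}$. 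Some such rate-of-convergence argument against a purpose-built test is unavoidable here; finiteness of $t_{\mu_{0}}^{a_{0}}(x_{0})$ together with a Tietze patching of integral defects will not produce it.
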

\begin{proof}
There is no loss in assuming that $t_{\mu}^{a}\geq1$. (For, if not,
we find $h_{\mu}^{a}$ for $t_{\mu}^{a}+1$ and then $\max\{0,h_{\mu}^{a}-1\}$
is the desired result.) First we prove a useful claim.

\begin{claim*}
Assuming $t_{\mu}^{a}\geq1$, there is a computable function $f\colon[0,\infty)\times K\times\mathbb{X}\rightarrow[0,\infty)$
strictly increasing in the first coordinate such that $t_{\mu}^{a}=\sup_{r}f(r,a,\mu,x)$
and $\int(t_{\mu}^{a}(x)-f(r,a,\mu,x))\,d\mu(x)=2^{-r}$.
\end{claim*}
\begin{proof}[Proof of claim.]
By Lemma~\ref{lem:approx-below}, there is a computable family $\{g_{\mu}^{r;a}\}_{r\in[0,\infty),(a,\mu)\in K}$
of strictly increasing uniformly computable functions such that for
$(a,\mu)\in K$ we have $t_{\mu}^{a}=\sup_{r}g_{\mu}^{r;a}$ and $r,a,\mu\mapsto\int_{{}}g_{\mu}^{r;a}\,d\mu$
is computable.

However, $\int(t_{\mu}^{a}-g_{\mu}^{r;a})\,d\mu$ may not be $2^{-r}.$
Notice that the integral of the difference
\[
F(r,a,\mu)=\int(t_{\mu}^{a}-g_{\mu}^{r;a})\,d\mu=\int_{{}}t_{\mu}^{a}\,d\mu-\int_{{}}g_{\mu}^{r;a}\,d\mu
\]
is computable and in the first coordinate it is a strictly decreasing
function from $[0,\infty)$ into $\mathbb{R}$. Moreover, since $g_{\mu}^{0;a}=0$
and $t_{\mu}^{a}\geq1$, for any $a$ and $\mu$ the greatest value
of $d(r,a,\mu)$ is $d(0,a,\mu)=\int t_{\mu}^{a}\,d\mu\geq1$ and
the lower limit is 
\[
\lim_{r\to\infty}F(r,a,\mu)=\lim_{r\to\infty}\int(t_{\mu}^{a}-g_{\mu}^{r;a})\,d\mu=0
\]
Therefore, we can compute a partial inverse map $F^{-1}(s,a,\mu)$
(for $s\in(0,1]$) such that $F(F^{-1}(s,a,\mu),a,\mu)=s$. We consider
the function $f\colon[0,\infty)\times K\times\mathbb{X}\rightarrow[0,\infty)$
such that 
\[
f(r,a,\mu,x)=g(F^{-1}(2^{-r},a,\mu),a,\mu,x).
\]
Then $t_{\mu}^{a}=\sup\{f(r,a,\mu,x):r\in[0,\infty)\}$ and $\int(t_{\mu}^{a}(x)-f(r,a,\mu,x))\,d\mu(x)=2^{-r}$,
proving the claim.
\end{proof}
As usual, write $f(r,a,\mu,x)$ as $f_{\mu}^{r;a}(x)$. The main idea
of the proof is that the convergence of $f_{\mu}^{n;a}(x)$ to $t_{\mu}^{a}(x)$
is almost uniform. In particular, $t_{\mu}^{a}<f_{\mu}^{2r;a}+2^{-r}$
except with a small probability.

For $m\in\mathbb{N}$, define $h_{\mu}^{m;a}$ as follows. First,
define
\[
g_{\mu}^{m,r;a}(x)=\inf\{f_{\mu}^{2s;a}(x)+2^{-s}:m\leq s\leq r\}.
\]
This is computable in $m,r,a,\mu,x$ ($r\geq m$) since the infimum
over the closed interval $[m,r]$ is computable. Notice $g_{\mu}^{m,r;a}$
is nonincreasing in $r$ and $g_{\mu}^{m,r;a}\leq f_{\mu}^{r;a}+2^{-r}$.
Next, define 
\[
h_{\mu}^{m;a}(x)=\sup\{f_{\mu}^{2r;a}(x):r\in[m,\infty)\ \text{and}\ f_{\mu}^{2r;a}(x)\leq g_{\mu}^{m,r;a}(x)\}.
\]
Clearly, $h_{\mu}^{m;a}\leq t_{\mu}^{a}$ since $t_{\mu}^{a}(x)=\sup_{r}f_{\mu}^{2r;a}(x)$.
By this definition $h_{\mu}^{m;a}(x)$ is clearly lower semicomputable.
However, we also have that $h_{\mu}^{m;a}(x)$ is upper semicomputable
since
\[
h_{\mu}^{m;a}(x)=\inf\{g_{\mu}^{m,r;a}(x):f_{\mu}^{2r;a}(x)\leq g_{\mu}^{m,r;a}(x)\}.
\]
Therefore $h_{\mu}^{m;a}(x)$ is computable in $m,a,\mu,x$.

Now for a fixed $x_{0}\in\mathbb{X}$ and $(a_{0},\mu_{0})\in K$
we have two cases:

In the first case, there exists an $m$ such that for all $r\geq m$,
$f_{\mu_{0}}^{2r;a_{0}}(x_{0})<g_{\mu_{0}}^{m,r;a_{0}}(x_{0})$. In
this case, 
\[
h_{\mu_{0}}^{m;a_{0}}(x_{0})=\sup_{r}f_{\mu_{0}}^{2r;a_{0}}(x_{0})=t_{\mu_{0}}^{a_{0}}(x_{0}).
\]
By letting $h_{\mu_{0}}^{a_{0}}(x_{0})=h_{\mu_{0}}^{m;a_{0}}(x_{0})$
for that choice of $m$ we have the desired conclusion. It remains
to show that the other case cannot hold for $x_{0}\in\SR{\mu_{0}}{a_{0}}$.

In the second case, for all $m$, there exists some $r$ such that
$f_{\mu_{0}}^{2r;a_{0}}(x_{0})=g_{\mu_{0}}^{m,r;a_{0}}(x_{0})$ and
therefore there exists some $s\in[m,r)$ such that $f_{\mu_{0}}^{2r;a_{0}}(x_{0})=g_{\mu_{0}}^{m,r;a_{0}}(x_{0})=f_{\mu_{0}}^{2s;a_{0}}(x_{0})+2^{-s}$.
For a fixed $m$ and the corresponding $r,s$ we have 
\[
t_{\mu_{0}}^{a_{0}}(x_{0})-f_{\mu_{0}}^{2s;a_{0}}(x_{0})\geq f_{\mu_{0}}^{2r;a_{0}}(x_{0})-f_{\mu_{0}}^{2s;a_{0}}(x_{0})=2^{-s}.
\]
In particular, for $n=\lfloor s\rfloor$ we have that $n\geq m$ and
\[
t_{\mu_{0}}^{a_{0}}(x_{0})-f_{\mu_{0}}^{2n;a_{0}}(x_{0})\geq t_{\mu_{0}}^{a_{0}}(x_{0})-f_{\mu_{0}}^{2s;a_{0}}(x_{0})\geq2^{-s}\geq2^{-(n+1)}.
\]
Since $m$ is arbitrary, there exists infinitely many $n$ that 
\[
t_{\mu_{0}}^{a_{0}}(x_{0})-f_{\mu_{0}}^{2n;a_{0}}(x_{0})\geq2^{-(n+1)}.
\]
Now, consider the sum
\[
u_{\mu}^{a}=\sum_{n\in\mathbb{N}}2^{(n+1)}(t_{\mu}^{a}-f_{\mu}^{2n;a}).
\]
Then $u_{\mu}^{a}$ is lower semicomputable and $u_{\mu_{0}}^{a_{0}}(x_{0})=\infty$.
Also by the Monotone Convergence Theorem we can compute the integral
of $u_{\mu}^{a}$ to be 
\[
\int_{{}}u_{\mu}^{a}\,d\mu=\sum_{n\in\mathbb{N}}\left(2^{(n+1)}\int(t_{\mu}^{a}-f_{\mu}^{2n;a})\,d\mu\right)=\sum_{n\in\mathbb{N}}2^{(n+1)}2^{-2n}=2\sum_{n\in\mathbb{N}}2^{-n}=4.
\]
Therefore, $x_{0}\notin\SR{\mu_{0}}{a_{0}}$. This prove the lemma.
\end{proof}
\begin{thm}
\label{thm:vL-sch}Let $\mathbb{X}$ and $\mathbb{Y}$ be computable
metric spaces. Let $\mu\in\probmeas(\mathbb{X})$ and $\nu\in\probmeas(\mathbb{Y})$.
Then
\[
(x,y)\in\SR{\mu\otimes\nu}{}\quad\text{iff}\quad x\in\SR{\mu}{\nu}\text{ and }y\in\SR{\nu}{\mu,x}.
\]
\end{thm}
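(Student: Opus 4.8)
The plan is to mimic the classical Van Lambalgen argument, but to use integral tests throughout (as forced by our definition of Schnorr randomness) and to use Theorem~\ref{thm:SR-over-K} to move tests on and off effectively closed sets of measures. Recall that $\mu$ and $\nu$ are to be treated as oracles, so $\SR{\mu}{\nu}$, $\SR{\nu}{\mu,x}$, and $\SR{\mu\otimes\nu}{}$ all make sense. A useful preliminary remark is that the maps $(\mu,\nu)\mapsto\mu\otimes\nu$ and $\mu\otimes\nu\mapsto(\mu,\nu)$ are computable, so $(\mu,\nu)$ and $\mu\otimes\nu$ are uniformly equicomputable; by Proposition~\ref{prop:reducible-oracles} this lets me freely swap the oracle $\mu\otimes\nu$ for the pair $(\mu,\nu)$ wherever convenient.

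For the forward direction, suppose $(x,y)\in\SR{\mu\otimes\nu}{}$. To see $x\in\SR{\mu}{\nu}$, take a uniform Schnorr integral test $t^{\nu}_{\mu}(x)$ (with oracle $\nu$) and define $s_{\mu\otimes\nu}(x,y)=t^{\nu}_{\mu}(x)$; this is lower semicomputable in $(\mu\otimes\nu,x,y)$ after passing through the equicomputability of $(\mu,\nu)$ and $\mu\otimes\nu$, and by Fubini $\iint s\,d(\mu\otimes\nu)=\int t^{\nu}_{\mu}\,d\mu$, which is computable. So $s$ is a uniform Schnorr integral test for $\mu\otimes\nu$, whence $t^{\nu}_{\mu}(x)=s_{\mu\otimes\nu}(x,y)<\infty$. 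To see $y\in\SR{\nu}{\mu,x}$, I use Lemma~\ref{lem:Luzin-for-tests}: given a uniform Schnorr integral test $t^{\mu,x}_{\nu}(y)$ with oracle $(\mu,x)$, I want to build a test on $\mu\otimes\nu$ that detects $(x,y)$. The obstruction is that $\int t^{\mu,x}_{\nu}\,d\nu$ is computable only uniformly in $(\mu,x)$, not as a function of $\mu$ alone, so the naive integral $\iint t^{\mu,x}_{\nu}(y)\,d\mu(x)\,d\nu(y)$ need not be computable. This is exactly where Lemma~\ref{lem:Luzin-for-tests} (the computable Luzin theorem for tests) comes in: since $x\in\SR{\mu}{\nu}$ (which we have by the first part, relativizing $\nu$), and since $t^{\mu,x}_{\nu}(y)$ can be regarded as a test in the variable $x$ with oracle $(\mu,\nu)$ — integrating out $y$ first — the lemma produces a computable $h^{\mu,\nu}_{\cdot}(x,y)$ below it agreeing at the relevant point; for a bounded computable function Fubini and Lemma~\ref{lem:Portmanteau}(\ref{enu:bounded-cont-integral}) give a computable double integral, so $h$ extends to a uniform Schnorr integral test on $\mu\otimes\nu$. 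Since $(x,y)$ passes it, it passes $t^{\mu,x}_{\nu}$.

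For the backward direction, assume $x\in\SR{\mu}{\nu}$ and $y\in\SR{\nu}{\mu,x}$, and let $T_{\mu\otimes\nu}(x,y)$ be a uniform Schnorr integral test for $\mu\otimes\nu$; I may assume $T\ge 1$ and normalize so $\iint T\,d(\mu\otimes\nu)=1$. Define the partial integral $t^{\mu,x}_{\nu}(y)=T_{\mu\otimes\nu}(x,y)$ viewed as a test in $y$ with oracle $(\mu,x)$ — wait, its $\nu$-integral $\int T(x,y)\,d\nu(y)$ need not be computable uniformly in $(\mu,x)$, only lower semicomputable. The standard fix is to split into the part of $x$-space where this conditional integral is small and the part where it is large: set $g^{\nu}_{\mu}(x)=\int T_{\mu\otimes\nu}(x,y)\,d\nu(y)$, which is lower semicomputable in $(\mu,\nu,x)$ with $\int g^{\nu}_{\mu}\,d\mu=1$, hence a uniform Schnorr integral test for $\mu$ with oracle $\nu$ (the integral is computable by Fubini applied to $T$). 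Since $x\in\SR{\mu}{\nu}$ we get $g^{\nu}_{\mu}(x)<\infty$; applying Lemma~\ref{lem:Luzin-for-tests} to $g$ at $x$ yields a computable $c^{\nu}_{\mu}(x)$ with $c^{\nu}_{\mu}(x)\le g^{\nu}_{\mu}(x)$ and equality at our point. Now $s^{\mu,x,\nu}_{\nu}(y)=T_{\mu\otimes\nu}(x,y)$ is lower semicomputable in $(\mu,x,\nu,y)$ and has $\nu$-integral $g^{\nu}_{\mu}(x)$; on the effectively closed set $K=\{(\mu,x,\nu): g^{\nu}_{\mu}(x)\le c^{\nu}_{\mu}(x)\}$ (effectively closed because $g$ is l.s.c.\ and $c$ is computable, and which contains our point) the integral equals $c^{\nu}_{\mu}(x)$, a computable function of the oracle $(\mu,x,\nu)$. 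So by Theorem~\ref{thm:SR-over-K} this is (the restriction to $K$ of) a uniform Schnorr integral test for $\nu$ with oracle $(\mu,x)$ — after absorbing $\nu$ into the oracle, which is harmless since $(\mu,x)$ already includes enough — and $y\in\SR{\nu}{\mu,x}$ forces $T_{\mu\otimes\nu}(x,y)=s(y)<\infty$, as required.

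The main obstacle, as indicated, is that conditional integrals of a test on a product are not computable uniformly in the base-space parameter — only lower semicomputable — so one cannot directly turn a product test into base-space and fiber tests. The device that overcomes this is Lemma~\ref{lem:Luzin-for-tests} combined with Theorem~\ref{thm:SR-over-K}: the Luzin-type lemma lets us replace the conditional integral by a computable lower bound that is tight at a fixed random point, and restricting to the effectively closed set where the (lower semicomputable) conditional integral is dominated by that computable bound makes the relevant integral computable on that set, which is all Theorem~\ref{thm:SR-over-K} demands. Care is also needed with normalization to keep everything $\ge 1$ and to avoid division by zero, and with bookkeeping of which oracle carries which information (handled throughout by Proposition~\ref{prop:reducible-oracles}), but these are routine.
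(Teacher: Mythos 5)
Your backward direction is essentially the paper's argument: form the conditional integral $I_{\mu}^{\nu}(x)=\int t_{\mu\otimes\nu}(x,y)\,d\nu(y)$, observe it is a lower semicomputable test for $\mu$ with oracle $\nu$ whose $\mu$-integral is computable by Fubini, use $x\in\SR{\mu}{\nu}$ together with Lemma~\ref{lem:Luzin-for-tests} to get a computable minorant agreeing at the point, and then make the $\nu$-integral of the fiber test computable. Your device for the last step (restricting to the effectively closed set where the lower semicomputable conditional integral is dominated by, hence equal to, the computable Luzin bound, and invoking Theorem~\ref{thm:SR-over-K}) differs from the paper's (which truncates the enumeration of the fiber test via Lemma~\ref{lem:approx-below} so that its integral equals the bound everywhere), but both are sound and yours is arguably slightly slicker.

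The forward direction, however, contains a genuine gap in the second conjunct. You claim that the ``naive'' product test $s_{\mu\otimes\nu}(x,y)=t_{\nu}^{\mu,x}(y)$ might fail to have a computable double integral, and you propose to repair this with Lemma~\ref{lem:Luzin-for-tests}. Two problems. First, the obstruction is illusory: by Remark~\ref{rem:normalized} you may normalize the fiber test so that $\int t_{\nu}^{\mu,x}(y)\,d\nu(y)=1$ for \emph{all} $(\mu,x,\nu)$, whence Fubini gives $\iint s\,d(\mu\otimes\nu)=\int_{\mathbb{X}}1\,d\mu=1$, which is trivially computable; the family $\{s_{\rho}\}$ indexed by the effectively closed set of product measures is then a uniform Schnorr integral test restricted to that set, and Theorem~\ref{thm:SR-over-K} finishes the argument. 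Second, and more seriously, the repair you describe cannot work: Lemma~\ref{lem:Luzin-for-tests} produces a \emph{computable minorant} $h\leq t$, and a computable (hence everywhere finite) function below $t$ is passed by every point, so ``$(x,y)$ passes $h$'' gives no information about whether $t_{\nu}^{\mu,x}(y)<\infty$ --- the inequality points the wrong way for this direction of the equivalence. (Indeed the lemma's hypothesis requires the point to already be random for the test in question, which is exactly what you are trying to establish.) The minorant/cut-off machinery belongs only to the backward direction, where one must shrink a test whose conditional integral is merely lower semicomputable; in the forward direction normalization alone suffices.
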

\begin{proof}[Proof of $\Rightarrow$]
Fix $\mu_{0},\nu_{0},x_{0},y_{0}$. We show the contrapositive. Assume,
$y_{0}\notin\SR{\nu_{0}}{\mu_{0},x_{0}}$. (The other case, $x_{0}\notin\SR{\mu_{0}}{\nu_{0}}$,
is handled similarly.) Then, there is a normalized uniform Schnorr
integral test $\{t_{\nu}^{\mu,x}\}_{\mu\in\probmeas(\mathbb{X}),x\in\mathbb{X},\nu\in\probmeas(\mathbb{Y})}$
such that $t_{\nu_{0}}^{\mu_{0},x_{0}}(y_{0})=\infty$. (Hence $\int_{{}}t_{\nu}^{\mu,x}(y)\,d\nu(y)=1$
for all $\mu,x,\nu$.)

Consider the family of functions $\{s_{\rho}\}_{\rho\in P}$ indexed
by the product measures
\[
P=\{\mu\otimes\nu:\mu,\nu\in\probmeas(\mathbb{X})\},
\]
and where $s_{\mu\otimes\nu}$ is given by
\[
s_{\mu\otimes\nu}(x,y):=t_{\nu}^{\mu,x}(y).
\]

We have that $\{s_{\rho}\}_{\rho\in P}$ is a uniform Schnorr integral
test, since $s_{\mu\otimes\nu}(x,y)$ is lower semicomputable in $\mu\otimes\nu$,
$x$, $y$ (Lemma~\ref{lem:Portmanteau}), and since Fubini's Theorem
gives us 
\[
\iint_{\mathbb{X}\times\mathbb{Y}}s(x,y)\,d(\mu\otimes\nu)(x,y)=\int_{\mathbb{X}}\left(\int_{\mathbb{Y}}t_{\nu}^{\mu,x}(y)\,d\nu(y)\right)d\mu(x)=\int_{\mathbb{X}}1\,d\mu=1.
\]
Since $P$ is effectively closed, and $s_{\mu_{0}\otimes\nu_{0}}(x_{0},y_{0})=\infty$,
by Theorem~\ref{thm:SR-over-K} we have $(x_{0},y_{0})\notin\SR{\mu_{0}\otimes\nu_{0}}{}$.
\end{proof}
\begin{proof}[Proof of $\Leftarrow$]
Fix $x_{0},y_{0},\mu_{0},\nu_{0}$. We show the contrapositive. Assume
$(x_{0},y_{0})\notin\SR{\mu_{0}\otimes\nu_{0}}{}$ and $x_{0}\in\SR{\mu_{0}}{\nu_{0}}$.
Then there is a normalized uniform Schnorr integral test $\{t_{\rho}\}_{\rho\in\probmeas(\mathbb{X}\times\mathbb{Y})}$
such that $t_{\mu_{0}\otimes\nu_{0}}(x_{0},y_{0})=\infty$. (Hence
$\int_{{}}t_{\rho}(x,y)\,d\rho(x,y)=1$ for all $\rho$.)

Consider the family $\{s_{\nu}^{\mu,x}\}_{\mu\in\probmeas(\mathbb{X}),x\in\mathbb{X},\nu\in\probmeas(\mathbb{Y})}$
of lower semicomputable functions given by 
\[
s_{\nu}^{\mu,x}(y):=t_{\mu\otimes\nu}(x,y).
\]
However, $\{s_{\nu}^{\mu,x}\}_{\mu\in\probmeas(\mathbb{X}),x\in\mathbb{X},\nu\in\probmeas(\mathbb{Y})}$
may not be a uniform test since it is possible that $\int_{{}}s_{\nu}^{\mu,x}(y)\,d\nu(y)=\infty$
for some $x$ (albeit a $\mu$-measure zero set of $x$'s for each
$\mu$ and $\nu$). To fix this, consider the following family of
functions 
\[
I_{\mu}^{\nu}(x):=\int_{\mathbb{Y}}s_{\nu}^{\mu,x}(y)\,d\nu(y)=\int_{\mathbb{Y}}t_{\mu\otimes\nu}(x,y)\,d\nu(y).
\]
Then $I_{\mu}^{\nu}(x)$ is lower semicomputable in $\nu$, $\mu$,
$x$ (Lemma~\ref{lem:Portmanteau}), and applying Fubini's Theorem,
\[
\int_{\mathbb{X}}I_{\mu}^{\nu}(x)\,d\mu(x)=\iint_{\mathbb{X}\times\mathbb{Y}}t_{\mu\otimes\nu}(x,y)\,d(\mu\otimes\nu)(x,y)=1.
\]
Since $x_{0}\in\SR{\mu_{0}}{\nu_{0}}$ and $\{I_{\mu}^{\nu}\}_{\nu\in\probmeas(\mathbb{Y}),\mu\in\probmeas(\mathbb{X})}$
is a uniform Schnorr integral test, we have that $\int_{{}}s_{\nu0}^{\mu_{0},x_{0}}(y)\,d\nu_{0}(y)=I_{\mu_{0}}^{\nu_{0}}(x_{0})<\infty$.
However, we need more. 

By Lemma~\ref{lem:Luzin-for-tests}, there is some computable $h_{\mu}^{\nu}\colon\mathbb{X}\rightarrow[0,\infty)$
such that $h\leq I_{\mu}^{\nu}$ and $h_{\mu_{0}}^{\nu_{0}}(x_{0})=I_{\mu_{0}}^{\nu_{0}}(x_{0})$.
Let $\{\hat{s}_{\nu}^{\mu,x}\}$ be the family of lower semicomputable
functions which equals $s_{\nu}^{\mu,x}$ except that we ``cut off''
the enumeration so that $\int_{{}}\hat{s}_{\nu}^{\mu,x}(y)\,d\nu(y)=h_{\mu}^{\nu}(x)$.

Formally, let $\{f_{\nu}^{r;\mu,x}\}$ be the family of continuous
functions from Lemma\ \ref{lem:approx-below} computable in $r,\mu,x,\nu$
($r\in[0,\infty)$) such that $s_{\nu}^{\mu,x}=\sup_{r}f_{\nu}^{r;\mu,x}$.
Then for $\mu,x,\nu$, let 
\[
\hat{s}_{\nu}^{\mu,x}:=\sup\left\{ f_{\nu}^{r;\mu,x}\ :\ r\in[0,\infty)\text{ and }{\textstyle \int_{\mathbb{Y}}f_{\nu}^{r;\mu,x}(y)\,d\nu(y)\leq h_{\mu}^{\nu}(x)}\right\} .
\]
Then $\int_{{}}\hat{s}_{\nu}^{\mu,x}d\nu=h_{\mu}^{\nu}(x)$ as desired.

Then $\{\hat{s}_{\nu}^{\mu,x}\}$ is a uniform Schnorr integral test.
Moreover, since $h_{\mu_{0}}^{\nu_{0}}(x_{0})=I_{\mu_{0}}^{\nu_{0}}(x_{0})$
we have $\hat{s}_{\nu_{0}}^{\mu_{0},x_{0}}=s_{\nu_{0}}^{\mu_{0},x_{0}}$.
Therefore $\hat{s}_{\nu_{0}}^{\mu_{0},x_{0}}(y_{0})=s_{\nu_{0}}^{\mu_{0},x_{0}}(y_{0})=t_{\mu_{0}\otimes\nu_{0}}(x_{0},y_{0})=\infty$.
Hence $y_{0}\notin\SR{\nu_{0}}{\mu_{0},x_{0}}$.
\end{proof}
By symmetry, we instantly have the following corollary.
\begin{cor}
\label{cor:vL_SR-symmetric}Let $\mathbb{X}$ and $\mathbb{Y}$ be
computable metric spaces. Let $\mu\in\probmeas(\mathbb{X})$ and $\nu\in\probmeas(\mathbb{Y})$.
Then
\[
(x\in\SR{\mu}{\nu}\text{ and }y\in\SR{\nu}{\mu,x})\quad\text{iff}\quad(y\in\SR{\nu}{\mu}\text{ and }x\in\SR{\mu}{\nu,y}).
\]
\end{cor}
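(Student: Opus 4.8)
The plan is to obtain the corollary as a double application of Theorem~\ref{thm:vL-sch}, together with the observation that swapping coordinates is a computability\nobreakdash-preserving operation on product measures. Fix $\mu\in\probmeas(\mathbb{X})$ and $\nu\in\probmeas(\mathbb{Y})$. By Theorem~\ref{thm:vL-sch} applied with $\mathbb{X}$ carrying $\mu$ and $\mathbb{Y}$ carrying $\nu$, the left\nobreakdash-hand condition ``$x\in\SR{\mu}{\nu}$ and $y\in\SR{\nu}{\mu,x}$'' is equivalent to $(x,y)\in\SR{\mu\otimes\nu}{}$. Applying the same theorem with the two spaces and measures interchanged, the right\nobreakdash-hand condition ``$y\in\SR{\nu}{\mu}$ and $x\in\SR{\mu}{\nu,y}$'' is equivalent to $(y,x)\in\SR{\nu\otimes\mu}{}$. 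So it suffices to establish
\[
(x,y)\in\SR{\mu\otimes\nu}{}\quad\Longleftrightarrow\quad(y,x)\in\SR{\nu\otimes\mu}{}.
\]

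To prove this equivalence I would use the coordinate\nobreakdash-swap map $\tau\colon\mathbb{X}\times\mathbb{Y}\to\mathbb{Y}\times\mathbb{X}$, $\tau(x,y)=(y,x)$, which is a computable bijection with computable inverse and satisfies $\tau_{*}(\mu\otimes\nu)=\nu\otimes\mu$. More generally, for any $\sigma\in\probmeas(\mathbb{X}\times\mathbb{Y})$ the pushforward $\tau_{*}\sigma\in\probmeas(\mathbb{Y}\times\mathbb{X})$ is uniformly computable from $\sigma$: by Lemma~\ref{lem:char-comp-meas} this reduces to the computability of $\sigma,i\mapsto\int f_{i}\,d(\tau_{*}\sigma)=\int f_{i}\circ\tau\,d\sigma$, and each $f_{i}\circ\tau$ is a computable function given uniformly by $i$. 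Now suppose $(y_{0},x_{0})\notin\SR{\nu_{0}\otimes\mu_{0}}{}$, witnessed by a uniform Schnorr integral test $\{t_{\rho}\}_{\rho\in\probmeas(\mathbb{Y}\times\mathbb{X})}$ with $t_{\nu_{0}\otimes\mu_{0}}(y_{0},x_{0})=\infty$. Define $s_{\sigma}(x,y):=t_{\tau_{*}\sigma}(y,x)$ for $\sigma\in\probmeas(\mathbb{X}\times\mathbb{Y})$. Then $s$ is lower semicomputable, being a composition of the lower semicomputable $t$ with the computable maps $\sigma\mapsto\tau_{*}\sigma$ and $\tau$; its normalization integral $\int s_{\sigma}\,d\sigma=\int t_{\tau_{*}\sigma}\,d(\tau_{*}\sigma)$ is computable in $\sigma$ by the change\nobreakdash-of\nobreakdash-variables identity together with computability of $\sigma\mapsto\tau_{*}\sigma$ and of $\rho\mapsto\int t_{\rho}\,d\rho$; and $s_{\mu_{0}\otimes\nu_{0}}(x_{0},y_{0})=t_{\nu_{0}\otimes\mu_{0}}(y_{0},x_{0})=\infty$. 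Hence $(x_{0},y_{0})\notin\SR{\mu_{0}\otimes\nu_{0}}{}$. The reverse implication is identical, using $\tau^{-1}$ in place of $\tau$. Chaining the three equivalences yields the corollary.

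The step requiring the most care---though it is a routine verification rather than a genuine obstacle---is precisely this compatibility of the swap map with the uniform test machinery: that $\tau_{*}\sigma$ is uniformly computable from $\sigma$, that precomposition by a computable map preserves lower semicomputability of the indexed family, and that the change\nobreakdash-of\nobreakdash-variables formula keeps the normalization integral computable. Each follows immediately from Lemmas~\ref{lem:char-comp-meas} and~\ref{lem:Portmanteau}; indeed the displayed equivalence is just the instance, for the computable isomorphism $\tau$, of the general principle that uniform Schnorr randomness is invariant under computable isomorphisms of the underlying space paired with the corresponding pushforward of the measure.
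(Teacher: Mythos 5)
Your proposal is correct and follows the same route as the paper, which derives the corollary from Theorem~\ref{thm:vL-sch} "by symmetry"; you have simply made explicit what that symmetry means, namely that the coordinate swap is a computable isomorphism carrying $\mu\otimes\nu$ to $\nu\otimes\mu$ and hence preserving uniform Schnorr randomness. The verification via Lemmas~\ref{lem:char-comp-meas} and~\ref{lem:Portmanteau} is exactly the routine check the paper leaves implicit.
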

We can also easily extend Theorem~\ref{thm:vL-sch} and its proof
to include an oracle $a$. Instead we can move the oracle into the
measure by considering the Dirac measure $\delta_{a}$ ($\delta_{a}(\{a\})=1$
and $\delta_{a}(\mathbb{X}\smallsetminus\{a\})=0$).
\begin{prop}
\label{prop:Dirac}$x\in\SR{\mu}a$ if and only if $(x,a)\in\SR{\mu\otimes\delta_{a}}{}$.
\end{prop}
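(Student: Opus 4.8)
The plan is to prove both implications by contraposition, in the style of the proof of Theorem~\ref{thm:vL-sch}: convert a witnessing uniform Schnorr integral test on one side into one on the other. Two computable-analysis facts do the work. First, the map $(a,\mu)\mapsto\mu\otimes\delta_{a}$, of type $\mathbb{A}\times\probmeas(\mathbb{X})\to\probmeas(\mathbb{X}\times\mathbb{A})$, is computable, since $a\mapsto\delta_{a}$ is computable and $(\mu,\nu)\mapsto\mu\otimes\nu$ is computable (both standard, and already used implicitly in Section~\ref{sec:SR-product-meas}). Second, for any bounded measurable $g$ on $\mathbb{X}\times\mathbb{A}$ one has $\int g(x,y)\,d(\mu\otimes\delta_{a})(x,y)=\int g(x,a)\,d\mu(x)$, so integrating against $\mu\otimes\delta_{a}$ is the same as integrating a slice against $\mu$.

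For ($\Rightarrow$), argue the contrapositive. If $(x_{0},a_{0})\notin\SR{\mu_{0}\otimes\delta_{a_{0}}}{}$, fix a normalized uniform Schnorr integral test $\{t_{\rho}\}_{\rho\in\probmeas(\mathbb{X}\times\mathbb{A})}$ with $t_{\mu_{0}\otimes\delta_{a_{0}}}(x_{0},a_{0})=\infty$, and put $s_{\mu}^{a}(x):=t_{\mu\otimes\delta_{a}}(x,a)$. By the first fact $s$ is lower semicomputable in $(a,\mu,x)$ (composition of a lower semicomputable function with a computable map), and by the second fact $\int s_{\mu}^{a}(x)\,d\mu(x)=\int t_{\mu\otimes\delta_{a}}(x,y)\,d(\mu\otimes\delta_{a})(x,y)=1$ for every $(a,\mu)$; so $\{s_{\mu}^{a}\}$ is a uniform Schnorr integral test as in Definition~\ref{def:SR-noncomp-meas}, and since $s_{\mu_{0}}^{a_{0}}(x_{0})=\infty$ we conclude $x_{0}\notin\SR{\mu_{0}}{a_{0}}$. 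No appeal to Theorem~\ref{thm:SR-over-K} is needed here, precisely because $\mu\otimes\delta_{a}$ is defined for every $(a,\mu)$.

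For ($\Leftarrow$), argue the contrapositive. If $x_{0}\notin\SR{\mu_{0}}{a_{0}}$, fix a normalized uniform Schnorr integral test $\{t_{\mu}^{a}\}$ with $t_{\mu_{0}}^{a_{0}}(x_{0})=\infty$, let $K:=\{\mu\otimes\delta_{a}:\mu\in\probmeas(\mathbb{X}),\ a\in\mathbb{A}\}\subseteq\probmeas(\mathbb{X}\times\mathbb{A})$, and define $s_{\mu\otimes\delta_{a}}(x,y):=t_{\mu}^{a}(x)$. This should be a uniform Schnorr integral test restricted to $K$: its integral equals $\int t_{\mu}^{a}\,d\mu=1$ by the second fact, and lower semicomputability on $K$ follows once we know the decomposition $\rho=\mu\otimes\delta_{a}\mapsto(a,\mu)$ is computable on $K$. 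Since $s_{\mu_{0}\otimes\delta_{a_{0}}}(x_{0},a_{0})=\infty$, Theorem~\ref{thm:SR-over-K} then gives $(x_{0},a_{0})\notin\SR{\mu_{0}\otimes\delta_{a_{0}}}{}$.

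The main obstacle is the bookkeeping lemma for ($\Leftarrow$): that $K$ is effectively closed and that $\rho\mapsto(a,\mu)$ is computable on $K$. The plan: a probability measure $\rho$ on $\mathbb{X}\times\mathbb{A}$ lies in $K$ if and only if its second marginal $\rho_{2}$ is a Dirac measure --- indeed, if $\rho_{2}=\delta_{a}$ then $\rho$ is concentrated on $\mathbb{X}\times\{a\}$, whence $\rho=\rho_{1}\otimes\delta_{a}$ for the first marginal $\rho_{1}$. The second-marginal map is computable, and the set of Dirac measures on $\mathbb{A}$ is effectively closed, since its complement is the union, over pairs of basic open balls $B_{1},B_{2}$ with disjoint closures, of the sets $\{\nu:\nu(B_{1})>0\text{ and }\nu(B_{2})>0\}$, each effectively open by Lemma~\ref{lem:Portmanteau}; hence $K$ is the preimage of an effectively closed set under a computable map. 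Finally, on $K$ the first marginal recovers $\mu$ and the centre of a Dirac measure is uniformly computable from it (shrink basic balls of measure $>1/2$), so $\rho\mapsto(a,\mu)$ is computable on $K$. Throughout, one should remember that $\mathbb{A}$ need not be compact, but marginals, Dirac measures, and the identities above are fine on arbitrary computable metric spaces.
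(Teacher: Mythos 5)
Your proof is correct, but it takes a genuinely different route from the paper. The paper obtains this proposition as a quick corollary of Van Lambalgen's Theorem (Theorem~\ref{thm:vL-sch}): since $a$ is an atom of $\delta_{a}$, the condition $a\in\SR{\delta_{a}}{\mu,x}$ is automatic, so vL collapses to $(x,a)\in\SR{\mu\otimes\delta_{a}}{}$ iff $x\in\SR{\mu}{\delta_{a}}$, and then Proposition~\ref{prop:reducible-oracles} converts the oracle $\delta_{a}$ into $a$ using exactly the same bookkeeping facts you isolate (the Dirac measures form an effectively closed set from which the atom is uniformly computable). You instead redo the test translations by hand. The payoff of your version is that the hard direction of vL\textemdash which in general requires the Luzin-type Lemma~\ref{lem:Luzin-for-tests} to repair the possibly infinite or noncomputable fibrewise integrals $\int t_{\mu\otimes\nu}(x,y)\,d\nu(y)$\textemdash degenerates completely here: integration against $\delta_{a}$ is evaluation at $a$, so $s_{\mu}^{a}(x)=t_{\mu\otimes\delta_{a}}(x,a)$ already has $\mu$-integral exactly $\iint t\,d(\mu\otimes\delta_{a})=1$, and no cutting off is needed. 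Your argument is thus self-contained modulo Theorem~\ref{thm:SR-over-K} and elementary computable measure theory, whereas the paper's is shorter but rides on the full strength of Theorem~\ref{thm:vL-sch}. Your bookkeeping lemma for the ($\Leftarrow$) direction (that $K$ is the preimage of the effectively closed set of Dirac marginals, and that $\rho\mapsto(a,\mu)$ is computable on $K$) is sound and is essentially the same fact the paper asserts without proof inside Proposition~\ref{prop:Dirac}; spelling it out is a small added value. One cosmetic point: in your ($\Rightarrow$) direction the test $\{s_{\mu}^{a}\}$ is naturally indexed by $\mathbb{A}\times\probmeas(\mathbb{X})$ rather than all of $\mathbb{A}\times\finitemeas(\mathbb{X})$, so strictly speaking you are still invoking Theorem~\ref{thm:SR-over-K} (with $K=\mathbb{A}\times\probmeas(\mathbb{X})$) or the earlier proposition that restricting to $\probmeas(\mathbb{X})$ changes nothing; this is harmless.
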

\begin{proof}
Clearly, $a\in\SR{\delta_{a}}{\delta_{a},a}$ since $a$ is an atom
of $\delta_{a}$. Therefore, Van Lambalgen's Theorem (Theorem~\ref{thm:vL-sch})
gives us 
\[
(x,a)\in\SR{\mu\otimes\delta_{a}}{}\quad\text{iff}\quad x\in\SR{\mu}{\delta_{a}}.
\]
It remains to show that $\SR{\mu}{\delta_{a}}=\SR{\mu}a$. Notice
that $\delta_{a}$ is uniformly computable from $a$. To show $a$
is uniformly computable from $\delta_{a}$ (as in Definition~\ref{def:uniform-comp})
consider the set $D\subseteq\finitemeas(\mathbb{X})$ of Dirac measures.
This set $D$ is effectively closed and given a Dirac measure $\delta_{a}$
one can uniformly compute the corresponding atom $a$. By Proposition~\ref{prop:reducible-oracles},
$\SR{\mu}{\delta_{a}}=\SR{\mu}a$ as desired.
\end{proof}

\section{Schnorr randomness for measures on a product space\label{sec:SR-kernels}}

Not all measures on a product space $\mathbb{X}\times\mathbb{Y}$
are product measures of the form $\mu\otimes\nu$. Instead there is
a more general way to construct measures on $\mathbb{X}\times\mathbb{Y}$.
Let $\mu$ be a probability measure on $\mathbb{X}$, and consider
a measurable map $\kappa\colon\mathbb{X}\rightarrow\probmeas(\mathbb{Y})$.
This map $\kappa$ is called a \emph{kernel}. For $x\in\mathbb{X}$
and $B\subseteq\mathbb{Y}$, let $\kappa(B\mid x)$ denote the probability
measure $\kappa(x)$ applied to the measurable set $B$. Similarly,
let $\kappa(\cdot\mid x)$ denote the measure $\kappa(x)$. A measure
$\mu\in\probmeas(\mathbb{X})$ and a kernel $\kappa\colon\mathbb{X}\rightarrow\probmeas(\mathbb{Y})$,
when combined, give a measure $\mu*\kappa$ on $\mathbb{X}\times\mathbb{Y}$
given by
\begin{equation}
(\mu*\kappa)(A\times B)=\int_{A}\kappa(B\mid x)\,d\mu(x)\qquad(A\subseteq\mathbb{X},B\subseteq\mathbb{Y}).\label{eq:kernel-formula}
\end{equation}
Formula (\ref{eq:kernel-formula}) extends to integrals, 
\begin{equation}
\iint_{\mathbb{X}\times\mathbb{Y}}f(x,y)\,(\mu*\kappa)(dxdy)=\int_{\mathbb{X}}\left(\int_{\mathbb{Y}}f(x,y)\,\kappa(dy\mid x)\right)\,\mu(dx).\label{eq:kernel-formula-function}
\end{equation}
Here $(\mu*\kappa)(dxdy)$, $\kappa(dy\mid x)$, and $\mu(dx)$ are,
respectively, alternate notations for $d(\mu*\kappa)(x,y)$, $d\kappa(\cdot\mid x)(y)$,
and $d\mu(x)$. Using Lemmas~\ref{lem:char-comp-meas} and \ref{lem:Portmanteau}
and equation~(\ref{eq:kernel-formula-function}) it is easy to see
that $\mu*\kappa$ is computable uniformly from $\mu$ and $\kappa$. 
\begin{defn}
\label{def:unif-family-kernels}Let $\mathbb{X}$, $\mathbb{Y}$,
and $\mathbb{A}$ be computable metric spaces and let $K\subseteq\mathbb{A}\times\probmeas(\mathbb{X})$
be an effectively closed subset. We will refer to a computable map
$\kappa\colon K\times\mathbb{X}\rightarrow\probmeas(\mathbb{Y})$
as a \emph{uniformly computable family of continuous kernels} and
denote it as $\{\kappa_{\mu}^{a}\colon\mathbb{X}\rightarrow\probmeas(\mathbb{Y})\}_{(a,\mu)\in K}$
where $\kappa_{\mu}^{a}$ and $\kappa_{\mu}^{a}(\cdot\mid x)$, respectively,
denote the kernel $x\mapsto\kappa(a,\mu,x)$ and the measure $\kappa(a,\mu,x)$.

We have this useful extension of Van Lambalgen's Theorem for continuous
kernels.
\end{defn}
\begin{thm}
\label{thm:vL-Sch-kernel}Let $\mathbb{A}$, $\mathbb{X}$, and $\mathbb{Y}$
be computable metric spaces. Let $K\subseteq\mathbb{A}\times\probmeas(\mathbb{X})$
be an effectively closed subset and let $\{\kappa_{\mu}^{a}\colon\mathbb{X}\rightarrow\probmeas(\mathbb{Y})\}_{(a,\mu)\in K}$
be a uniformly computable family of continuous kernels as in Definition~\ref{def:unif-family-kernels}.
For all $(a,\mu)\in K$, $x\in\mathbb{X}$, and $y\in\mathbb{Y}$,
\[
(x,y)\in\SR{\mu*\kappa_{\mu}^{a}}a\quad\text{iff}\quad x\in\SR{\mu}a\text{ and }y\in\SR{\kappa_{\mu}^{a}(\cdot\mid x)}{a,\mu,x}.
\]
\end{thm}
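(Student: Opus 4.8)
The plan is to follow the same strategy as the proof of Theorem~\ref{thm:vL-sch}, replacing the product measure $\mu\otimes\nu$ with the measure $\mu*\kappa_\mu^a$ and the fixed second-coordinate measure $\nu$ with the (now $x$-dependent) measure $\kappa_\mu^a(\cdot\mid x)$. The key point is that equation~(\ref{eq:kernel-formula-function}) plays exactly the role that Fubini's Theorem played before: it disintegrates the integral over $\mathbb{X}\times\mathbb{Y}$ into an outer integral over $\mathbb{X}$ and an inner integral over $\mathbb{Y}$ against $\kappa_\mu^a(\cdot\mid x)$. Throughout I would carry the oracle $a$ along and, since all objects are uniformly computable in $(a,\mu)\in K$, restrict all tests to the effectively closed set of relevant parameters and invoke Theorem~\ref{thm:SR-over-K} freely.

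For the ($\Leftarrow$, i.e.\ $x\in\SR{\mu}a$ and $y\in\SR{\kappa_\mu^a(\cdot\mid x)}{a,\mu,x}$ imply $(x,y)\in\SR{\mu*\kappa_\mu^a}a$) direction, I would prove the contrapositive exactly as in the product case. Fix parameters and assume $(x_0,y_0)\notin\SR{\mu_0*\kappa_{\mu_0}^{a_0}}{a_0}$ while $x_0\in\SR{\mu_0}{a_0}$; take a normalized uniform Schnorr integral test $\{t_\rho^a\}$ with $t^{a_0}_{\mu_0*\kappa_{\mu_0}^{a_0}}(x_0,y_0)=\infty$ and $\int t_\rho^a\,d\rho=1$. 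Set $s_{\nu}^{a,\mu,x}(y):=t^{a}_{\mu*\kappa_\mu^a}(x,y)$ where $\nu=\kappa_\mu^a(\cdot\mid x)$; this is lower semicomputable in the parameters by Lemma~\ref{lem:Portmanteau} and by the uniform computability of $\mu*\kappa_\mu^a$ from $(a,\mu)$. As before, the inner integral $I_\mu^a(x):=\int_{\mathbb{Y}}s_{\kappa_\mu^a(\cdot\mid x)}^{a,\mu,x}(y)\,\kappa_\mu^a(dy\mid x)=\int_{\mathbb{Y}}t^a_{\mu*\kappa_\mu^a}(x,y)\,\kappa_\mu^a(dy\mid x)$ may be infinite on a $\mu$-null set of $x$, but by~(\ref{eq:kernel-formula-function}) we have $\int_{\mathbb{X}}I_\mu^a(x)\,d\mu(x)=\int t^a_{\mu*\kappa_\mu^a}\,d(\mu*\kappa_\mu^a)=1$, so $\{I_\mu^a\}$ is a uniform Schnorr integral test (restricted to $K$), and $x_0\in\SR{\mu_0}{a_0}$ gives $I_{\mu_0}^{a_0}(x_0)<\infty$. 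Then apply Lemma~\ref{lem:Luzin-for-tests} to get a computable $h_\mu^a(x)\le I_\mu^a(x)$ with equality at $(a_0,\mu_0,x_0)$, and use Lemma~\ref{lem:approx-below} to ``cut off'' the enumeration of $s_{\nu}^{a,\mu,x}$ so that its integral against $\kappa_\mu^a(\cdot\mid x)$ equals $h_\mu^a(x)$, exactly as in the proof of Theorem~\ref{thm:vL-sch}. The resulting family $\{\hat{s}^{a,\mu,x}_{\nu}\}$ is a uniform Schnorr integral test for the measures $\kappa_\mu^a(\cdot\mid x)$ with oracle $(a,\mu,x)$, and it still diverges at $(a_0,\mu_0,x_0,y_0)$, so $y_0\notin\SR{\kappa_{\mu_0}^{a_0}(\cdot\mid x_0)}{a_0,\mu_0,x_0}$.

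For the ($\Rightarrow$) direction I again prove the contrapositive and mimic the product-measure argument, but here there is one genuine wrinkle compared to the symmetric product case. Suppose $(x_0,y_0)\in\SR{\mu_0*\kappa_{\mu_0}^{a_0}}{a_0}$. If $x_0\notin\SR{\mu_0}{a_0}$, then a uniform Schnorr integral test $\{t_\mu^a\}$ on $\mathbb{X}$ with $t_{\mu_0}^{a_0}(x_0)=\infty$ lifts to the test $(x,y)\mapsto t_\mu^a(x)$ on $\mathbb{X}\times\mathbb{Y}$, whose integral against $\mu*\kappa_\mu^a$ equals $\int t_\mu^a\,d\mu$ by~(\ref{eq:kernel-formula-function}) (integrate out $y$ first); this is a uniform Schnorr integral test since $\mu*\kappa_\mu^a$ is uniformly computable in $(a,\mu)$, contradicting $(x_0,y_0)\in\SR{\mu_0*\kappa_{\mu_0}^{a_0}}{a_0}$. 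If instead $y_0\notin\SR{\kappa_{\mu_0}^{a_0}(\cdot\mid x_0)}{a_0,\mu_0,x_0}$, take a normalized uniform Schnorr integral test $\{t_\nu^{a,\mu,x}\}$ for the second-coordinate measures with $t^{a_0,\mu_0,x_0}_{\kappa_{\mu_0}^{a_0}(\cdot\mid x_0)}(y_0)=\infty$ and $\int t_\nu^{a,\mu,x}\,d\nu=1$, and set $s_{\mu*\kappa_\mu^a}^a(x,y):=t^{a,\mu,x}_{\kappa_\mu^a(\cdot\mid x)}(y)$; by~(\ref{eq:kernel-formula-function}), $\iint s^a_{\mu*\kappa_\mu^a}\,d(\mu*\kappa_\mu^a)=\int_{\mathbb{X}}\big(\int_{\mathbb{Y}}t^{a,\mu,x}_{\kappa_\mu^a(\cdot\mid x)}(y)\,\kappa_\mu^a(dy\mid x)\big)\,d\mu(x)=\int_{\mathbb{X}}1\,d\mu=1$, so $\{s^a_{\mu*\kappa_\mu^a}\}$ is a uniform Schnorr integral test over the effectively closed set $\{(a,\mu*\kappa_\mu^a):(a,\mu)\in K\}$, and it diverges at $(a_0,x_0,y_0)$, so $(x_0,y_0)\notin\SR{\mu_0*\kappa_{\mu_0}^{a_0}}{a_0}$ by Theorem~\ref{thm:SR-over-K}. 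The main obstacle is a bookkeeping one rather than a conceptual one: one must check that the set of parameter values $\{(a,\mu*\kappa_\mu^a) : (a,\mu)\in K\}$ (and its analogue carrying the extra oracle $x$) is effectively closed, or else argue with tests restricted directly to $K\times\mathbb{X}$ via Theorem~\ref{thm:SR-over-K} and the uniform computability of $(a,\mu)\mapsto\mu*\kappa_\mu^a$; and one must carefully track that $\kappa_\mu^a(\cdot\mid x)$ is uniformly computable in $(a,\mu,x)$ so that ``$\SR{\kappa_\mu^a(\cdot\mid x)}{a,\mu,x}$'' is well-typed and the Luzin-style cutoff from Lemma~\ref{lem:Luzin-for-tests} and Lemma~\ref{lem:approx-below} goes through with $x$ as an additional oracle parameter.
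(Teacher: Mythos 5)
Your proposal is correct and follows essentially the same route as the paper's proof: the contrapositive in both directions, lifting tests via the disintegration formula (\ref{eq:kernel-formula-function}) in place of Fubini, the Luzin-style cutoff from Lemma~\ref{lem:Luzin-for-tests} and Lemma~\ref{lem:approx-below} to repair the possibly infinite inner integral, and Theorem~\ref{thm:SR-over-K} applied to the effectively closed parameter sets. The one bookkeeping point you flag is handled in the paper exactly as you suggest, by noting that the marginal $\rho_{\mathbb{X}}$ is uniformly computable from $\rho$, so that $\{(a,\mu*\kappa_{\mu}^{a}):(a,\mu)\in K\}$ is effectively closed.
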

\begin{proof}[Proof of $\Rightarrow$]
Fix $a_{0},\mu_{0},x_{0},y_{0}$. Prove the contrapositive, considering
two cases:

\noindent \emph{Case 1.} Assume $x_{0}\notin\SR{\mu_{0}}{a_{0}}$.
Then, there is a normalized uniform Schnorr integral test $\{t_{\mu}^{a}\}_{a\in\mathbb{A},\mu\in\probmeas(\mathbb{X})}$
such that $t_{\mu_{0}}^{a_{0}}(x_{0})=\infty$. (Hence $\int_{{}}t_{\mu}^{a}(x)\,\mu(dx)=1$
for all $a,\mu$.)

Given any $\rho\in\probmeas(\mathbb{X}\times\mathbb{Y})$, we can
uniformly compute the marginal measure $\rho_{\mathbb{X}}$ given
by $\rho_{\mathbb{X}}(A)=\rho(A\times\mathbb{Y})$. Also, if $\rho=\mu*\kappa$
then $\rho_{\mathbb{X}}=\mu$. Therefore, the following is an effectively
closed subset of $\mathbb{A}\times\probmeas(\mathbb{X}\times\mathbb{Y})$,
\begin{align*}
P & =\{(a,\mu*\kappa_{\mu}^{a}):(a,\mu)\in K\}\\
 & =\{(a,\rho):(a,\rho_{\mathbb{X}})\in K\ \text{and}\ \rho=\rho_{\mathbb{X}}*\kappa_{\rho_{\mathbb{X}}}^{a}\}.
\end{align*}
Now consider, the family of functions $\{s_{\rho}^{a}\}_{(a,\rho)\in P}$
given by
\[
s_{\mu*\kappa_{\mu}^{a}}^{a}(x,y):=t_{\mu}^{a}(x).
\]
To see that $\{s_{\rho}^{a}\}_{(a,\rho)\in P}$ is a uniform Schnorr
integral test, $s_{\mu*\kappa_{\mu}^{a}}^{a}(x,y)$ is lower semicomputable
in $a,$ $\mu*\kappa_{\mu}^{a}$, $x$, $y$ (recall that $\mu$ is
computable in $\mu*\kappa_{\mu}^{a}$). Moreover, using formula (\ref{eq:kernel-formula-function})
and the fact that $s_{\mu*\kappa_{\mu}^{a}}^{a}(x,y)$ does not depend
on $y$, 
\[
\iint_{\mathbb{X}\times\mathbb{Y}}s_{\mu*\kappa_{\mu}^{a}}^{a}(x,y)\,(\mu*\kappa_{\mu}^{a})(dxdy)=\int_{\mathbb{X}}t_{\mu}^{a}(x)\,\left(\int_{\mathbb{Y}}\kappa_{\mu}^{a}(dy\mid x)\right)\,\mu(dx)=\int_{\mathbb{X}}t_{\mu}^{a}(x)\,\mu(dx)=1.
\]
Since $P$ is effectively closed, and since $s_{\mu_{0}*\kappa_{\mu_{0}}^{a_{0}}}^{a_{0}}(x_{0},y_{0})=\infty$,
by Theorem~\ref{thm:SR-over-K} therefore $(x_{0},y_{0})\notin\SR{\mu_{0}*\kappa_{\mu_{0}}^{a_{0}}}{a_{0}}$.

\noindent \emph{Case 2.} Assume $y_{0}\notin\SR{\kappa_{\mu_{0}}^{a_{0}}(\cdot\mid x_{0})}{a_{0},\mu_{0},x_{0}}$.
Then, there is a normalized uniform Schnorr integral test $\{t_{\nu}^{a,\mu,x}\}_{a\in\mathbb{A},\mu\in\probmeas(\mathbb{X}),x\in\mathbb{X},\nu\in\probmeas(\mathbb{Y})}$
such that $t_{\kappa_{\mu_{0}}^{a_{0}}(\cdot\mid x_{0})}^{a_{0},\mu_{0},x_{0}}(y_{0})=\infty$.
(Hence $\int_{{}}t_{\nu}^{a,\mu,x}(y)\,\nu(dy)=1$ for all $a,\mu,x,\nu$.)

Consider the family of functions $\{s_{\rho}^{a}\}_{(a,\rho)\in P}$
(where $P$ is as in Case 1) given by 
\[
s_{\mu*\kappa_{\mu}^{a}}^{a}(x,y):=t_{\kappa_{\mu}^{a}(\cdot\mid x)}^{a,\mu,x}(y).
\]
To see that $\{s_{\rho}^{a}\}_{(a,\rho)\in P}$ is a uniform Schnorr
integral test, $s_{\mu*\kappa_{\mu}^{a}}^{a}(x,y)$ is lower semicomputable
in $a,$ $\mu*\kappa_{\mu}^{a}$, $x$, $y$. Moreover, using formula
(\ref{eq:kernel-formula-function}), 
\[
\iint_{\mathbb{X}\times\mathbb{Y}}s_{\mu*\kappa_{\mu}^{a}}^{a}(x,y)\,(\mu*\kappa_{\mu}^{a})(dxdy)=\int_{\mathbb{X}}\left(\int_{\mathbb{Y}}t_{\kappa_{\mu}^{a}(\cdot\mid x)}^{a,\mu,x}(y)\,\kappa_{\mu}^{a}(dy\mid x)\right)\,\mu(dx)=\int_{\mathbb{X}}1\,\mu(dx)=1.
\]
Since $P$ is effectively closed and $s_{\mu_{0}*\kappa_{\mu_{0}}^{a_{0}}}^{a_{0}}(x_{0},y_{0})=\infty$,
by Theorem~\ref{thm:SR-over-K} we have $(x_{0},y_{0})\notin\SR{\mu_{0}*\kappa_{\mu_{0}}^{a_{0}}}{a_{0}}$.
\end{proof}
\begin{proof}[Proof of $\Leftarrow$]
Fix $a_{0},\mu_{0},x_{0},y_{0}$. Prove the contrapositive. Assume
$(x_{0},y_{0})\notin\SR{\mu_{0}*\kappa_{\mu_{0}}^{a_{0}}}{a_{0}}$
and $x_{0}\in\SR{\mu_{0}}{a_{0}}$. Then there is a normalized uniform
Schnorr integral test $\{t_{\rho}^{a}\}_{a\in\mathbb{A},\rho\in\probmeas(\mathbb{X}\times\mathbb{Y})}$
such that $t_{\mu_{0}*\kappa_{\mu_{0}}^{a_{0}}}^{a_{0}}(x_{0},y_{0})=\infty$.
(Hence $\iint_{{}}t_{\rho}^{a}(x,y)\,\rho(dxdy)=1$ for all $a,\rho$.)

Consider the effectively closed set 
\begin{align*}
P & =\{(a,\mu,x,\kappa_{\mu}^{a}(\cdot\mid x)):a\in\mathbb{A},\mu\in\probmeas(\mathbb{X}),x\in\mathbb{X}\}\\
 & \subseteq\mathbb{A}\times\probmeas(\mathbb{X})\times\mathbb{X}\times\probmeas(\mathbb{Y}),
\end{align*}
and consider the family of functions $\{s_{\nu}^{a,\mu,x}\}_{(a,\mu,x,\nu)\in P}$
given by
\[
s_{\kappa_{\mu}^{a}(\cdot\mid x)}^{a,\mu,x}(y):=t_{\mu*\kappa_{\mu}^{a}}^{a}(x,y).
\]
However, $\{s_{\nu}^{a,\mu,x}\}_{(a,\mu,x,\nu)\in P}$ may not be
a uniform test since it is possible that $\int_{{}}s_{\kappa_{\mu}^{a}(\cdot\mid x)}^{a,\mu,x}(y)\,\kappa_{\mu}^{a}(dy\mid x)=\infty$
for some $x$. To fix this, consider the following family of functions
\[
I_{\mu}^{a}(x):=\int_{\mathbb{Y}}s_{\kappa_{\mu}^{a}(\cdot\mid x)}^{a,\mu,x,}(y)\,\kappa_{\mu}^{a}(dy\mid x)=\int_{\mathbb{Y}}t_{\mu*\kappa_{\mu}^{a}}^{a}(x,y)\,\kappa_{\mu}^{a}(dy\mid x).
\]
Then $I_{\mu}^{a}(x)$ is lower semicomputable in $a$, $\mu$, $x$
(Lemma~\ref{lem:Portmanteau}), and using formula\ (\ref{eq:kernel-formula-function}),
\[
\int_{\mathbb{X}}I_{\mu}^{a}(x)\,\mu(dx)=\int_{\mathbb{X}}\left(\int_{\mathbb{Y}}t_{\mu*\kappa_{\mu}^{a}}^{a}(x,y)\,\kappa_{\mu}^{a}(dy\mid x)\right)\mu(dx)=\iint_{\mathbb{X\times\mathbb{Y}}}t_{\mu*\kappa_{\mu}^{a}}^{a}(x,y)\,(\mu*\kappa_{\mu}^{a})(dxdy)=1.
\]
Since $x_{0}\in\SR{\mu_{0}}{a_{0}}$, we have that $\int_{{}}s_{\kappa_{\mu_{0}}^{a_{0}}(\cdot\mid x_{0})}^{a_{0},\mu_{0},x_{0}}(y)\,\kappa_{\mu_{0}}^{a_{0}}(dy\mid x_{0})=I_{\mu_{0}}^{a_{0}}(x_{0})<\infty$.
However, we need more. 

By Lemma~\ref{lem:Luzin-for-tests}, there is some computable $h_{\mu}^{a}\colon\mathbb{X}\rightarrow[0,\infty)$
such that $h_{\mu}^{a}\leq I_{\mu}^{a}$ and $h_{\mu_{0}}^{a_{0}}(x_{0})=I_{\mu_{0}}^{a_{0}}(x_{0})$.
Let $\{\hat{s}_{\nu}^{a,\mu,x}\}_{(a,\mu,x,\nu)\in P}$ be the family
of lower semicomputable functions which equals $s_{\nu}^{a,\mu,x}(y)$
except that we ``cut off'' the enumeration so that $\int_{{}}\hat{s}_{\nu}^{a,\mu,x}(y)\,\nu(dy)=h_{\mu}^{a}(x)$.

Formally, let $\{f_{\nu}^{r;a,\mu,x}\}$ be the family of continuous
functions from Lemma\ \ref{lem:approx-below} computable in $r,a,\mu,x,\nu$
($r\in[0,\infty)$) such that $s_{\nu}^{a,\mu,x}=\sup_{r}f_{\nu}^{r;a,\mu,x}$.
Then for $a,\mu,x,\nu$, let 
\[
\hat{s}_{\nu}^{a,\mu,x}:=\sup\left\{ f_{\nu}^{r;a,\mu,x}\ :\ r\in[0,\infty)\text{ and }{\textstyle \int_{\mathbb{Y}}f_{\nu}^{a,\mu,x}(y)\,\nu(dy)\leq h_{\mu}^{a}(x)}\right\} .
\]
Then $\int_{{}}\hat{s}_{\nu}^{a,\mu,x}(y)\,\nu(dy)=h_{\mu}^{a}(x)$
as desired.

Then $\{\hat{s}_{\nu}^{a,\mu,x}\}_{(a,\mu,x,\nu)\in P}$ is a uniform
Schnorr integral test and $\hat{s}_{\kappa_{\mu_{0}}^{a_{0}}(\cdot\mid x_{0})}^{a_{0},\mu_{0},x_{0}}=s_{\kappa_{\mu_{0}}^{a_{0}}(\cdot\mid x_{0})}^{a_{0},\mu_{0},x_{0}}$.
Therefore $\hat{s}_{\kappa_{\mu_{0}}^{a_{0}}(\cdot\mid x_{0})}^{a_{0},\mu_{0},x_{0}}(y_{0})=s_{\kappa_{\mu_{0}}^{a_{0}}(\cdot\mid x_{0})}^{a_{0},\mu_{0},x_{0}}(y_{0})=t_{\mu_{0}*\kappa_{\mu_{0}}^{a_{0}}}^{a_{0}}(x_{0},y_{0})=\infty$.
Hence $y_{0}\notin\SR{\kappa_{\mu_{0}}^{a_{0}}(\cdot\mid x_{0})}{a_{0},\mu_{0},x_{0}}$.
\end{proof}
\begin{rem}
Consider the case when $\cont(\mathbb{X},\probmeas(\mathbb{Y}))$
is a computable metric space, for example when $\mathbb{X}=\mathbb{Y}=\{0,1\}^{\mathbb{N}}$.\footnote{Specially, we want $\cont(\mathbb{X},\probmeas(\mathbb{Y}))$ to have
a computable metric space structure such that the evaluation map $\eval\colon\cont(\mathbb{X},\probmeas(\mathbb{Y}))\times\mathbb{X}\to\probmeas(\mathbb{Y})$
given by $\eval(\kappa,x)=\kappa(x)$ is computable. We remark that
this is true when $\mathbb{X}$ is effectively locally compact (see,
e.g., \cite[Def.~7.15]{Bienvenu.Gacs.Hoyrup.ea:2011}). (In contrast,
if $\mathbb{X}$ is not locally compact, then $\cont(\mathbb{X},\probmeas(\mathbb{Y}))$
is likely not a Polish space.)}  In this case, we can use the kernel $\kappa$ directly as an oracle.
Theorem~\ref{thm:vL-Sch-kernel} becomes
\[
(x,y)\in\SR{\mu*\kappa}{\kappa}\quad\text{iff}\quad x\in\SR{\mu}{\kappa}\text{ and }y\in\SR{\kappa(\cdot\mid x)}{\mu,\kappa,x}.
\]
\end{rem}
\begin{rem}
\label{rem:meas-kernels}We have only considered the case of continuous
kernels $\kappa$. In general, every probability measure $\rho$ on
$\mathbb{X}\times\mathbb{Y}$ can be decomposed into a probability
measure $\rho_{\mathbb{X}}$ on $\mathbb{X}$ (called the \emph{marginal probability measure})
and a (not necessarily continuous) kernel $\kappa\colon\mathbb{X}\rightarrow\probmeas(\mathbb{Y})$
(called the \emph{conditional probability}). The conditional probability
is sometimes denoted $\rho(\cdot\mid\cdot)$ where $\rho(B\mid x)=\kappa(B\mid x)$.

Even when $\rho$ is computable, the conditional probability $\rho(\cdot\mid\cdot)$
may not be computable in any nice sense. (See Bauwens~\cite[Cor.~3]{Bauwens:}
and Bauwens, Shen, and Takahashi \cite[Ex.~3]{Bauwens.Shen.Takahashi:2016},
also Ackerman, Freer, and Roy \cite{Ackerman.Freer.Roy:2011}.)

In general, one would like to show that for any probability measure
$\rho$ on $\mathbb{X}\times\mathbb{Y}$ that the following holds,
\[
(x,y)\in\SR{\rho}{\rho(\cdot\mid\cdot)}\quad\text{iff}\quad x\in\SR{\rho_{\mathbb{X}}}{\rho(\cdot\mid\cdot)}\text{ and }y\in\SR{\kappa(\cdot\mid x)}{\rho_{\mathbb{X}},\rho(\cdot\mid\cdot),x}.
\]
While this result should be true, to formalize and prove it would
take us too far afield. (Notice it is not even obvious in which computable
metric space $\mathbb{A}$ the conditional probability $\rho(\cdot\mid\cdot)$
lives.)
\end{rem}
\begin{rem}
Theorem~\ref{thm:vL-Sch-kernel} also holds for Martin-Löf randomness.
See, for example, Takahashi \cite{Takahashi:2008,Takahashi:2011a},
Bauwens \cite[Thm.~4]{Bauwens:}, and the survey by Bauwens, Shen,
and Takahashi \cite[Thm.~5]{Bauwens.Shen.Takahashi:2016}.
\end{rem}

\section{\label{sec:SR-maps}Schnorr randomness and measure-preserving maps}

In this section we explore how Schnorr randomness behaves along measure-preserving
maps. The main theorem is another version of Van Lambalgen's Theorem.
In this section, let $\mathbb{X}$ and $\mathbb{Y}$ be computable
metric spaces, and let $\mu$ be a probability measure on $\mathbb{X}$.
Let $T\colon\mathbb{X}\rightarrow\mathbb{Y}$ be a measurable map.
Then the \emph{pushforward measure} $\mu_{T}$ is the probability
measure given by $\mu_{T}(B)=\mu(T^{-1}(A))$. This formula extends
to the following change of basis formula,
\begin{equation}
\int_{\mathbb{Y}}f(y)\,\mu_{T}(dy)=\int_{\mathbb{X}}f(T(x))\,\mu(dx).\label{eq:change-of-basis}
\end{equation}

\begin{defn}
\label{def:unif-family-maps}Let $\mathbb{X}$, $\mathbb{Y}$, and
$\mathbb{A}$ be computable metric spaces and let $K\subseteq\mathbb{A}\times\probmeas(\mathbb{X})$
be a $\Pi_{1}^{0}$ subset. We will refer to a computable map $T\colon K\times\mathbb{X}\rightarrow\mathbb{Y}$
as a \emph{uniformly computable family of continuous maps} and denote
it as $\{T_{\mu}^{a}\colon\mathbb{X}\rightarrow\mathbb{Y}\}_{(a,\mu)\in K}$
where $T_{\mu}^{a}$ denotes the map $x\mapsto T(a,\mu,x)$.
\end{defn}
Note that the map $a,\mu\mapsto\mu_{T_{\mu}^{a}}$ is computable (using
Lemmas~\ref{lem:char-comp-meas} and \ref{lem:Portmanteau} and with
the change of basis formula (\ref{eq:change-of-basis})).

This next proposition concerns randomness conservation which is an
important topic in its own right \cite{Hoyrup.Rojas:2009b,Bienvenu.Porter:2012,Rute:c}.
If $\mu$ is a computable measure, $T\colon\mathbb{X}\to\mathbb{Y}$
is a computable map, and $x\in\SR{\mu}{}$, then $T(x)\in\SR{\mu_{T}}{}$
\cite[Thm.~4.1]{Bienvenu.Porter:2012}. This next proposition generalizes
this fact to a family of continuous maps.
\begin{prop}[Randomness conservation]
\label{prop:rand-preserve-cont}Let $\mathbb{A}$, $\mathbb{X}$,
and $\mathbb{Y}$ be computable metric spaces. Let $K\subseteq\mathbb{A}\times\probmeas(\mathbb{X})$
be an effectively closed subset and let $\{T_{\mu}^{a}\colon\mathbb{X}\rightarrow\mathbb{Y}\}_{(a,\mu)\in K}$
be a uniformly computable family of continuous maps as in Definition~\ref{def:unif-family-maps}.
Then for all $(a,\mu)\in K$ and $x\in\mathbb{X}$, 
\[
x\in\SR{\mu}a\quad\text{implies}\quad T_{\mu}^{a}(x)\in\SR{\mu_{T_{\mu}^{a}}}{a,\mu}.
\]
\end{prop}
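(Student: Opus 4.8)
The plan is to prove the contrapositive via a test-pullback argument, mirroring the structure of the $(\Rightarrow)$ direction of Theorem~\ref{thm:vL-Sch-kernel}. Fix $(a_0,\mu_0)\in K$ and $x_0\in\mathbb{X}$, write $\nu_\mu^a := \mu_{T_\mu^a}$, and suppose $T_{\mu_0}^{a_0}(x_0)\notin\SR{\nu_{\mu_0}^{a_0}}{a_0,\mu_0}$. Then there is a normalized uniform Schnorr integral test $\{t_\nu^{b,\mu}\}_{b\in\mathbb{A},\mu\in\probmeas(\mathbb{X}),\nu\in\probmeas(\mathbb{Y})}$ with $\int t_\nu^{b,\mu}(y)\,d\nu(y)=1$ for all $b,\mu,\nu$, and $t_{\nu_{\mu_0}^{a_0}}^{a_0,\mu_0}(T_{\mu_0}^{a_0}(x_0))=\infty$. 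The goal is to pull this test back along $T$ to a uniform Schnorr integral test on $\mathbb{X}$ restricted to a suitable effectively closed set, whose value at $x_0$ is infinite, then invoke Theorem~\ref{thm:SR-over-K}.

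Concretely, I would first note that the map $(a,\mu)\mapsto(a,\mu,\nu_\mu^a)$ is computable (by the remark following Definition~\ref{def:unif-family-maps}), so its graph
\[
P=\{(a,\mu,x):(a,\mu)\in K\}\subseteq\mathbb{A}\times\probmeas(\mathbb{X})\times\mathbb{X}
\]
is effectively closed (here $P$ is just $K\times\mathbb{X}$, which is $\Pi^0_1$). Then define, for $(a,\mu)\in K$,
\[
s_\mu^a(x):=t_{\nu_\mu^a}^{a,\mu}\bigl(T_\mu^a(x)\bigr).
\]
This is lower semicomputable in $(a,\mu,x)$ ranging over $K\times\mathbb{X}$, since it is the composition of the computable map $(a,\mu,x)\mapsto(a,\mu,T_\mu^a(x))$ with a lower semicomputable function, together with the computable map $(a,\mu)\mapsto\nu_\mu^a$. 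The key integral computation uses the change of basis formula~(\ref{eq:change-of-basis}): for each $(a,\mu)\in K$,
\[
\int_{\mathbb{X}}s_\mu^a(x)\,\mu(dx)=\int_{\mathbb{X}}t_{\nu_\mu^a}^{a,\mu}\bigl(T_\mu^a(x)\bigr)\,\mu(dx)=\int_{\mathbb{Y}}t_{\nu_\mu^a}^{a,\mu}(y)\,\nu_\mu^a(dy)=1,
\]
which is finite and (trivially) computable in $(a,\mu)\in K$. Hence $\{s_\mu^a\}_{(a,\mu)\in K}$ is a uniform Schnorr integral test restricted to $K$ in the sense of Definition~\ref{def:SR-unif-int-test-K}. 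Since $s_{\mu_0}^{a_0}(x_0)=t_{\nu_{\mu_0}^{a_0}}^{a_0,\mu_0}(T_{\mu_0}^{a_0}(x_0))=\infty$, Theorem~\ref{thm:SR-over-K} gives $x_0\notin\SR{\mu_0}{a_0}$, which is the contrapositive of what we wanted.

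I expect the only subtle point to be bookkeeping with the oracle: the test we obtain for $\nu_{\mu_0}^{a_0}$ is relative to the pair $(a_0,\mu_0)$, so when we pull back we must check that $s_\mu^a$ depends computably on the oracle data $(a,\mu)$ that is actually available on $K$ — which it does, since $\nu_\mu^a$ is computable from $(a,\mu)$. No new analytic input is needed here (unlike the $(\Leftarrow)$ directions of the Van Lambalgen theorems, there is no need for Lemma~\ref{lem:Luzin-for-tests} or a cut-off construction), because the conservation direction only requires transporting a single test forward. The main "obstacle" is therefore purely organizational: making sure the index space for the test and the effectively closed set $K$ are set up so that Theorem~\ref{thm:SR-over-K} applies verbatim.
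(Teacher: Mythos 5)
Your proposal is correct and follows essentially the same route as the paper: pull the test for the pushforward measure back along $T_\mu^a$ via $s_\mu^a(x)=t_{\mu_{T_\mu^a}}^{a,\mu}(T_\mu^a(x))$, verify the integral equals $1$ by the change of basis formula, and conclude via Theorem~\ref{thm:SR-over-K} restricted to $K$. Your extra care about the oracle bookkeeping and the effectively closed index set is sound and matches what the paper leaves implicit.
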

\begin{proof}
Fix $a_{0},\mu_{0},x_{0}$. Let $\nu_{0}$ denote the pushforward
$(\mu_{0})_{T_{\mu_{0}}^{a_{0}}}$ and let $y_{0}=T_{\mu}^{a}(x_{0})$.
Assume $y_{0}\notin\SR{\nu_{0}}{a_{0},\mu_{0}}$. Then there is a
normalized uniform Schnorr integral test $\{t_{\nu}^{a,\mu}\}_{a\in\mathbb{A},\mu\in\probmeas(\mathbb{X}),\nu\in\probmeas(\mathbb{Y})}$
such that $t_{\nu_{0}}^{a_{0},\mu_{0}}(y_{0})=\infty$. Consider the
uniform Schnorr integral test $\{s_{\mu}^{a}\}_{a\in\mathbb{A},\mu\in\probmeas(\mathbb{X})}$
given by $s_{\mu}^{a}(x)=t_{\mu_{T_{\mu}^{a}}}^{a,\mu}(T_{\mu}^{a}(x))$.
By the change of basis formula (\ref{eq:change-of-basis}), 
\[
\int_{\mathbb{X}}s_{\mu}^{a}(x)\,\mu(dx)=\int_{\mathbb{Y}}t_{\mu_{T_{\mu}^{a}}}^{a,\mu}(y)\,\mu_{T_{\mu}^{a}}(dy)=1.
\]
Then $s_{\mu_{0}}^{a_{0}}(x_{0})=\infty$, so $x_{0}\notin\SR{\mu_{0}}{a_{0}}$.
\end{proof}
This next proposition concerns isomorphic measure spaces. A important
application of randomness preservation is that computably isomorphic
measure spaces $(\mathbb{X},\mu)$ and $(\mathbb{Y},\nu)$ have ``identical''
Schnorr random points. Specifically, if $\mu$ and $\nu$ are computable
and there is a pair of computable maps $T\colon\mathbb{X}\to\mathbb{Y}$
and $S\colon\mathbb{Y}\to\mathbb{X}$ which are measure-preserving
and inverses of each other, then $x\in\SR{\mu}{}$ if and only if
$T(x)\in\SR{\nu}{}$. This next proposition generalizes this fact
to a family of continuous isomorphisms.
\begin{prop}
\label{prop:iso-preserve-cont}Let $\mathbb{A}$, $\mathbb{X}$, and
$\mathbb{Y}$ be computable metrics spaces. Let $K\subseteq\mathbb{A}\times\probmeas(\mathbb{X})$
and $L\subseteq\mathbb{A}\times\probmeas(\mathbb{Y})$ be effectively
closed subsets and let $\{T_{\mu}^{a}\colon\mathbb{X}\rightarrow\mathbb{Y}\}_{(a,\mu)\in K}$
and $\{S_{\nu}^{a}\colon\mathbb{Y}\rightarrow\mathbb{X}\}_{(a,\nu)\in L}$
be uniformly computable families of continuous maps as in Definition~\ref{def:unif-family-maps}.
Further, assume these two families of maps are inverses in the following
sense:

\begin{itemize}
\item If $(a,\mu)\in K$ and $\nu=\mu_{T_{\mu}^{a}}$, then $(a,\nu)\in L$,
$\mu=\nu_{S_{\nu}^{a}}$, and 
\[
S_{\nu}^{a}\circ T_{\mu}^{a}=\id_{\mathbb{X}}\quad(\mu\text{-a.s.}).
\]
\item If $(a,\nu)\in L$, and $\mu=\nu_{S_{\nu}^{a}}$, then $(a,\mu)\in K$,
$\nu=\mu_{T_{\mu}^{a}}$, and 
\[
T_{\mu}^{a}\circ S_{\nu}^{a}=\id_{\mathbb{Y}}\quad(\nu\text{-a.s.}).
\]
\end{itemize}
\noindent For every $(a,\mu)\in K$ with $\nu=\mu_{T_{\mu}^{a}}$,
and every $x\in\mathbb{X}$ and $y\in\mathbb{Y}$, the following are
equivalent:

\begin{enumerate}
\item $x\in\SR{\mu}a$ and $y=T_{\mu}^{a}(x)$.
\item $y\in\SR{\nu}a$ and $x=S_{\nu}^{a}(y)$.
\end{enumerate}
\end{prop}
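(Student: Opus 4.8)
The plan is to obtain both implications from the randomness‑conservation result (Proposition~\ref{prop:rand-preserve-cont}), glued together by the hypothesis that $S$ and $T$ invert each other off a null set, with Theorem~\ref{thm:SR-over-K} used to absorb that null set.

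The key auxiliary observation I would isolate first is: if $\{g_\mu^a\colon\mathbb{X}\to\mathbb{X}\}_{(a,\mu)\in K}$ is a uniformly computable family of continuous maps such that $g_\mu^a(x)=x$ for $\mu$-almost every $x$, for each $(a,\mu)\in K$, then $x\in\SR{\mu}a$ already forces $g_\mu^a(x)=x$. Indeed, $(a,\mu,x)\mapsto d_{\mathbb{X}}(g_\mu^a(x),x)$ is computable on $K\times\mathbb{X}$, so $t_\mu^a(x):=\sup_n n\cdot\min\{1,\,n\,d_{\mathbb{X}}(g_\mu^a(x),x)\}$ is lower semicomputable on $K\times\mathbb{X}$; it equals $\infty$ exactly where $g_\mu^a(x)\neq x$ and $0$ elsewhere, and by monotone convergence $\int t_\mu^a\,d\mu=0$ for every $(a,\mu)\in K$. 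Thus $\{t_\mu^a\}_{(a,\mu)\in K}$ is a uniform Schnorr integral test restricted to $K$, and Theorem~\ref{thm:SR-over-K} forces $t_\mu^a(x)<\infty$ whenever $x\in\SR{\mu}a$, i.e. $g_\mu^a(x)=x$. The analogous statement holds over $L$ with $\mathbb{Y},\nu$ in place of $\mathbb{X},\mu$.

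Now fix $(a,\mu)\in K$ with $\nu=\mu_{T_\mu^a}$; by the first bullet of the hypothesis $(a,\nu)\in L$, $\mu=\nu_{S_\nu^a}$, and $S_\nu^a\circ T_\mu^a=\id_{\mathbb{X}}$ $\mu$-a.s., and by the second bullet $T_\mu^a\circ S_\nu^a=\id_{\mathbb{Y}}$ $\nu$-a.s.; moreover $(a,\mu,x)\mapsto S_{\mu_{T_\mu^a}}^a(T_\mu^a(x))$ is a uniformly computable family of continuous self-maps of $\mathbb{X}$ over $K$ (using that $a,\mu\mapsto\mu_{T_\mu^a}$ is computable and that $(a,\nu)\in L$ when $(a,\mu)\in K$), and likewise $T\circ S$ over $L$. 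For (1)$\Rightarrow$(2): if $x\in\SR{\mu}a$ and $y=T_\mu^a(x)$, then Proposition~\ref{prop:rand-preserve-cont} gives $y\in\SR{\nu}{a,\mu}$, hence $y\in\SR{\nu}a$ since the pair $(a,\mu)$ uniformly computes $a$ (Proposition~\ref{prop:reducible-oracles}); and the observation above, applied to $g=S\circ T$ over $K$, gives $S_\nu^a(T_\mu^a(x))=x$, i.e. $x=S_\nu^a(y)$. For (2)$\Rightarrow$(1): if $y\in\SR{\nu}a$ and $x=S_\nu^a(y)$, then Proposition~\ref{prop:rand-preserve-cont} applied to $\{S_\nu^a\}_{(a,\nu)\in L}$ gives $x\in\SR{\nu_{S_\nu^a}}{a,\nu}=\SR{\mu}{a,\nu}\subseteq\SR{\mu}a$ (again Proposition~\ref{prop:reducible-oracles}), and the observation applied to $g=T\circ S$ over $L$ gives $T_\mu^a(S_\nu^a(y))=y$, i.e. $y=T_\mu^a(x)$.

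The only delicate points are bookkeeping rather than substance: verifying that the composed maps live over the correct $\Pi_1^0$ index sets—which is exactly what the ``inverses'' hypothesis supplies, e.g. $(a,\nu)\in L$ whenever $(a,\mu)\in K$ and $\nu=\mu_{T_\mu^a}$—and shedding the extra measure from the oracle via Proposition~\ref{prop:reducible-oracles}. I do not expect a genuine obstacle; the proposition is two applications of randomness conservation stitched together with the a.e.-inverse condition and the $\infty$-valued test coming from Theorem~\ref{thm:SR-over-K}.
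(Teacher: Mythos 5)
Your proof is correct and follows essentially the same two-step structure as the paper's: randomness conservation (Proposition~\ref{prop:rand-preserve-cont}, with Proposition~\ref{prop:reducible-oracles} to shed the extra oracle) for the first conjunct, then upgrading the $\mu$-a.e.\ identity $S_{\nu}^{a}\circ T_{\mu}^{a}=\id_{\mathbb{X}}$ to an actual identity at the random point. The only divergence is in that second step: you build an explicit uniform Schnorr integral test that is infinite off the identity set, whereas the paper simply notes that $\{x:x=S_{\nu}^{a}(T_{\mu}^{a}(x))\}$ is a closed set of $\mu$-measure one, hence contains $\supp\mu$, and invokes Proposition~\ref{prop:support} (which gives the conclusion even for Kurtz randoms); both arguments are sound.
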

\begin{proof}
Fix $x_{0},y_{0},\mu_{0},\nu_{0},a_{0}$ where $(a_{0},\mu_{0})\in K$,
$\nu_{0}=(\mu_{0})_{T_{\mu_{0}}^{a_{0}}}$. By symmetry it is enough
to show (1) implies (2). Assume $x_{0}\in\SR{\mu_{0}}{a_{0}}$. Then
by Proposition~\ref{prop:rand-preserve-cont}, we have $T_{\mu_{0}}^{a_{0}}(x_{0})\in\SR{\nu_{0}}{a_{0}}$.

It remains to show that $x_{0}=S_{\nu_{0}}^{a_{0}}(T_{\mu_{0}}^{a_{0}}(x_{0}))$.
Notice that the set 
\[
\{x\in\mathbb{X}:x=S_{\nu_{0}}^{a_{0}}(T_{\mu_{0}}^{a_{0}}(x))\}
\]
is a closed set of $\mu_{0}$-measure one, so this set contains the
support of $\mu_{0}$, and by Proposition~\ref{prop:support} also
contains $x_{0}$.
\end{proof}
Now, we show how one can modify Van Lambalgen's Theorem for kernels
(Theorem~\ref{thm:vL-Sch-kernel}) to give a useful result about
randomness along continuous maps.

For each measurable map $T\colon\mathbb{X}\rightarrow\mathbb{Y}$,
there is a corresponding map $y\mapsto\mu(\cdot\mid T=y)$, referred
to as the \emph{conditional probability} of $T$, which is a measurable
map of type $\mathbb{Y}\rightarrow\probmeas(\mathbb{X})$ (so it is
a kernel). It is the unique such map (up to $\nu$-a.s.\ equivalence)
satisfying the property 
\[
\mu(A\cap T^{-1}(B))=\int_{B}\mu(A\mid T=y)\,\mu_{T}(dy)
\]
for measurable sets $A\subseteq\mathbb{X}$ and $B\subseteq\mathbb{Y}$.

Consider the space $\mathbb{Y}\times\mathbb{X}$ and the measurable
map $(T,\id_{\mathbb{X}})\colon\mathbb{X}\rightarrow\mathbb{Y}\times\mathbb{X}$
given by $x\mapsto(T(x),x)$. Let $\mu_{(T,\id_{\mathbb{X}})}$ denote
the pushforward of $\mu$ along $(T,\id_{\mathbb{X}})$. (So $\mu_{(T,\id_{\mathbb{X}})}$
is a probability measure on $\mathbb{Y}\times\mathbb{X}$ which is
supported on the inverted graph of $T$.) Now for $B\subseteq\mathbb{Y}$
and $A\subseteq\mathbb{X}$ we have that 
\[
(\mu_{(T,\id_{\mathbb{X}})})(B\times A)=\mu(A\cap T^{-1}(B))=\int_{B}\mu(A\mid T=y)\,\mu_{T}(dy).
\]
Comparing this to equation~(\ref{eq:kernel-formula}) we have that
\[
\mu_{(T,\id_{\mathbb{X}})}=\mu_{T}*\mu(\cdot\mid T=\cdot)
\]
where $\mu(\cdot\mid T=\cdot)$ denotes the kernel $y\mapsto\mu(\cdot\mid T=y)$.
Now, we can apply our Van Lambalgen's Theorem for kernels (Theorem~\ref{thm:vL-Sch-kernel})
to get the following version of Van Lambalgen's Theorem for maps (where
the conditional probability is continuous.)
\begin{thm}
\label{thm:vL-mp-maps}Let $\mathbb{A}$, $\mathbb{X}$ and $\mathbb{Y}$
be computable metric spaces. Let $K\subseteq\mathbb{A}\times\probmeas(\mathbb{X})$
be an effectively closed subset and let $\{T_{\mu}^{a}\colon\mathbb{X}\rightarrow\mathbb{Y}\}_{(a,\mu)\in K}$
be a uniformly computable family of continuous maps as in Definition~\ref{def:unif-family-maps}.
Moreover, assume that the conditional probability map $a,\mu,y\mapsto\mu(\cdot\mid T_{\mu}^{a}=y)$
is a computable map of type $K\times\mathbb{Y}\to\probmeas(\mathbb{X})$.
For $(a,\mu)\in K$, $x\in\mathbb{X}$ and $y\in\mathbb{Y}$ the following
are equivalent:

\begin{enumerate}
\item $x\in\SR{\mu}a$ and $y=T_{\mu}^{a}(x)$.
\item $y\in\SR{\mu_{T_{\mu}^{a}}}{a,\mu}$ and $x\in\SR{\mu(\cdot\mid T_{\mu}^{a}=y)}{a,\mu,y}.$
\end{enumerate}
\end{thm}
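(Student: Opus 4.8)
The plan is to derive Theorem~\ref{thm:vL-mp-maps} from the Van Lambalgen Theorem for kernels (Theorem~\ref{thm:vL-Sch-kernel}) together with the isomorphism-preservation result (Proposition~\ref{prop:iso-preserve-cont}). The key observation, already set up in the discussion preceding the statement, is that the map $(T_\mu^a,\id_\mathbb{X})\colon\mathbb{X}\to\mathbb{Y}\times\mathbb{X}$ sends $\mu$ to $\mu_{T_\mu^a}*\kappa_\mu^a$, where $\kappa_\mu^a$ is the continuous kernel $y\mapsto\mu(\cdot\mid T_\mu^a=y)$. Under our hypothesis this kernel is a uniformly computable family of continuous kernels in the sense of Definition~\ref{def:unif-family-kernels}, so Theorem~\ref{thm:vL-Sch-kernel} applies to it.

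First I would set $\nu_\mu^a := \mu_{T_\mu^a}$ and $S := (T_\mu^a,\id_\mathbb{X})$, and verify that $\{S_\mu^a\colon\mathbb{X}\to\mathbb{Y}\times\mathbb{X}\}_{(a,\mu)\in K}$ and the ``projection back'' map $\{\pi_\mathbb{X}\colon\mathbb{Y}\times\mathbb{X}\to\mathbb{X}\}$ (independent of $a,\mu$) form a pair of uniformly computable families of continuous maps satisfying the inverse hypotheses of Proposition~\ref{prop:iso-preserve-cont}. The relevant $\Pi^0_1$ index set on the $\mathbb{Y}\times\mathbb{X}$ side is $L=\{(a,\nu_\mu^a*\kappa_\mu^a):(a,\mu)\in K\}$, which is effectively closed because the $\mathbb{X}$-marginal of such a measure recovers $\mu$ uniformly and because $\nu_\mu^a$ and $\kappa_\mu^a$ are uniformly computable from $(a,\mu)$. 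The map $\pi_\mathbb{X}\circ S_\mu^a=\id_\mathbb{X}$ exactly (not just a.s.), and $S_\mu^a\circ\pi_\mathbb{X}=\id_{\mathbb{Y}\times\mathbb{X}}$ holds $\mu_{(T_\mu^a,\id_\mathbb{X})}$-a.s.\ since that measure is supported on the graph $\{(T_\mu^a(x),x):x\in\mathbb{X}\}$. Also $(\mu_{(T_\mu^a,\id_\mathbb{X})})_{\pi_\mathbb{X}}=\mu$. Proposition~\ref{prop:iso-preserve-cont} then gives, for $(a,\mu)\in K$, $x\in\mathbb{X}$, $y\in\mathbb{Y}$:
\[
\bigl(x\in\SR{\mu}a\text{ and }(y,x)=S_\mu^a(x)\bigr)\quad\text{iff}\quad\bigl((y,x)\in\SR{\mu_{(T_\mu^a,\id_\mathbb{X})}}a\text{ and }x=\pi_\mathbb{X}(y,x)\bigr).
\]
The left side is precisely ``$x\in\SR{\mu}a$ and $y=T_\mu^a(x)$,'' i.e.\ condition (1) of the theorem; the right side says $(y,x)\in\SR{\mu_{T_\mu^a}*\kappa_\mu^a}{a}$ (the condition $x=\pi_\mathbb{X}(y,x)$ is trivially true).

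Next I would apply Theorem~\ref{thm:vL-Sch-kernel} to the kernel $\kappa_\mu^a=\mu(\cdot\mid T_\mu^a=\cdot)$ over the space $\mathbb{Y}\times\mathbb{X}$ with base measure $\nu_\mu^a$ and index set $\{(a,\nu_\mu^a):(a,\mu)\in K\}$ (noting $\nu_\mu^a$ is uniformly computable from $(a,\mu)$, and uniformly recomputes $(a,\mu)$ only modulo the original $K$, so some care is needed to phrase the index set correctly — one can take the $\Pi^0_1$ set $\{(a,\mu,\nu):(a,\mu)\in K,\ \nu=\nu_\mu^a\}$ and view $\kappa$ as a family indexed by it). This yields
\[
(y,x)\in\SR{\nu_\mu^a*\kappa_\mu^a}{a}\quad\text{iff}\quad y\in\SR{\nu_\mu^a}{a}\text{ and }x\in\SR{\kappa_\mu^a(\cdot\mid y)}{a,\nu_\mu^a,y}.
\]
Since $\kappa_\mu^a(\cdot\mid y)=\mu(\cdot\mid T_\mu^a=y)$, chaining the two displayed equivalences gives condition (2) — modulo one bookkeeping point about oracles, namely that the oracle on the right side of Theorem~\ref{thm:vL-Sch-kernel} is $(a,\nu_\mu^a,y)$ whereas the theorem states $(a,\mu,y)$. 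These are interchangeable: $\nu_\mu^a$ is uniformly computable from $(a,\mu)$, and conversely $\mu$ can be recovered from the kernel-conditional construction — but in fact the cleanest route is to run Theorem~\ref{thm:vL-Sch-kernel} with $\mathbb{A}$ taken to be (a space coding) $\mathbb{A}$ itself and the index set $K$ directly, producing oracle $(a,\mu,y)$, and $\SR{\mu_{T_\mu^a}}{a,\mu}$ on the $y$-side; Proposition~\ref{prop:reducible-oracles} lets us freely add or drop $\nu_\mu^a$ from oracle lists since it is uniformly computable from $(a,\mu)$.

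The main obstacle I anticipate is purely organizational: correctly threading the effectively-closed index sets and the oracle lists through two different theorems so that the hypotheses of each are literally met, especially the requirement in Definition~\ref{def:unif-family-kernels} that the kernel be a computable map on $K\times\mathbb{X}$ (here $K\times\mathbb{Y}$) — which is exactly the extra hypothesis we assumed about $a,\mu,y\mapsto\mu(\cdot\mid T_\mu^a=y)$. No new analytic ideas beyond those in Sections~\ref{sec:SR-kernels} and~\ref{sec:SR-maps} should be needed; the proof is a composition of Proposition~\ref{prop:iso-preserve-cont} (to replace $\mathbb{X}$ by the graph inside $\mathbb{Y}\times\mathbb{X}$) and Theorem~\ref{thm:vL-Sch-kernel} (to split the graph-measure into its $\mathbb{Y}$-marginal and the conditional kernel), with Proposition~\ref{prop:reducible-oracles} used to normalize oracle lists.
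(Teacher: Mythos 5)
Your proposal is correct and follows essentially the same route as the paper's own proof: transfer to the graph measure $\mu_{(T_{\mu}^{a},\id_{\mathbb{X}})}$ via Proposition~\ref{prop:iso-preserve-cont} with $\pi_{\mathbb{X}}$ as inverse, decompose it as $\mu_{T_{\mu}^{a}}*\mu(\cdot\mid T_{\mu}^{a}=\cdot)$ and apply Theorem~\ref{thm:vL-Sch-kernel} with $x$ and $y$ switched, then clean up the oracle lists with Proposition~\ref{prop:reducible-oracles}. The bookkeeping points you flag (index sets, dropping $\mu_{T_{\mu}^{a}}$ from the oracle) are exactly the ones the paper handles.
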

\begin{proof}
Consider the space $\mathbb{Y}\times\mathbb{X}$ and let $(T_{\mu}^{a},\id_{\mathbb{X}})\colon\mathbb{X}\rightarrow\mathbb{Y}\times\mathbb{X}$
denote the continuous map $x\mapsto(T_{\mu}^{a}(x),x)$. Let $\mu_{(T_{\mu}^{a},\id_{\mathbb{X}})}$
denote the pushforward of $\mu$ along $(T_{\mu}^{a},\id_{\mathbb{X}})$.
This measure $\mu_{(T_{\mu}^{a},\id_{\mathbb{X}})}$ is supported
on the graph $\{(y,x):y=T_{\mu}^{a}(x)\}$ of the map $T_{\mu}^{a}$,
and $(T_{\mu}^{a},\id_{\mathbb{X}})$ is an isomorphism from $\mu$
onto $\mu_{(T_{\mu}^{a},\id_{\mathbb{X}})}$. Therefore, by Proposition~\ref{prop:iso-preserve-cont}\footnote{To apply Proposition~\ref{prop:iso-preserve-cont}, notice that the
computable projection map $\pi_{\mathbb{X}}\colon\mathbb{Y}\times\mathbb{X}\to\mathbb{X}$
given by $(y,x)\mapsto x$ is the inverse of $(T_{\mu}^{a},\id_{\mathbb{X}})$
in that $\pi_{\mathbb{X}}\circ(T_{\mu}^{a},\id_{\mathbb{X}})=\id_{\mathbb{X}}$
($\mu$-a.s.) and $(T_{\mu}^{a},\id_{\mathbb{X}})\circ\pi_{\mathbb{X}}=\id_{\mathbb{Y}\times\mathbb{X}}$
($\mu_{(T_{\mu}^{a},\id_{\mathbb{X}})}$-a.s.). Now consider the following
table where the symbols to the left of $\rightsquigarrow$ are those
used in Proposition~\ref{prop:iso-preserve-cont} and the ones to
the right are the corresponding values in this proof:
\begin{align*}
\mathbb{X} & \rightsquigarrow\mathbb{X} & \mu & \rightsquigarrow\mu & T_{\mu}^{a}\colon\mathbb{X}\rightarrow\mathbb{Y} & \rightsquigarrow(T_{\mu}^{a},\id_{\mathbb{X}})\colon\mathbb{X}\rightarrow\mathbb{Y}\times\mathbb{X}\\
\mathbb{Y} & \rightsquigarrow\mathbb{Y}\times\mathbb{X} & \nu & \rightsquigarrow\mu_{(T_{\mu}^{a},\id_{\mathbb{X}})} & S_{\nu}^{a}\colon\mathbb{Y}\rightarrow\mathbb{X} & \rightsquigarrow\pi_{\mathbb{X}}\colon\mathbb{Y}\times\mathbb{X}\to\mathbb{X}
\end{align*}
Also, use $(a,\mu)$ as the oracle instead of just $a$. Then we have
the following where the second equivalence follows from Proposition~\ref{prop:iso-preserve-cont}:
\begin{align*}
(x\in\SR{\mu}{a,\mu}\ \text{and}\ y=T_{\mu}^{a}(x)) & \text{\quad iff\quad}(x\in\SR{\mu}{a,\mu}\text{ and }(y,x)=(T_{\mu}^{a},\id_{\mathbb{X}})(x))\\
 & \text{\quad iff\quad}((y,x)\in\SR{\mu_{(T_{\mu}^{a},\id_{\mathbb{X}})}}{a,\mu}\text{ and }x=\pi_{\mathbb{X}}(y,x))\\
 & \text{\quad iff\quad}(y,x)\in\SR{\mu_{(T_{\mu}^{a},\id_{\mathbb{X}})}}{a,\mu}
\end{align*}
}
\[
\left(x\in\SR{\mu}{a,\mu}\ \text{and}\ y=T_{\mu}^{a}(x)\right)\quad\text{iff}\quad(y,x)\in\SR{\mu_{(T_{\mu}^{a},\id_{\mathbb{X}})}}{a,\mu}.
\]
Now, by the discussion above, $\mu_{(T_{\mu}^{a},\id_{\mathbb{X}})}=\mu_{T_{\mu}^{a}}*\mu(\cdot\mid T_{\mu}^{a}=\cdot)$.
By Theorem~\ref{thm:vL-Sch-kernel} (with $x$ and $y$ switched),
we have
\[
(y,x)\in\SR{\mu_{(T_{\mu}^{a},\id_{\mathbb{X}})}}{a,\mu}\quad\text{iff}\quad\left(y\in\SR{\mu_{T_{\mu}^{a}}}{a,\mu}\ \text{and}\ x\in\SR{\mu(\cdot\mid T_{\mu}^{a}=y)}{a,\mu,\mu_{T_{\mu}^{a}},y}\right).
\]
Combining both results gives us the desired result, except that we
want to remove the oracle $\mu_{T_{\mu}^{a}}$ from $\SR{\mu(\cdot\mid T_{\mu}^{a}=y)}{a,\mu,\mu_{T_{\mu}^{a}},y}$.
This is handled by Proposition~\ref{prop:reducible-oracles}, since
the pair $(a,\mu)$ uniformly computes $\mu_{T_{\mu}^{a}}$.
\end{proof}
We instantly get the following corollary.
\begin{cor}
\label{cor:rand-pres-nrfn}Let $\mathbb{A}$, $\mathbb{X}$ and $\mathbb{Y}$
be computable metric spaces. Let $K\subseteq\mathbb{A}\times\probmeas(\mathbb{X})$
be effectively closed and let $\{T_{\mu}^{a}\colon\mathbb{X}\rightarrow\mathbb{Y}\}_{(a,\mu)\in K}$
be a uniformly computable family of continuous maps as in Definition~\ref{def:unif-family-maps}.
Moreover, assume that the conditional probability map $a,\mu,y\mapsto\mu(\cdot\mid T_{\mu}^{a}=y)$
is a computable map of type $K\times\mathbb{Y}\to\probmeas(\mathbb{X})$.
For $(a,\mu)\in K$, $x\in\mathbb{X}$ and $y\in\mathbb{Y}$ the following
both hold:

\begin{enumerate}
\item (Randomness conservation) If $x\in\SR{\mu}a$ then $T_{\mu}^{a}(x)\in\SR{\mu_{T_{\mu}^{a}}}a$.
\item (No-randomness-from-nothing) If $y\in\SR{\mu_{T}}{a,\mu}$, then there
exists some $x\in\SR{\mu}a$ such that $T_{\mu}^{a}(x)=y$.
\end{enumerate}
\end{cor}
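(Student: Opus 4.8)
The plan is to read both parts directly off Theorem~\ref{thm:vL-mp-maps}, using Proposition~\ref{prop:reducible-oracles} to shed the unwanted oracle $\mu$ and the (standard) fact that the Schnorr random set of a nonzero finite measure is conull.

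For randomness conservation, I would fix $(a,\mu)\in K$ and $x\in\SR{\mu}a$, and put $y:=T_\mu^a(x)$. Then clause~(1) of Theorem~\ref{thm:vL-mp-maps} holds by construction, so clause~(2) holds; in particular $y\in\SR{\mu_{T_\mu^a}}{a,\mu}$. Since the pair $(a,\mu)$ uniformly computes $a$ (project onto the first coordinate, with the whole space as the effectively closed domain), Proposition~\ref{prop:reducible-oracles} upgrades this to $y\in\SR{\mu_{T_\mu^a}}{a}$, which is claim~(1). (One could instead quote Proposition~\ref{prop:rand-preserve-cont} and then drop the oracle $\mu$ in the same way; that route does not even use the continuity hypothesis on the conditional probability.)

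For no-randomness-from-nothing, I would fix $(a,\mu)\in K$ and $y\in\SR{\mu_{T_\mu^a}}{a,\mu}$. By the standing hypothesis, $\nu:=\mu(\cdot\mid T_\mu^a=y)$ is a well-defined probability measure, hence $\nu\neq 0$. The key step is to produce a witness $x\in\SR{\nu}{a,\mu,y}$: there are only countably many uniform Schnorr integral tests (they are indexed by programs for lower semicomputable functions), and each such test $\{t_\nu^{a,\mu,y}\}$ excludes only the $\nu$-null set $\{x:t_\nu^{a,\mu,y}(x)=\infty\}$, which is null because $\int t_\nu^{a,\mu,y}(x)\,d\nu(x)<\infty$; hence $\SR{\nu}{a,\mu,y}$ is $\nu$-conull and in particular nonempty. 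Choosing any such $x$, both conjuncts of clause~(2) of Theorem~\ref{thm:vL-mp-maps} hold, so clause~(1) holds, giving $x\in\SR{\mu}a$ with $T_\mu^a(x)=y$, as required.

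The oracle bookkeeping is routine; the one ingredient not already packaged as a lemma is the conullity (hence nonemptiness) of $\SR{\nu}{a,\mu,y}$ for the probability measure $\nu$, even relative to the possibly noncomputable oracle $(a,\mu,y)$, so that is the step I would spell out most carefully — although it is entirely standard.
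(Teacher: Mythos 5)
Your proposal is correct and is exactly the argument the paper intends: the corollary is stated as following ``instantly'' from Theorem~\ref{thm:vL-mp-maps}, and your two reductions (dropping the oracle $\mu$ via Proposition~\ref{prop:reducible-oracles}, and using the countability of uniform Schnorr integral tests to see that $\SR{\mu(\cdot\mid T_{\mu}^{a}=y)}{a,\mu,y}$ is conull, hence nonempty) are precisely the details being elided. Your side remark that item~(1) also follows from Proposition~\ref{prop:rand-preserve-cont} without the hypothesis on the conditional probability matches the paper's own comment after the corollary.
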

The first item is a weaker version of the randomness conservation
result given in Proposition~\ref{prop:rand-preserve-cont}. In Proposition~\ref{prop:rand-preserve-cont}
we did not require that the conditional probability be computable
in any sense. 

However, for the the second item, also known as \emph{no randomness ex nihilo},
the condition that $a,\mu,y\mapsto\mu(\cdot\mid T_{\mu}^{a}=y)$ is
computable is basically necessary. Indeed, Rute~\cite[Thm.~25]{Rute:c}
constructed a computable $\lambda$-measure-preserving map $T\colon\{0,1\}^{\mathbb{N}}\rightarrow\{0,1\}^{\mathbb{N}}$
for which no-randomness-from-nothing fails for Schnorr randomness.\footnote{As a technical point, Rute's map is almost-everywhere computable (so
the map is partial), but it can be modified to be a total computable
map $T\colon\mathbb{X}\rightarrow\{0,1\}^{\mathbb{N}}$ by letting
$\mathbb{X}=\dom T$. (One must use a computable version of Alexandrov's
Theorem\textemdash that every ${\bf \Pi}_{2}^{0}$ subspace of a Polish
space is Polish\textemdash to show that $\mathbb{X}$ is a computable
metric space, and one must show that $\lambda\upharpoonright\mathbb{X}$
is still a computable measure on $\mathbb{X}$ with the same Schnorr
randoms as $\lambda$ on $\{0,1\}^{\mathbb{N}}$.)}

Here is an application of Corollary~\ref{cor:rand-pres-nrfn}.
\begin{example}[Schnorr random Brownian motion]
Let $\mathbb{P}$ denote the Wiener measure on $\cont([0,1])$, that
is the measure of Brownian motion. Consider the map $T\colon\omega\mapsto\omega(1)$
which sends each Brownian motion path $\omega\in\cont([0,1])$ to
its value at $1$. Randomness conservation (Proposition~\ref{prop:rand-preserve-cont})
tells us that if $\omega$ is a Schnorr random Brownian motion, then
$\omega(1)$ is Schnorr random for the Gaussian distribution on $\mathbb{R}$.
(This is easily seen to be equivalent to $\omega(1)$ being Schnorr
random for the Lebesgue measure on $\mathbb{R}$.)

However, now we have the tools to prove the converse direction (which
remained open until now). Assume $a\in\mathbb{R}$ is Schnorr random
for the Gaussian distribution. We wish to find some Schnorr random
Brownian motion $\omega$ such that $\omega(1)=a$. The conditional
probability $\mathbb{P}(\cdot\mid T=a)$ is the probability distribution
associated with being a Brownian motion path $\omega$ which satisfies
$\omega(1)=a$. Such an object is called a \emph{Brownian bridge ending at level $a$}.
Moreover the probability measure $\mathbb{P}(\cdot\mid T=a)$ is uniformly
computable in $a$ since it is just the pushforward of $\mathbb{P}$
along the computable map
\[
\omega\mapsto(\omega(t)-\omega(1)t+at)_{0\leq t\leq1}
\]
which transforms any Brownian motion path $\omega$ into a Brownian
bridge ending at level $a$ (see equation (4) in Pitman \cite{Pitman:1999}).
Therefore, by Corollary~\ref{cor:rand-pres-nrfn}, there is some
Schnorr random Brownian motion $\omega$ such that $\omega(1)=a$.
More generally, Theorem~\ref{thm:vL-mp-maps} tells us a Schnorr
random Brownian motion $\omega$ is exactly composed of a Schnorr
random $a$ (for $\omega(1)$) and a Schnorr random Brownian bridge
ending at level $a$ (for the rest of $\omega$).
\end{example}
\begin{rem}
\label{rem:meas-maps}The results in this section are not as practical
as they could be because we assume $T$ and $y\mapsto\mu(\cdot\mid T=y)$
are both continuous. There are many natural examples where one or
both of these maps is discontinuous. 

For example, the map $T\colon2^{\mathbb{N}}\to[0,1]$ which maps a
binary sequence $x\in2^{\mathbb{N}}$ to its maximum initial frequency
of $1$s, $T(x)=\max_{n}\frac{1}{n}\sum_{k<n}x_{k}$, is not continuous.
Also, the ``time inversion of Brownian motion'' $\omega(t)\mapsto\frac{1}{t}\omega(1/t)$
is a well-known isomorphism from the Wiener measure onto itself, but
it is not continuous as a map of type $C([0,\infty))\to C([0,\infty))$.
(Specifically, if $\xi(t)=t\omega(1/t)$ then $\xi(0)=\lim_{t\to0}t\omega(1/t)=0$
almost surely, but the modulus of continuity of $\xi$ near $0$ is
not continuous in $\omega$. It depends on the rate of convergence
of $\lim_{t\to0}t\omega(1/t)$.) Last, consider the projection map
$S\colon\{0,1\}\times2^{\mathbb{N}}\times2^{\mathbb{N}}\to2^{\mathbb{N}}$
which maps $(n,x_{0},x_{1})\mapsto x_{n}$. This map is clearly continuous,
and even computable. However, let $\mu$ be the measure $\mu_{1}\otimes\mu_{2}\otimes\mu_{3}$
where $\mu_{1}$ is a uniform measure on $\{0,1\},$ $\mu_{2}$ is
the fair coin measure, and $\mu_{3}$ is a Bernoulli measure with
weight $1/3$. We claim the conditional probability map $y\mapsto\mu(\cdot\mid S=y)$
is not continuous as follows. Given $y\in2^{\mathbb{N}}$, the conditional
probability measure $\mu(\cdot\mid S=y)$ would almost surely have
to be concentrated on $\{0\}\times2^{\mathbb{N}}\times2^{\mathbb{N}}$
if $\lim_{n}\frac{1}{n}\sum_{k<n}y_{k}=1/2$ and on $\{1\}\times2^{\mathbb{N}}\times2^{\mathbb{N}}$
if $\lim_{n}\frac{1}{n}\sum_{k<n}y_{k}=1/3$. However, $y\mapsto\lim_{n}\frac{1}{n}\sum_{k<n}y_{k}$
is not a continuous map.

Nonetheless, it should be possible to generalize the results in this
section to measurable maps and measurable conditional probabilities
(including the examples just mentioned). However, just as with Remark~\ref{rem:meas-kernels},
this project is beyond the scope of this paper.
\end{rem}
\begin{rem}
If $\cont(\mathbb{X},\mathbb{Y})$ and $\cont(\mathbb{Y},\probmeas(\mathbb{X}))$
are both computable metric spaces (which usually requires that $\mathbb{X}$
and $\mathbb{Y}$ be effectively locally compact), then we can use
continuous maps $T$ and $\mu(\cdot\mid T=\cdot)$ as oracles avoiding
the need for uniform families of continuous maps. For example, Theorem~\ref{thm:vL-mp-maps}
says the following are equivalent:

\end{rem}
\begin{enumerate}
\item $x\in\SR{\mu}{T,\mu(\cdot\mid T=\cdot)}$ and $y=T(x)$.
\item $y\in\SR{\mu_{T}}{\mu,T,\mu(\cdot\mid T=\cdot)}$ and $x\in\SR{\mu(\cdot\mid T=y)}{\mu,T,\mu(\cdot\mid T=\cdot),y}.$
\end{enumerate}
\begin{rem}
All the theorems in this section hold for Martin-Löf randomness as
well. To our knowledge Theorem~\ref{thm:vL-Sch-kernel} for Martin-Löf
randomness is a new result\textemdash although it is not quite as
useful in the Martin-Löf randomness case since no-randomness-from-nothing
(the second conclusion of Corollary~\ref{cor:rand-pres-nrfn}) holds
for Martin-Löf randomness without any computability assumptions on
the conditional probability $\mu(\cdot\mid T=\cdot)$. (For example,
Hoyrup and Rojas \cite[Prop.~5]{Hoyrup.Rojas:2009b} and Bienvenu
and Porter \cite[Thm.~3.5]{Bienvenu.Porter:2012}. Also see the survey
by Bienvenu, Hoyrup, and Shen \cite[Thm.~5]{Bienvenu.Hoyrup.Shen:2016}.)
\end{rem}

\section{\label{sec:useful-characterization}A useful characterization of
Schnorr randomness for noncomputable measures}

So far we have shown our definition of Schnorr randomness for noncomputable
measures enjoys the properties one would want in such a randomness
notion. However, the fact remains that our definition is difficult
to work with for two reasons:
\begin{enumerate}
\item Many randomness results for arbitrary measures and arbitrary spaces
require reasoning which is not uniform in the measure\textemdash but
is uniform in the Cauchy names for the measure.
\item Many results for Schnorr randomness in the literature use sequential
tests or martingale tests, while our definition relies on integral
tests.
\end{enumerate}
We will address both of these problems in this section. First we give
a characterization of Schnorr randomness for noncomputable measures
where we only require uniformity in the Cauchy names for the measure
and the oracle. Then we apply this to give sequential test and martingale
test characterizations of Schnorr randomness. We also apply this to
show that, for noncomputable measures, Schnorr randomness is still
stronger than Kurtz randomness.

These results (and their proofs) suggest that most results for Schnorr
randomness can be relativized.

\subsection{\label{subsec:SR-rel-to-name}Randomness relative to the name for
the measure}

The key to overcoming our two major difficulties is to consider randomness,
not relative to the measure, but relative to a \emph{name} for the
measure. This has already been done for Martin-Löf randomness. This
approach originated in work of Reimann \cite[\S 2.6]{Reimann:2008vn}
and Reimann and Slaman \cite[\S 3.1]{Reimann.Slaman:2015}. They defined
$x\in\{0,1\}^{\mathbb{N}}$ to be Martin-Löf $\mu$-random if there
is some Cauchy name $h$ for $\mu$ such that $x\notin\bigcap_{n}U_{n}$
for all sequential $\mu$-tests $(U_{n})_{n\in\mathbb{N}}$ computable
from $h$. Day and Miller \cite[Thm.~1.6]{Day.Miller:2013} showed
that this definition is equivalent to the Levin-Gács definition using
uniform tests (as in Section~\ref{sec:SR-noncomp}).\footnote{There are many differences between the Reimann-Slaman and Levin-Gács
definitions. Reimann and Slaman use sequential tests while Levin and
Gács use integral tests. It is well-known how to handle this. The
Reimann-Slaman definition is non-uniform while Levin's definition
is uniform. This is also easily handled (in contrast to Schnorr randomness
where the uniform and non-uniform versions are different). The significant
difference between the two definitions is that the Reimann-Slaman
definition uses Cauchy names while the Levin-Gács definition directly
uses the measure $\mu$. Bridging this difference is the major contribution
of the Day-Miller result which we focus on in this section.}  Day and Miller's proof naturally extends to effectively compact
metric spaces as shown in Bienvenu, Gács, Hoyrup, Rojas, and Shen
\cite[Thm.~5.36, Lem.~7.21]{Bienvenu.Gacs.Hoyrup.ea:2011}. (Moreover,
by adapting our proof of Theorem~\ref{thm:SR-name} below, one can
see that Day and Miller's result holds for all computable metric spaces.)
\begin{thm}[Day and Miller]
\label{thm:MLR-name}The following are equivalent:

\begin{enumerate}
\item $x\in\MLR{\mu}{}$.
\item $x\in\MLR{\mu}h$ for some Cauchy name $h\in\mathbb{N}^{\mathbb{N}}$
for $\mu$.
\end{enumerate}
\end{thm}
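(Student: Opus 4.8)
The plan is to prove the two implications separately, observing up front that $(2)\Rightarrow(1)$ is the cheap direction and that all the content sits in $(1)\Rightarrow(2)$. For $(2)\Rightarrow(1)$ I would argue the contrapositive: suppose $x\notin\MLR{\mu}{}$, witnessed by an oracle-free uniform Martin-L\"of integral test $\{s_{\mu}\}$ of type $\finitemeas(\mathbb{X})\times\mathbb{X}\to[0,\infty]$ with $s_{\mu}(x)=\infty$. Reading $s$ as a test that ignores its oracle, i.e.\ setting $t_{\mu}^{a}:=s_{\mu}$ for all $a\in\mathbb{N}^{\mathbb{N}}$, produces a uniform Martin-L\"of integral test over the oracle space $\mathbb{N}^{\mathbb{N}}$ with $t_{\mu}^{h}(x)=s_{\mu}(x)=\infty$ for \emph{every} name $h$. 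Hence $x\notin\MLR{\mu}{h}$ for all $h$, which is exactly the contrapositive of $(2)$.

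For $(1)\Rightarrow(2)$ I would again pass to the contrapositive and exploit the one feature Martin-L\"of randomness has that Schnorr randomness lacks: a universal test. Fix the universal uniform Martin-L\"of integral test $\mathbf{t}\colon\mathbb{N}^{\mathbb{N}}\times\finitemeas(\mathbb{X})\times\mathbb{X}\to[0,\infty]$ over the oracle space $\mathbb{N}^{\mathbb{N}}$, normalized so that $\int\mathbf{t}_{\mu}^{a}\,d\mu\le 1$ for all $a,\mu$ and so that $x\notin\MLR{\mu}{a}$ iff $\mathbf{t}_{\mu}^{a}(x)=\infty$. Write $N_{\mu}\subseteq\mathbb{N}^{\mathbb{N}}$ for the set of Cauchy names of $\mu$. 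The contrapositive hypothesis ``$x_{0}\notin\MLR{\mu_{0}}{h}$ for every name $h$ of $\mu_{0}$'' becomes $\mathbf{t}_{\mu_{0}}^{h}(x_{0})=\infty$ for all $h\in N_{\mu_{0}}$, and the goal is to manufacture an oracle-free uniform Martin-L\"of integral test $s$ with $s_{\mu_{0}}(x_{0})=\infty$. The structural input is that $N_{\mu}$ is effectively closed uniformly in $\mu$: using the fast-Cauchy characterization, $h\in N_{\mu}$ iff $d(x_{h(j)},x_{h(i)})\le 2^{-i}$ for all $j\ge i$ and $d(x_{h(i)},\mu)\le 2^{-i}$ for all $i$, all of which are $\Pi^{0}_{1}$ conditions in $(h,\mu)$ by the computability of the metrics (Lemma~\ref{lem:Portmanteau}). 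The naive candidate is the pointwise infimum
\[
s_{\mu}(x):=\inf\{\mathbf{t}_{\mu}^{h}(x):h\in N_{\mu}\}.
\]
By hypothesis $s_{\mu_{0}}(x_{0})=\infty$, and since $N_{\mu}\neq\varnothing$ and $s_{\mu}\le\mathbf{t}_{\mu}^{h}$ for any single name $h$, the integral bound $\int s_{\mu}\,d\mu\le 1$ comes for free, with no Fatou argument needed.

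The hard part is that this infimum is, as written, the wrong kind of semicomputable object: certifying $s_{\mu}(x)>q$ requires checking $\mathbf{t}_{\mu}^{h}(x)>q$ for \emph{all} $h\in N_{\mu}$, an upper-semicomputable demand, whereas an integral test must be lower semicomputable. The dual repair of replacing the infimum by a union of the name-indexed tests would restore lower semicomputability but destroy the mass bound, since a union over infinitely many names can carry $\mu$-mass up to $\mu(\mathbb{X})$. Reconciling these two failures---producing a single lower semicomputable $\{s_{\mu}\}$ of bounded $\mu$-mass that nonetheless diverges at $x_{0}$---is the genuine obstacle. I would resolve it level by level: write $\mathbf{t}$ as a supremum of computable functions each factoring through a finite prefix of the name, and at level $n$ aggregate the contributions over the effectively closed (but in general infinite) set of length-$k_{n}$ consistent prefixes approximating $\mu$, arranging the aggregation so that it stays lower semicomputable in $(\mu,x)$ via Lemma~\ref{lem:Portmanteau} while its $\mu$-integral telescopes to a geometric bound because each prefix-localized piece is sub-probability in $\mu$-mass.

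When $\mathbb{X}$ is effectively compact this aggregation collapses to a finite subcover at each level, which is precisely the Day--Miller argument (and its extension in Bienvenu, G\'acs, Hoyrup, Rojas, and Shen). For a general computable metric space $N_{\mu}$ need not be compact, and I expect this to be exactly the step where the old proof breaks and where one must run the approximation machinery developed for the Schnorr analogue Theorem~\ref{thm:SR-name}; the Martin-L\"of case is then the technically lighter version of that construction, since the integral need only be finite rather than computable, so no rescaling or Tietze extension is required.
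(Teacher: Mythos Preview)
Your easy direction $(2)\Rightarrow(1)$ is fine and matches the paper's reasoning (Proposition~\ref{prop:reducible-oracles}).

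For the hard direction $(1)\Rightarrow(2)$, note first that the paper does not write out a proof: it cites Day--Miller for effectively compact $\mathbb{X}$ and then says one obtains the general case ``by adapting our proof of Theorem~\ref{thm:SR-name}.'' So the relevant comparison is with the proof of Theorem~\ref{thm:SR-name}, and here your proposal diverges substantially---and has a gap.

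You follow the Day--Miller strategy: take the universal test $\mathbf{t}$, form $s_{\mu}(x)=\inf_{h\in N_{\mu}}\mathbf{t}_{\mu}^{h}(x)$, and then try to repair lower semicomputability by a level-by-level aggregation over finite prefixes of names. You correctly observe that this collapses to the Day--Miller argument when $\mathbb{X}$ is effectively compact, and that for general $\mathbb{X}$ the set of consistent prefixes at each level is infinite. But your proposed repair (``arranging the aggregation so that it stays lower semicomputable\ldots while its $\mu$-integral telescopes to a geometric bound'') is not carried out, and it is not clear how to carry it out: aggregating over infinitely many prefixes by a supremum loses the integral bound, while any weighted sum or infimum loses lower semicomputability. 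This is exactly the obstruction, and you have not resolved it.

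More importantly, you misidentify what the ``machinery developed for the Schnorr analogue Theorem~\ref{thm:SR-name}'' is. You expect it to be the rescaling/Tietze apparatus, but that belongs to Theorem~\ref{thm:SR-over-K}, not to Theorem~\ref{thm:SR-name}. The actual proof of Theorem~\ref{thm:SR-name} is completely different from the Day--Miller infimum argument: one constructs, uniformly in $\mu$, a probability measure $\xi^{\mu}$ on $\mathbb{N}^{\mathbb{N}}$ supported on the Cauchy names of $\mu$; then, given $x_{0}\in\MLR{\mu_{0}}{}$, one picks any $h_{0}\in\MLR{\xi^{\mu_{0}}}{\mu_{0},x_{0}}$ and applies Van Lambalgen's theorem (Corollary~\ref{cor:vL_SR-symmetric}, which holds for Martin-L\"of randomness as well) to swap and conclude $x_{0}\in\MLR{\mu_{0}}{\xi^{\mu_{0}},h_{0}}$; since $h_{0}$ is a name for $\mu_{0}$ and $\xi^{\mu_{0}}$ is uniformly computable from $\mu_{0}$, Proposition~\ref{prop:reducible-oracles} gives $x_{0}\in\MLR{\mu_{0}}{h_{0}}$. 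This argument uses no universal test, no compactness, no infimum over names, and no Tietze extension; it works verbatim for any computable metric space. The Martin-L\"of version is indeed technically lighter than the Schnorr version, but for a different reason than you suggest: one simply does not need the careful integral-computability bookkeeping in Van Lambalgen's theorem.
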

The reason this result is not trivial is that there are measures $\mu$
for which $\mu$ cannot compute any of its Cauchy names. In other
words, $\mu$ has no Cauchy name of least Turing degree. (See Day
and Miller \cite{Day.Miller:2013} for discussion.)

Here we give a similar result for Schnorr randomness, but using a
very different proof from that of Day and Miller. In particular, our
proof also applies to noncompact spaces $\mathbb{X}$.
\begin{thm}
\label{thm:SR-name}Let $a\in\mathbb{A}$, $\mu\in\finitemeas(\mathbb{X})$,
and $x\in\mathbb{X}$. The following are equivalent.

\begin{enumerate}
\item $x\in\SR{\mu}h$ for some Cauchy name $h\in\mathbb{N}^{\mathbb{N}}$
for the pair $(a,\mu)$.
\item $x\in\SR{\mu}h$ for some $h\in\mathbb{N}^{\mathbb{N}}$ which uniformly
computes $(a,\mu)$ as in Definition~\ref{def:uniform-comp}.
\item $x\in\SR{\mu}a$.
\end{enumerate}
\end{thm}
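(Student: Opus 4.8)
The plan is to run the cycle $(3)\Rightarrow(1)\Rightarrow(2)\Rightarrow(3)$. The implication $(1)\Rightarrow(2)$ is immediate: any Cauchy name $h$ for $(a,\mu)$ uniformly computes $(a,\mu)$ in the sense of Definition~\ref{def:uniform-comp}, witnessed by the effectively closed set of all valid Cauchy names (for points of $\mathbb{A}\times\finitemeas(\mathbb{X})$) together with the computable limit map. For $(2)\Rightarrow(3)$: if $h$ uniformly computes $(a,\mu)$, then composing with the projection to $\mathbb{A}$ shows that $h$ uniformly computes $a$, so $\SR{\mu}h\subseteq\SR{\mu}a$ by Proposition~\ref{prop:reducible-oracles}. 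We may also assume throughout that $\mu$ is a probability measure: the case $\mu=0$ is vacuous by Proposition~\ref{prop:.zero-meas}, and when $\mu\neq 0$ the renormalization $x\in\SR{\mu}a\iff x\in\SR{\mu/\|\mu\|}{\|\mu\|,a}$ from Section~\ref{sec:basic-results}, together with Proposition~\ref{prop:reducible-oracles}, reduces the pair $(a,\mu)$ to $(\|\mu\|,a,\mu/\|\mu\|)$.

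The heart of the matter is $(3)\Rightarrow(1)$, which I would prove by exhibiting a \emph{sufficiently random} Cauchy name, in the spirit of Theorem~\ref{thm:MLR-name} but by a route adapted to Schnorr randomness and to non-compact spaces. The first and main step is to construct a \emph{computable} map $\kappa\colon\mathbb{A}\times\probmeas(\mathbb{X})\to\probmeas(\mathbb{N}^{\mathbb{N}})$ such that $\kappa(a,\mu)$ is a probability measure concentrated on the Cauchy names of the point $(a,\mu)$. The obstacle is that this must be done with $\mathbb{X}$ and $\mathbb{A}$ non-compact, where a ball around $(a,\mu)$ typically contains infinitely many basic points, so there is no continuous rule for choosing the ``next'' approximant. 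The fix is to sample the $i$-th entry of the name from the basic points $p_{k}$ of $\mathbb{A}\times\probmeas(\mathbb{X})$ lying in the $2^{-i-2}$-ball about $(a,\mu)$, assigning $p_{k}$ weight proportional to $2^{-k}\bigl(2^{-i-2}-d(p_{k},(a,\mu))\bigr)^{+}$: the truncated-linear factor makes the weights depend continuously on $(a,\mu)$, the geometric factor $2^{-k}$ keeps the total weight finite, positive, and computable (with effective tail bounds) in spite of non-compactness, and every name in the support of $\kappa(a,\mu)$ satisfies $d(x_{h(i)},(a,\mu))<2^{-i-1}$ for all $i$, hence is a genuine fast Cauchy name for $(a,\mu)$. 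Producing this $\kappa$ is exactly the kind of computable-analysis input the introduction promises, and it is where a proof valid for non-compact spaces must differ from the compact-space arguments of Day and Miller.

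Given $\kappa$, set $\theta_0:=\kappa(a_0,\mu_0)$, a probability measure on $\mathbb{N}^{\mathbb{N}}$ that is computable from $(a_0,\mu_0)$ and concentrated on Cauchy names of $(a_0,\mu_0)$. Suppose $x_0\in\SR{\mu_0}{a_0}$; then also $x_0\in\SR{\mu_0}{a_0,\theta_0}$ by Proposition~\ref{prop:reducible-oracles}. Since $\theta_0$ is a probability measure and the universal uniform Martin-L\"of integral test (relative to $(a_0,\mu_0,x_0)$, for $\theta_0$) has finite $\theta_0$-integral, $\MLR{\theta_0}{a_0,\mu_0,x_0}$ has $\theta_0$-measure one, hence is nonempty; pick $h_0$ in it. Then $h_0\in\SR{\theta_0}{a_0,\mu_0,x_0}\subseteq\supp\theta_0$ by Propositions~\ref{prop:MLR-to-SR}, \ref{prop:reducible-oracles} and \ref{prop:support}, and $\supp\theta_0$ is contained in the (closed) set of Cauchy names of $(a_0,\mu_0)$, so $h_0$ is such a name; in particular $h_0$ uniformly computes $(a_0,\mu_0)$ and therefore $\theta_0$. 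Now apply Van Lambalgen's theorem to the product $\mu_0\otimes\theta_0$ relative to the oracle $a_0$ — obtained by combining Theorem~\ref{thm:vL-sch}, Corollary~\ref{cor:vL_SR-symmetric} and Proposition~\ref{prop:Dirac} (to absorb $a_0$ into a Dirac factor, using Proposition~\ref{prop:iso-preserve-cont} to move $\SR$ across the coordinate reorderings). From $x_0\in\SR{\mu_0}{a_0,\theta_0}$ and $h_0\in\SR{\theta_0}{a_0,\mu_0,x_0}$ we get $(x_0,h_0)\in\SR{\mu_0\otimes\theta_0}{a_0}$; decomposing the same product in the opposite order gives $x_0\in\SR{\mu_0}{a_0,\theta_0,h_0}\subseteq\SR{\mu_0}{h_0}$ (Proposition~\ref{prop:reducible-oracles}, since the oracle $(a_0,\theta_0,h_0)$ uniformly computes $h_0$). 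Thus $x_0\in\SR{\mu_0}{h_0}$; undoing the renormalization turns $h_0$ into a Cauchy name for the original pair $(a_0,\mu_0)$ with the same conclusion, establishing $(1)$. The remaining work is bookkeeping: arranging the effectively closed sets so that $\kappa$ and the name-extraction maps are genuinely total computable objects, and tracking the oracle through the applications of Van Lambalgen's theorem and Proposition~\ref{prop:reducible-oracles}.
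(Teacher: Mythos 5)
Your proposal is correct and follows essentially the same route as the paper: the key step is the same construction of a probability measure on $\mathbb{N}^{\mathbb{N}}$, computable uniformly from $(a,\mu)$ and supported on its Cauchy names (with the same geometric-times-truncated-linear weights), followed by choosing a sufficiently random name and transferring randomness via the symmetric form of Van Lambalgen's Theorem (Corollary~\ref{cor:vL_SR-symmetric}) and Proposition~\ref{prop:reducible-oracles}. The only differences are cosmetic bookkeeping (your explicit reduction to probability measures, and selecting $h_0$ from $\MLR{}{}$ rather than $\SR{}{}$), neither of which changes the argument.
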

\begin{proof}
(1) implies (2) since each Cauchy name $h$ for $(a,\mu)$ uniformly
computes $(a,\mu)$. (2) implies (3) by Proposition~\ref{prop:reducible-oracles}.

Now we show the difficult direction, (3) implies (1). Informally,
the main idea is that $x\in\SR{\mu}a$ iff $x\in\SR{\mu}h$ for some
``random'' name $h$ for $(a,\mu)$. This ``randomness'' is achieved
by finding a probability measure $\xi^{a,\mu}$ on $\mathbb{N}^{\mathbb{N}}$,
computable uniformly in $(a,\mu)$, which is supported on the Cauchy
names for $(a,\mu)$.

Formally, we construct $\xi^{a,\mu}$ as follows. Let $\{b_{n}\}_{n\in\mathbb{N}}$
denote the basic points of the computable metric space $\mathbb{A}\times\finitemeas(\mathbb{X})$
with metric $d$. For each finite sequence $\sigma\in\mathbb{N}^{*}$
we construct the measure $\xi^{a,\mu}[\sigma]$ of the cylinder set
$[\sigma]=\{h\in\mathbb{N}^{\mathbb{N}}:h\upharpoonright|\sigma|=\sigma\}$
to be as follows 
\[
\xi^{a,\mu}[\sigma]=\prod_{i=0}^{|\sigma|}\frac{2^{-\sigma(i)}\left(2^{-(i+1)}\dotminus d(b_{\sigma(i)},(a,\mu))\right)}{\sum_{n\in\mathbb{N}}2^{-n}\left(2^{-(i+1)}\dotminus d(b_{n},(a,\mu))\right)}.
\]
(Here $\dotminus$ is truncated subtraction, $x\dotminus y:=\max\{x-y,0\}$.)
The term inside the product represents the probability of choosing
$h(i)$, the $i$th value in the Cauchy name $h$, independently of
all the other values. This $\xi^{a,\mu}$ is a probability measure
uniformly computable from $a,\mu$. Also, $\xi^{a,\mu}$ is supported
on the set of $h\in\mathbb{N}^{\mathbb{N}}$ such that $d(b_{h(i)},(a,\mu))\leq2^{-(i+1)}$
for all $i$, so by Proposition~\ref{prop:support}, any $h\in\SR{\xi^{a,\mu}}{}$
is a Cauchy name for $(a,\mu)$.

Fix $a_{0},\mu_{0}$ and choose $x_{0}\in\SR{\mu_{0}}{a_{0}}$. Since
$\xi^{a_{0},\mu_{0}}$ is uniformly computable in $a_{0},\mu_{0}$
we have that $x_{0}\in\SR{\mu_{0}}{a_{0},\xi^{a_{0},\mu_{0}}}$ by
Proposition~\ref{prop:reducible-oracles}. Choose $h_{0}\in\SR{\xi^{a_{0},\mu_{0}}}{a_{0},\mu_{0},x_{0}}$.
The corollary to Van Lambalgen's Theorem (Corollary~\ref{cor:vL_SR-symmetric})
states that
\[
\left(x_{0}\in\SR{\mu_{0}}{a_{0},\xi^{a_{0},\mu_{0}}}\ \text{and}\ h_{0}\in\SR{\xi^{a_{0},\mu_{0}}}{a_{0},\mu_{0},x_{0}}\right)\quad\text{iff}\quad\left(h_{0}\in\SR{\xi^{a_{0},\mu_{0}}}{a_{0},\mu_{0}}\ \text{and}\ x_{0}\in\SR{\mu_{0}}{a_{0},\xi^{a_{0},\mu_{0}},h_{0}}\right).
\]
In particular, this gives us $x_{0}\in\SR{\mu_{0}}{a_{0},h_{0},\xi^{a_{0},\mu_{0}}}$.
Last, the pair $(a_{0},\xi^{a_{0},\mu_{0}})$ is uniformly computable
in $h_{0}$, so $x_{0}\in\SR{\mu_{0}}{h_{0}}$ by Proposition~\ref{prop:reducible-oracles}.
\end{proof}

\subsection{\label{subsec:unif-seq-tests-and-SLLN}Uniform Schnorr sequential
tests with applications}

Now let us consider the uniform version of Schnorr sequential tests.
\begin{defn}
\label{def:unif-seq-test}A \emph{uniform Schnorr sequential test}
restricted to an effectively closed set $K\subseteq\mathbb{A}\times\finitemeas(\mathbb{X})$
is a family $\{U_{\mu}^{n;a}\}_{n\in\mathbb{N},(a,\mu)\in K}$ of
open subsets of $\mathbb{X}$ such that $U_{\mu}^{n;a}$ is $\Sigma_{1}^{0}[n,a,\mu]$,
$\mu(U_{\mu}^{n,a})\leq2^{-n}$, and $a,n,\mu\mapsto\mu(U_{\mu}^{n;a})$
is computable. 
\end{defn}
\begin{prop}
\label{prop:seq-test-easy}Let $\{U_{\mu}^{n;a}\}_{n\in\mathbb{N},(a,\mu)\in K}$
be a uniform sequential test. For all $(a,\mu)\in K$ and $x\in\mathbb{X}$,
if $x\in\SR{\mu}a$, then $x\notin\bigcap_{n\in\mathbb{N}}U_{\mu}^{n;a}$
.
\end{prop}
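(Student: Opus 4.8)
The plan is to prove the contrapositive by turning the given uniform Schnorr sequential test into a uniform Schnorr integral test restricted to $K$ and then applying Theorem~\ref{thm:SR-over-K}. Concretely, suppose $(a_0,\mu_0)\in K$ and $x_0\in\bigcap_{n\in\mathbb{N}}U_{\mu_0}^{n;a_0}$; the goal is to conclude $x_0\notin\SR{\mu_0}{a_0}$. (If $\mu_0=0$ there is nothing to prove, since $\SR{\mu_0}{a_0}=\varnothing$ by Proposition~\ref{prop:.zero-meas}.) Define, for $(a,\mu)\in K$ and $x\in\mathbb{X}$,
\[
t_\mu^a(x)=\sum_{n\in\mathbb{N}}\mathbf{1}_{U_\mu^{n;a}}(x).
\]

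First I would check that $\{t_\mu^a\}_{(a,\mu)\in K}$ is lower semicomputable on $\mathbb{X}\times K$: since $U_\mu^{n;a}$ is $\Sigma_1^0[n,a,\mu]$, the indicator $\mathbf{1}_{U_\mu^{n;a}}(x)$ is lower semicomputable uniformly in $n,a,\mu,x$, and a countable sum of nonnegative lower semicomputable functions is again lower semicomputable (it is the supremum of its partial sums). Next I would verify the two integral-test conditions of Definition~\ref{def:SR-unif-int-test-K}. By the sequential-test hypothesis, $\mu(U_\mu^{n;a})\le 2^{-n}$ for all $(a,\mu)\in K$, so
\[
\int t_\mu^a\,d\mu=\sum_{n\in\mathbb{N}}\mu(U_\mu^{n;a})\le\sum_{n\in\mathbb{N}}2^{-n}=2<\infty,
\]
which gives condition (1). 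For condition (2), each $\mu(U_\mu^{n;a})$ is computable uniformly in $n$ and $(a,\mu)\in K$ (part of the definition of a uniform Schnorr sequential test), and the bounds $\mu(U_\mu^{n;a})\le 2^{-n}$ force $\sum_{n\ge N}\mu(U_\mu^{n;a})\le 2^{-N+1}$, so the partial sums converge to $a,\mu\mapsto\int t_\mu^a\,d\mu$ at a computable rate; hence this map is computable on $K$. Thus $\{t_\mu^a\}_{(a,\mu)\in K}$ is a uniform Schnorr integral test restricted to $K$.

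Finally, since $x_0\in U_{\mu_0}^{n;a_0}$ for every $n$, we get $t_{\mu_0}^{a_0}(x_0)=\sum_{n\in\mathbb{N}}1=\infty$, so Theorem~\ref{thm:SR-over-K} yields $x_0\notin\SR{\mu_0}{a_0}$, as desired. There is no serious obstacle here---this is the easy implication, as the name \texttt{seq-test-easy} suggests---and the only point needing a moment's care is the uniform computability of $a,\mu\mapsto\int t_\mu^a\,d\mu$, which follows directly from the geometric bound $\mu(U_\mu^{n;a})\le 2^{-n}$ built into the definition of a uniform Schnorr sequential test.
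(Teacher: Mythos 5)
Your proposal is correct and is essentially the paper's own proof: the paper also sets $t_{\mu}^{a}=\sum_{n}\mathbf{1}_{U_{\mu}^{n;a}}$ and observes that it is a uniform Schnorr integral test with $t_{\mu_{0}}^{a_{0}}(x_{0})=\infty$. You have merely spelled out the verification (lower semicomputability of the sum, the geometric tail bound giving computable convergence of $\int t_{\mu}^{a}\,d\mu$, and the appeal to Theorem~\ref{thm:SR-over-K} for tests restricted to $K$) that the paper leaves implicit.
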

\begin{proof}
Fix $a_{0},\mu_{0},x_{0}$ such that $x_{0}\in\bigcap_{n\in\mathbb{N}}U_{\mu_{0}}^{n;a_{0}}$.
The function $t_{\mu}^{a}=\sum_{n\in\mathbb{N}}\mathbf{1}_{U_{\mu}^{n;a}}$
is a uniform Schnorr integral test with $t(x_{0})=\sum_{n\in\mathbb{N}}\mathbf{1}_{U_{\mu_{0}}^{n;a_{0}}}=\sum_{n\in\mathbb{N}}1=\infty$.
\end{proof}
However, the converse to Proposition~\ref{prop:seq-test-easy} generally
fails. (See Subsection~\ref{subsec:uniform-sequential-tests}.) Nonetheless,
when combined with Theorem~\ref{thm:SR-name}, uniform Schnorr sequential
tests are a useful tool. They are especially useful when combined
with the following two lemmas, which are some of the most important
lemmas in computable measure theory. Such lemmas allow one to apply
Cantor-space-like reasoning to arbitrary computable metric spaces.
(See, for example, Hoyrup and Rojas \cite{Hoyrup.Rojas:2009}.)
\begin{lem}
\label{lem:a.e.-basis}Given a measure $\mu\in\finitemeas(\mathbb{X})$
with Cauchy name $h\in\mathbb{N}$, a basic point $x_{i}\in\mathbb{X}$,
and two positive rationals $q_{1}<q_{2}$, one can effectively (uniformly
in $h,i,q_{1},q_{2}$) find a radius $r\in[q_{1},q_{2}]$ such that
$\mu\{x\in\mathbb{X}:d_{\mathbb{X}}(x,x_{i})=r\}=0$.
\end{lem}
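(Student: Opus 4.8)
The plan is to reduce the statement to finding a point of continuity of the bounded nondecreasing function $F(r):=\mu(B(x_i,r))$ on the interval $[q_1,q_2]$. Since $B(x_i,r)=\bigcup_{r'<r}B(x_i,r')$ and $\overline{B}(x_i,r)=\bigcap_{r'>r}B(x_i,r')$, one has $\mu\{x:d_{\mathbb{X}}(x,x_i)=r\}=\mu(\overline{B}(x_i,r))-\mu(B(x_i,r))$, which is exactly the jump $F(r^{+})-F(r)$; so a radius $r$ works iff $F$ is continuous at $r$. I would produce such an $r$ as the single point lying in a nested sequence of closed rational intervals $[q_1,q_2]=I_0\supseteq I_1\supseteq\cdots$ with $|I_k|\to0$ along which the annular mass $m(I_k):=\mu\{x:a_k\le d_{\mathbb{X}}(x,x_i)\le b_k\}=\mu(\overline{B}(x_i,b_k))-\mu(B(x_i,a_k))$ (where $I_k=[a_k,b_k]$) tends to $0$. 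This suffices because $r\in I_k$ implies $\{x:d_{\mathbb{X}}(x,x_i)=r\}\subseteq\{x:a_k\le d_{\mathbb{X}}(x,x_i)\le b_k\}$ for every $k$, so $\mu\{x:d_{\mathbb{X}}(x,x_i)=r\}\le m(I_k)\to0$.

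The inductive step is where the only real idea is needed, and I expect it to be the main obstacle. Fix a rational $M\ge\max\{1,\|\mu\|\}$ (computable uniformly in $\mu$, since $\mu\mapsto\|\mu\|$ is computable). Given $I_k$, subdivide it into $N_k:=2^{k+2}$ consecutive equal closed subintervals $J_1,\dots,J_{N_k}$ with rational endpoints. The $x$-sets $A_\ell:=\{x:\inf J_\ell\le d_{\mathbb{X}}(x,x_i)\le\sup J_\ell\}$ overlap only along the spheres through the $N_k-1$ interior cut points, and those spheres are pairwise disjoint, hence of total $\mu$-mass at most $\|\mu\|$; so inclusion–exclusion gives $\sum_\ell m(J_\ell)\le\|\mu\|+\|\mu\|\le 2M$, whence some $J_\ell$ satisfies $m(J_\ell)\le 2M/N_k=M2^{-(k+1)}$. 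The catch is that $m(J_\ell)$ is only \emph{upper} semicomputable — it is $\mu$ of an effectively closed set minus $\mu$ of an effectively open set (Lemma~\ref{lem:Portmanteau}) — so I cannot just select a subinterval of least annular mass. Instead I would dovetail the nonincreasing rational upper approximations to $m(J_1),\dots,m(J_{N_k})$ and halt as soon as one drops below the threshold $M2^{-k}$; this search must terminate, because the $J_\ell$ found above has $m(J_\ell)\le M2^{-(k+1)}<M2^{-k}$. Set $I_{k+1}$ to be that subinterval; then $m(I_{k+1})<M2^{-k}$ and $|I_{k+1}|=|I_k|/2^{k+2}\le|I_k|/2$.

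It remains to observe that the whole construction is uniform in $i,q_1,q_2$ and in $\mu$ (hence in the Cauchy name $h$): all subdivisions and thresholds are rational and computable; $\mu\mapsto\mu(B(x_i,r))$ and $\mu\mapsto\mu(\overline{B}(x_i,r))$ are respectively lower and upper semicomputable uniformly in $i$ and the rational radius $r$ by Lemma~\ref{lem:Portmanteau}(\ref{enu:open-integral}) and (\ref{enu:closed-integral}); and $\|\mu\|$ is computable. Since $|I_k|\to0$, the intersection $\bigcap_k I_k$ is a single point $r\in[q_1,q_2]$, a Cauchy name for which is read off from the endpoints of the $I_k$ (passing to a subsequence to meet the $2^{-i}$ modulus convention). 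Finally $m(I_k)<M2^{-(k-1)}\to0$ forces $\mu\{x:d_{\mathbb{X}}(x,x_i)=r\}=0$, as required. One minor point to keep in mind is that $M>0$, so the thresholds $M2^{-k}$ are positive and the search is not vacuously stuck in the degenerate case $\mu=0$ (in which, of course, any $r$ works anyway).
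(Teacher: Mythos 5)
Your proof is correct and is a fully detailed version of the paper's own (very brief) argument: a nested-interval search for closed annuli of small $\mu$-mass, using the pigeonhole bound on pairwise disjoint spheres to guarantee that a subinterval of mass below the threshold exists, and the upper semicomputability of $\mu(\overline{B}(x_{i},b))-\mu(B(x_{i},a))$ to make the halting search effective. The only phrasing to tighten is that this search-and-halt procedure is effective in the Cauchy name $h$ rather than ``uniform in $\mu$'' as a point of $\finitemeas(\mathbb{X})$ (different names may select different subintervals), but computability from $h$ is exactly what the lemma asks for.
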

\begin{proof}
Follow a basic diagonalization argument. Start by searching for rationals
$a$ and $b$ such that $q_{1}\leq a<b\leq q_{2}$ and both $|b-a|$
and $\mu\{x\in\mathbb{X}:a\leq d_{\mathbb{X}}(x,x_{i})\leq b\}$ are
small. Then repeat, replacing $q_{1}$ and $q_{2}$ with $a$ and
$b$. Let $r$ be the limit. Such a search can be done effectively
in the Cauchy name $h$ of $\mu$.
\end{proof}
If $r$ is as in the previous lemma, call $B(x_{i},r)$ an \emph{$(h,\mu)$-basic open ball}
of $\mathbb{X}$ and call $\overline{B}(x_{i},r)$ an \emph{$(h,\mu)$-basic closed ball}.
By enumerating the triples $(i,q_{1},q_{2})$ we have an enumeration
of $(h,\mu)$-basic open balls $\{B_{\mu}^{n;h}\}$ and $(h,\mu)$-basic
closed balls $\{\overline{B}_{\mu}^{n;h}\}_{n\in\mathbb{N}}$. (Note
that the radius of the $n$th ball depends uniformly on $h,\mu$.)
We have the following convenient facts about $(h,\mu)$-basic balls.
\begin{lem}
\label{lem:a.e.-basis-facts}Consider the effectively closed set 
\[
K=\{(h,\mu)\in\mathbb{N}^{\mathbb{N}}\times\finitemeas(\mathbb{X}):h\text{ is a Cauchy name for }\mbox{\ensuremath{\mu}}\}.
\]

\begin{enumerate}
\item If $U\subseteq K\times\mathbb{X}$ is $\Sigma_{1}^{0}$, then there
is a computable map $f\colon K\to\{0,1\}^{\mathbb{N}}$ such that
$U_{\mu}^{h}=\bigcup_{n\in f(h,\mu)}B_{\mu}^{n;h}=\bigcup_{n\in f(h,\mu)}\overline{B}_{\mu}^{n;h}$.
\item If $A_{\mu}^{h}$ is a finite Boolean combination of $(h,\mu)$-basic
open balls $B_{\mu}^{0;h},\ldots,B_{\mu}^{n-1;h}$, then $h,\mu\mapsto\mu(A_{\mu}^{h})$
is computable (uniformly in the code for the Boolean combination).
\end{enumerate}
\end{lem}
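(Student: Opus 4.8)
The plan is to prove the two parts separately: part~(2) will follow quickly from the Computable Portmanteau Theorem (Lemma~\ref{lem:Portmanteau}) together with the defining null-sphere property of $(h,\mu)$-basic balls, while part~(1) requires a bookkeeping argument that rewrites a $\Sigma_1^0$ presentation of the section $U_\mu^h$ in terms of the \emph{fixed} enumeration $\{B_\mu^{n;h}\}_{n\in\mathbb{N}}$.

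For part~(1), fix $(h,\mu)\in K$. Since $U$ is $\Sigma_1^0$, the section $U_\mu^h$ is an increasing union $\bigcup_m B_m$ of ordinary basic open balls $B_m=B(x_{i_m},\rho_m)$ of $\mathbb{X}$, where the set of indices $m$ that occur is enumerable — though not necessarily decidable — from the oracle $(h,\mu)$. The geometric input, supplied by Lemma~\ref{lem:a.e.-basis}, is that every basic open ball $B(x_i,q)$ is an increasing union $\bigcup_s B(x_i,r_s)=\bigcup_s\overline{B}(x_i,r_s)$ of $(h,\mu)$-basic balls with $r_s\uparrow q$; concretely one takes $r_s$ to be the $(h,\mu)$-radius in the interval $[q-2^{-s},q-2^{-s-1}]$, so that the open-ball union and the closed-ball union coincide and both equal $B(x_i,q)$. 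I would then define $f(h,\mu)\in\{0,1\}^{\mathbb{N}}$ by declaring the index of the $(h,\mu)$-basic ball with parameters $(i_m,\rho_m-2^{-s},\rho_m-2^{-s-1})$ to belong to $f(h,\mu)$ exactly when the ball $B_m$ has been enumerated into the presentation of $U_\mu^h$ within $s$ stages (and $\rho_m-2^{-s}>0$). Membership is decidable in the oracle $(h,\mu)$; once $B_m$ appears, all of its approximating $(h,\mu)$-basic balls from some stage on are included, so their union recovers $B_m$, while no approximating ball of a ball $B_m$ that never appears is ever included. Hence $\bigcup_{n\in f(h,\mu)}B_\mu^{n;h}=\bigcup_{n\in f(h,\mu)}\overline{B}_\mu^{n;h}=U_\mu^h$. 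The one routine verification is that the parameter triple $(i_m,\rho_m-2^{-s},\rho_m-2^{-s-1})$ is recoverable from a triple index (so that the above ``declaring'' is effective and its domain decidable), which amounts to checking that $q_2-q_1$ is a negative power of $2$ and that $q_1+2(q_2-q_1)$ is the radius of a genuine basic ball.

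For part~(2), fix a code for a finite Boolean combination $A_\mu^h=\Phi(B_\mu^{0;h},\dots,B_\mu^{n-1;h})$ assembled from the open balls $O_k:=B_\mu^{k;h}$ by unions, intersections, and complements, and write $C_k:=\overline{B}_\mu^{k;h}$, so that $O_k\subseteq C_k$ and, by Lemma~\ref{lem:a.e.-basis}, $\mu(C_k\setminus O_k)=0$. Replacing in $\Phi$ each positively-occurring ball by the corresponding closed ball $C_k$, while leaving each complemented occurrence as $\mathbb{X}\setminus O_k$, produces a set $A^+\supseteq A_\mu^h$ which is a Boolean combination of effectively closed sets (uniformly in $(h,\mu)$, since the radii are computable from the oracle), hence effectively closed; replacing instead each complemented ball $\mathbb{X}\setminus O_k$ by $\mathbb{X}\setminus C_k$ while keeping the positive $O_k$ produces $A^-\subseteq A_\mu^h$, effectively open. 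Both $A^+$ and $A^-$ differ from $A_\mu^h$ only within $\bigcup_k(C_k\setminus O_k)$, which is $\mu$-null, so $\mu(A^-)=\mu(A_\mu^h)=\mu(A^+)$. By the Computable Portmanteau Theorem, $h,\mu\mapsto\mu(A^-)$ is lower semicomputable (Lemma~\ref{lem:Portmanteau}(\ref{enu:open-integral})) and $h,\mu\mapsto\mu(A^+)$ is upper semicomputable (Lemma~\ref{lem:Portmanteau}(\ref{enu:closed-integral})), both uniformly in the code; being equal, $h,\mu\mapsto\mu(A_\mu^h)$ is computable.

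The main obstacle is the bookkeeping in part~(1): because the enumeration $\{B_\mu^{n;h}\}$ is fixed in advance and indexed by triples, one cannot simply read off a computable index set from a mere enumeration of $U_\mu^h$ — the argument must exploit the slack in \emph{which} approximating $(h,\mu)$-basic balls one chooses, delaying each commitment until the underlying basic ball appears in the (only enumerable) presentation of the section. Part~(2), by contrast, is essentially immediate once the null-sphere property of $(h,\mu)$-basic balls is combined with the semicomputability clauses of the Computable Portmanteau Theorem.
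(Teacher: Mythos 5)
Your proof is correct and follows essentially the same route as the paper's: part (1) rewrites each basic open ball enumerated into $U_{\mu}^{h}$ as a union of $(h,\mu)$-basic balls of smaller radii (you additionally spell out the delayed-commitment bookkeeping needed to make the index set $f(h,\mu)$ decidable rather than merely c.e., a detail the paper elides with ``we can easily replace''), and part (2) sandwiches the Boolean combination between an effectively open set and an effectively closed set of equal $\mu$-measure and invokes the lower/upper semicomputability clauses of Lemma~\ref{lem:Portmanteau}, which is exactly the paper's argument for a single ball extended to Boolean combinations. No gaps.
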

\begin{proof}
(1) Given $U_{\mu}^{h}$, effectively find a computable sequence of
basic open balls $B(x_{i},r)$ such that $U_{\mu}^{h}=\bigcup_{i}B(x_{i},r)$.
(This is computable since we are working with the Cauchy name $h\in\mathbb{N}^{\mathbb{N}}$.)
Then we can easily replace each basic open ball $B(x_{i},r)$ with
a sequence of $(h,\mu)$-basic open or closed balls with the same
center $x_{i}$ but with smaller radii.

(2) Consider just the balls $B_{\mu}^{n;h}$ and $\overline{B}_{\mu}^{n;h}$
which are respectively $\Sigma_{1}^{0}[n,h,\mu]$ and $\Pi_{1}^{0}[n,h,\mu]$.
The map $n,h,\mu\mapsto\mu(B_{\mu}^{n;h})$ is lower semicomputable
and the map $n,h,\mu\mapsto\mu(\overline{B}_{\mu}^{n;h})$ is upper
semicomputable (Lemma~\ref{lem:Portmanteau}). Since $\mu(B_{\mu}^{n;h})=\mu(\overline{B}_{\mu}^{n;h})$,
the maps are computable. The same idea holds for finite Boolean combinations.
\end{proof}
Now we apply the above to show that for noncomputable measures, Schnorr
randomness is still stronger than Kurtz randomness.
\begin{prop}
\label{prop:SR-implies-KR}If $x\in\SR{\mu}a$ then $x\in\KR{\mu}a$.
\end{prop}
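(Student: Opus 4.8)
The plan is to combine the uniform sequential test criterion of Proposition~\ref{prop:seq-test-easy} with the name characterization of Theorem~\ref{thm:SR-name}. Fix $a_0,\mu_0,x_0$ with $x_0\in\SR{\mu_0}{a_0}$; I must show $x_0\in\KR{\mu_0}{a_0}$, i.e.\ that $x_0\notin P_{\mu_0}^{a_0}$ for every uniform Kurtz test $P\subseteq\mathbb{A}\times\finitemeas(\mathbb{X})\times\mathbb{X}$. The obstruction to a naive argument is that covering the $\Pi_1^0$ null section $P_\mu^a$ by open sets of small \emph{computable} measure genuinely requires a Cauchy name for $\mu$, not $\mu$ itself; the point of Theorem~\ref{thm:SR-name} is that $x_0$ stays Schnorr random relative to some such name. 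So first I would fix a uniform Kurtz test $P$ and apply Theorem~\ref{thm:SR-name} to obtain a Cauchy name $h_0\in\mathbb{N}^{\mathbb{N}}$ for $(a_0,\mu_0)$ with $x_0\in\SR{\mu_0}{h_0}$.

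Next I would manufacture a uniform Schnorr sequential test, over the oracle space $\mathbb{N}^{\mathbb{N}}$ and restricted to the effectively closed set $K=\{(h,\mu)\in\mathbb{N}^{\mathbb{N}}\times\finitemeas(\mathbb{X}): h$ is a Cauchy name for some pair $(a,\mu)\in\mathbb{A}\times\finitemeas(\mathbb{X})\}$, which covers $P$. For $(h,\mu)\in K$ write $a(h)$ for the oracle coordinate recovered from $h$. Since $P$ is $\Pi_1^0$, the set $\{(h,\mu,x)\in K\times\mathbb{X}:(a(h),\mu,x)\notin P\}$ is $\Sigma_1^0$ and has $(h,\mu)$-section $\mathbb{X}\setminus P_\mu^{a(h)}$; so by Lemma~\ref{lem:a.e.-basis-facts}(1) (applied after extracting a Cauchy name for $\mu$ from $h$) there is a computable $f\colon K\to\{0,1\}^{\mathbb{N}}$ with $\mathbb{X}\setminus P_\mu^{a(h)}=\bigcup_{n\in f(h,\mu)}\overline{B}_\mu^{n;h}$, a union of $(h,\mu)$-basic \emph{closed} balls. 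Writing $\overline{W}_{\mu,j}^h$ for the union of the first $j$ balls enumerated into $f(h,\mu)$, Lemma~\ref{lem:a.e.-basis-facts}(2) makes $h,\mu,j\mapsto\mu(\overline{W}_{\mu,j}^h)$ computable, and this increases to $\|\mu\|$ because $\mu(P_\mu^{a(h)})=0$. Hence I can search computably for some $j=j(n,h,\mu)$ with $\mu(\overline{W}_{\mu,j}^h)>\|\mu\|-2^{-n}$ and put $U_\mu^{n;h}:=\mathbb{X}\setminus\overline{W}_{\mu,j(n,h,\mu)}^h$. This set is open and $\Sigma_1^0[n,h,\mu]$; its measure $\mu(U_\mu^{n;h})=\|\mu\|-\mu(\overline{W}_{\mu,j(n,h,\mu)}^h)$ is $<2^{-n}$ and computable in $n,h,\mu$ (using Lemma~\ref{lem:a.e.-basis-facts}(2) and computability of $\mu\mapsto\|\mu\|$); and since $\overline{W}_{\mu,j}^h\subseteq\mathbb{X}\setminus P_\mu^{a(h)}$ we have $P_\mu^{a(h)}\subseteq U_\mu^{n;h}$ for every $n$. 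Thus $\{U_\mu^{n;h}\}_{n\in\mathbb{N},(h,\mu)\in K}$ is a uniform Schnorr sequential test restricted to $K$ with $P_\mu^{a(h)}\subseteq\bigcap_n U_\mu^{n;h}$.

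Finally I would specialize to $(h_0,\mu_0)\in K$, noting $a(h_0)=a_0$. Proposition~\ref{prop:seq-test-easy} (with oracle $h_0$) together with $x_0\in\SR{\mu_0}{h_0}$ gives $x_0\notin\bigcap_n U_{\mu_0}^{n;h_0}\supseteq P_{\mu_0}^{a_0}$, so $x_0\notin P_{\mu_0}^{a_0}$. As $P$ was an arbitrary uniform Kurtz test, $x_0\in\KR{\mu_0}{a_0}$. (The case $\mu_0=0$ is vacuous by Proposition~\ref{prop:.zero-meas}, since then $\SR{\mu_0}{a_0}=\varnothing$.)

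The main obstacle is conceptual and lies entirely in the first step: to turn the $\Pi_1^0$ null set $P_\mu^a$ into a covering by open sets whose measures are \emph{computable} (not merely lower semicomputable) one must work relative to a Cauchy name, and the existence of a name relative to which $x_0$ remains Schnorr random is exactly the content of Theorem~\ref{thm:SR-name}. The only technical care afterwards is to cover by complements of finite unions of $(h,\mu)$-basic \emph{closed} balls rather than by arbitrary closed sets: this keeps the cover open, keeps it genuinely disjoint from $P_\mu^a$, and — since an $(h,\mu)$-basic closed ball has the same measure as the corresponding open ball — keeps the measure sequence computable, which is precisely what Lemma~\ref{lem:a.e.-basis-facts} supplies.
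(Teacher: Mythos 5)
Your proposal is correct and follows essentially the same route as the paper: pass to a Cauchy name $h_0$ for $(a_0,\mu_0)$ via Theorem~\ref{thm:SR-name}, use Lemma~\ref{lem:a.e.-basis-facts} to exhaust the complement of the $\Pi_1^0$ null section by $(h,\mu)$-basic closed balls, take complements of finite subunions of measure within $2^{-n}$ of $\|\mu\|$ to build a uniform Schnorr sequential test relative to the name, and conclude with Proposition~\ref{prop:seq-test-easy}. The only cosmetic difference is that you recover the oracle $a$ from the name $h$ rather than carrying $(h,a)$ as a joint oracle as the paper does, which changes nothing.
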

\begin{proof}
Fix $a_{0},\mu_{0},x_{0}$ and assume $x_{0}\in\SR{\mu_{0}}{a_{0}}$.
By Theorem~\ref{thm:SR-name}, there is some Cauchy name $h_{0}$
for $(a_{0},\mu_{0})$ such that $x_{0}\in\SR{\mu_{0}}{h_{0},a_{0}}$.
It is enough to show that $x_{0}\in\KR{\mu_{0}}{h_{0},a_{0}}$ since
clearly $\KR{\mu_{0}}{h_{0},a_{0}}\subseteq\KR{\mu_{0}}{a_{0}}$.
Consider an effectively closed set $P\subseteq\mathbb{N}^{\mathbb{N}}\times\mathbb{A}\times\finitemeas(\mathbb{X})\times\mathbb{X}$
such that $\mu(P_{\mu}^{h,a})=0$ for all $h,a,\mu$. We must show
$x_{0}\notin P_{\mu_{0}}^{a_{0},h_{0}}$.

Let $U_{\mu}^{h,a}$ be the complement of $P_{\mu}^{h,a}$. By Lemma~\ref{lem:a.e.-basis-facts}(1)
we can enumerate a sequence of $(h,\mu)$-basic closed balls $\overline{B}_{\mu}^{i;h}$
such that $U_{\mu}^{h,a}=\bigcup_{i}\overline{B}_{\mu}^{i;h}$. Find
a finite subsequence of these balls whose union has measure $>\|\mu\|-2^{-n}$.
Let $V_{\mu}^{n;h,a}$ be the complement of this finite union. Then
$V_{\mu}^{n;h,a}$ is $\Sigma_{1}^{0}[n,h,a,\mu]$, $P_{\mu}^{h,a}\subseteq V_{\mu}^{n;h,a}$,
and $\mu(V_{\mu}^{n;h,a})\leq2^{-n}$. By Lemma~\ref{lem:a.e.-basis-facts}(2),
$n,h,a,\mu\mapsto\mu(V_{\mu}^{n;h,a})$ is computable. Hence $\{V_{\mu}^{n;h,a}\}_{n\in\mathbb{N},(h,a,\mu)\in K}$
(where is $K$ is the set of all $(h,a,\mu)$ where $h$ is a Cauchy
name for $(a,\mu)$) is a uniform Schnorr sequential test. By Proposition~\ref{prop:seq-test-easy},
$x_{0}\notin\bigcap_{n}V_{\mu_{0}}^{n;h_{0},a_{0}}$. Therefore, $x_{0}\notin P_{\mu_{0}}^{a_{0},h_{0}}$
as desired.
\end{proof}
\begin{rem}
Our proof of Proposition~\ref{prop:SR-implies-KR} follows the usual
sequential test proof for computable measures (folklore, compare with
Hoyrup and Rojas \cite[Lem.~6.2.1]{Hoyrup.Rojas:2009}). We only needed
to check that the steps are uniform in the Cauchy name of the measure.
Now, with some ingenuity one could alternately find a direct proof
which uses integral tests and avoids Cauchy names. (We leave this
as an exercise for the reader. Hint:\ Use bump functions.) Nonetheless,
our point is that one does not need to be ingenious. In general, most
proofs concerning Schnorr randomness can be naturally relativized
as in the above example.
\end{rem}
When we say that Schnorr randomness is ``stronger'' than Kurtz randomness,
we mean that 
\[
\{(a,\mu,x)\mid x\in\SR{\mu}a\}\subsetneqq\{(a,\mu,x)\mid x\in\KR{\mu}a\}
\]
Inclusion follows from Proposition\ \ref{prop:SR-implies-KR}. Nonequality
is already known for the computable case. For example, it is well-known
that for the fair-coin measure every weak 1-generic is Kurtz random
but not Schnorr random \cite[\S 8.11.2]{Downey.Hirschfeldt:2010}.
However, for certain special measures $\mu$ it is the case that $\KR{\mu}{}=\SR{\mu}{}$.
By Proposition~\ref{prop:support}, every measure $\mu$ with finite
support\textemdash for example a Dirac measure $\delta_{x}$\textemdash satisfies
$\KR{\mu}{}=\SR{\mu}{}=\MLR{\mu}{}$. Moreover, in Section~\ref{sec:conclusion}
we discuss noncomputable ``neutral measures'' $\mu$ such that \emph{every point}
is Martin-Löf $\mu$-random. These measures also satisfy $\KR{\mu}{}=\SR{\mu}{}=\MLR{\mu}{}$.

\subsection{\label{subsec:seq-test-rel-to-name}Characterizing Schnorr randomness
for noncomputable measures via sequential tests}

The previous section points to the following characterization of Schnorr
randomness for noncomputable measures via sequential tests.
\begin{thm}
\label{thm:SR-seq-test}Consider the effectively closed set 
\[
K=\{(h,\mu)\in\mathbb{N}^{\mathbb{N}}\times\finitemeas(\mathbb{X}):h\text{ is a Cauchy name for }\mbox{\ensuremath{\mu}}\}.
\]
For all $(h_{0},\mu_{0})\in K$ and $x_{0}\in\mathbb{X}$, the following
are equivalent:

\begin{enumerate}
\item $x_{0}\in\SR{\mu_{0}}{h_{0}}$.
\item $x_{0}\notin\bigcap_{n\in\mathbb{N}}U_{\mu_{0}}^{n;h_{0}}$ for all
uniform Schnorr sequential tests $\{U_{\mu}^{n;h}\}_{n\in\mathbb{N},(h,\mu)\in K}$.
\end{enumerate}
\end{thm}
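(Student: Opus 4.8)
The plan is to prove both directions by reducing to results already established, particularly Theorem~\ref{thm:SR-name}, Theorem~\ref{thm:SR-over-K}, Proposition~\ref{prop:seq-test-easy}, and the $(h,\mu)$-basic ball machinery of Lemma~\ref{lem:a.e.-basis-facts}. For the direction (1)$\Rightarrow$(2), suppose $x_0 \in \SR{\mu_0}{h_0}$ and let $\{U_\mu^{n;h}\}_{n\in\mathbb{N},(h,\mu)\in K}$ be a uniform Schnorr sequential test. Since $h_0$ is a Cauchy name for $\mu_0$ and $(h_0,\mu_0)\in K$, Proposition~\ref{prop:seq-test-easy} (applied with the oracle space $\mathbb{N}^{\mathbb{N}}$ and the oracle $h_0$, and with $K$ as given) tells us directly that $x_0 \notin \bigcap_n U_{\mu_0}^{n;h_0}$. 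So this direction is essentially immediate.

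The substantive direction is (2)$\Rightarrow$(1), or rather its contrapositive: if $x_0 \notin \SR{\mu_0}{h_0}$, then there is a uniform Schnorr sequential test $\{U_\mu^{n;h}\}$ restricted to $K$ which captures $x_0$, i.e. $x_0 \in \bigcap_n U_{\mu_0}^{n;h_0}$. By Theorem~\ref{thm:SR-over-K} (using $K$ as the effectively closed set of pairs, with oracle space $\mathbb{N}^{\mathbb{N}}$), there is a uniform Schnorr integral test $\{t_\mu^h\}_{(h,\mu)\in K}$ with $t_{\mu_0}^{h_0}(x_0)=\infty$; by Remark~\ref{rem:normalized} we may assume $\int t_\mu^h\,d\mu = 1$ for all $(h,\mu)\in K$ (after excising the zero measure, which has no random points anyway by Proposition~\ref{prop:.zero-meas}). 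Now I would convert this integral test into a sequential test by the usual device: set $U_\mu^{n;h} := \{x : t_\mu^h(x) > 2^n\}$. Then $\mu(U_\mu^{n;h}) \le 2^{-n}$ by Markov's inequality, $U_\mu^{n;h}$ is $\Sigma_1^0[n,h,\mu]$ since $t_\mu^h$ is lower semicomputable, and $x_0 \in \bigcap_n U_{\mu_0}^{n;h_0}$ because $t_{\mu_0}^{h_0}(x_0)=\infty$. The one thing that is \emph{not} automatic is requirement that $n,h,\mu \mapsto \mu(U_\mu^{n;h})$ be computable — in general the superlevel sets of a lower semicomputable function have only lower semicomputable measure.

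This is the main obstacle, and here is where the $(h,\mu)$-basic ball technology of Lemma~\ref{lem:a.e.-basis-facts} becomes essential (and why the statement is phrased over names rather than measures). Rather than taking raw superlevel sets, I would use Lemma~\ref{lem:approx-below} to write $t_\mu^h = \sup_r f_\mu^{r;h}$ with $f$ computable, nondecreasing in $r$, and $r,h,\mu \mapsto \int f_\mu^{r;h}\,d\mu$ computable; then the open sets $\{x : f_\mu^{r;h}(x) > c\}$ still have only lower-semicomputable measure, but now I can approximate them from inside by finite unions of $(h,\mu)$-basic closed balls (Lemma~\ref{lem:a.e.-basis-facts}(1)), whose measures are exactly computable and whose boundaries are $\mu$-null (Lemma~\ref{lem:a.e.-basis-facts}(2)). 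Concretely: using the Cauchy name $h$, enumerate $(h,\mu)$-basic balls covering $\{t_\mu^h > 2^n\}$, take a finite subunion $W_\mu^{n;h}$ whose measure exceeds $\mu\{t_\mu^h > 2^n\} - 2^{-(n+1)}$ while still being contained in $\{t_\mu^h > 2^{n-1}\}$ (say), rescale the threshold so that $\mu(W_\mu^{n;h}) \le 2^{-n}$ exactly is achievable, or simply accept $\mu(W_\mu^{n;h}) \le 2^{-(n-1)}$ and reindex. The point is that $\mu(W_\mu^{n;h})$ is then computable by Lemma~\ref{lem:a.e.-basis-facts}(2), so $\{W_\mu^{n;h}\}$ is a genuine uniform Schnorr sequential test over $K$, and since $x_0 \in \{t_{\mu_0}^{h_0} > 2^n\}$ for every $n$, a careful enough choice of the finite subunions (ensuring each $W_\mu^{n;h}$ contains all of the superlevel set for a large enough approximation stage, or instead building the $W$'s so that $x_0$ is guaranteed to be caught) gives $x_0 \in \bigcap_n W_{\mu_0}^{n;h_0}$. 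This contradicts (2), completing the proof. I expect the bookkeeping in choosing the finite unions so that the target point $x_0$ is retained — while keeping measures computable and small — to be the only delicate part; it is exactly the argument already used in the proof of Proposition~\ref{prop:SR-implies-KR}, just run with the integral test's superlevel sets in place of the complement of a Kurtz test.
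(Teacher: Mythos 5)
Your direction (1)$\Rightarrow$(2) is exactly the paper's: it is immediate from Proposition~\ref{prop:seq-test-easy}. Your direction (2)$\Rightarrow$(1) also starts the same way the paper does --- take a uniform Schnorr integral test $\{t_\mu^h\}_{(h,\mu)\in K}$ with $t_{\mu_0}^{h_0}(x_0)=\infty$, pass to the superlevel sets $\{x: t_\mu^h(x)>2^n\}$, and observe that the only missing property is computability of $n,h,\mu\mapsto \mu\{t_\mu^h>2^n\}$. But your proposed repair of that last defect does not work, and the difficulty you defer to ``bookkeeping'' is in fact fatal to your construction.

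The problem is that you shrink the test sets \emph{spatially}: you replace the open superlevel set by a finite union $W_\mu^{n;h}$ of $(h,\mu)$-basic balls contained in it. Such a finite inner approximation must be chosen uniformly in $(n,h,\mu)$, with no access to $x_0$, and a finite union of basic balls inside an open set can perfectly well omit any prescribed point of that set --- $x_0$ may enter the effective enumeration of $\{t_{\mu_0}^{h_0}>2^n\}$ arbitrarily late, and making $\mu(W_\mu^{n;h})$ close to $\mu\{t_{\mu_0}^{h_0}>2^n\}$ says nothing about capturing a fixed (possibly measure-zero) point. Your appeal to the argument of Proposition~\ref{prop:SR-implies-KR} is the wrong analogy: there one inner-approximates the \emph{complement} of the target closed set, so the resulting test set is a \emph{superset} of the target and containment is automatic; here you are inner-approximating the test set itself, which is precisely the direction that loses $x_0$. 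The paper's fix goes in the other, ``vertical'' direction: it leaves the superlevel set spatially intact and instead perturbs the threshold, computing from the Cauchy name $h$ a value $c(n,h,\mu)\approx 2^{n+1}$ with $\mu\{x: t_\mu^h(x)=c(n,h,\mu)\}=0$ (the level-set analogue of Lemma~\ref{lem:a.e.-basis}); then $\mu\{t_\mu^h>c(n,h,\mu)\}$ is both lower and upper semicomputable, hence computable, while $x_0\in\{t_{\mu_0}^{h_0}>c\}$ for every finite $c$ because $t_{\mu_0}^{h_0}(x_0)=\infty$. If you replace your finite-union step by this threshold-perturbation step (keeping everything else), your proof goes through.
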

\begin{proof}
The direction (1) implies (2) follows from Proposition~\ref{prop:seq-test-easy}.

For (2) implies (1), assume $x_{0}\notin\SR{\mu_{0}}{h_{0}}$. Then
there is a uniform Schnorr integral test $\{t_{\mu}^{h}\}_{(h,\mu)\in K}$
such that $t_{\mu_{0}}^{h_{0}}(x_{0})=\infty$ and $\int_{{}}t_{\mu}^{h}\,d\mu\leq1$.
Our goal is to find a uniform Schnorr sequential test $\{U_{\mu}^{n;h}\}_{n\in\mathbb{N},(h,\mu)\in K}$
such that $x_{0}\in\bigcap_{n}U_{\mu_{0}}^{n;h_{0}}.$

First, let us just consider the sets 
\[
U_{\mu}^{n;h}=\{x:t_{\mu}^{h}(x)>2^{n}\}.
\]
The set $U_{\mu}^{n;h}$ is $\Sigma_{1}^{0}[n,h,\mu]$ and by Markov's
inequality,
\[
\mu(U_{\mu}^{n;h})\leq2^{-n}\int_{{}}t_{\mu}^{h}\,d\mu\leq2^{-n}.
\]
Unfortunately, $\mu(U_{\mu}^{n;h})$ may not be computable. This is
fixed by the following claim.

\begin{claim*}
From $h$ and $\mu$ (where $h$ is a Cauchy name for $\mu$) we can
uniformly compute some value $c(n,h,\mu)\approx2^{n+1}$ such that
$n,h,\mu\mapsto\mu\{x:t_{\mu}^{h}(x)>c(n,h,\mu)\}$ is computable
and $\mu\{x:t_{\mu}^{h}(x)>c(n,h,\mu)\}\leq2^{-n}$ for all $n,h,\mu$.
\end{claim*}
\begin{proof}[Proof of claim]
(For more details, see Miyabe and Rute \cite{Miyabe.Rute:2013},
specifically the claim in the proof of the Key Lemma.) If we can compute
a value $c(n,h,\mu)$ such that $\mu\{x:t_{\mu}^{h}(x)=c(n,h,\mu)\}=0$,
then $h,a,\mu\mapsto\mu\{x:t_{\mu}^{h}(x)>c(n,h,\mu)\}$ is computable.
To see this, first note that $\mu\{x:t_{\mu}^{h}(x)>c(n,h,\mu)\}$
is lower semicomputable in the parameters (Lemma~\ref{lem:Portmanteau}).
We can also show that $\mu\{x:t_{\mu}^{h}(x)\geq c(n,h,\mu)\}$ is
upper semicomputable as follows. Pick $\varepsilon>0$ and approximate
$t_{\mu}^{h}$ from below with a continuous function $f$ which is
close enough to $t_{\mu}^{h}$ in the $L^{1}$-norm such that 
\[
\mu\{x:t_{\mu}^{h}(x)\geq c(n,h,\mu)\}\approx\mu\{x:f(x)\geq c(n,h,\mu)-\varepsilon\},
\]
The right-hand-side is upper semicomputable in the parameters (Lemma~\ref{lem:Portmanteau}).
Therefore, $\mu\{x:t_{\mu}^{h}(x)>c(n,h,\mu)\}=\mu\{x:t_{\mu}^{h}(x)\geq c(n,h,\mu)\}$
is computable.

Now, we need to compute $c(n,h,\mu)$. For a sufficiently small $\varepsilon$,
break up $[2^{n+1}-\varepsilon,2^{n+1}+\varepsilon]$ into small rational
intervals $[a,b]$. Then use $h$ to approximate $t_{\mu}^{h}$ with
some continuous function $f^{h}$ close enough to $t_{\mu}^{h}$ so
that 
\[
\mu(\{x\in\mathbb{X}:t_{\mu}^{h}(x)\in[a,b]\})\approx\mu(\{x\in\mathbb{X}:f^{h}\in[a,b]\}).
\]
The right-hand-side is upper semicomputable in $h$. Putting this
all together, one may use $h$ to search for some small interval $[a,b]$
such that $\mu(\{x\in\mathbb{X}:t_{\mu}^{h}(x)\in[a,b]\})$ is sufficiently
small. Continuing like this, one can use $h$ to uniformly compute
$c(n,h,\mu)$.

(Note: We need the Cauchy name $h$ in order to perform this search
for small intervals \emph{uniformly}. Different estimates of the
value of $\mu(\{x\in\mathbb{X}:t_{\mu}^{h}(x)\in[a,b]\})$ will lead
us to different limits $c(n,h,\mu)$. To make this computation uniform,
we need a Cauchy name $h$ to uniformly compute these estimates.)

This completes the proof of the claim.
\end{proof}
Finally, let $V_{\mu}^{n;h}=\{x:t_{\mu}^{h}(x)>c(n,h,\mu)\}.$ This
is the desired uniform Schnorr sequential test.
\end{proof}
There is nothing special here about sequential tests. One may apply
these principles to other characterizations of Schnorr randomness,
for example Schnorr's characterization using martingale tests.
\begin{defn}
(On $\{0,1\}^{\mathbb{N}}$.) A \emph{uniform Schnorr martingale test}
is a pair of family of pairs $\{\nu_{\mu}^{a},f_{\mu}^{a}\}_{(a,\mu)\in K}$
such that $K\subseteq\mathbb{A}\times\finitemeas(\{0,1\}^{\mathbb{N}})$,
$\nu_{\mu}^{a}$ is a measure on $\{0,1\}^{\mathbb{N}}$ uniformly
computable in $a$ and $\mu$, and $f_{\mu}^{a}\colon\mathbb{N}\rightarrow\mathbb{R}$
is an unbounded nondecreasing function uniformly computable in $a$
and $\mu$.
\end{defn}
We call this a martingale test because $\nu_{\mu}^{a}[\sigma]/\mu[\sigma]$
is a martingale. That is 
\[
\left(\frac{\nu_{\mu}^{a}[\sigma0]}{\mu[\sigma0]}\right)\mu[\sigma0]+\left(\frac{\nu_{\mu}^{a}[\sigma1]}{\mu[\sigma1]}\right)\mu[\sigma1]=\left(\frac{\nu_{\mu}^{a}[\sigma]}{\mu[\sigma]}\right)\mu[\sigma].
\]

\begin{prop}
\label{prop:SR-mart} Consider the space $\{0,1\}^{\mathbb{N}}$ and
the effectively closed set 
\[
K=\{(h,\mu)\in\mathbb{N}^{\mathbb{N}}\times\finitemeas(\{0,1\}^{\mathbb{N}}):h\text{ is a Cauchy name for }\mbox{\ensuremath{\mu}}\}.
\]
For all $(h_{0},\mu_{0})\in K$ and $x_{0}\in\{0,1\}^{\mathbb{N}}$
, the following are equivalent:

\begin{enumerate}
\item $x_{0}\in\SR{\mu_{0}}{h_{0}}$.
\item For every uniform Schnorr martingale test $\{\nu_{\mu}^{h},f_{\mu}^{h}\}_{(h,\mu)\in K}$
we have
\[
\forall n\ \mu_{0}[x_{0}\upharpoonright n]>0\quad\text{and}\quad\forall^{\infty}n\ \frac{\nu^{h_{0}}[x_{0}\upharpoonright n]}{\mu_{0}[x_{0}\upharpoonright n]}\leq f_{\mu_{0}}^{h_{0}}(n).
\]
\end{enumerate}
\end{prop}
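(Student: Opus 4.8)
The plan is to prove both directions by the relativization method of Theorem~\ref{thm:SR-seq-test}, together with the classical fact that on Cantor space Schnorr sequential tests and Schnorr martingale tests carve out the same null sets. Throughout I use that on $\{0,1\}^{\mathbb{N}}$ cylinders $[\sigma]$ are clopen, so $\mu\mapsto\mu[\sigma]$ and $\mu\mapsto\mu(A)$ for a finite Boolean combination $A$ of cylinders are computable, and that if finitely many nonnegative lower semicomputable reals sum to a computable real then each of them is computable. Write $M_\mu^h(\sigma):=\nu_\mu^h[\sigma]/\mu[\sigma]$. The clause ``$\forall n\ \mu_0[x_0{\upharpoonright}n]>0$'' is disposed of first: since $\SR{\mu_0}{h_0}\subseteq\SR{\mu_0}{}\subseteq\supp\mu_0$ by Propositions~\ref{prop:reducible-oracles} and \ref{prop:support}, $x_0\in\SR{\mu_0}{h_0}$ forces $x_0\in\supp\mu_0$, hence every cylinder around $x_0$ has positive measure; conversely if $\mu_0[x_0{\upharpoonright}n_0]=0$ then $x_0\notin\supp\mu_0$, so $x_0\notin\SR{\mu_0}{h_0}$, and moreover $x_0$ already violates~(2) for the trivial martingale test. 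So in either direction I may assume $x_0\in\supp\mu_0$, so that the ratios $M_\mu^h(x_0{\upharpoonright}n)$ are meaningful.

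For (1)$\Rightarrow$(2) I argue contrapositively: suppose $M_{\mu_0}^{h_0}(x_0{\upharpoonright}m)>f_{\mu_0}^{h_0}(m)$ for infinitely many $m$, for some uniform Schnorr martingale test $\{\nu_\mu^h,f_\mu^h\}_{(h,\mu)\in K}$; I build a uniform Schnorr sequential test over $K$ catching $x_0$ and invoke Proposition~\ref{prop:seq-test-easy}. Choose a nondecreasing unbounded perturbation $\tilde f_\mu^h$ with $f_\mu^h(m)-1<\tilde f_\mu^h(m)<f_\mu^h(m)$, selected uniformly in $h$ by a finite search at each level $m$ so that $\tilde f_\mu^h(m)\neq\nu_\mu^h[\sigma]/\mu[\sigma]$ for every length-$m$ cylinder with $\mu[\sigma]>0$; then $A_\mu^{m;h}:=\{x:\nu_\mu^h[x{\upharpoonright}m]>\tilde f_\mu^h(m)\,\mu[x{\upharpoonright}m]\}$ is an explicitly computable clopen set of computable measure. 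Put $W_\mu^{n;h}:=\bigcup_{m\ge m_\mu^{n;h}}A_\mu^{m;h}$, where $m_\mu^{n;h}$ is found by searching for any $m$ with $\tilde f_\mu^h(m)>2^n(\|\nu_\mu^h\|+1)$. Ville's maximal inequality for the nonnegative $\mu$-martingale $m\mapsto M_\mu^h(x{\upharpoonright}m)$ (whose integral is $\|\nu_\mu^h\|$) gives $\mu(W_\mu^{n;h})\le\|\nu_\mu^h\|/\tilde f_\mu^h(m_\mu^{n;h})<2^{-n}$, and the same inequality applied from stage $N$ yields $0\le\mu(W_\mu^{n;h})-\mu(\bigcup_{m_\mu^{n;h}\le m\le N}A_\mu^{m;h})\le\|\nu_\mu^h\|/\tilde f_\mu^h(N)$, so $n,h,\mu\mapsto\mu(W_\mu^{n;h})$ is computable. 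Since $M_{\mu_0}^{h_0}(x_0{\upharpoonright}m)>f_{\mu_0}^{h_0}(m)>\tilde f_{\mu_0}^{h_0}(m)$ infinitely often, $x_0\in\bigcap_n W_{\mu_0}^{n;h_0}$, so $x_0\notin\SR{\mu_0}{h_0}$.

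For (2)$\Rightarrow$(1), again contrapositively: if $x_0\notin\SR{\mu_0}{h_0}$ then Theorem~\ref{thm:SR-seq-test} supplies a uniform Schnorr sequential test $\{U_\mu^{n;h}\}_{(h,\mu)\in K}$ with $x_0\in\bigcap_n U_{\mu_0}^{n;h_0}$, and after reindexing I may assume $\mu(U_\mu^{n;h})\le 4^{-n}$. The task is to convert this into a uniform Schnorr martingale test. Using that $\mu(U_\mu^{n;h})$ is computable, enumerate $U_\mu^{n;h}$ as a union of cylinders and compute a finite sub-union $V_\mu^{n;h}$ (a clopen set, whose cylinders I may take all to have length $\ge n$) with $\mu(U_\mu^{n;h}\setminus V_\mu^{n;h})$ as small as desired; let $L_\mu^{n;h}$ be the longest cylinder occurring in $V_\mu^{n;h}$. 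Set $\nu_\mu^h[\sigma]:=\sum_n 2\,\mu([\sigma]\cap V_\mu^{n;h})$ and $f_\mu^h(k):=\#\{n:L_\mu^{n;h}\le k\}$. Then $\nu_\mu^h$ is a finite measure uniformly computable in $h,\mu$ (finite-union measures are computable and the tail beyond the $M$-th term contributes at most $2^{-M+1}$), $f_\mu^h$ is a nondecreasing unbounded function uniformly computable in $h,\mu$ (as $L_\mu^{n;h}\ge n$, only finitely many $n$ satisfy $L_\mu^{n;h}\le k$), and $M_\mu^h(x{\upharpoonright}k)\ge 2f_\mu^h(k)-O_x(1)$ along any $x$ that lies in $V_\mu^{n;h}$ for all large $n$, so every such $x$ fails the test $\{\nu_\mu^h,f_\mu^h\}$. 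The one remaining point — which I expect to be the main obstacle — is that $x_0$ need not lie in $V_\mu^{n;h}$ for all large $n$; this ``tail'' set is $\mu$-null, and catching it requires reproducing the standard refinement from the classical proof of the martingale characterization of Schnorr randomness \cite{Downey.Hirschfeldt:2010} while keeping every construction uniform in the Cauchy name $h$ and every auxiliary set of computable measure. This is precisely where a soft appeal to the classical result does not suffice and one must relativize its proof, just as with Theorem~\ref{thm:SR-seq-test}. Once this is in place, the analogous characterization of $\MLR{}{}$ follows by omitting all the computability-of-measure requirements on $\nu_\mu^h$, $f_\mu^h$, and $\mu(U_\mu^{n;h})$.
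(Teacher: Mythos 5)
Your overall strategy is exactly the one the paper intends: its entire proof of this proposition is the single sentence ``follow the usual proof in Downey--Hirschfeldt, using the tricks from the proof of Theorem~\ref{thm:SR-seq-test} as needed,'' so relativizing the classical sequential-test/martingale equivalence uniformly in the Cauchy name $h$ is the right plan, and you carry out the direction (1)$\Rightarrow$(2) in more detail than the paper does. That direction is essentially correct, with one small repair: you cannot literally arrange $\tilde f_\mu^h(m)\neq\nu_\mu^h[\sigma]/\mu[\sigma]$ by a finite search, since equality of reals is undecidable; instead run the nested-interval search of Lemma~\ref{lem:a.e.-basis} (relative to $h$) to obtain $\tilde f_\mu^h(m)$ with $\mu\{x:\nu_\mu^h[x\upharpoonright m]=\tilde f_\mu^h(m)\,\mu[x\upharpoonright m]\}=0$, which is all that is needed for $\mu(A_\mu^{m;h})$ to be computable. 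With that change your maximal-inequality bookkeeping for $W_\mu^{n;h}$ goes through.

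The direction (2)$\Rightarrow$(1) is left with a genuine, self-acknowledged gap, and it sits at the crux of the classical theorem rather than in the relativization. Your martingale only defeats those $x\in\bigcap_n U_\mu^{n;h}$ lying in the finite approximation $V_\mu^{n;h}$ for almost every $n$, and $x_0$ need not be such a point: the depth at which $x_0$ enters $U_{\mu_0}^{n;h_0}$ can grow faster than anything computable from $h_0$. Replacing $\mu([\sigma]\cap V_\mu^{n;h})$ by $\mu([\sigma]\cap U_\mu^{n;h})$ in the definition of $\nu_\mu^h$ is legitimate and keeps $\nu$ uniformly computable (for clopen $C$, the quantities $\mu(U\cap C)$ and $\mu(U\cap C^{c})$ are both lower semicomputable and sum to the computable $\mu(U)$, so both are computable --- the same device as Lemma~\ref{lem:a.e.-basis-facts}(2)), and it forces $\nu_\mu^{h_0}[x_0\upharpoonright k]/\mu_0[x_0\upharpoonright k]\to\infty$; but divergence of the ratio does not yield exceedance of a fixed computable order $f_\mu^h$ infinitely often, which is what the negation of (2) demands. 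Closing this is precisely the nontrivial bookkeeping of the classical proof: one must calibrate $f_\mu^h$ against the stages by which a computably known fraction of each $U_\mu^{n;h}$ has been enumerated (this is where the computability of $\mu(U_\mu^{n;h})$ from $h$ is used essentially) and handle the $\mu$-null set of late entrants within the same test. You have neither reproduced that argument nor checked its uniformity in $h$, so as written the proposal establishes only one direction; to complete it you must either carry out that bookkeeping or import the classical proof wholesale, which is all the paper itself does.
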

\begin{proof}
Just follow the usual proof, as say found in Downey and Hirschfeldt~\cite[Thm.~7.1.7]{Downey.Hirschfeldt:2010},
using the tricks from the proof of Theorem~\ref{thm:SR-seq-test}
as needed. (Also, see Miyabe and Rute \cite[\S 6.3]{Miyabe.Rute:2013}
for some discussion and tricks related to uniform martingale tests.)
\end{proof}

\section{\label{sec:Other-bad-defs}Alternative definitions of Schnorr randomness
for noncomputable measures}

In this section, we explore alternate definitions of Schnorr randomness
for noncomputable measures which have appeared in the literature or
the folklore. All of these definitions are distinct from our definition.
We argue that our definition has more desirable properties. 

(The one definition we do not address is Schnorr's definition of Schnorr
randomness for a noncomputable Bernoulli measures found in Sections
24 and 25 of his book \cite{Schnorr:1971rw}. While it seems that
his definition is equivalent to ours\textemdash for Bernoulli measures\textemdash{}
the details are long and beyond the scope of this paper.)

\subsection{\label{subsec:naive-def}Non-uniform definition}

We must address what, for most computability theorists, would be the
natural extension of Schnorr randomness to both noncomputable oracles
and noncomputable measures.
\begin{defn}
Fix $a\in\mathbb{A}$ and $\mu\in\probmeas(\mathbb{X})$. A \emph{non-uniform Schnorr sequential $\mu$-test relative to $a$}
is a sequence $(U_{(\mu)}^{n;(a)})_{n\in\mathbb{N}}$ of open sets
such that $U_{(\mu)}^{n;(a)}$ is $\Sigma_{1}^{0}[n,a,\mu]$, $\mu(U_{(\mu)}^{n;(a)})\leq2^{-n}$,
and $\mu(U_{(\mu)}^{n;(a)})$ is computable from $a,\mu$.\footnote{We use the notation $(a)$ and $(\mu)$ here to remind the reader
that the test $(U_{(\mu)}^{n;(a)})_{n\in\mathbb{N}}$ depends non-uniformly
on $a$ and $\mu$. In particular, when we say that $\mu(U_{(\mu)}^{n;(a)})$
is computable from $a,\mu$, we mean that there is a \emph{partial}
computable map $f\colon{\subseteq{}}\mathbb{A}\times\probmeas(\mathbb{X})\to\mathbb{R}$
such that $\mu(U_{(\mu)}^{n;(a)})=f(a,\mu)$. It may not be possible
to extend $(U_{(\mu)}^{n;(a)})_{n\in\mathbb{N}}$ to a uniform sequential
test $(U_{\mu}^{n;a})_{n\in\mathbb{N},a\in\mathbb{A},\mu\in\mathcal{M}_{1}(\mathbb{X})}$.} A point $x\in\mathbb{X}$ is \emph{non-uniformly Schnorr $\mu$-random relative to the oracle $a$}
if $x\notin\bigcap_{n\in\mathbb{N}}U_{n}$ for all non-uniform Schnorr
sequential $\mu$-tests relative to $a$.
\end{defn}
(Here we used the sequential test characterization of Schnorr randomness,
but any of the standard characterizations, including integral tests,
would give the same result.) It is easy to see that this non-uniform
definition is at least as strong as ours. Therefore, for every $x$
which is uniformly Schnorr $\mu$-random relative to $a$ we have
that $x\in\SR{\mu}a$. While to our knowledge, there has been no investigations
using this definition of Schnorr randomness for noncomputable measures,
there are a number of books and papers which use this non-uniform
definition of Schnorr randomness relative to a noncomputable oracle
(for example the books \cite{Nies:2009,Downey.Hirschfeldt:2010}).\footnote{These books and papers do not call this definition ``non-uniform.''
They usually just say ``$x$ is $a$-Schnorr random'' or ``$x\in\SR{}a$.''}  This is especially true of papers concerning lowness for randomness
\cite{Kjos-Hanssen.Nies.Stephan:2005,Franklin:2010b,Bienvenu.Miller:2012}.

Yu \cite{Yu:2007} showed that when $(\mathbb{X},\mu)$ is the fair
coin measure $(\{0,1\}^{\mathbb{N}},\lambda)$, this non-uniform definition
does not satisfy Van Lambalgen's Theorem. (The result is also implicit
in the proof of Theorem 5 in Merkle, Miller, Nies, Reimann, and Stephan
\cite{Merkle.Miller.Nies.ea:2006}.)
\begin{prop}[Merkle et. al \cite{Merkle.Miller.Nies.ea:2006}, Yu \cite{Yu:2007}]
\label{prop:vL-failure}There is a pair of sequences $x,y\in\{0,1\}^{\mathbb{N}}$
such that $(x,y)$ is Schnorr random on $\lambda\otimes\lambda$ but
$x$ is not non-uniformly Schnorr $\lambda$-random relative to $y$.
\end{prop}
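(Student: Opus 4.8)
The plan is to read this off from Van Lambalgen's theorem for Schnorr randomness together with a classical construction. First I would apply Theorem~\ref{thm:vL-sch} (with Corollary~\ref{cor:vL_SR-symmetric}) to the computable measure $\lambda\otimes\lambda$: for $x,y\in\{0,1\}^{\mathbb{N}}$, since $\lambda$ is computable, $(x,y)\in\SR{\lambda\otimes\lambda}{}$ holds iff $x\in\SR{\lambda}{}$ and $y\in\SR{\lambda}{x}$, equivalently iff $y\in\SR{\lambda}{}$ and $x\in\SR{\lambda}{y}$; by the bit-interleaving isomorphism (Propositions~\ref{prop:iso-preserve-cont} and~\ref{prop:our-def-extends-usual-def}) this is in turn equivalent to the join $x\oplus y$ being Schnorr $\lambda$-random in the classical sense. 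So it suffices to produce a Schnorr $\lambda$-random sequence $z=x\oplus y$ (with $x$ its even bits and $y$ its odd bits) for which, additionally, $x\le_T y$.

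Granting this, I would extract the failing test as follows. If $x\le_T y$, then $\{x\}=\bigcap_n[x\upharpoonright n]$ is a $\Pi_1^0[y]$ subset of $\{0,1\}^{\mathbb{N}}$ of $\lambda$-measure $0$ that contains $x$ — a non-uniform Kurtz $\lambda$-test relative to $y$ that $x$ fails. Running the argument of Proposition~\ref{prop:SR-implies-KR} in the non-uniform setting upgrades this to a non-uniform Schnorr sequential $\lambda$-test relative to $y$: write the $\Sigma_1^0[y]$ set $\{0,1\}^{\mathbb{N}}\smallsetminus\{x\}$ as a $y$-computable union of basic clopen sets, and for each $m$ let $U_m$ be the complement of a finite subunion of $\lambda$-measure $\ge 1-2^{-m}$; then each $U_m$ is clopen with $x\in U_m$, $\lambda(U_m)\le 2^{-m}$, and $\lambda(U_m)$ a dyadic rational computable from $y$. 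Thus $x\in\bigcap_m U_m$ and $x$ is not non-uniformly Schnorr $\lambda$-random relative to $y$. (Note that $x\not\le_{tt}y$ is then automatic, since otherwise $\{x\}$ would yield a \emph{uniform} Kurtz test and we would get $x\notin\SR{\lambda}{y}$, contradicting the conclusion of the previous paragraph; so the reduction $x\le_T y$ necessarily has unbounded use.)

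The main obstacle is the construction of such a $z=x\oplus y$, and this is exactly the argument of Yu~\cite{Yu:2007} (also implicit in the proof of Theorem~5 in Merkle, Miller, Nies, Reimann, and Stephan~\cite{Merkle.Miller.Nies.ea:2006}), which I would invoke. One builds $y$ and a Turing functional $\Phi$ with $x=\Phi^y$ by a non-effective finite-extension argument: at step $e$, first extend the finite part of $y$ committed so far enough to escape the $e$-th potential Schnorr $\lambda$-test — possible with only finitely much information, since such a test has effectively computable measures — and then commit a further block of $y$ encoding the next bits of $x$ through $\Phi$, choosing the use of $\Phi$ on these arguments to exceed the $e$-th total computable function. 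This bookkeeping simultaneously forces $x\oplus y$ to escape every Schnorr $\lambda$-test (so $z$ is Schnorr $\lambda$-random) and makes $x\le_T y$ with $x\not\le_{tt}y$. The delicacy — and the reason the construction is nontrivial — is that Schnorr randomness has no universal test, so one cannot force below a single test but must diagonalize against all of them while maintaining the coding; this is where the cited constructions do their real work. Combining the three steps, the resulting $x,y$ give $(x,y)\in\SR{\lambda\otimes\lambda}{}$ while $x$ is not non-uniformly Schnorr $\lambda$-random relative to $y$, as required.
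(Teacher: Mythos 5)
The paper offers no proof of this proposition at all---it is stated purely as a citation to Merkle et al.\ and Yu---so your proposal is in effect the same approach: defer the hard existence statement (a Schnorr $\lambda$-random join $x\oplus y$ with $x\leq_T y$ but $x\not\leq_{tt}y$) to the cited constructions, and supply the glue. Your glue is correct on both ends: the bit-interleaving isomorphism together with Propositions~\ref{prop:our-def-extends-usual-def} and~\ref{prop:iso-preserve-cont} identifies $\SR{\lambda\otimes\lambda}{}$ with classical Schnorr randomness of $x\oplus y$, and if $x=\Phi^y$ then the sets $[\Phi^y\upharpoonright n]$ (or your finite-subcover variant) form a legitimate non-uniform Schnorr sequential $\lambda$-test relative to $y$ that $x$ fails, since their measures are exactly $2^{-n}$. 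One caveat about your parenthetical sketch of the cited construction: you cannot ``escape the $e$-th Schnorr sequential test'' by a finite extension, because avoiding an open set of small measure is a tail condition that no finite initial segment of $y$ can secure; the actual arguments of Yu and of Merkle et al.\ instead work with the martingale/order characterization of Schnorr randomness, keeping each computable martingale's capital along $z$ below the corresponding computable order while weaving in the coding of the even bits into the odd bits with unbounded use. Since you explicitly defer that step to the citations---exactly as the paper does---this imprecision does not affect the correctness of your derivation of the proposition from the cited result.
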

This shows that the non-uniform definition is different from ours
(and has less desirable properties). Moreover, this difference extends
to randomness for noncomputable measures as follows.
\begin{prop}
\label{prop:naive-different}There is a noncomputable probability
measure $\mu$ on $\{0,1\}^{\mathbb{N}}$ and a point $z\in\{0,1\}^{\mathbb{N}}$
such that $z\in\SR{\mu}{}$, but $z$ is not non-uniformly Schnorr
$\mu$-random.
\end{prop}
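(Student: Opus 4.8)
The plan is to transfer the failure of Van Lambalgen's Theorem for the non-uniform notion (Proposition~\ref{prop:vL-failure}) into a statement about a noncomputable measure, by absorbing the offending oracle into a Dirac factor. Fix $x,y\in\{0,1\}^{\mathbb{N}}$ as provided by Proposition~\ref{prop:vL-failure}, so $(x,y)\in\SR{\lambda\otimes\lambda}{}$ while $x$ is \emph{not} non-uniformly Schnorr $\lambda$-random relative to $y$. Since $\lambda\otimes\lambda$ is computable, Proposition~\ref{prop:our-def-extends-usual-def}, Van Lambalgen's Theorem (Theorem~\ref{thm:vL-sch}), and the fact that the computable oracle $\lambda$ may be dropped (Proposition~\ref{prop:reducible-oracles}) give $x\in\SR{\lambda}{}$ and $y\in\SR{\lambda}{x}$; the symmetric corollary (Corollary~\ref{cor:vL_SR-symmetric}) then yields $y\in\SR{\lambda}{}$ and $x\in\SR{\lambda}{y}$. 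I work in the computable metric space $\{0,1\}^{\mathbb{N}}\times\{0,1\}^{\mathbb{N}}$, which we identify with $\{0,1\}^{\mathbb{N}}$, and I set $\mu=\lambda\otimes\delta_{y}$ and $z=(x,y)$.

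First I would check that $\mu$ is noncomputable. From a Cauchy name for $\mu=\lambda\otimes\delta_{y}$ one computes the second marginal $\delta_{y}$, and then, on the effectively closed set of Dirac measures (as in the proof of Proposition~\ref{prop:Dirac}), the atom $y$ itself; thus $y$ is uniformly computable from $\mu$ in the sense of Definition~\ref{def:uniform-comp}. But $y\in\SR{\lambda}{}$ is not computable: by Proposition~\ref{prop:SR-implies-KR} it is Kurtz $\lambda$-random, whereas a computable point would lie in the Kurtz $\lambda$-test $\{y\}$ (note $\lambda(\{y\})=0$). Hence $\mu$ is noncomputable. Moreover $z\in\SR{\mu}{}$: by Proposition~\ref{prop:Dirac}, $(x,y)\in\SR{\lambda\otimes\delta_{y}}{}$ if and only if $x\in\SR{\lambda}{y}$, and the right-hand side was established above.

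It remains to produce a non-uniform Schnorr sequential $\mu$-test containing $z$. By the choice of $x,y$ there is a sequence $(U_{n})_{n\in\mathbb{N}}$ of open subsets of $\{0,1\}^{\mathbb{N}}$, uniformly $\Sigma_{1}^{0}$ relative to $y$, with $\lambda(U_{n})\leq 2^{-n}$, with $\lambda(U_{n})$ computable from $y$ uniformly in $n$, and with $x\in\bigcap_{n}U_{n}$. Put $W^{n}=U_{n}\times\{0,1\}^{\mathbb{N}}$. Each $W^{n}$ is open; since $y$ is recovered uniformly from a name for $\mu$, the family $(W^{n})_{n}$ is uniformly $\Sigma_{1}^{0}$ relative to (a name for) $\mu$, as the non-uniform definition requires. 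Also $\mu(W^{n})=(\lambda\otimes\delta_{y})(U_{n}\times\{0,1\}^{\mathbb{N}})=\lambda(U_{n})\leq 2^{-n}$, and $\mu(W^{n})=\lambda(U_{n})$ is computable from a name for $\mu$ uniformly in $n$ (recover $y$, then compute $\lambda(U_{n})$), so $(W^{n})_{n}$ is a non-uniform Schnorr sequential $\mu$-test. Since $x\in U_{n}$ for every $n$, we get $z=(x,y)\in W^{n}$ for every $n$, hence $z\in\bigcap_{n}W^{n}$, witnessing that $z$ is not non-uniformly Schnorr $\mu$-random.

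I expect the main obstacle to be the bookkeeping in the last step: verifying \emph{precisely} that $(W^{n})_{n}$ meets every clause of the non-uniform Schnorr sequential test definition --- in particular that the ``$\Sigma_{1}^{0}[n,\mu]$'' and ``$\mu(W^{n})$ computable from $\mu$'' conditions hold \emph{uniformly in $n$} --- which reduces entirely to the observation that $y$ is uniformly computable from $\mu=\lambda\otimes\delta_{y}$. The rest is the routine assembly of Proposition~\ref{prop:vL-failure} with Van Lambalgen's Theorem, its symmetric corollary, and Proposition~\ref{prop:Dirac}.
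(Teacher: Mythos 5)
Your proposal is correct and follows essentially the same route as the paper's own proof: take $x,y$ from Proposition~\ref{prop:vL-failure}, set $\mu=\lambda\otimes\delta_{y}$ and $z=(x,y)$, use Proposition~\ref{prop:Dirac} to get $z\in\SR{\mu}{}$, and lift the relativized test on $x$ to the product test $U_{n}\times\{0,1\}^{\mathbb{N}}$ via the fact that $\mu$ computes $y$. Your explicit verification that $\mu$ is noncomputable is a small but welcome addition that the paper leaves implicit.
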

\begin{proof}
Let $x,y\in\{0,1\}^{\mathbb{N}}$ be from Proposition~\ref{prop:vL-failure}.
By our generalization of Van Lambalgen's Theorem (Theorem~\ref{thm:vL-sch}),
we have $x\in\SR{\lambda}y$. However, by Proposition~\ref{prop:vL-failure},
$x$ is not non-uniformly Schnorr $\lambda$-random relative to $y$. 

Now, let $z=(x,y)$ and $\mu=\lambda\otimes\delta_{y}$. Since $\{0,1\}^{\mathbb{N}}$
is computably homeomorphic to $\{0,1\}^{\mathbb{N}}\times\{0,1\}^{\mathbb{N}}$,
we can view $z$ as a point in $\{0,1\}^{\mathbb{N}}$ and $\mu$
as a measure on $\{0,1\}^{\mathbb{N}}$. By Proposition~\ref{prop:Dirac},
$z\in\SR{\mu}{}$. Similarly, it is easy to see that $z$ is not non-uniformly
Schnorr $\mu$-random since one can use $\mu$ to compute $y$, and
$y$ to compute a non-uniform Schnorr sequential $\lambda$-test $U_{\lambda}^{n;(y)}$
covering $x$. Then $V_{(\mu)}^{n}=U_{\lambda}^{n;(y)}\times\{0,1\}^{\mathbb{N}}$
is a non-uniform Schnorr sequential $\mu$-test covering $z$.
\end{proof}
(With some more work, we could have even constructed a Bernoulli measure
$B_{p}$ on $\{0,1\}^{\mathbb{N}}$ satisfying Proposition~\ref{prop:naive-different},
however the proof is beyond the scope of this paper.)

\subsection{\label{subsec:Schnorr-Fuchs-def}Schnorr-Fuchs definition}

Schnorr and Fuchs \cite[Def~3.1]{Schnorr.Fuchs:1977} (also Schnorr
\cite[Def.~3.4]{Schnorr:1977}), introduced a notion of Schnorr randomness
for noncomputable measures on $\{0,1\}^{\mathbb{N}}$. Their definition
is based on Schnorr's martingale characterization of Schnorr randomness.
\begin{defn}[Schnorr and Fuchs]
\label{def:Sch-mart-def}Given a measure $\mu\in\probmeas(\{0,1\}^{\mathbb{N}})$,
say that $x\in\{0,1\}^{\mathbb{N}}$ is \emph{Schnorr-Fuchs $\mu$-random}
if there are no computable measures $\nu$ and no unbounded nondecreasing
computable functions $f\colon\mathbb{N}\rightarrow\mathbb{R}$ such
that
\[
\exists^{\infty}n\ \frac{\nu[x\upharpoonright n]}{\mu[x\upharpoonright n]}\geq f(n).
\]
\end{defn}
Compare this definition to our martingale characterization of $\SR{\mu}{}$
in Proposition~\ref{prop:SR-mart}. The definitions are similar except
in Proposition~\ref{prop:SR-mart}, $\nu$ and $f$ may depend on
$\mu$. It follows that every $x\in\SR{\mu}{}$ is Schnorr-Fuchs $\mu$-random.

However the converse fails. The Schnorr-Fuchs definition is ``blind,''
in that the test does not use the measure as an oracle. (See Section~\ref{sec:conclusion}.)
\begin{prop}
There is a measure $\mu$ and some $x\in\{0,1\}^{\mathbb{N}}$ such
that $x$ is Schnorr-Fuchs $\mu$-random but $x\notin\SR{\mu}{}$.
\end{prop}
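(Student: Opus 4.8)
The plan is to exploit the single structural difference between Definition~\ref{def:Sch-mart-def} and the martingale characterization of $\SR{\mu}{}$ in Proposition~\ref{prop:SR-mart}: in Proposition~\ref{prop:SR-mart} the test data $(\nu,f)$ may be computed \emph{from} a name for $\mu$, whereas a Schnorr--Fuchs test is ``blind'' — $\nu$ is a plain computable measure and $f$ a plain computable order, with no access to $\mu$. So I will build a measure $\mu$ which encodes a sequence that is random enough to be invisible to every blind test, yet which a $\mu$-relative test is free to use; this mirrors Proposition~\ref{prop:naive-different}. Concretely, fix a Martin--L\"of $\lambda$-random $w\in\{0,1\}^{\mathbb{N}}$ (so $w$ is Schnorr $\lambda$-random and noncomputable), identify $\{0,1\}^{\mathbb{N}}$ with $\{0,1\}^{\mathbb{N}}\times\{0,1\}^{\mathbb{N}}$ via bit-interleaving, let $\mu$ be the image of $\lambda\otimes\delta_{w}$ under this computable homeomorphism, and let $x$ be the image of the pair $(w,w)$. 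Note $\mu$ is noncomputable, since $w$ is computable from $\mu$.

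\emph{Step 1: $x\notin\SR{\mu}{}$.} Since $\SR{}{}$ is invariant under computable homeomorphisms (Proposition~\ref{prop:iso-preserve-cont}), it suffices to show $(w,w)\notin\SR{\lambda\otimes\delta_{w}}{}$, which by Proposition~\ref{prop:Dirac} amounts to $w\notin\SR{\lambda}{w}$. For the latter, apply Theorem~\ref{thm:SR-over-K} with the effectively closed set $K=\{0,1\}^{\mathbb{N}}\times\{\lambda\}$ to the uniform Schnorr integral test $t^{a}(v)=\sum_{n\in\mathbb{N}}\mathbf{1}_{[a\upharpoonright n]}(v)$, where $[a\upharpoonright n]$ is the cylinder determined by the first $n$ bits of $a$: this $t$ is lower semicomputable in $(a,v)$, satisfies $\int t^{a}\,d\lambda=\sum_{n}2^{-n}=2$ for every $a$ (so the integral is computable and finite), and $t^{w}(w)=\infty$.

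\emph{Step 2: $x$ is Schnorr--Fuchs $\mu$-random.} Let $\nu$ be a computable probability measure on $\{0,1\}^{\mathbb{N}}$ and $f$ a computable unbounded nondecreasing function. In product coordinates the cylinder $[x\upharpoonright n]$ is the box $[w\upharpoonright k]\times[w\upharpoonright\ell]$ with $k=\lceil n/2\rceil$ and $\ell=\lfloor n/2\rfloor$, so $\mu[x\upharpoonright n]=\lambda[w\upharpoonright k]\cdot\delta_{w}[w\upharpoonright\ell]=2^{-k}$, while $\nu[x\upharpoonright n]\le\nu_{\mathbb{X}}[w\upharpoonright k]$, where $\nu_{\mathbb{X}}$ is the first-coordinate marginal of $\nu$ (a computable measure). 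Hence
\[
\frac{\nu[x\upharpoonright n]}{\mu[x\upharpoonright n]}\;\le\;\nu_{\mathbb{X}}[w\upharpoonright k]\cdot 2^{k}.
\]
The map $\sigma\mapsto\nu_{\mathbb{X}}[\sigma]\cdot2^{|\sigma|}$ is a computable $\lambda$-martingale, so by the martingale characterization of Schnorr randomness (Downey and Hirschfeldt \cite[Thm.~7.1.7]{Downey.Hirschfeldt:2010}) applied to the Schnorr $\lambda$-random $w$ and the computable order $k\mapsto f(2k-2)$ (setting $f(m):=f(0)$ for $m<0$), we get $\nu_{\mathbb{X}}[w\upharpoonright k]\cdot2^{k}<f(2k-2)\le f(n)$ for all sufficiently large $n$ (using $2k-2\le n$). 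Thus $\nu[x\upharpoonright n]/\mu[x\upharpoonright n]\ge f(n)$ fails for all large $n$, so $(\nu,f)$ does not witness non-randomness of $x$. As this holds for every blind pair $(\nu,f)$, $x$ is Schnorr--Fuchs $\mu$-random.

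The substantive content is Step~2, and the point worth flagging is \emph{why} the marginal inequality $\nu[A\times B]\le\nu_{\mathbb{X}}(A)$ is exactly the right estimate: a $\mu$-relative test, knowing $w$, could exploit the full $\delta_{w}$-factor of $\mu$, but a blind $\nu$ cannot ``aim at'' $w$ in the second coordinate and so competes only against the $\lambda$-factor — which the Schnorr randomness of $w$ already tames. The remaining checks are routine: that interleaving is a computable homeomorphism carrying product cylinders to boxes (so that the computation in Step~2 literally takes place in $\{0,1\}^{\mathbb{N}}$ and $\mu$ is a genuine probability measure there), that $\SR{}{}$ transfers across it (Proposition~\ref{prop:iso-preserve-cont}), and that $\nu_{\mathbb{X}}$ is computable uniformly from $\nu$ (Lemmas~\ref{lem:char-comp-meas} and~\ref{lem:Portmanteau} applied to the projection).
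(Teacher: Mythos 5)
Your proof is correct, but it takes a genuinely different route from the paper's. The paper picks two Schnorr $\lambda$-random sequences $x,y$ agreeing except in the first bit and lets $\mu$ put mass $\tfrac12$ uniformly on $[0]$ and mass $\tfrac12$ on the atom $y\in[1]$; then $\mu[x\upharpoonright n]=\lambda[x\upharpoonright n]$ for all $n\geq1$, so every blind pair $(\nu,f)$ is defeated immediately by the Schnorr $\lambda$-randomness of $x$ (no marginal estimate or index shift needed), while $\mu$ uniformly computes $y$, hence $x$, and is uniform around $x$, giving $x\notin\SR{\mu}{}$. You instead push $\lambda\otimes\delta_{w}$ through the interleaving homeomorphism and test the diagonal point $(w,w)$: the failure of uniform randomness comes structurally from Proposition~\ref{prop:Dirac} together with the self-test $t^{a}(v)=\sum_{n}\mathbf{1}_{[a\upharpoonright n]}(v)$ witnessing $w\notin\SR{\lambda}{w}$, and the Schnorr--Fuchs half reduces to the marginal bound $\nu[x\upharpoonright n]\leq\nu_{\mathbb{X}}[w\upharpoonright k]$ plus the martingale characterization of Schnorr randomness applied to $M(\sigma)=\nu_{\mathbb{X}}[\sigma]2^{|\sigma|}$ with the shifted order $k\mapsto f(2k-2)$. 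Both constructions hide the secret as an atom of (part of) the measure that a blind test cannot aim at; the paper's buys a one-line Schnorr--Fuchs verification and an entirely elementary non-randomness argument, while yours buys a cleaner conceptual picture (blind tests literally see only the $\lambda$-marginal) and reuses the Dirac/Van Lambalgen machinery already deployed in Proposition~\ref{prop:naive-different}, at the cost of the index bookkeeping $k=\lceil n/2\rceil$. The only loose ends in your write-up are routine: Definition~\ref{def:Sch-mart-def} allows real-valued $f$ whereas the cited martingale characterization is usually stated for integer-valued orders (replace $f(2k-2)$ by a computable nondecreasing unbounded minorant), and the appeal to Proposition~\ref{prop:iso-preserve-cont} should be read with the constant family of maps over all of $\probmeas(\{0,1\}^{\mathbb{N}}\times\{0,1\}^{\mathbb{N}})$. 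Neither affects correctness.
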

\begin{proof}
Let $x,y\in\{0,1\}^{\mathbb{N}}$ be Schnorr $\lambda$-random sequences
which are equal except for the first bit: $x$ starts with $0$, and
$y$ starts with $1$. Let $\mu$ be the measure where $\mu[0]=\mu[1]=\frac{1}{2}$
and the mass on $[0]$ is distributed uniformly\textemdash as in the
fair coin measure\textemdash and the mass on $[1]$ is entirely concentrated
on $y$\textemdash so $\mu(\{y\})=\frac{1}{2}$.

Firstly, for all computable $\nu$ and $f$ we have 
\[
\forall^{\infty}n\ \frac{\nu[x\upharpoonright n]}{\mu[x\upharpoonright n]}=\frac{\nu[x\upharpoonright n]}{\lambda[x\upharpoonright n]}\leq f(n)
\]
since $\mu[x\upharpoonright n]=\lambda[x\upharpoonright n]$ for $n\geq1$
and since $x$ is Schnorr $\lambda$-random. Therefore $x$ is Schnorr-Fuchs
$\mu$-random.

Moreover, $\mu$ uniformly computes $y$ as in Definition~\ref{def:uniform-comp}.
(The atom $y$ can be computed by looking for all finite strings $\sigma\in\{0,1\}^{*}$
such that $\mu[1\sigma]\geq\frac{3}{8}$\textemdash restricting the
computation to the effectively closed set $K$ of measures with such
a property.) Therefore, $\mu$ also uniformly computes $x$ and it
is easy to see that $x\notin\SR{\mu}{}$ (since the measure is uniform
around $x$).
\end{proof}
\begin{rem}
Unlike the non-uniform definition of Schnorr randomness, we conjecture
that the Schnorr-Fuchs definition agrees with our definition on a
large class of measures, e.g.~Bernoulli measures. See Section~\ref{sec:conclusion}.
\end{rem}

\subsection{\label{subsec:uniform-sequential-tests}Uniform sequential test definition}

The reader may wonder why we rely on integral tests instead of the
more common sequential tests. We defined uniform sequential tests
$\{U_{\mu}^{n}\}_{n\in\mathbb{N},\mu\in K}$ in Definition~\ref{def:unif-seq-test},
and used them in Subsections~\ref{subsec:unif-seq-tests-and-SLLN}
and \ref{subsec:seq-test-rel-to-name}. However, they are not as robust
as uniform Schnorr integral tests. Mathieu Hoyrup (unpublished, personal
communication) considered using uniform sequential tests to define
Schnorr randomness on noncomputable measures, but ran into this serious
flaw.
\begin{prop}[Hoyrup]
\label{prop:unif-seq-tests-on-reals}There are no non-trivial uniform
Schnorr sequential tests $\{U_{\mu}^{n}\}_{n\in\mathbb{N},\mu\in\probmeas(\mathbb{X})}$
on the space $\mathbb{X}=[0,1]$. Specifically, if $U_{\mu}\subseteq[0,1]$
is $\Sigma_{1}^{0}[\mu]$ such that $\mu\mapsto\mu(U_{\mu})$ is computable,
then either $U_{\mu}$ is empty for all $\mu$ or is $\mu$-measure
one for all $\mu$.
\end{prop}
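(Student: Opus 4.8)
The plan is to suppose $U_\mu$ is $\Sigma^0_1[\mu]$ with $\mu\mapsto\mu(U_\mu)$ computable, and show that the function $\mu\mapsto\mu(U_\mu)$, being both computable and "upper semicontinuous in a one-sided way" along well-chosen paths of measures, is forced to be constant. The key structural fact is that an effectively open set $U\subseteq\probmeas([0,1])\times[0,1]$ is a monotone gadget: if $\mu(U_\mu)$ is large and $\nu$ is close to $\mu$ in a suitable sense, then $\nu(U_\mu)$ is still fairly large, because $U_\mu=\bigcup_n B_n$ is an increasing union of (finite unions of) basic open balls, and the measure of a fixed open set varies lower-semicontinuously. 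But $\mu\mapsto\mu(U_\mu)$ itself need not be continuous, since $U_\mu$ changes with $\mu$; this is precisely the tension I want to exploit.

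First I would reduce to a local statement. Fix $\mu_0$ with $0<\mu_0(U_{\mu_0})<1$; I will derive a contradiction. Pick a rational $q$ with $0<q<\mu_0(U_{\mu_0})$ and choose (effectively, from a name for $\mu_0$) a finite union $V$ of basic open balls with $V\subseteq U_{\mu_0}$ and $\mu_0(V)>q$. Now perturb $\mu_0$: let $\mu_t=(1-t)\mu_0+t\delta_x$ where $x\in[0,1]\setminus\overline V$ is a computable point that lies outside the support of $\mu_0$ restricted to a neighborhood (such $x$ exists since $\mu_0(U_{\mu_0})<1$, so there is a basic ball $B$ with $\mu_0(B)$ small, and we may even take $x$ with $\mu_0$-null neighborhood using Lemma~\ref{lem:a.e.-basis}). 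Along this path, $t\mapsto\mu_t$ is computable, $V\subseteq U_{\mu_t}$ still holds for small $t$ since $V$ is a \emph{fixed} open set contained in the effectively open $U$ (monotonicity of $U$ in the measure argument), and $\mu_t(V)\ge(1-t)\mu_0(V)>q$ for $t$ small. Meanwhile $\mu_t(\{x\})\ge t$, and $x\notin\overline V$, so if we could also force $x\notin U_{\mu_t}$ we would be separating; but we cannot directly. So instead the contradiction should come from the computability of $t\mapsto\mu_t(U_{\mu_t})$ combined with a \emph{two-sided} squeeze: build one path making $\mu(U_\mu)$ provably $\ge q$ and a second path (shrinking $V$, or moving mass into the complement of $\overline{U_{\mu_0}}$, which is $\Pi^0_1$) forcing $\mu(U_\mu)\le 1-q'$, and arrange both paths to pass through the same measure, contradicting that $\mu(U_\mu)$ is single-valued unless $q+q'\le 1$ for all such choices — i.e. unless $\mu(U_\mu)\in\{0,1\}$.

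The cleaner route, and the one I would actually write up, is the following. Since $\mu\mapsto\mu(U_\mu)$ is computable, it is in particular continuous on $\probmeas([0,1])$. I claim continuity already fails whenever $0<\mu_0(U_{\mu_0})<1$. Take $\mu_n\to\mu_0$ where $\mu_n$ is obtained by moving a mass-$\varepsilon_n$ "bump" of $\mu_0$ (from inside a ball where $U_{\mu_0}$ is absent, using that $\mu_0(U_{\mu_0})<1$) onto a point deep inside $U_{\mu_0}$; because $U$ is \emph{open} and the bump lands strictly inside $U_{\mu_0}$, for $\mu_n$ close to $\mu_0$ that relocated mass is still inside $U_{\mu_n}$, so $\mu_n(U_{\mu_n})\ge\mu_0(U_{\mu_0})+\varepsilon_n - o(1)$; dually, relocating a bump from inside $U_{\mu_0}$ out to a point in the $\Pi^0_1$ complement set $[0,1]\setminus\overline{U_{\mu_0}}$ decreases the value. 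Choosing the bump sizes $\varepsilon_n\to 0$ but exploiting that the \emph{sign} of the jump is controlled, I get that $\mu\mapsto\mu(U_\mu)$ is not continuous at $\mu_0$ from at least one side, contradicting computability. Hence $\mu(U_\mu)\in\{0,1\}$ for every $\mu$, and a final connectedness-style argument (or just noting $U_\mu$ empty versus co-null are the only options consistent with the ball-decomposition) gives the dichotomy.

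\emph{The main obstacle} is making the perturbation argument genuinely work at the level of the \emph{fiberwise} set $U_\mu$: because $U\subseteq\probmeas([0,1])\times[0,1]$ is effectively open, $\mathbf 1_{U_\mu}(x)$ is only lower semicontinuous jointly in $(\mu,x)$, so while mass that sits strictly inside $U_{\mu_0}$ stays inside $U_{\mu_n}$ for nearby $\mu_n$ (that direction is safe), mass on the boundary can appear or disappear. The fix is to always relocate bumps to \emph{computable interior} points of $U_{\mu_0}$ and to \emph{computable points of the open complement}, never near the boundary, and to only ever use the \emph{safe} direction of semicontinuity — increasing via interior points — on one path and the safe direction — the complement is $\Pi^0_1$ hence the value of $\mu$ on a fixed closed subset of it is upper semicontinuous — on the other. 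Balancing the two so they contradict the assumed computable (hence continuous and single-valued) behavior of $\mu\mapsto\mu(U_\mu)$ is the crux; everything else (effectivity of the bump construction, use of Lemma~\ref{lem:Portmanteau} and Lemma~\ref{lem:a.e.-basis}) is routine.
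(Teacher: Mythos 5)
Your intermediate claim (that $\mu\mapsto\mu(U_\mu)$ cannot be continuous at a $\mu_0$ with $0<\mu_0(U_{\mu_0})<1$) is true, but the bump-perturbation argument you sketch for it does not go through, and you have correctly located but not resolved the obstruction. The ``decreasing path'' is the fatal step: if you move mass $\varepsilon_n$ from inside $U_{\mu_0}$ to a computable point $p\notin\overline{U_{\mu_0}}$, the fiber $U_{\mu_n}$ of the perturbed measure is only constrained from \emph{below} (lower semicontinuity of $(\mu,x)\mapsto\mathbf{1}_{U_\mu}(x)$), so it may strictly contain $U_{\mu_0}$ and in particular contain $p$; then $\mu_n(U_{\mu_n})$ does not drop, and upper semicontinuity of $\mu\mapsto\mu(C)$ for a \emph{fixed} closed $C\subseteq[0,1]\smallsetminus U_{\mu_0}$ says nothing about $\mu_n(U_{\mu_n})$. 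The same defect undercuts the ``increasing path'': the mass you remove from a region where $U_{\mu_0}$ is absent may lie inside $U_{\mu_n}$, cancelling the intended gain of $\varepsilon_n$. Moreover, even where your sign claims do hold, the increments along both paths are $O(\varepsilon_n)$ with $\varepsilon_n\to0$, which is perfectly consistent with continuity of $\mu\mapsto\mu(U_\mu)$; you never manufacture a jump of fixed size, so no contradiction with computability is actually reached. Finally, the endgame is also incomplete: knowing $\mu(U_\mu)\in\{0,1\}$ for every $\mu$ does not give the stated dichotomy, since $U_\mu$ could be a nonempty open set of $\mu$-measure zero, and the passage to ``$U_\mu=\varnothing$ for all $\mu$'' needs its own argument.

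The idea your proposal is missing is the paper's decomposition along the Dirac perturbation $\phi(\mu,x,s)=s\delta_x+(1-s)\mu$. Writing $\phi(\mu,x,s)(U_{\phi(\mu,x,s)})=s\,\delta_x(U_{\phi(\mu,x,s)})+(1-s)\,\mu(U_{\phi(\mu,x,s)})$, the left side is continuous by hypothesis and \emph{both} summands are lower semicontinuous (Lemma~\ref{lem:Portmanteau}); a sum of two lower semicontinuous functions that is continuous forces each summand to be continuous. This is the two-sided control your one-sided semicontinuity cannot supply. It makes the $\{0,1\}$-valued function $(\mu,x,s)\mapsto\delta_x(U_{\phi(\mu,x,s)})$ continuous for $s>0$, hence constant by connectedness of the domain. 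If it is constantly $0$, openness of $U$ forces $U_\mu=\varnothing$ for every $\mu$ (otherwise $(\phi(\mu,x,s),x)\in U$ for small $s$); if it is constantly $1$, every atom of every measure lies in its fiber, so $\nu(U_\nu)=1$ for atomic $\nu$, and density of atomic measures plus continuity gives $\mu(U_\mu)=1$ for all $\mu$. I would rework your write-up around this decomposition rather than trying to salvage the bump paths.
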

\begin{proof}
Let $\{U_{\mu}\}_{\mu\in\probmeas(\mathbb{X})}$ be a family of open
sets which are $\Sigma_{1}^{0}[\mu]$. For $\mu\in\probmeas([0,1])$,
$x\in[0,1]$, and $s\in[0,1]$, consider the measure 
\[
\phi(\mu,x,s)=s\delta_{x}+(1-s)\mu.
\]

\begin{claim*}
For $s>0$, the map $\mu,x,s\mapsto\delta_{x}(U_{\phi(\mu,x,s)})\in\{0,1\}$
is continuous.
\end{claim*}
\begin{proof}[Proof of claim.]
By assumption, the map $m(\mu)=\mu(U_{\mu})$ is continuous. Since
$\phi$ is also continuous, so is the composition $m\circ\phi$, 
\[
(m\circ\phi)(\mu,x,s)=\phi(\mu,x,s)(U_{\phi(\mu,x,s)})=s\delta_{x}(U_{\phi(\mu,x,s)})+(1-s)\mu(U_{\phi(\mu,x,s)})
\]
Since both the maps $\mu,x,s\mapsto s\delta_{x}(U_{\phi(\mu,x,s)})$
and $\mu,x,s\mapsto(1-s)\mu(U_{\phi(\mu,x,s)})$ are lower semicontinuous
(Lemma~\ref{lem:Portmanteau}) and they sum to a continuous map,
they must both be continuous, proving our claim.
\end{proof}
Since we are on the connected space $[0,1]$, either this map is constant
$0$ or constant $1$. In the constant $0$ case, $x\notin U_{\phi(\mu,x,s)}$
for all $\mu,x,s$ ($s>0$). We argue that $U_{\mu}=\varnothing$
for all $\mu$. For, if $x\in U_{\mu}$, then $x\in U_{\nu}$ for
every measure $\nu$ which is sufficiently close to $\mu$, including
$\nu=\phi(\mu,x,s)$ for sufficiently small $s>0$. However, this
contradicts that $x\notin U_{\phi(\mu,x,s)}$ for $s>0$.

In the constant $1$ case, $x\in U_{\phi(\mu,x,s)}$ for all $\mu,x,s$
($s>0$). In particular, any measure $\nu$ with an atom $x$ is of
the form $\phi(\mu,x,s)$ for some $\mu$ and $s>0$. Hence, every
atom of $\nu$ is in $U_{\nu}$. Therefore, if $\nu$ is atomic (made
up entirely of atoms), then $\nu(U_{\nu})=1$. Since $\mu\mapsto\mu(U_{\mu})$
is continuous and every measure $\mu$ can be approximated by an atomic
measure, we have that $\mu(U_{\mu})=1$ for all $\mu$.
\end{proof}
The proof of Proposition~\ref{prop:unif-seq-tests-on-reals} works
for any connected space $\mathbb{X}$. For a more general space, the
same proof gives the following result, again showing there are no
nontrivial uniform Schnorr tests.
\begin{prop}[Hoyrup]
Given a computable metric space $\mathbb{X}$ and a $\Sigma_{1}^{0}[\mu]$
set $U_{\mu}\subseteq[0,1]$ such that $\mu\mapsto\mu(U_{\mu})$ is
computable, there is a clopen set $C\subseteq\mathbb{X}$ (independent
of $\mu$) such that for all $\mu$ we have $U_{\mu}\subseteq C$
and $\mu(U_{\mu})=\mu(C)$.
\end{prop}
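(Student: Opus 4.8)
The plan is to recycle the continuity claim from the proof of Proposition~\ref{prop:unif-seq-tests-on-reals} and then replace the appeal to connectedness of $[0,1]$ by connectedness of the parameter space $\probmeas(\mathbb{X})$. Recall from that proof that, with $\phi(\mu,x,s)=s\delta_{x}+(1-s)\mu$, the map $(\mu,x,s)\mapsto\delta_{x}(U_{\phi(\mu,x,s)})$ is continuous on $\probmeas(\mathbb{X})\times\mathbb{X}\times(0,1]$ and takes values in $\{0,1\}$; the argument there --- writing $(\mu\mapsto\mu(U_{\mu}))\circ\phi$ as a sum of two lower semicontinuous functions which is forced to be continuous by Lemma~\ref{lem:Portmanteau} --- never used connectedness of $\mathbb{X}$.

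First I would fix $x\in\mathbb{X}$ and note that the slice $\probmeas(\mathbb{X})\times\{x\}\times(0,1]$ is connected, since $\probmeas(\mathbb{X})$ is convex, hence path-connected via $t\mapsto(1-t)\mu+t\nu$, and $(0,1]$ is an interval. A continuous $\{0,1\}$-valued map is constant on a connected set, so there is a well-defined value $g(x)\in\{0,1\}$ with $g(x)=\delta_{x}(U_{\phi(\mu,x,s)})$ for \emph{every} $\mu\in\probmeas(\mathbb{X})$ and \emph{every} $s\in(0,1]$. Fixing any $\mu_{0}\in\probmeas(\mathbb{X})$ and any $s_{0}>0$, we have $g=\bigl(x\mapsto\delta_{x}(U_{\phi(\mu_{0},x,s_{0})})\bigr)$, which is continuous, so $C:=g^{-1}(\{1\})$ is a clopen subset of $\mathbb{X}$, manifestly independent of $\mu$.

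Next I would verify the two desired properties. For $U_{\mu}\subseteq C$: if $x\in U_{\mu}$, then since $U\subseteq\probmeas(\mathbb{X})\times\mathbb{X}$ is $\Sigma^{0}_{1}$ (open) and $\phi(\mu,x,s)\to\mu$ as $s\to 0^{+}$, we get $x\in U_{\phi(\mu,x,s)}$ for all sufficiently small $s>0$, hence $g(x)=1$ and $x\in C$. For the measure equality I would first treat atomic measures: if $\nu=\sum_{j}q_{j}\delta_{x_{j}}$ with distinct atoms $x_{j}$, $q_{j}>0$, $\sum_{j}q_{j}=1$, then $\nu=\phi(\mu_{j},x_{j},q_{j})$ with $q_{j}>0$, taking $\mu_{j}=(1-q_{j})^{-1}\sum_{k\neq j}q_{k}\delta_{x_{k}}$ (or any probability measure in the degenerate case $q_{j}=1$, where $\nu=\delta_{x_{j}}$ and one uses $s=1$). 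Hence $x_{j}\in U_{\nu}\iff g(x_{j})=1\iff x_{j}\in C$, and since $\nu$ is carried by its atoms, $\nu(U_{\nu})=\sum_{x_{j}\in C}q_{j}=\nu(C)$. Finally, $\mu\mapsto\mu(U_{\mu})$ is computable, hence continuous, and $\mu\mapsto\mu(C)$ is continuous because $C$ is clopen (so $\mathbf{1}_{C}$ is bounded and continuous, equivalently $\partial C=\varnothing$); these two continuous maps agree on the basic points of $\probmeas(\mathbb{X})$, which are atomic and dense, so they agree everywhere, giving $\mu(U_{\mu})=\mu(C)$ for all $\mu$.

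The routine checks are the continuity of $\phi$, of $x\mapsto\delta_{x}$, and of $\mu\mapsto\mu(C)$ for clopen $C$, together with the bookkeeping for an atom of full mass. The one genuinely new ingredient over the connected case is the observation that it is connectedness of the parameter space $\probmeas(\mathbb{X})$, not of $\mathbb{X}$ itself, that pins down the set $C$; accordingly the step I expect to require the most care is the well-definedness of $g$ --- verifying that the ``constant on each connected slice'' argument really does produce a single clopen $C$ serving simultaneously for all $\mu$.
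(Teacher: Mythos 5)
Your proof is correct and follows essentially the same route as the paper's: the same continuity claim for $(\mu,x,s)\mapsto\delta_{x}(U_{\phi(\mu,x,s)})$, the same use of connectedness of $\probmeas(\mathbb{X})\times(0,1]$ to conclude that membership depends only on $x$, and the same clopen set $C$. The only difference is that you spell out the verification of $U_{\mu}\subseteq C$ and of $\mu(U_{\mu})=\mu(C)$ (via atomic measures, density, and continuity of $\mu\mapsto\mu(C)$), which the paper compresses into ``just as in the previous proof.''
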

\begin{proof}
Look at the disjoint sets: 
\begin{align*}
A_{0} & =\{(\mu,x,s):\delta_{x}(U_{\phi(\mu,x,s)})=0\}\\
A_{1} & =\{(\mu,x,s):\delta_{x}(U_{\phi(\mu,x,s)})=1\}
\end{align*}
As before, the map $\mu,x,s\mapsto\delta_{x}(U_{\phi(\mu,x,s)})\in\{0,1\}$
is continuous for $s\in(0,1]$. By the connectedness of $\probmeas(\mathbb{X})$
and $(0,1]$, if $(\mu,x,s)\in A_{i}$ for some $\mu$ and $s>0$,
then $(\mu,x,s)\in A_{i}$ for all $\mu$ and all $s>0$. In other
words, membership in $A_{i}$ depends only on $x$. Let $C$ be the
clopen set 
\[
\{x\in\mathbb{X}:\exists\mu\,\exists s>0\ \delta_{x}(U_{\phi(\mu,x,s)})=1\}.
\]
Then, just as in the previous proof, $U_{\mu}\subseteq C$ and $\mu(U_{\mu})=\mu(C)$.
\end{proof}
What if we consider uniform Schnorr sequential tests of the form $\{U_{\mu}^{n}\}_{n\in\mathbb{N},\mu\in K}$
where $K$ is an effectively closed set $K\subseteq\probmeas(\mathbb{X})$?
These next two propositions show that such an approach would not work.
Recall, a \emph{$1$-generic} $x\in\mathbb{X}$ is a point not on
the boundary of any $\Sigma_{1}^{0}$ subset of $\mathbb{X}$.
\begin{prop}
\label{prop:unif-seq-test-1-generic}Let $\{U_{\mu}^{n}\}_{n\in\mathbb{N},\mu\in K}$
be a uniform Schnorr sequential test on the space $\mathbb{X}=[0,1]$,
and let $\mu_{0}\in K$ be a 1-generic point in $\probmeas([0,1])$.
Then for each $n$, either $U_{\mu_{0}}^{n}$ is empty or $\mu_{0}(U_{\mu_{0}}^{n})=1$.
Therefore, $\bigcap_{n}U_{\mu_{0}}^{n}=\varnothing$.
\end{prop}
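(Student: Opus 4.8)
The plan is to localize the argument of Proposition~\ref{prop:unif-seq-tests-on-reals} around $\mu_0$. The only use of $1$-genericity is to observe that, since $K$ is $\Pi_1^0$, its complement is $\Sigma_1^0$, and a $1$-generic point cannot lie on the boundary of any $\Sigma_1^0$ set; as $\mu_0\in K$ this forces $\mu_0\in\operatorname{int}(K)$. So I fix an open $N$ with $\mu_0\in N\subseteq K$ and, inside it, a \emph{convex} basic open neighbourhood $N'\ni\mu_0$ (the weak topology on $\probmeas([0,1])$ has a base of convex sets). On $N\subseteq K$ the map $m(\mu):=\mu(U_\mu^{n})$ is computable, hence continuous.

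Next I use the perturbation $\phi(\mu,x,s)=s\delta_x+(1-s)\mu$. From the explicit metric on $\probmeas([0,1])$ and dominated convergence, $\sup_{x\in[0,1],\,0<s<s_0}d(\phi(\mu,x,s),\mu)\to0$ as $s_0\to0$ uniformly in $\mu$, so for small enough $s_0$ the map $\phi$ carries the connected set $D:=N'\times[0,1]\times(0,s_0)$ into $N\subseteq K$. On $D$ one has
\[
m(\phi(\mu,x,s))=s\,\delta_x\!\left(U_{\phi(\mu,x,s)}^{n}\right)+(1-s)\,\mu\!\left(U_{\phi(\mu,x,s)}^{n}\right),
\]
a continuous function written as a sum of two nonnegative lower semicomputable (hence lower semicontinuous) functions of $(\mu,x,s)$: the first because $\{(\nu,x):x\in U_\nu^{n}\}$ is $\Sigma_1^0$, the second by Lemma~\ref{lem:Portmanteau}, both precomposed with the computable $\phi$. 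Hence each summand is continuous, and dividing by $s>0$ shows $(\mu,x,s)\mapsto\delta_x(U_{\phi(\mu,x,s)}^{n})$ is a continuous $\{0,1\}$-valued function on the connected $D$, so it is constant.

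If the constant is $0$, then $U_{\mu_0}^{n}=\varnothing$: if $x_0\in U_{\mu_0}^{n}$, the open set $\{\nu:x_0\in U_\nu^{n}\}$ contains $\mu_0$, hence contains $\phi(\mu_0,x_0,s)$ for some small $s\in(0,s_0)$, contradicting the vanishing. If the constant is $1$, I bring in the basic points of $\probmeas([0,1])$ — finitely supported probability measures with positive rational weights, which are dense. Given a basic point $\rho\in N'$ and an atom $y_j$ of $\rho$, write $\rho=s\delta_{y_j}+(1-s)\mu_s$ with $\mu_s=(\rho-s\delta_{y_j})/(1-s)$; for $s$ small, $\mu_s\in N'$, so the constant-$1$ property applied to $(\mu_s,y_j,s)\in D$ gives $y_j\in U_\rho^{n}$. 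Thus every atom of $\rho$ lies in $U_\rho^{n}$, so $m(\rho)=\rho(U_\rho^{n})=1$; by density of basic points in $N'$ and continuity of $m$ on $N$, $\mu_0(U_{\mu_0}^{n})=m(\mu_0)=1$. This gives the dichotomy for each $n$; and since a uniform Schnorr sequential test satisfies $\mu_0(U_{\mu_0}^{n})\le2^{-n}<1$ for $n\ge1$, the second case cannot occur there, so $U_{\mu_0}^{n}=\varnothing$ for all $n\ge1$ and $\bigcap_nU_{\mu_0}^{n}=\varnothing$.

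The main obstacle is the constant-$1$ case. The perturbation argument only controls measures of the special form $\phi(\mu,x,s)$, so the point is to realize that any basic point can be peeled apart atom by atom into exactly that form while staying inside the tiny neighbourhood $N'$, after which density and continuity of $m$ transfer the conclusion back onto $\mu_0$ itself. Secondary technical care is needed to keep $D$ connected (hence taking $N'$ convex rather than a generic metric ball) and to obtain the uniform-in-$\mu$ estimate on $d(\phi(\mu,x,s),\mu)$ that guarantees $\phi(D)\subseteq K$.
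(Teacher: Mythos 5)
Your proof is correct and follows essentially the same route as the paper: use $1$-genericity to place $\mu_0$ in the interior of $K$, localize the perturbation/connectedness argument of Proposition~\ref{prop:unif-seq-tests-on-reals} to a neighbourhood inside $K$, and conclude the dichotomy, with the $\mu_0(U_{\mu_0}^{n})\leq 2^{-n}$ bound ruling out the measure-one case. If anything you are more careful than the paper, which asserts connectedness of $\{(\mu,x,s):s>0,\ \phi(\mu,x,s)\in B\}$ without justification, whereas your convex product neighbourhood $N'\times[0,1]\times(0,s_0)$ is manifestly connected (just make sure $N'$ sits at positive distance from the complement of $N$ so that the uniform bound on $d(\phi(\mu,x,s),\mu)$ really keeps $\phi(D)$ inside $K$).
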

\begin{proof}
Since $\mu_{0}\in\probmeas([0,1])$ is $1$-generic and $\mu_{0}\in K$,
there is a basic open ball $B\subseteq\probmeas([0,1])$ such that
$\mu_{0}\in B\subseteq K$. Let $D$ be the set of triples $(\mu,x,s)$
such that $s>0$ and $s\delta_{x}+(1-s)\mu\in B$. Since $D$ is connected,
the proof of Proposition~\ref{prop:unif-seq-tests-on-reals} still
goes through showing that $\mu_{0}(U_{\mu_{0}}^{n})=1$ or $U_{\mu_{0}}^{n}=\varnothing$.
\end{proof}
This would suggest that $\SR{\mu_{0}}{}=[0,1]$ for a 1-generic measure
$\mu_{0}$. However, this next result, also due to Mathieu Hoyrup
(unpublished, personal communication), shows that this is not the
case.
\begin{prop}[Hoyrup]
If $\mu\in\probmeas(\mathbb{X})$ is 1-generic (even weakly 1-generic\footnote{The measure $\mu\in\probmeas(\mathbb{X})$ is \emph{weakly $1$-generic}
if it is in every dense $\Sigma_{1}^{0}$ subset of $\probmeas(\mathbb{X})$.
Equivalently, $\mu$ is weakly $1$-generic if and only if it is in
every dense $\Pi_{2}^{0}$ subset of $\probmeas(\mathbb{X})$ since
a dense $\Pi_{2}^{0}$ set is the intersection of countably many dense
$\Sigma_{1}^{0}$ sets. Clearly, every $1$-generic is weakly $1$-generic. }) and $x\in\mathbb{X}$ is computable then $x\notin\SR{\mu}{}$.
\end{prop}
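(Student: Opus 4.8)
The plan is to build, for each computable point $x \in \mathbb{X}$, a single uniform Schnorr integral test $\{t_\nu\}_{\nu \in \probmeas(\mathbb{X})}$ with $\int t_\nu \, d\nu$ constant (say equal to $1$) such that $t_\mu(x) = \infty$ for every weakly $1$-generic $\mu$. Since $x$ is computable and fixed, the key resource is a computable family of basic open balls $B_0 \supseteq B_1 \supseteq \cdots$ centered at $x$ with radii shrinking to $0$; then $\mathbf{1}_{B_k}$ is lower semicomputable uniformly in $k$, and the natural candidate for a test component is $\sum_k \mathbf{1}_{B_k}$, which is $\infty$ at $x$. The obstacle is that $\int (\sum_k \mathbf{1}_{B_k}) \, d\nu = \sum_k \nu(B_k)$ need not be computable, and indeed need not be finite, as a function of $\nu$. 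So the real work is a bookkeeping argument that exploits weak $1$-genericity to control this integral while still forcing divergence at $x$.

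First I would set up the combinatorics. Fix $x$ computable and the nested balls $B_k \ni x$. For a parameter $n$, I want an effectively open (uniformly in $n$ and $\nu$) set $U^n_\nu$ containing $x$ with $\nu(U^n_\nu) \le 2^{-n}$ and $\nu(U^n_\nu)$ computable in $n, \nu$; then $t_\nu := \sum_n \mathbf{1}_{U^n_\nu}$ is a uniform Schnorr integral test with $\int t_\nu \, d\nu \le 1$ and $t_\nu(x) = \infty$ for every $\nu$ — which would actually prove something far too strong (it would show no computable point is even Kurtz random for any measure, which is false). So this naive approach cannot work, and the statement must genuinely use that $\mu$ is weakly $1$-generic and not, say, that $x \in \supp\mu$. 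The fix: I should only try to force $x$ out of randomness for measures in a $\Sigma^0_1$-dense set. Concretely, consider the set $G_k = \{\nu : \nu(B_k) < 2^{-k}\}$; this is effectively open (Lemma~\ref{lem:Portmanteau}, since $\nu \mapsto \nu(B_k)$ is lower semicomputable so $\{\nu : \nu(B_k) < 2^{-k}\}$ is $\Sigma^0_1$ — wait, that inequality gives a $\Pi^0_1$-ish condition; more carefully use $\{\nu: \nu(\overline{B}_k) < 2^{-k}\}$ via upper semicomputability of $\nu \mapsto \nu(\overline{B}_k)$, which is $\Sigma^0_1$). Each $G_k$ is dense in $\probmeas(\mathbb{X})$ because any measure can be perturbed slightly to put less than $2^{-k}$ mass near $x$ (e.g.\ push a bit of mass away from $x$); denseness is where I need $x$ to not be "forced" by the topology — and perturbations of measures are exactly what's available. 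Then $G := \bigcap_k G_k$ is a dense $\Pi^0_2$ set, so it contains every weakly $1$-generic $\mu$.

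Second, on the effectively closed set $K = \probmeas(\mathbb{X}) \setminus (\text{small ball})$ — actually I want to work relative to $G$, which is only $\Pi^0_2$, not $\Pi^0_1$, so I cannot directly apply Theorem~\ref{thm:SR-over-K} with $K = \overline{G}$. Instead I would realize $G$ as an increasing union of effectively closed sets $K_m$ and hope to patch, OR — cleaner — build the test directly on all of $\probmeas(\mathbb{X})$ as follows. Set $t_\nu(y) := \sum_k 2^{-k} \cdot \frac{\mathbf{1}_{B_k}(y)}{\max\{\nu(\overline{B}_k),\, 2^{-k}\}}$, where the denominator is upper semicomputable and bounded below by $2^{-k}$, so each summand is lower semicomputable uniformly in $k$. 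Then $\int t_\nu \, d\nu = \sum_k 2^{-k} \frac{\nu(B_k)}{\max\{\nu(\overline{B}_k), 2^{-k}\}} \le \sum_k 2^{-k} = 1$, but this integral is only lower semicomputable, not obviously computable — here is where I invoke the trick from the proof of Theorem~\ref{thm:SR-seq-test} (choosing radii, via Lemma~\ref{lem:a.e.-basis}, so that $\nu(B_k) = \nu(\overline{B}_k)$ and the map $\nu \mapsto \nu(B_k)$ is computable), but that trick needs a \emph{name} for $\nu$, not $\nu$ itself. So the honest route is: prove the result for $x \notin \SR{\mu}{h}$ where $h$ is a name, using the name to pick the balls and make all the integrals computable, and then apply Theorem~\ref{thm:SR-name} to descend to $\SR{\mu}{}$. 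For a weakly $1$-generic $\mu$: along the name $h$, since $\mu \in G_k$ for each $k$ we have $\mu(B_k) < 2^{-k}$ eventually, so $\mu(\overline{B}_k) = \mu(B_k) < 2^{-k}$, hence the $k$th summand of $t_\mu(x)$ equals $2^{-k} \cdot \frac{1}{2^{-k}} = 1$ for all large $k$ — wait, I need it to equal $1$ for infinitely many $k$, and with $\max\{\mu(\overline{B}_k), 2^{-k}\} = 2^{-k}$ this gives summand $= 1$, so $t_\mu(x) = \sum_k 1 = \infty$.

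The main obstacle I expect is making the denseness of the $G_k$ genuinely effective and compatible with the name-based construction — specifically, ensuring that the sets "$\mu(B_k) < 2^{-k}$" are $\Sigma^0_1$ (or that I can work with their complements correctly), that their intersection really is a dense $\Pi^0_2$ set containing every weakly $1$-generic, and that the radii for $B_k$ can be chosen uniformly in the name $h$ for $\mu$ (via Lemma~\ref{lem:a.e.-basis}) so that each $\mu(B_k) = \mu(\overline{B}_k)$, making $\int t_\mu^h \, d\mu$ computable. Once all of that is in place, the chain is: construct $\{t_\mu^h\}_{(h,\mu) \in K}$ (over the $\Pi^0_1$ set $K$ of name/measure pairs) which is a uniform Schnorr integral test with $t_\mu^h(x) = \infty$ whenever $\mu \in \bigcap_k G_k$; conclude $x \notin \SR{\mu}{h}$ for such $\mu$ and all names $h$; by Theorem~\ref{thm:SR-name}, $x \notin \SR{\mu}{}$. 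The subtlety to double-check is that weak $1$-genericity of $\mu$ refers to $\Sigma^0_1$ subsets of $\probmeas(\mathbb{X})$ \emph{without} a name parameter, which is exactly the statement's hypothesis, so the $G_k$ must be $\Sigma^0_1$ with no oracle — and that forces me to use $\{\nu : \nu(\overline{B}_k) < 2^{-k}\}$ with a \emph{fixed} computable radius for $\overline{B}_k$ (computable since $x$ is computable), not a name-dependent one; then separately, inside the test construction, re-pick names-dependent radii. Reconciling these two radius choices is the fiddly part, but it is a standard maneuver.
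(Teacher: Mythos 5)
Your proposal is essentially correct, but it takes a genuinely different and more roundabout route than the paper. The paper's proof sidesteps every difficulty you wrestle with by replacing indicators of balls with the computable tent functions $f_{n}(y)=1\dotminus(2^{n}d_{\mathbb{X}}(x,y))$: these are bounded computable, so $\mu\mapsto\int f_{n}\,d\mu$ is computable outright by Lemma~\ref{lem:Portmanteau}(4), and the single test $t_{\mu}=\sum_{n}\frac{2^{-n}}{\max\{2^{-n},\int f_{n}\,d\mu\}}f_{n}$ is a uniform Schnorr integral test on all of $\probmeas(\mathbb{X})$ (the tail of the integral past index $k$ is bounded by $2^{-k}$, giving computability of $\int t_{\mu}\,d\mu$). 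No Cauchy names, no Lemma~\ref{lem:a.e.-basis}, no Theorem~\ref{thm:SR-name} are needed. The genericity input is the same in both arguments\textemdash the paper packages it as one dense $\Pi_{2}^{0}$ set $\{\mu:\exists^{\infty}n\ \int f_{n}\,d\mu<2^{-n}\}$, you use the dense $\Sigma_{1}^{0}$ sets $\{\nu:\nu(\overline{B}_{k})<2^{-k}\}$ one at a time, and both rest on the denseness of measures avoiding a neighborhood of $x$. Your flagged ``fiddly part'' (reconciling the fixed radii used for genericity with the name-dependent radii used for computability of $\mu(B_{k}')$) does resolve: choose the name-dependent radius inside $[\rho_{k}/2,\rho_{k}]$ so that $\overline{B_{k}'}\subseteq\overline{B}_{k}$, whence $\mu(\overline{B_{k}'})<2^{-k}$ and the $k$th summand at $x$ equals $1$; then Theorem~\ref{thm:SR-over-K} over the closed set of name--measure pairs and Theorem~\ref{thm:SR-name} (applied to \emph{every} name) finish the argument. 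So your proof works, at the cost of invoking two substantial theorems the paper's two-line test construction avoids; what your version buys is a reusable template for converting ball-based sequential-test reasoning into the uniform setting. One caveat shared by both proofs: the denseness claims fail if $x$ is an isolated point of $\mathbb{X}$ (e.g.\ $\mathbb{X}=\mathbb{N}$, where every weakly $1$-generic measure gives $x$ positive mass), so the proposition implicitly assumes $x$ is not isolated.
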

\begin{proof}
Fix a (weakly) $1$-generic measure $\mu_{0}$ and a computable point
$x_{0}$. Consider the sequence of ``tent functions'' given by $f_{n}(x)=1\dotminus(2^{n}\cdot d_{\mathbb{X}}(x_{0},x))$.
First, we claim that $\int_{{}}f_{n}\,d\mu_{0}<2^{-n}$ for infinitely
many $n$. Indeed, the set $\{\mu:\exists^{\infty}n\ \int_{{}}f_{n}\,d\mu<2^{-n}\}$
is a $\Pi_{2}^{0}$ set. Moreover, the set is dense since there is
a dense set of measures $\mu$ such that $x_{0}\notin\supp\mu$. On
those measures, $\int_{{}}f_{n}\,d\mu=0$ for sufficiently large $n$.
Since $\mu_{0}$ is (weakly) 1-generic we proved our claim.

As for our uniform Schnorr integral test, let 
\[
t_{\mu}(x)=\sum_{n\in\mathbb{N}}\frac{2^{-n}}{\max\{2^{-n},\int_{{}}f_{n}\,d\mu\}}f_{n}(x).
\]
This is lower semicomputable, and the integral $\int_{{}}t_{\mu}\,d\mu$
is computable since 
\[
\int_{{}}\left(\sum_{n>k}\frac{2^{-n}}{\max\{2^{-n},\int_{{}}f_{n}\,d\mu\}}f_{n}(x)\right)d\mu=\sum_{n>k}\frac{2^{-n}\int f_{n}\,d\mu}{\max\{2^{-n},\int_{{}}f_{n}\,d\mu\}}\leq\sum_{n>k}2^{-n}=2^{-k}.
\]
Hence it is a uniform Schnorr integral test. Also, by our earlier
claim, 
\[
t_{\mu_{0}}(x_{0})=\sum_{n\in\mathbb{N}}\frac{2^{-n}}{\max\{2^{-n},\int_{{}}f_{n}\,d\mu\}}=\infty.
\]
Therefore, $x_{0}\notin\SR{\mu_{0}}{}$.
\end{proof}
\begin{rem}
It is not enough to just show that $\mu(\{x\})=0$. See Proposition~\ref{prop:0-can-be-SR-non-atom}.
\end{rem}
\begin{rem}
Despite these negative results, there still are a number of situations
where uniform Schnorr sequential tests are useful. We already saw
in Proposition~\ref{prop:SR-implies-KR} and Theorem~\ref{subsec:seq-test-rel-to-name}
that, when combined with Cauchy names, uniform Schnorr sequential
tests are just as good as uniform Schnorr integral tests. Moreover,
if $K\subseteq\probmeas(\mathbb{X})$ is a sufficiently nice class
of measures, then we can avoid the above counterexamples. For example,
Schnorr \cite[\S 24]{Schnorr:1971rw} gave examples of non-trivial
uniform Schnorr sequential tests $\{U_{\mu}^{n}\}_{\mu\in B}$ where
$B$ is the class of all Bernoulli measures. (Indeed he used this
to define Schnorr randomness for noncomputable Bernoulli measures,
and it seems that his definition agrees with ours.)
\end{rem}

\section{\label{sec:conclusion}Closing comments, future directions, and open
problems}

There are a large number of results concerning Martin-Löf randomness
for noncomputable measures. It is natural to ask which of those results
hold for Schnorr randomness. Here is a brief list of questions and
further topics to explore.
\begin{itemize}
\item To what extent do the results for Schnorr randomness on computable
measures generalize to the noncomputable setting (either noncomputable
measures or oracles)? The results in Section~\ref{sec:useful-characterization}
suggest that many, if not almost all, results will generalize seamlessly.
However, the result at the end of this section suggests that some
caution is needed.
\item What other randomness notions can we relativize is a uniform way to
noncomputable measures? One possibility is computable randomness.
See Rute~\cite[\S 4.3]{Rute:c} for a definition of computable randomness
on noncomputable measures using uniform integral tests. We suspect
that most of the results in this paper also hold for computable randomness
(except for Van Lambalgen's Theorem and its variations, which do not
hold for computable randomness even on computable measures \cite{Bauwens:a}).
\item The results in Sections~\ref{sec:SR-kernels} and \ref{sec:SR-maps}
were only given for continuous kernels, continuous measure-preserving
maps, and continuous conditional probabilities. However, as mentioned
in Remarks~\ref{rem:meas-kernels} and \ref{rem:meas-maps}, one
would like ``continuous'' to be ``measurable.'' There are techniques
in randomness to do exactly this\textemdash including layerwise computability
\cite{Hoyrup.Rojas:2009a,Hoyrup.Rojas:2009b,Miyabe:2013uq}, the computable
metric space of $\mu$-measurable functions \cite[\S 3 on p.36]{Rute:2013pd},
and canonical values \cite[\S 3 on p.36]{Rute:2013pd}. However, it
is not trivial to extend these ideas to a noncomputable measure $\mu$.
\item A probability measure $\mu$ is \emph{neutral for Martin-Löf randomness}
if $\sup_{x\in\mathbb{X}}t_{\mu}(x)<\infty$ for all Martin-Löf integral
tests $t_{\mu}$, and is \emph{weakly neutral for Martin-Löf randomness}
if every point $x\in\mathbb{X}$ is $\mu$-random. Levin (see \cite[\S5]{Gacs:2005})
showed that neutral measures exist when $\mathbb{X}$ is effectively
compact. Moreover, (weakly) neutral measures are noncomputable for
$\mathbb{X}=\{0,1\}^{\mathbb{N}}$ (see \cite{Gacs:2005,Bienvenu.Gacs.Hoyrup.ea:2011,Day.Miller:2013}).
It is clear that every (weakly) neutral measure for Martin-Löf randomness
is (weakly) neutral for Schnorr randomness. Conversely, are there
measures which are (weakly) neutral for Schnorr randomness but not
(weakly) neutral for Martin-Löf randomness? What can be said about
the computability of (weakly) neutral measures for Schnorr randomness?
Day and Miller \cite[Cor.~4.4]{Day.Miller:2013} showed that each
weakly neutral measure $\mu$ has no least Turing degree which computes
$\mu$. We conjecture that a similar result holds for Schnorr randomness,
but possibly replacing computability and Turing degrees with uniform
computability (as in Definition~\ref{def:uniform-comp}) and truth-table
degrees.
\item A point $x$ is \emph{blind} (or \emph{Hippocratic}) \emph{Martin-Löf $\mu$-random}
if $x$ passes all \emph{blind $\mu$-tests}, that is a $\mu$-test
which does not use the measure as an oracle \cite{Kjos-Hanssen:2010aa,Bienvenu.Gacs.Hoyrup.ea:2011}.
For many classes of measures\textemdash e.g.\ the Bernoulli measures\textemdash $\mu$-randomness
and blind $\mu$-randomness are equivalent \cite{Kjos-Hanssen:2010aa,Bienvenu.Gacs.Hoyrup.ea:2011}.
What is the proper definition of a blind Schnorr $\mu$-test? For
example, the Schnorr-Fuchs definition in Subsection~\ref{subsec:Schnorr-Fuchs-def}
is a blind randomness notion. Another approach is to define a blind
$\mu_{0}$-test as a lower semicomputable function $t$ (not dependent
on the measure $\mu_{0}$) such that the map $\mu\mapsto\int_{{}}t\,d\mu$
is computable for all $\mu\in K$ (where $K$ is an effectively closed
set of measures containing $\mu_{0}$). To what extent are these different
notions of blind Schnorr randomness equivalent?
\item If $K$ is an effectively compact class of measures, then $x$ is
\emph{Martin-Löf $K$-random} if $x$ is Martin-Löf $\mu$-random
for some $\mu\in K$ \cite{Bienvenu.Gacs.Hoyrup.ea:2011}. Moreover,
one can define a \emph{Martin-Löf integral $K$-test} as a lower semicomputable
function $t$ such that $\int_{{}}t\,d\mu\leq1$ for all $\mu\in K$.
It turns out that $x$ is Martin-Löf $K$-random if and only if $t(x)<\infty$
for all Martin-Löf integral $K$-tests \cite{Bienvenu.Gacs.Hoyrup.ea:2011}.
What is the appropriate test for Schnorr $K$-randomness? We conjecture
that it is a lower semicomputable function $t$ such that the map
$\mu\mapsto\int_{{}}t\,d\mu$ is computable for $\mu\in K$.
\item Let $K$ be an effectively compact set of probability measures where
$\MLR{\mu}{}\cap\MLR{\nu}{}=\varnothing$ for each pair of measures
$\mu,\nu\in K$. Then given $x\in\MLR{\mu}{}$ for $\mu\in K$, there
is a computable algorithm which uniformly computes $\mu$ from $x$
\cite[\S IV]{Bienvenu.Monin:2012}. (Technically, the algorithm is
``layerwise computable.'') A similar result should also hold of
Schnorr randomness.
\item One of the most important classes of noncomputable probability measures
is the class of Bernoulli measures. Martin-Löf, in his original paper
on randomness \cite[\S IV]{Martin-Lof:1966mz}, considered randomness
on noncomputable Bernoulli measures, using what he called a ``Bernoulli
test.'' His definition (which is quite different from anything in
this paper) is equivalent to the usual definition of Martin-Löf randomness
for noncomputable measures (see \cite[\S IV]{Bienvenu.Gacs.Hoyrup.ea:2011}).
Schnorr \cite[\S 24]{Schnorr:1971rw} adapted Martin-Löf's Bernoulli
tests to Schnorr randomness. As we mentioned in the beginning of Section~\ref{sec:Other-bad-defs},
it seems that Schnorr's definition is equivalent to ours.
\item Suppose that $\mu$ is a computable mixture of probability measures,
that is $\mu(A)=\int_{{}}\nu(A)\,d\xi(\nu)$ where $\xi$ is a computable
probability measure on $\probmeas(\mathbb{X})$. Hoyrup \cite[Thm 3.1]{Hoyrup:2013}
showed that $x\in\MLR{\mu}{}$ if and only if there is a measure $\nu\in\MLR{\xi}{}$
such that $x\in\MLR{\nu}{}$. Such results are important for decompositions
in analysis, such as the ergodic decomposition. We doubt Hoyrup's
result holds for Schnorr randomness in general. (See Question~23
in Rute~\cite{Rute:c}.) Nonetheless, this result likely still holds
in settings such as the ergodic decomposition where $\xi$ is supported
on a pairwise disjoint set of measures.
\item Reimann \cite[Thm.~14, Cor.~23]{Reimann:2008vn} characterized the
points which are strongly $s$-random (a weakening of Martin-Löf randomness)
as the points which are random for a certain class of measures. Similar
results can be found in Diamondstone and Kjos-Hanssen \cite[Cor.~3.6]{Diamondstone.Kjos-Hanssen:2012},
Miller and Rute \cite{Miller.Rute:}, and (implicitly in) Allen, Bienvenu,
and Slaman \cite[\S 4.2]{Allen.Bienvenu.Slaman:2014}.
\item A couple of papers \cite{Reimann.Slaman:2015,Barmpalias.Greenberg.Montalban.ea:2012}
have investigated the sequences in $\{0,1\}^{\mathbb{N}}$ which are
not Martin-Löf random for a continuous probability measure (a probability
measure with no atoms). What can be said about the Schnorr random
version?
\item Reimann and Slaman \cite[Thm.~4.4]{Reimann.Slaman:2015} showed that
a sequence $x\in\{0,1\}^{\mathbb{N}}$ is noncomputable if and only
if $x\in\MLR{\mu}{}$ for some probability measure $\mu$ where $x$
is not a $\mu$-atom. We end this paper by showing that this result
does not hold for Schnorr randomness (Proposition~\ref{prop:0-can-be-SR-non-atom}).
Nonetheless, see Question~\ref{que:reducibility-needed} for a possible
result for Schnorr randomness of a similar flavor.
\end{itemize}
Here $0^{\infty}$ denotes the all-zero sequence $00\ldots$.
\begin{prop}
\label{prop:0-can-be-SR-non-atom}There is a noncomputable probability
measure $\mu$ on $\{0,1\}^{\mathbb{N}}$ such that $0^{\infty}\in\SR{\mu}{}$
but $\mu(\{0^{\infty}\})=0$.
\end{prop}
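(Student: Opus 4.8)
The plan is to build $\mu$ supported near $0^{\infty}$, with $\mu([0^{k}])\downarrow 0$ but slowly and irregularly, so that the rate at which $\mu$ thins out at $0^{\infty}$ cannot be recovered by a uniform procedure, and then to use Theorem~\ref{thm:SR-over-K} (equivalently, the name characterizations of Theorems~\ref{thm:SR-name} and \ref{thm:SR-seq-test}) to reduce $0^{\infty}\in\SR{\mu}{}$ to defeating uniform Schnorr integral (resp.\ sequential) tests over a conveniently chosen effectively closed $K\ni\mu$. The guiding principle is the contrast with Martin-L\"of randomness: for Martin-L\"of randomness one can always cover $0^{\infty}$ by $U^{n}_{\nu}=\bigcup\{[0^{l}]:\nu([0^{l}])<2^{-n}\text{, verified along a name for }\nu\}$, but the measure $\nu\mapsto\nu(U^{n}_{\nu})$ this produces is only lower semicomputable, hence this is \emph{not} a Schnorr test — exploiting exactly this gap is the whole point, and it is consistent with the Reimann--Slaman theorem (which does force computable non-atoms out of the Martin-L\"of randoms).

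Concretely, I would take $K$ to be an effectively closed family of probability measures on $\{0,1\}^{\mathbb{N}}$ — say all measures obtained by walking along $0^{\infty}$ and, at each level, either keeping all the current ``live mass'' on $0^{\infty}$ or peeling half of it (uniformly) into the branch $[0^{l}1]$ — parametrised by a bit-sequence $b$ recording the peeling decisions; the map $b\mapsto\mu_{b}$ is computable, so $K$ is effectively compact. Crucially $K$ contains the measures with only finitely many peels, which have a genuine atom at $0^{\infty}$ of mass up to $1$ (in particular $\delta_{0^{\infty}}\in K$). Pick $\mu=\mu_{a}$ for a noncomputable $a$ with infinitely many ones, those ones so sparse that the peel-levels are not dominated by a computable function. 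Then $\mu_{a}$ is noncomputable (from $\mu_{a}$ one reads off $\mu_{a}([0^{k}])$, hence $a$), $\mu_{a}(\{0^{\infty}\})=0$, and $0^{\infty}\in\supp\mu_{a}$, so Proposition~\ref{prop:support} is not violated. Now suppose $t$ were a uniform Schnorr integral test over $K$ with $t_{\mu_{a}}(0^{\infty})=\infty$; normalise (Remark~\ref{rem:normalized}) so $\int t_{\nu}\,d\nu=1$ on $K$. Lower semicontinuity of $t$ yields, for each $m$, a radius $\varepsilon_{m}>0$ and a level $l_{m}$ with $t>m$ on $(B(\mu_{a},\varepsilon_{m})\cap K)\times[0^{l_{m}}]$, so by Markov $\nu([0^{l_{m}}])\le 1/m$ for all $\nu\in K$ within $\varepsilon_{m}$ of $\mu_{a}$. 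But $K$ contains, within $\varepsilon_{m}$ of $\mu_{a}$, a ``delayed-peel'' measure $\mu_{b}$ that agrees with $\mu_{a}$ up to a level $L$ just large enough to be $\varepsilon_{m}$-close and then holds all its mass on $0^{\infty}$; its mass in $[0^{l_{m}}]$ equals the live mass of $\mu_{a}$ at level $L$, and sparseness of the ones of $a$ keeps this $>1/m$ — a contradiction. Hence $0^{\infty}\in\SR{\mu_{a}}{}$.

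The main obstacle is exactly that last step: arranging that a ``delayed-peel neighbour with too much mass'' is always available pits three competing demands against each other — $\mu([0^{k}])\to 0$ (so $0^{\infty}$ is no atom), the decrease being slow enough relative to the test's adversarially small $\varepsilon_{m}$ that the trapped live mass beats $1/m$, and the computability constraint on the test's integral — and I do not expect one fixed $K$ to handle all tests simultaneously. The robust route is a forcing construction of a Cauchy name for $\mu$: finite conditions record a partial measure plus a current live mass, and one meets requirements $R_{e}$ (``defeat the $e$-th candidate uniform Schnorr sequential test over the name-set''), $C_{e}$ (``make $\mu$ differ from the $e$-th computable measure''), and a cofinal peeling requirement. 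Attending to $R_{e}$, one uses that only finitely much of the name is committed and that the ``stop peeling now'' completion lies in $K$ with a large atom at $0^{\infty}$, so the candidate test cannot safely enumerate a deep cylinder into $U^{e;n}_{\cdot}$ for $2^{-n}$ below the current live mass without violating its measure bound on that completion; this fixes a level $n_{e}$ at which $0^{\infty}$ escapes $U^{e;n_{e};h}_{\mu}$, after which peeling may resume (driving $\mu([0^{k}])$ to $0$) provided it is slow enough not to invalidate earlier $R_{e}$'s. Checking that the peeling schedule can respect every $R_{e}$ while still reaching $0$, and that $C_{e}$ and totality of the name cause no conflict, is the technical heart of the proof.
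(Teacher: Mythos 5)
There is a genuine gap, and it sits exactly where you flag it yourself: both of your attempts stop short at the decisive step. The fixed-$K$ peeling argument fails because the radius $\varepsilon_{m}$ and hence the level $L$ come from the modulus of lower semicontinuity of an adversarially chosen test, so no peeling schedule fixed in advance can guarantee that the trapped live mass at level $L$ beats $1/m$ for every test simultaneously; you concede this. The fallback forcing construction is only a list of requirements, and the strategy you describe for $R_{e}$ does not work as stated: it blocks the test from enumerating a deep cylinder around $0^{\infty}$ only by appeal to the measure bound $\mu(U^{n})\leq2^{-n}$ on the ``stop peeling now'' completion. But that bound is exactly the Martin-L\"of condition, and by the Reimann--Slaman theorem you yourself invoke, a Martin-L\"of test \emph{does} capture the computable point $0^{\infty}$ whenever it is not an atom. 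So once peeling resumes and the live mass drops below $2^{-n_{e}}$ (which it must, since you need $\mu([0^{k}])\to0$), the $e$-th test may enumerate a deep cylinder into $U^{n_{e}}$ containing $0^{\infty}$ without violating any measure bound, and $R_{e}$ is injured; slowing the peeling only delays this. The only thing that can rescue the argument is the computability of $n,h,\mu\mapsto\mu(U_{\mu}^{n;h})$, which you correctly identify in your opening paragraph as ``the whole point'' but never actually bring to bear in the strategy. That is the missing idea, not a routine verification.

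For comparison, the paper avoids any direct construction. It cites Franklin and Stephan for a noncomputable $a_{0}$ that is uniformly low for Schnorr randomness and truth-table reducible to some $x_{0}\in\SR{\lambda}{}$; then $a_{0}\in\SR{\lambda_{f}}{a_{0}}$ by randomness conservation (Proposition~\ref{prop:rand-preserve-cont}), $a_{0}$ is not a $\lambda_{f}$-atom because atoms of computable measures are computable, and pushing forward along $x\mapsto x+_{2}a_{0}$ and mixing in $2\delta_{a_{0}}$ (so that $\mu$ uniformly computes $a_{0}$ and Proposition~\ref{prop:reducible-oracles} removes the oracle) yields $0^{\infty}\in\SR{\mu}{}$ with $\mu(\{0^{\infty}\})=0$. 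The lesson is that defeating all uniform Schnorr tests with a single noncomputable parameter is essentially the content of the Franklin--Stephan lowness construction; if you want a self-contained forcing proof you would in effect be reproving that theorem, and your requirements $R_{e}$ would need to exploit the computability of the test's measure (e.g.\ by forcing the test to commit to $\mu(U_{\mu}^{n_{e};h})$ to high precision before the live mass is allowed to fall) rather than the $2^{-n}$ bound alone.
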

\begin{proof}
Franklin and Stephan \cite[Thm.~6.3]{Franklin.Stephan:2010} showed
that there exists a sequence $a_{0}\in\{0,1\}^{\mathbb{N}}$ which
is noncomputable, uniformly low for Schnorr randomness (that is, $\SR{\lambda}{}=\SR{\lambda}{a_{0}}$),
and truth-table reducible to some sequence $x_{0}\in\SR{\lambda}{}$.
Since $x_{0}\in\SR{\lambda}{}$ and $a_{0}$ is uniformly low for
Schnorr randomness, we have that 
\[
x_{0}\in\SR{\lambda}{a_{0}}.
\]
Since $a_{0}\leq_{tt}x_{0}$ there is some computable function $f\colon\{0,1\}^{\mathbb{N}}\rightarrow\{0,1\}^{\mathbb{N}}$
such that $f(x_{0})=a_{0}$. By randomness conservation (Proposition~\ref{prop:rand-preserve-cont}),
\[
a_{0}\in\SR{\lambda_{f}}{a_{0}}.
\]
However, $a_{0}$ is not an atom of $\lambda_{f}$ since $a_{0}$
is not computable and all atoms of a computable measure are computable.
Now consider the map $x\mapsto x+_{2}a_{0}$, where $+_{2}$ represents
component-wise addition modulo $2$. Let $\nu$ denote the pushforward
of $\lambda_{f}$ along this map. We have $a_{0}\mapsto a_{0}+_{2}a_{0}=0^{\infty}$.
Since $x\mapsto x+_{2}a_{0}$ is an isomorphism from $\lambda_{f}$
onto $\nu$ and $a_{0}$ is not an atom of $\lambda_{f}$, then $0^{\infty}$
is not an atom of $\nu$. However, by randomness conservation (Proposition~\ref{prop:rand-preserve-cont}),
\[
0^{\infty}\in\SR{\nu}{a_{0}}.
\]
Last, let $\mu=(\nu+2\delta_{a_{0}})/3$. Since $\mu$ is uniformly
equicomputable with the atom $a_{0}$ (as in Definition~\ref{def:sch-unif-rel}),
we have
\[
0^{\infty}\in\SR{\mu}{}.
\]
However, $0^{\infty}$ is not a $\nu$-atom, nor is it equal to $a_{0}$,
so $0^{\infty}$ is not a $\mu$-atom.
\end{proof}
For \emph{computable measures} it is true that $0^{\infty}\in\SR{\mu}{}$
iff $0^{\infty}$ is a $\mu$-atom. However, the proof is nonuniform
(it requires knowing the rate that $\mu[0^{\infty}\upharpoonright n]$
converges to $0$) and cannot be applied to noncomputable measures.

\bibliographystyle{alpha}
\bibliography{schnorr_noncomp_meas}

\end{document}